\theoremstyle{plain}
\newtheorem{theorem}{Theorem}[section]
\newtheorem{maprop}[theorem]{Proposition}
\newtheorem{monlem}[theorem]{Lemma}
\newtheorem{corol}[theorem]{Corollary}
\newtheorem{maconj}[theorem]{Conjecture}
\newtheorem{question}[theorem]{Question}
\newtheorem*{corolUd}{Corollary \ref{corol:Ud}}
\newtheorem*{lemclustermonomial}{Lemma \ref{lem:clustermonomial}}
\newtheorem*{theoremsemicanonicalbasisA}{Theorem \ref{theorem:semicanonicalbasisA}}
\newtheorem*{theoremdifferencedelta}{Theorem \ref{theorem:differencedelta}}
\newtheorem*{propreflexionaffine}{Proposition \ref{prop:reflexionaffine}}
\newtheorem*{lemXMlambda}{Lemma \ref{lem:XMlambda}}
\newtheorem*{corolreflectionbase}{Corollary \ref{corol:reflectionbase}}
\newtheorem*{propexplicitbase}{Proposition \ref{prop:explicitbase}}
\theoremstyle{definition}
\newtheorem{defi}[theorem]{Definition}
\newtheorem{notation}[theorem]{Notation}
\newtheorem{monexmp}[theorem]{Example}
\newtheorem{rmq}[theorem]{Remark}
\def\affA#1#2{\tilde{\mathbb A}_{#1,#2}}
\def\Aaffine{\tilde{\mathbb A}}
\def\Daffine{\tilde{\mathbb D}}
\def\Eaffine{\tilde{\mathbb E}}
\def\regrad{{\rm{q.rad}}}
\def\regsoc{{\rm{q.soc}}}
\def\reglen{{\rm{q.l}}}
\def\coker{{\rm{coker}\,}}
\def\Hom{{\rm{Hom}}}
\def\Ext{{\rm{Ext}}}
\def\add{{\rm{add}\,}}
\def\modg{{\rm{mod-}}}
\def\Gr{{\rm{Gr}}}
\def\End{{\rm{End}}}
\def\dim{{\rm{dim}\,}}
\def\ddim{{\textbf{dim}\,}}
\def\codim{{\rm{codim}}}
\def\rep{{\rm{rep}}}
\def\Ob{{\rm{Ob}}}
\def\im{{\rm{Im}\,}}
\def\Cl{{\rm{Cl}}}
\def\supp{{\rm{supp}\,}}
\def\A{{\mathbb{A}}}
\def\E{{\mathbb{E}}}
\def\N{{\mathbb{N}}}
\def\Z{{\mathbb{Z}}}
\def\Q{{\mathbb{Q}}}
\def\P{{\mathbb{P}}}
\def\<{\left<}
\def\>{\right>}
\def\d{{\partial}}
\def\ens#1{\left\{ #1 \right\}}
\def\fl{{\longrightarrow}\,}
\title[Generic variables and bases]{Generic variables in acyclic cluster algebras and bases in affine cluster algebras}
\author{\textsc{G. Dupont}}
 \address{Universit\'e de Lyon \\
Universit\'e Lyon 1 \\
Institut Camille Jordan CNRS UMR 5208 \\
43, boulevard du 11 novembre 1918\\
F-69622 Villeurbanne Cedex.}
 \email{dupont@math.univ-lyon1.fr}
\begin{document}
\maketitle

\begin{abstract}
	Let $Q$ be a finite quiver without oriented cycles and $\mathcal A(Q)$ be the coefficient-free cluster algebra with initial seed $(Q,\textbf u)$. Using the Caldero-Chapoton map, we introduce and investigate a family of generic variables in $\Z[\textbf u^{\pm 1}]$ containing the cluster monomials of $\mathcal A(Q)$. The aim of these generic variables is to give an explicit new method for constructing $\Z$-bases in the cluster algebra $\mathcal A(Q)$. 

	If $Q$ is an affine quiver with minimal imaginary root $\delta$, we investigate differences between cluster characters associated to indecomposable representations of dimension vector $\delta$. We define the notion of \emph{difference property} which gives an explicit description of these differences. We prove in particular that this property holds for quivers of affine type $\Aaffine$. When $Q$ satisfies the difference property, we prove that generic variables span the cluster algebra $\mathcal A(Q)$. If $\mathcal A(Q)$ satisfies some gradability condition, we prove that generic variables are linearly independent over $\mathbb Z$ in $\mathcal A(Q)$. In particular, this implies that generic variables form a $\Z$-basis in a cluster algebra associated to an affine quiver of type $\Aaffine$.
\end{abstract}

\setcounter{tocdepth}{1}
\tableofcontents

\section*{Introduction}
Cluster algebras were introduced by Fomin and Zelevinsky in a series of papers \cite{cluster1,cluster2,cluster3,cluster4} in order to design a general framework for understanding total positivity in algebraic groups and canonical bases in quantum groups. They turned out to be related to various subjects in mathematics like combinatorics, Lie theory, representation theory, Teichm\"uller theory and many other topics. 

A cluster algebra is a commutative algebra generated by indeterminates over $\Q$ called \emph{cluster variables}. They are gathered into sets of fixed cardinality called \emph{clusters}. The initial data for constructing a (symmetric coefficient-free) cluster algebra is a \emph{seed}, that is, a pair $(B,\textbf u)$ where $B \in M_q(\Z)$ is an anti-symmetric matrix and $\textbf u=(u_1, \ldots, u_q)$ is a $q$-tuple of indeterminates over $\Q$. The cluster variables are defined inductively by a process called \emph{mutation}. The cluster algebra associated to a seed $(B,\textbf u)$ is denoted by $\mathcal A(B)$, it is a $\Z$-subalgebra of the ring $\Z[u_1^{\pm 1}, \ldots, u_q^{\pm 1}]$ of Laurent polynomials in $\textbf u$. 

Viewing anti-symmetric matrices as incidence matrices of quivers, it is possible to define the cluster algebra $\mathcal A(Q)$ from a seed $(Q,\textbf u)$ where $Q$ is a quiver. Namely, $\mathcal A(Q)$ is the cluster algebra with initial seed $(B,\textbf u)$ where $B$ is the incidence matrix of $Q$. If $Q$ contains no oriented cycles, it is called \emph{acyclic} and $\mathcal A(Q)$ is called an \emph{acyclic} cluster algebra. Initiated in \cite{MRZ}, formally defined in \cite{BMRRT} and later developed in various papers (see \cite{BMR1,BMR2,BMR3,CCS1,CCS2,CC,CK1,CK2} for example), the \emph{cluster category} of an acyclic quiver $Q$ gives a fruitful categorification of an acyclic cluster algebra $\mathcal A(Q)$. Another categorification, using preprojective algebras is developed independently by Geiss, Leclerc and Schr\"oer (see \cite{GLS,GLS2,GLS:rigid2} for example). For a wide overview concerning categorifications of acyclic cluster algebras, one can refer to \cite{Keller:categorification}.

The investigation of $\Z$-bases in cluster algebras, whether it is very fundamental in the theory, is still widely open. A first idea would be to consider particular monomials in cluster variables in order to constitute a $\Z$-basis of $\mathcal A(Q)$. A privileged choice is given by the \emph{cluster monomials}, that is, monomials in cluster variables taken in a same cluster. It is proved in \cite{CK1} (see also \cite{shermanz} for rank two) that cluster monomials constitute indeed a basis if $Q$ is a Dynkin quiver. 

In \cite{shermanz}, the authors investigated all cluster algebras associated to matrices of rank 2 of Dynkin or affine type. It turned out that if the matrix $B$ is not of Dynkin type, the cluster monomials are not enough to generate $\mathcal A(B)$ as a $\Z$-module and one has to had somehow an ``imaginary'' part to the set of cluster monomials. For these cluster algebras of rank 2, they managed to obtain a $\Z$-basis called \emph{canonical basis}, characterized by a certain positivity property. Nevertheless, at this time there are no similar results for cluster algebras of higher ranks.

Later, in \cite{CZ}, Caldero and Zelevinsky investigated another basis arising naturally from the representation theory of the quiver $Q$ when $Q$ is the Kronecker quiver. They called it the \emph{semicanonical basis} of $\mathcal A(Q)$. It has no longer the positivity property of the canonical basis but it appeared to us that the definition of Caldero-Zelevinsky's basis, using the AR-quiver approach to cluster variables, could be generalized (see also \cite{Cerulli:thesis} for results about concerning rank three cluster algebras).

In \cite{GLS:rigid2}, the authors give a very general definition of bases in cluster algebras using Lusztig's dual semicanonical basis. Their methods consist in realizing certain cluster algebras as subalgebras of the graded dual of the enveloping algebra $U(\mathfrak n)$ where $\mathfrak n$ is the maximal nilpotent subalgebra of the Lie-Kac-Moody algebra $\mathfrak g$ associated to $Q$. Specializing coefficients, they construct a basis in the acyclic cluster algebra $\mathcal A(Q)$ using the dual $\mathcal S^*$ of Lusztig's semicanonical basis. By definition, the elements of $\mathcal S^*$ are generic constructible functions parametrized by irreducible components of the nilpotent varieties of modules over the preprojective algebra.

The idea of this article is to define generic values in an acyclic cluster algebra, giving an analogue in the context of cluster categories to the generic functions of Lusztig's dual semicanonical basis. This enables to carry out the methods used in \cite{CK1} for Dynkin quivers to quivers of affine types in order to define a $\Z$-basis. The interest of this method is that it provides a completely explicit realization of the generic values in an acyclic cluster algebra.

The main tool for constructing these generic variables is the so-called \emph{Caldero-Chapoton map}. Introduced in \cite{CC}, the Caldero-Chapoton map is an explicit map from the set of objects in the cluster category $\mathcal C_Q$ to the ring of Laurent polynomials in $\textbf u$ containing the acyclic cluster algebra $\mathcal A(Q)$. Among various interesting properties, the Caldero-Chapoton map turns out to be a constructible function, admitting generic values. It turns out that cluster monomials naturally appear as generic values. Regarding the works of Geiss, Leclerc and Schr\"oer, it is reasonable to think that generic values for the Caldero-Chapoton map form a good candidate for being a $\Z$-basis in the considered cluster algebra. The article is devoted to prove that this is indeed the case when $Q$ is an affine quiver satisfying a property called \emph{difference property}. In particular, this proves that generic variables form indeed a $\Z$-basis when $Q$ is a quiver of affine type $\Aaffine$.

Because of the similarity with Luzstig's dual semicanonical basis, it seemed to be relevant to call our basis the \emph{semicanonical basis} of the cluster algebra. However, this might look a bit confusing because, even if the terminology is the same, our semicanonical basis does not coincide with Caldero-Zelevinsky's semicanonical basis explicited in \cite{CZ} for the Kronecker quiver. We will investigate in subsection \ref{subsection:basesKronecker} the differences between known bases in this case. We will prove in particular that the Caldero-Zelevinsky's basis, the canonical basis and the semicanonical basis can be obtained from each other only by a locally unipotent base change.

The article is organized as follows. Section \ref{section:background} presents the used material and the main results of this paper. In section \ref{section:affine}, we give the essential background concerning cluster categories and cluster algebras associated to affine quivers. Section \ref{section:generic} is the heart of our study. We introduce the so-called \emph{generic variables} and study some of their properties for acyclic quivers. We then investigate their properties for affine quivers and prove that they form a generating set in cluster algebras $\mathcal A(Q)$ associated to affine quivers $Q$ satisfying a certain \emph{difference property}. In section \ref{section:reflection}, we prove that the Caldero-Chapoton map is compatible with Zhu's extended BGP-reflection-functors. Considering interactions of reflection functors and generic bases, this allows to prove that under some gradability condition on $\mathcal A(Q)$, generic variables are linearly independent in $\mathcal A(Q)$. In section \ref{section:examples}, we give explicit computations of generic variables for Dynkin quivers and affine quivers of rank lesser than four. Finally in section \ref{section:conjectures}, we give some conjectures and questions.

\section{Background and main results}\label{section:background}
	In this article, $k$ denotes the field $\mathbb C$ of complex numbers.
\begin{subsection}{The cluster category}
	Let $Q$ be a quiver, we denote by $Q_0$ the set of vertices, $Q_1$ the set of arrows and for any arrow $\alpha:i \fl j$ we denote by $s(\alpha)=i$ the \emph{source}\index{source} of $\alpha$ and by $t(\alpha)=j$ the \emph{tail}\index{tail} of $\alpha$. All the considered quivers will be connected (the underlying diagram is connected) and finite ($Q_0$ and $Q_1$ are finite sets). A quiver $Q$ is called \emph{acyclic}\index{acyclic} if $Q$ does not contain any oriented cycle.
	
	Let $Q$ be an acyclic quiver, $\Phi(Q)$ its root system, $\Phi_{\geq 0}(Q)$ its positive root system and $\Pi(Q)=\ens{\alpha_i, i \in Q_0}$ the set of simple roots of $Q$. As usual, we write 
	$$\Phi_{\geq -1}(Q)=\Phi_{\geq 0}(Q) \sqcup -\Pi(Q)$$
	the set of \emph{almost positive roots}\index{almost positive roots}. We will denote by $(-,-)$ the Tits form of $Q$. We identify the root lattice with $\Z^{Q_0}$ by sending $\alpha_i$ to the $i$-th vector of the canonical basis of $\Z^{Q_0}$.
	
	Let $\rep(Q)$ denote the category of finite dimensional representations of $Q$. If $M$ is a representation of $Q$, then for any $i \in Q_0$, $M(i)$ denotes the underlying vector space at vertex $i$ and for any $\alpha: i \fl j$ in $Q_1$, $M(\alpha):M(i) \fl M(j)$ denotes the corresponding linear map. The \emph{dimension vector}\index{dimension vector} of $M$ is the element in $\N^{Q_0}$ defined by
	$$\ddim M=(\dim M(i))_{i \in Q_0}.$$
	
	Let $kQ$ be the path algebra over $Q$ and $kQ$-mod be the category of finite dimensional left-$kQ$-modules. It is known that $kQ$-mod is equivalent to the category $\rep(Q)$, we will then often abuse the terminology by identifying modules and representations. For any vertex $i \in Q_0$, we denote by $S_i$ the simple module associated to $i$, by $P_i$ its projective cover and by $I_i$ its injective hull.
	
	For any dimension vector $\textbf d$, $\rep(Q,\textbf d)$ denotes the set of representations of $Q$ with dimension vector $\textbf d$. This is an affine variety isomorphic to 
	$$\rep(Q,\textbf d)=\prod_{\alpha \in Q_1} k^{d_{s(\alpha)}} \times k^{d_{t(\alpha)}}.$$
	In particular $\rep(Q,\textbf d)$ is an irreducible variety. The algebraic group 
	$$G_{\textbf d}=\prod_{i \in Q_0}GL(k,d_i)$$
	acts on $\rep(Q,\textbf d)$ and the $G_{\textbf d}$ orbits in $\rep(Q,\textbf d)$ coincide with the isoclasses of $kQ$-modules.
	
	The Grothendieck group $K_0(kQ)$ of $kQ$-mod is the free abelian group over the isoclasses of $kQ$-modules modulo the relations $X+Y=E$ for any short exact sequence $0 \fl X \fl E \fl Y \fl 0$. The dimension vector $\ddim $ induces an isomorphism of abelian groups 
	$$\ddim: K_0(kQ) \xrightarrow{\sim} \Z^{Q_0}$$ 
	sending the isoclass of the simple module $S_i$ to the simple root $\alpha_i$ for any $i \in Q_0$.
	
	As $kQ$ is hereditary, the Euler form on $kQ$-mod is given by
	$$\<M,N\>=\dim \Hom_{kQ}(M,N)-\dim \Ext^1_{kQ}(M,N).$$
	It is well defined on the Grothendieck group and for any two $kQ$-modules $M,N$, we have
	$$\<\ddim M, \ddim N\>=(\ddim M,\ddim N).$$
	
	We denote by $\tau$ the Auslander-Reiten translation on $kQ$-mod. An indecomposable module will be called \emph{preprojective}\index{preprojective} if it is in the $\tau$-orbit of a projective module. It will be called \emph{preinjective}\index{preinjective} if it is in the $\tau$-orbit of an injective module. It will be called \emph{regular}\index{regular} if it is neither preprojective nor preinjective. An arbitrary module is called preprojective (resp. preinjective, regular) if all its indecomposable direct summands are preprojective (resp. preinjective, regular). We denote by $\mathcal P(Q)$ (resp. $\mathcal I(Q)$, $\mathcal R(Q)$) the full subcategory of preprojective (resp. preinjective, regular) modules. If there is no possible confusion, we will omit the reference to $Q$.
	
 	We denote by $D^b(kQ)$ the bounded derived category of $kQ$-mod with translation $\tau$ and shift functor $[1]$. A complex concentrated in degree zero will still be called a module. $D^b(kQ)$ is a triangulated category and the functor $G=\tau^{-1}[1]$ is an auto-equivalence of $D^b(kQ)$. The orbit category $\mathcal C_Q=D^b(kQ)/G$, introduced in \cite{BMRRT}, is called the \emph{cluster category of $Q$}\index{cluster category}. It is proved in \cite{K} that $\mathcal C_Q$ is a triangulated category and that the canonical functor $D^b(kQ) \fl \mathcal C_Q$ is a triangle functor. The image of an object $M$ in $D^b(kQ)$ under this canonical functor will still be denoted by $M$.
	
	The cluster category $\mathcal C_Q$ is a $k$-linear Krull-Schmidt category whose indecomposable objects are indecomposable modules and shifts of indecomposable projective modules. Namely, if we denote by ind-$\mathcal K$ the family of indecomposable objects in a Krull-Schmidt category $\mathcal K$, we have:
	$$\textrm{ind-} \mathcal C_Q=\textrm{ind-} kQ\textrm{-mod} \sqcup \ens{P_i[1] \ : \ i \in Q_0}$$
	
	Every object $M$ in $\mathcal C_Q$ has thus an unique decomposition $M=M_0 \oplus P_M[1]$ where $M_0$ is a module and $P_M$ is a projective module. The homological functor $H^0=\Hom_{\mathcal C_Q}(kQ,-): \mathcal C_Q \fl kQ\textrm{-mod}$ allows to recover the module part of an object $M \in \mathcal C_Q$. Thus, we will always write
	$$M=H^0(M) \oplus P_M[1]$$
	the decomposition of an object in $\mathcal C_Q$ into a direct sum of a module and the shift of a projective module.
	
	The cluster category $\mathcal C_Q$ is a 2-Calabi-Yau category, it means that for any two objects $M,N \in \mathcal C_Q$, there is a duality
	$$\Ext^1_{\mathcal C_Q}(M,N) \simeq D\Ext^1_{\mathcal C_Q}(N,M)$$
	where $D=\Hom_k(-,k)$ is the standard duality. 
	Moreover, if $M$ and $N$ are modules, there is a precise description:
	$$\Ext^1_{\mathcal C_Q}(M,N)=\Ext^1_{kQ}(M,N) \oplus D\Ext^1_{kQ}(N,M)$$
	
	In the Auslander-Reiten quiver of a cluster category, indecomposable preprojective and indecomposable injective are in the same component. This component is called \emph{transjective}\index{transjective} and is denoted by $\mathcal {PI}(Q)$.
\end{subsection}

\begin{subsection}{Affine quivers}
	An \emph{affine quiver}\index{affine quiver} is an acyclic quiver whose underlying diagram in an extended Dynkin diagram. The representation theory of such quivers is deeply studied and the reader can for example refer to \cite{DR:memoirs, ringel:1099, CB:lectures}. We recall some useful background concerning representation theory of affine quivers. In this subsection we always assume that $Q$ is an affine quiver.

	We set
	$$\Phi^{\textrm{re}}(Q)=\ens{\alpha \in \Phi(Q) \ : \ (\alpha,\alpha)=1} \textrm{ and }\Phi_{\geq 0}^{\textrm{re}}(Q)=\Phi^{\textrm{re}}(Q)\cap \Phi_{\geq 0}(Q)$$
	the sets of \emph{real roots}\index{real roots} and \emph{positive real roots}, 
	$$\Phi^{\textrm{im}}(Q)=\ens{\alpha \in \Phi(Q) \ : \ (\alpha,\alpha)=0} \textrm{ and }\Phi_{\geq 0}^{\textrm{im}}(Q)=\Phi^{\textrm{im}}(Q)\cap \Phi_{\geq 0}(Q).$$
	the sets of \emph{imaginary roots}\index{imaginary roots} and \emph{positive imaginary roots}.
	It is known that we have the following decomposition:
	$$\Phi(Q)=\Phi^{\textrm{re}}(Q) \sqcup \Phi^{\textrm{im}}(Q) \textrm{ and }\Phi_{\geq 0}(Q)=\Phi_{\geq 0}^{\textrm{re}}(Q) \sqcup \Phi_{\geq 0}^{\textrm{im}}(Q).$$
	Moreover, there exists an unique positive imaginary root $\delta$ such that 
	$$\Phi^{\textrm{im}}(Q)=\Z\delta$$
	and this root $\delta$ is called the \emph{minimal imaginary root of $Q$}\index{minimal imaginary root}. Note that $\delta$ is a \emph{sincere root}\index{sincere root}, it means that $\delta_i \neq 0$ for every $i \in Q_0$.
	
	A positive root $\alpha$ is called a \emph{Schur root}\index{Schur roots} if there exists a (necessarily indecomposable) representation $M \in \rep(Q,\alpha)$ such that $\End_{kQ}(M) \simeq k$. Such a representation is called a \emph{Schur representation}\index{Schur representation}. According to Kac's theorem, there exists an indecomposable representation in $\rep(Q,\alpha)$ if and only if $\alpha$ is a positive root. Moreover, if $\alpha$ is a real root, there exists an unique indecomposable representation of dimension vector $\alpha$. If in addition $\alpha$ is a Schur root, then $M$ is \emph{rigid}\index{rigid}, that is, $\Ext^1_{kQ}(M,M)=0$. If $\alpha$ is a positive imaginary root, then, the set of indecomposable representations of dimension vector $\alpha$ is a $\P^1(k)$-family of pairwise non-isomorphic representations. In particular $\rep(Q,\alpha)$ (and thus $\rep(Q)$) contains infinitely many non-isomorphic indecomposable objects.
	
	The existence of a minimal imaginary root provides a very useful linear form $\d: \Z^{Q_0} \fl \Z$ called \emph{defect form}\index{defect}. This form is defined by 
	$$\d_\alpha=\<\delta,\alpha\>$$ for any $\alpha \in \Z^{Q_0}$. As the Euler-form is well defined on the Grothendieck group, for any $M$ in $\rep(Q)$, we define the \emph{defect of $M$} as
	$$\d_M=\<\delta, \ddim M\>.$$
	The defect allows to characterize whether an indecomposable module is preprojective, preinjective or regular. Namely if $M$ is an indecomposable module, then $M$ is preprojective iff $\d_M<0$, $M$ is preinjective iff $\d_M>0$ and $M$ is regular iff $\d_M=0$.
	
	We denote by $c: K_0(kQ) \fl K_0(kQ)$ the \emph{Coxeter transformation}\index{Coxeter} on $K_0(kQ)$, that is, the $\Z$-linear transformation induced by the translation on the Grothendieck group. For any $\alpha, \beta \in \Z^{Q_0}$, we have $\<\alpha,\beta\>=-\<\beta,c(\alpha)\>$. In particular as $c(\delta)=\delta$, we get
	$$\<\alpha,\delta\>=-\<\delta,\alpha\>$$
	for any $\alpha \in \Z^{Q_0}$.
	
	A \emph{tube}\index{tube} $\mathcal T$ is a category isomorphic to the mesh category of the stable translation quiver $\Z\A_\infty/(\tau^p)$ where $p \geq 1$ is an integer called \emph{rank}\index{rank} of $\mathcal T$. A tube will be called \emph{homogeneous}\index{homogeneous tube} if it has rank 1 and it will be called \emph{exceptional}\index{exceptional tube} if it is not homogeneous.
	Abusing the terminology, a quiver of the form $\Z\A_\infty/(\tau^p)$ will also be called a tube. It is known that the regular components of the AR quiver of $Q$ form a $\P^1$-family of tubes. At most three of the tubes are exceptional. The \emph{tubular type}\index{tubular type} of $Q$ is the set of ranks of exceptional tubes. For any $\lambda \in \P^1$, we denote by $\mathcal T_\lambda(Q)$ the corresponding tube in the AR-quiver of $Q$. We denote by $\P^1_0(Q)$ the set of $\lambda \in \P^1$ such that $\mathcal T_\lambda$ is homogeneous. If there is no possible confusion, the reference to $Q$ will be omitted.
	
	For a Dynkin quiver $Q$, one can define an ordering on the indecomposable objects in $kQ$-mod with respect to the existence of a morphism from an object to another. For affine quivers such an ordering does not exists. Nevertheless, there is still a result of ordering with respect to the components.
	If $P \in \mathcal P(Q)$, $I \in \mathcal I(Q)$ and $R \in \mathcal R(Q)$, then
	$$\Hom_{kQ}(R,P) \simeq \Hom_{kQ}(I,R) \simeq \Hom_{kQ}(I,P)=0,$$
	and
	$$\Ext^1_{kQ}(P,R)\simeq \Ext^1_{kQ}(R,I)\simeq \Ext^1_{kQ}(P,I)=0.$$
	If $M$ and $N$ are two regular indecomposable modules in different tubes, then 
	$$\Hom_{kQ}(M,N)=0 \textrm{ and } \Ext^1_{kQ}(M,N)=0.$$
	
	Every tube $\mathcal T$ is a uniserial abelian category closed under extensions, kernels and cokernels. A regular module $M$ in a tube $\mathcal T$ is called \emph{quasi-simple}\index{quasi-simple} if it does not contain any proper submodule in $\mathcal T$. In particular, if $M$ is quasi-simple, then it contains no proper regular submodule (these modules are sometimes called \emph{regular simple} in the literature). The \emph{quasi-composition series} of an indecomposable regular module $M$ is a sequence 
	$$0 =M_0 \subset M_1 \subset \cdots \subset M_r =M$$
	such that each $M_i$ is regular and $M_i/M_{i-1}$ is a quasi-simple module. Such a quasi-composition series is unique. The \emph{quasi-length}\index{quasi-length} $\reglen(M)$ of $M$ is the integer $r$, the \emph{quasi-socle}\index{quasi-socle} $\regsoc(M)$ of $M$ is $M_1$ and the \emph{quasi-radical}\index{quasi-radical} $\regrad(M)$ of $M$ is $M_{r-1}$.
	
	Note that the $\tau$-orbit of any projective or injective module is infinite. The situation is pictured in figure \ref{figure:clustercategory}.
	
	\begin{figure}[htb]
		\begin{picture}(400,300)(-50,0)
		
			\qbezier(80,150)(150,200)(220,120)
			\qbezier(220,120)(230,100)(200,80)
			\qbezier(80,150)(0,100)(100,50)
			\qbezier(80,120)(60,100)(100,80)
			\qbezier(80,120)(150,200)(220,150)
			\qbezier(220,150)(280,100)(200,50)
			
			\put(103,30){\line(0,1){70}}
			\put(113,30){\line(0,1){}}
			\put(108,30){\oval(10,3)[b]}
			\put(108,100){\oval(10,3)}
	
			\multiput(116,30)(6,0){4}{\multiput(0,0)(0,20){4}{\vector(0,1){20}}}
			\multiput(118,50)(6,0){4}{\multiput(0,0)(0,20){4}{\vector(0,-1){20}}}		
			
			\put(140,30){\line(0,1){70}}
			\put(150,30){\line(0,1){70}}
			\put(145,30){\oval(10,3)[b]}
			\put(145,100){\oval(10,3)}
	
			\multiput(153,30)(6,0){4}{\multiput(0,0)(0,20){4}{\vector(0,1){20}}}
			\multiput(155,50)(6,0){4}{\multiput(0,0)(0,20){4}{\vector(0,-1){20}}}
			
			\put(197,30){\line(0,1){70}}
			\put(177,30){\line(0,1){70}}
			\put(187,30){\oval(20,4)[b]}
			\put(187,100){\oval(20,4)}
			
			\put(40,210){\line(1,0){200}}
			\put(40,210){\line(0,-1){3}}
			\put(240,210){\line(0,-1){3}}
			\put(140,215){$\mathcal PI$}
	
			\put(40,190){\line(1,0){88}}
			\put(40,190){\line(0,-1){3}}
			\put(128,190){\line(0,-1){3}}
			\put(80,195){$\mathcal P$}
			\put(130,190){\line(1,0){30}}
			\put(130,190){\line(0,-1){3}}
			\put(160,190){\line(0,-1){3}}
			\put(133,195){$\ens{P_i[1]}$}
			\put(162,190){\line(1,0){78}}
			\put(240,190){\line(0,-1){3}}
			\put(162,190){\line(0,-1){3}}
			\put(200,195){$\mathcal I$}
			
			\put(100,20){\line(1,0){100}}
			\put(100,20){\line(0,1){3}}
			\put(200,20){\line(0,1){3}}
			\put(140,10){$\mathcal R$}
			
			\put(70,50){\vector(-1,1){10}}
			\put(60,45){$\tau$}
			
			\put(230,50){\vector(1,1){10}}
			\put(240,45){$\tau^{-1}$}
		\end{picture}
		\caption{The cluster category of an affine quiver}\label{figure:clustercategory}
	\end{figure}
\end{subsection}

\begin{subsection}{The Caldero-Chapoton map}
	We now return to the case where $Q$ is an arbitrary acyclic quiver. We denote by $\mathcal A(Q)$ the coefficient free cluster algebra with initial seed $(Q,\textbf u)$ where $\textbf u=(u_i, i \in Q_0)$ is a set of indeterminates over $\Q$. It has already been mentioned that the cluster category is a fruitful categorification for the cluster algebra $\mathcal A(Q)$. A central object for the `decategorification' in this framework is the so-called Caldero-Chapoton map introduced in \cite{CC}.
	
	Fix $M$ a $kQ$-module and $\textbf e$ a dimension vector, the \emph{quiver grassmannian of $M$ of dimension $\textbf e$}\index{quiver grassmannian} is the set
	$$\Gr_{\textbf e}(M)=\ens{N \in \rep(Q,\textbf e) \ : \ N \textrm{ is a subrepresentation of } M}.$$
	This is a closed subset of the standard vector space grassmannian and it is thus a projective variety. We can in particular define its Euler-Poincar\'e characteristic $\chi(\Gr_{\textbf e}(M))$ with respect to the \'etale cohomology with compact support. These varieties are of great interest, it is for example proved in \cite{CR} that $\chi(\Gr_{\textbf e}(M))>0$ if $\Gr_{\textbf e}(M)$ is not empty which is a very important step towards the proof of the \emph{positivity conjecture} of Fomin and Zelevinsky.
	
	\begin{defi}
		The \emph{Caldero-Chapoton map}\index{Caldero-Chapoton map} of an acyclic quiver $Q$ is the map $X_?^Q$ defined from the set of objects in $\mathcal C_Q$ to the ring of Laurent polynomials in the indeterminates $\ens{u_i, i \in Q_0}$ by:
		\begin{enumerate}
			\item If $M$ is an indecomposable $kQ$-module, then
				\begin{equation}\label{CCmap}
					X^Q_M = \sum_{\textbf v} \chi(\Gr_{\textbf v}(M)) \prod_{i \in Q_0} u_i^{-<\textbf v, \alpha_i>-\<\alpha_i, \ddim M - \textbf v\>}
				\end{equation}
			\item If $M=P_i[1]$ is the shift of the projective module associated to $i \in Q_0$, then $$X^Q_M=u_i$$
			\item For any two objects $M,N$ of $\mathcal C_Q$, 
				$$X_{M \oplus N}^Q=X_M^QX_N^Q$$
		\end{enumerate}
		
		For any object $M$ in the cluster category $\mathcal C_Q$, $X_M^Q$ will be called the \emph{generalized variable}\index{generalized variable} associated to $M$. If there is no possible confusion, we will omit the reference to $Q$.
	\end{defi}		
	Note that the Caldero-Chapoton map satisfies equality (\ref{CCmap}) for every $kQ$-module $M$ (see \cite{CC}). If there is no possible confusion, we will simply write $X_M$ for $X^Q_M$.
	
	We extend the dimension vector to the objects in the cluster category by setting for any $i \in Q_0$, $\ddim P_i[1]=-\alpha_i$ and if $M=\bigoplus_i M_i$ is a decomposition into indecomposable objects, we set 
	$$\ddim M=\sum_i \ddim M_i.$$		
	Considering a Laurent polynomial $F=P(\textbf u)/\prod_{i \in Q_0} u_i^{d_i}$ such that $P(\textbf u)$ is not divisible by any $u_i$, we define the \emph{denominator vector}\index{denominator vector} $\delta(F)$ of $F$ as the tuple $\textbf d=(d_i)_{i \in Q_0}$. The following theorem will be referred to as the \emph{denominators theorem}\index{denominators theorem}:		
	\begin{theorem}[\cite{CK1}]\label{theorem:denominators}
		Fix $Q$ an acyclic quiver. Then for any object $M$ in $\mathcal C_Q$, the denominator vector of $X^Q_M$ is $\ddim M$.
	\end{theorem}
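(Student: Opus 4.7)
The plan is to verify the theorem first for $kQ$-modules by direct computation with the Caldero-Chapoton formula, and then to deduce the case of a general object of $\mathcal C_Q$ by the multiplicativity property $X_{M \oplus N} = X_M X_N$.

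First I would fix a vertex $i \in Q_0$ and rewrite the $u_i$-exponent in the summand of $X_M$ indexed by $\textbf v$. Using $\<\alpha_j, \alpha_i\> = \delta_{ij} - \#\{j \to i\}$, valid for the Euler form of an acyclic quiver without loops, a short computation yields
$$-\<\textbf v, \alpha_i\> - \<\alpha_i, \ddim M - \textbf v\> = -d_i + \sum_{j \to i} v_j + \sum_{i \to j}(d_j - v_j),$$
where $d_j := \dim M(j)$. Since $0 \leq v_j \leq d_j$ whenever $\Gr_\textbf v(M) \neq \emptyset$, both sums are non-negative, so the exponent is bounded below by $-d_i$, with equality exactly when $v_j = 0$ at every in-neighbour $j$ of $i$ and $v_j = d_j$ at every out-neighbour $j$ of $i$. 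This already shows that $\prod_j u_j^{d_j} \cdot X_M$ is a genuine polynomial and that the denominator vector of $X_M$ is dominated componentwise by $\ddim M$.

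Next I would realise this bound by exhibiting an explicit subrepresentation $N_i \subseteq M$ achieving equality. The natural candidate is the subrepresentation generated by $\bigoplus_{i \to j} M(j)$: by construction $N_i(j) = M(j)$ for every out-neighbour $j$ of $i$, and the acyclicity of $Q$ forbids any path starting at an out-neighbour of $i$ from reaching an in-neighbour of $i$ (such a path would close into a cycle through $i$), whence $N_i(j) = 0$ for every in-neighbour $j$ of $i$. Setting $\textbf v^* := \ddim N_i$, the monomial indexed by $\textbf v^*$ realises $u_i$-exponent exactly $-d_i$, and since $\Gr_{\textbf v^*}(M)$ contains $N_i$ it is non-empty, so by the Cerulli--Reineke positivity result cited in the introduction the coefficient $\chi(\Gr_{\textbf v^*}(M))$ is strictly positive. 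All other contributions to the coefficient of $u_i^{-d_i}$ in $X_M$ come from non-negative Euler characteristics, so no cancellation can occur, and the $u_i$-Laurent degree of $X_M$ is exactly $-d_i$. This establishes the theorem for every $kQ$-module.

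Finally, for a general object $M = H^0(M) \oplus P_M[1]$ with $P_M \simeq \bigoplus_i P_i^{m_i}$, multiplicativity yields $X_M = X_{H^0(M)} \prod_i u_i^{m_i}$, which shifts each Laurent degree by $-m_i$; no cancellation occurs because the leading coefficient of $X_{H^0(M)}$ in each $u_i$ is strictly positive, so the denominator vector of $X_M$ is $\ddim H^0(M) - \sum_i m_i \alpha_i = \ddim M$ under the convention $\ddim P_i[1] = -\alpha_i$. The main obstacle is the construction of $N_i$ and the verification that its support avoids the in-neighbours of $i$: this is precisely where acyclicity enters, since an oriented cycle through $i$ would allow the subrepresentation generated by the out-neighbourhood to leak back into in-neighbours; once this step is secured, the remainder is routine bookkeeping between the Euler form, the Caldero-Chapoton formula, and Cerulli--Reineke positivity.
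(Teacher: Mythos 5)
Since the paper states this theorem with only a citation to \cite{CK1} and gives no proof, there is no internal argument to compare against; your proposal supplies the standard direct argument, and it is essentially correct. The rewriting of the $u_i$-exponent as $-d_i + \sum_{j \to i} v_j + \sum_{i \to j}(d_j - v_j)$ is accurate for the hereditary Euler form, the lower bound $\geq -d_i$ follows, and the construction of $N_i$ as the submodule generated by $\bigoplus_{i\to j}M(j)$ is a clean use of acyclicity: any path from an out-neighbour of $i$ to an in-neighbour of $i$ would close an oriented cycle through $i$, so $N_i$ vanishes on all in-neighbours and $\ddim N_i$ realises the bound. The reduction to the module case via $X_M = X_{H^0(M)}\prod_i u_i^{m_i}$ and the convention $\ddim P_i[1]=-\alpha_i$ is also handled correctly.

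Two caveats. First, a minor one: the positivity reference is to Caldero--Reineke, not Cerulli--Reineke. Second, and more substantively: your argument genuinely needs $\chi(\Gr_{\textbf e}(M)) > 0$ for nonempty Grassmannians of an \emph{arbitrary} module $M$ (and $\geq 0$ for all of them) to rule out cancellation, because distinct subrepresentation dimension vectors $\textbf v,\textbf v'$ can produce the same Laurent monomial whenever $\textbf v - \textbf v'$ lies in the radical of the antisymmetrised Euler form (e.g.\ $\textbf v-\textbf v'\in\Z\delta$ in the affine case). The paper states the positivity of $\chi(\Gr_{\textbf e}(M))$ for nonempty Grassmannians as a general fact and attributes it to \cite{CR}, so your invocation is consistent with the framework here; but \cite{CR} actually establishes this only for exceptional (rigid) modules, and the general case requires later work. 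The original proofs of the denominator theorem for arbitrary acyclic quivers (Caldero--Keller, Buan--Marsh--Reiten, Hubery) deliberately avoid this dependence by other methods. So your argument is correct modulo that input, and is the natural ``modern'' proof, but it is worth being aware that it leans on a stronger result than the one \cite{CR} literally proves.
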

	
	The central role of the Caldero-Chapoton map in the `decategorification' is that it allows to realize cluster variables as generalized variables associated to indecomposable rigid objects. Namely, the result is:		
	\begin{theorem}[\cite{CK2}]\label{theorem:correspondanceCK2}
		Let $Q$ be an acyclic quiver. Then 
		$$\Cl(Q)=\ens{M \in \Ob(\mathcal C_Q) \ : \ M \textrm{ is indecomposable and rigid}}.$$
		where $\Cl(Q)$ denotes the set of cluster variables in $\mathcal A(Q)$.
	\end{theorem}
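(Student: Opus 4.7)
The plan is to show that the Caldero-Chapoton map induces a bijection between (isomorphism classes of) indecomposable rigid objects of $\mathcal C_Q$ and the set $\Cl(Q)$ of cluster variables of $\mathcal A(Q)$. Both inclusions are established by induction on the ``mutation distance'' from the initial seed, with the base case furnished by the initial cluster itself: the shifted projectives $P_i[1]$ are indecomposable and rigid in $\mathcal C_Q$, and by definition $X^Q_{P_i[1]} = u_i$ for all $i \in Q_0$.

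The central technical tool is the Caldero-Keller multiplication formula: for any two objects $M, N$ of $\mathcal C_Q$ with $\dim \Ext^1_{\mathcal C_Q}(M,N) = 1$, one has
\begin{equation*}
    X^Q_M \cdot X^Q_N = X^Q_E + X^Q_{E'},
\end{equation*}
where $E$ and $E'$ are the middle terms of the two non-split triangles connecting $M$ and $N$. This formula exactly mirrors an exchange relation in $\mathcal A(Q)$, and the combinatorial matching goes as follows. If $T = T_1 \oplus \cdots \oplus T_n$ is a basic cluster-tilting object of $\mathcal C_Q$ whose indecomposable rigid summands already satisfy that the $X^Q_{T_i}$ form a cluster of $\mathcal A(Q)$, then the mutation $\mu_k(T)$ produces a new indecomposable rigid summand $T_k^*$ fitting into exchange triangles with $T_k$ such that $\dim \Ext^1_{\mathcal C_Q}(T_k, T_k^*) = 1$. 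The multiplication formula applied to the pair $(T_k, T_k^*)$ reproduces the Fomin-Zelevinsky exchange relation between the two neighbouring clusters, and therefore identifies $X^Q_{T_k^*}$ with the mutated cluster variable. Iterating, every cluster variable is realised as $X^Q_M$ for some indecomposable rigid $M$; conversely, using a connectivity statement for the exchange graph of cluster-tilting objects, every indecomposable rigid object of $\mathcal C_Q$ is a summand of some cluster-tilting object reachable from the initial one by iterated mutation.

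Injectivity of $M \mapsto X^Q_M$ on indecomposable rigid objects follows from the denominators theorem (Theorem \ref{theorem:denominators}): the dimension vector of $M$ is recovered from the denominator of $X^Q_M$, and by Kac's theorem together with rigidity, such an $M$ is determined up to isomorphism by its dimension vector (an almost positive real Schur root, carrying a unique indecomposable). The main obstacle is proving the multiplication formula itself: it reduces to a delicate computation of Euler characteristics of fibrations between quiver grassmannians associated to $E$, $E'$ and to $M \oplus N$, exploiting the $2$-Calabi-Yau property of $\mathcal C_Q$ and the splitting $\Ext^1_{\mathcal C_Q}(M,N) \simeq \Ext^1_{kQ}(M,N) \oplus D\Ext^1_{kQ}(N,M)$. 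A secondary but more combinatorial difficulty is the connectivity of the cluster-tilting exchange graph of $\mathcal C_Q$, which is what ensures that the inductive procedure exhausts all indecomposable rigid objects.
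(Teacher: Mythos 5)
The paper records this theorem as a cited result of Caldero and Keller and gives no proof of its own, so there is no internal argument to compare your sketch against. That said, your outline is a faithful high-level account of the actual strategy in \cite{CK2}: first establish the one-dimensional multiplication formula by comparing Euler characteristics of quiver grassmannians, then use it to show that mutation of cluster-tilting objects in $\mathcal C_Q$ reproduces the Fomin--Zelevinsky exchange relation through the map $M \mapsto X^Q_M$ (with the $P_i[1]$ as base case), and finally invoke connectedness of the tilting graph (from \cite{BMRRT,HU}) to reach every indecomposable rigid object by iterated mutation. Your injectivity remark via the denominators theorem together with Kac's theorem is correct, though strictly speaking the stated identity is only a set equality and does not require it; if one does want injectivity, one should note explicitly that an indecomposable rigid module over a hereditary algebra is exceptional, so its dimension vector is a real Schur root and carries a unique indecomposable, while $\ddim P_i[1]=-\alpha_i$ separates the shifted projectives. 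No genuine gap at the level of detail you have chosen: the two points you single out as obstacles --- the multiplication formula and connectivity of the tilting exchange graph --- are precisely where the substantive work lies in \cite{CK2} and \cite{BMRRT}.
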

	
	In \cite{CK1}, it turned out that using the properties of the Caldero-Chapoton map, one is able to prove that the set of cluster monomials is a $\Z$-basis of the $\Z$-module $\mathcal A(Q)$ when $Q$ is a Dynkin quiver. According to theorem \ref{theorem:correspondanceCK2}, an equivalent way to state this result is:		
	\begin{theorem}[\cite{CK1}]\label{theorem:baseCK1}
		Let $Q$ be a quiver of Dynkin type. Then the set 
		$$\ens{X_M \ : \ M \textrm{ is rigid in } \mathcal C_Q}$$
		is a $\Z$-basis of the $\Z$-module $\mathcal A(Q)$.
	\end{theorem}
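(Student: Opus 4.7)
The plan is to show successively that $\ens{X_M \ : \ M \text{ rigid in } \mathcal C_Q}$ coincides with the set of cluster monomials of $\mathcal A(Q)$, that it spans $\mathcal A(Q)$ as a $\Z$-module, and that it is $\Z$-linearly independent. For the identification, Theorem \ref{theorem:correspondanceCK2} already provides the bijection between indecomposable rigid objects of $\mathcal C_Q$ and cluster variables. An arbitrary rigid $M$ decomposes as $M=\bigoplus_i M_i$ with the $M_i$ indecomposable rigid and $\Ext^1_{\mathcal C_Q}(M_i,M_j)=0$ for all $i,j$, so by the cluster-tilting theory of \cite{BMRRT} the family $\ens{M_i}$ extends to a basic cluster-tilting object, and the corresponding $X_{M_i}$ all lie in a common cluster of $\mathcal A(Q)$. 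Multiplicativity of $X_?$ on direct sums then identifies $X_M=\prod_i X_{M_i}$ with a cluster monomial, and conversely every cluster monomial arises in this way.

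For the spanning claim I would use that the Dynkin case is of finite representation type: $\mathcal C_Q$ has only finitely many indecomposable objects, and in particular $\mathcal A(Q)$ has only finitely many cluster variables. The Caldero-Chapoton multiplication formula of \cite{CC} expresses, whenever $\dim\Ext^1_{\mathcal C_Q}(M,N)=1$, the product $X_M X_N$ as a sum $X_E+X_{E'}$ of generalized variables of the middle terms of the two non-split triangles between $M$ and $N$. Starting from any monomial in cluster variables, I would iteratively apply this formula to each pair of factors $X_M X_N$ with $\Ext^1_{\mathcal C_Q}(M,N)\neq 0$, terminating by a well-founded induction (e.g. on the total dimension of incompatible pairs still present). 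Since $\mathcal A(Q)$ is generated as a $\Z$-algebra by its cluster variables, this process rewrites every element of $\mathcal A(Q)$ as a $\Z$-linear combination of elements $X_M$ with $M$ rigid.

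For linear independence I would invoke the denominators theorem \ref{theorem:denominators}, which asserts that $X_M$ has denominator vector $\ddim M \in \Z^{Q_0}$. The crucial structural input is the injectivity of the map $M\mapsto \ddim M$ on isomorphism classes of rigid objects of $\mathcal C_Q$ in Dynkin type, which is the content of Fomin-Zelevinsky's parametrization of cluster monomials by compatible tuples of almost positive roots. Granted this injectivity, a hypothetical non-trivial relation $\sum_M a_M X_M=0$ admits some $M$ whose denominator vector is maximal for a fixed total order on $\Z^{Q_0}$ refining the componentwise order; the corresponding Laurent monomial $\pm a_M / \prod_i u_i^{(\ddim M)_i}$ then cannot be cancelled by any other term of the sum, forcing $a_M=0$ and a contradiction. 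The main obstacle in this outline is precisely the denominator injectivity, a non-trivial finite-type fact; once it is granted, both the spanning reduction via multiplication formulas and the leading-term argument for independence are essentially formal.
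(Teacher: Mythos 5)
The paper does not actually prove this theorem; it quotes it from \cite{CK1}, so there is no internal proof to compare against. Your first two steps are sound in outline: the identification of rigid objects with cluster monomials via Theorem \ref{theorem:correspondanceCK2} and cluster-tilting theory is fine, and the spanning reduction by iterating the one-dimensional multiplication formula (which is Theorem \ref{theorem:onedimmult} from \cite{CK2}, not \cite{CC}) does terminate, the precise decreasing invariant being $\dim\Ext^1_{\mathcal C_Q}(Y,Y)$ as in Lemma \ref{lem:dimautoext}.

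The linear independence step, however, contains a genuine gap. You claim the Laurent monomial $\prod_i u_i^{-(\ddim M)_i}$ occurs in $X_M$ and can serve as a leading term with respect to a total order refining the componentwise order on denominator vectors. But the denominator vector records, for each $i$ separately, the most negative $u_i$-degree occurring in $X_M$; nothing forces a single monomial to achieve all these minima simultaneously. Already for $Q = (1 \to 2 \to 3)$ and $M = P_1$ with $\ddim P_1 = (1,1,1)$ one computes
$$X_{P_1} = \frac{1}{u_3} + \frac{1}{u_2 u_3} + \frac{1}{u_1 u_2} + \frac{1}{u_1},$$
so $\delta(X_{P_1}) = (1,1,1)$ yet $u_1^{-1}u_2^{-1}u_3^{-1}$ does not appear, and your purported leading term is absent. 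The argument in \cite{CK1} (mirrored in Propositions \ref{prop:supportconeCK1}, \ref{prop:graduationCK1} and Lemma \ref{lem:lemCK1} of this paper) instead uses a genuinely occurring distinguished monomial: the vertex $\lambda_M = (-\<\alpha_i,\ddim H^0(M)\>+\<\ddim P_M, \alpha_i\>)_i$ of the cone $C_{\lambda_M}$ containing $\supp(X_M)$, whose coefficient in $X_M$ is $1$, together with a linear form $\epsilon$ with $\epsilon(B\alpha_i)<0$ which makes $\lambda_M$ the $\epsilon$-maximal point of the support. The injectivity of $M \mapsto \ddim M$ on rigid objects, which you rightly single out, then gives injectivity of $M \mapsto \lambda_M$, and the leading-term argument is carried out in the grading $\epsilon$, not against the denominator vectors.
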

\end{subsection}

\begin{subsection}{Main results}
	The aim of this article is to give a generalization of theorem \ref{theorem:baseCK1} when $Q$ is a quiver of affine type. It already appeared in \cite{shermanz, CZ} that if $Q$ is not of Dynkin type, then in general, the cluster monomials do not form a generating family of the $\Z$-module $\mathcal A(Q)$. This should be a special case of a general phenomenon for quivers of infinite representation type (see also \cite{Leclerc:imaginary_vectors}).
	
	For our purpose, we will define a generalization of the notion of cluster monomial. By theorem \ref{theorem:correspondanceCK2}, cluster monomials correspond to generalized variables associated to rigid objects. It is known that the $G_{\textbf d}$-orbit of a rigid object $M \in \rep(Q,\textbf d)$ is open dense in $\rep(Q,\textbf d)$. As the Caldero-Chapoton map on $\rep(Q,\textbf d)$ is $G_{\textbf d}$ invariant, cluster monomials can be viewed as values of the Caldero-Chapoton map on a dense open subset of $\rep(Q,\textbf d)$. The following result, which is a corollary of lemma \ref{lem:Ude}, will generalize the notion of cluster monomial with respect to this property.
	\begin{corolUd}
		Fix $Q$ an acyclic quiver and $\textbf d$ a dimension vector. Then there exists a dense open subset $U_{\textbf d} \subset \rep(Q,\textbf d)$ such that the Caldero-Chapoton map is constant over $U_{\textbf d}$. Moreover, if $U'_{\textbf d}$ is another such open subset, the values of the Caldero-Chapoton map are the same on $U_{\textbf d}$ and $U'_{\textbf d}$.
	\end{corolUd}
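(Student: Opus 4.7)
The plan is to reduce the existence and uniqueness of $U_{\textbf d}$ to the constructibility of $X_?^Q$ as a function on the irreducible variety $\rep(Q,\textbf d)$, and then to exploit the elementary fact that a finite intersection of dense open subsets of an irreducible variety is still dense, open and non-empty.

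First I would observe that for $M \in \rep(Q,\textbf d)$ the formula (\ref{CCmap}) expresses the Caldero-Chapoton value as a finite $\Z[\textbf u^{\pm 1}]$-linear combination
$$X_M^Q = \sum_{0 \leq \textbf v \leq \textbf d} \chi(\Gr_{\textbf v}(M))\, \textbf u^{\,\textbf m(\textbf v,\textbf d)},$$
in which the Laurent monomial $\textbf u^{\,\textbf m(\textbf v,\textbf d)}$ depends only on $\textbf v$ and the fixed dimension vector $\textbf d$, not on the particular module $M$. Hence on $\rep(Q,\textbf d)$ the map $M \mapsto X_M^Q$ factors through the finite collection of integers $\chi(\Gr_{\textbf v}(M))$ for $\textbf v \leq \textbf d$. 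By Lemma \ref{lem:Ude}, each of these Euler characteristics is a constructible function of $M$, so for every such $\textbf v$ there exists a dense open subset $U_{\textbf d,\textbf v} \subset \rep(Q,\textbf d)$ on which $\chi(\Gr_{\textbf v}(-))$ takes a constant generic value.

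Since $\rep(Q,\textbf d)$ is irreducible (as recalled in the excerpt) and there are only finitely many dimension vectors $\textbf v$ with $0 \leq \textbf v \leq \textbf d$, the finite intersection $U_{\textbf d}:=\bigcap_{\textbf v} U_{\textbf d,\textbf v}$ is again a dense open subset, on which every $\chi(\Gr_{\textbf v}(-))$, and therefore $X_?^Q$ itself, is constant. For the uniqueness part, if $U'_{\textbf d}$ is any other dense open subset on which $X_?^Q$ is constant, then $U_{\textbf d}\cap U'_{\textbf d}$ is a non-empty open subset of $\rep(Q,\textbf d)$ (by irreducibility), and evaluating at any point of this intersection forces the two constant values to coincide.

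The only non-trivial input, and what I expect to be the main technical obstacle packaged into Lemma \ref{lem:Ude}, is the constructibility of the quiver-grassmannian Euler characteristics in $M$; this is a standard consequence of assembling the $\Gr_{\textbf v}(M)$ into a relative quiver grassmannian over $\rep(Q,\textbf d)$, a projective morphism to whose fibres one applies the constructibility of $\chi$ in algebraic families. Once that tool is available, the rest of the argument is purely a density/irreducibility manipulation as above.
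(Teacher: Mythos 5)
Your argument matches the paper's proof almost exactly: both take the finite intersection $U_{\textbf d} = \bigcap_{\textbf e \leq \textbf d} U_{\textbf d,\textbf e}$ of the dense open sets furnished by Lemma \ref{lem:Ude}, observe that constancy of each $\chi(\Gr_{\textbf e}(-))$ forces constancy of $X_M^Q$ via the defining formula, and invoke irreducibility of $\rep(Q,\textbf d)$ both for density of the intersection and for the uniqueness claim. The only cosmetic difference is your framing of Lemma \ref{lem:Ude} in terms of constructibility; the substance is the same.
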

	
	If $\textbf d \in \N^{Q_0}$, we write $X_{\textbf d}$ the value of the Caldero-Chapoton map on $U_{\textbf d}$. For $\textbf d \in \Z^{Q_0}$, we write
	$$[\textbf d]_+=\left(\max(d_i,0)\right)_{i \in Q_0}$$
	and we set
	$$X_{\textbf d}=X_{[\textbf d]_+}.\prod_{d_i <0} u_i^{-d_i}$$
	the \emph{generic variable of dimension $\textbf d$}\index{generic variable}. The following lemma states that generic variables generalize cluster monomials:
	
	\begin{lemclustermonomial}
		Let $Q$ be an acyclic quiver, fix $x$ a cluster monomial with denominator vector $\delta(x)$. Then
		$$x=X_{\delta(x)}.$$
		Equivalently, if $M$ is a rigid object in $\mathcal C_Q$, we have
		$$X_M=X_{\ddim M}.$$
	\end{lemclustermonomial}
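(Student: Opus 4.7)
The plan is to reduce the statement to the more familiar fact that the orbit of a rigid module under the base-change group is open dense, and then to apply Corollary \ref{corol:Ud}. First, by the denominators theorem (Theorem \ref{theorem:denominators}), the denominator vector $\delta(X_M)$ of a cluster monomial $x = X_M$ equals $\ddim M$, so the two formulations are equivalent; it suffices to prove $X_M = X_{\ddim M}$ for any rigid object $M$ in $\mathcal C_Q$.

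Next I would write $M = M_0 \oplus P_M[1]$ with $M_0 = H^0(M)$ and $P_M$ a projective $kQ$-module, and set $n_i$ equal to the multiplicity of $P_i$ as a summand of $P_M$. Rigidity of $M$ in $\mathcal C_Q$ forces on the one hand $\Ext^1_{kQ}(M_0,M_0)=0$, so that $M_0$ is a rigid $kQ$-module, and on the other hand, via the identification of $\Ext^1_{\mathcal C_Q}(P_M[1],M_0)$ with $\Hom_{kQ}(P_M,M_0)$, the vanishing $M_0(i)=0$ for every $i$ with $n_i>0$. Combined, these two facts show that $(\ddim M)_i = \dim M_0(i)$ whenever $n_i=0$, while $(\ddim M)_i = -n_i \leq 0$ whenever $n_i>0$; in particular
\[
[\ddim M]_+ = \ddim M_0, \qquad \prod_{d_i<0} u_i^{-d_i} = \prod_{i\in Q_0} u_i^{n_i} = X_{P_M[1]}^Q.
\]

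The third step is to identify $X_{M_0}$ with $X_{\ddim M_0}$. A classical dimension count, applied to the action of $G_{\textbf d}$ on $\rep(Q,\textbf d)$ for $\textbf d = \ddim M_0$, gives
\[
\codim_{\rep(Q,\textbf d)} G_{\textbf d}\cdot M_0 = \dim \Ext^1_{kQ}(M_0,M_0),
\]
which vanishes by rigidity of $M_0$; hence the orbit $G_{\textbf d}\cdot M_0$ is open in the irreducible variety $\rep(Q,\textbf d)$, and therefore dense. Because the Caldero-Chapoton map depends only on the isoclass of a module, it is constant equal to $X_{M_0}$ on this dense open orbit. Corollary \ref{corol:Ud} then lets us take this orbit as the open set $U_{\textbf d}$, yielding $X_{\ddim M_0} = X_{M_0}$.

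Finally, I would assemble the pieces using the multiplicativity axiom of $X_?^Q$ on direct sums:
\[
X_{\ddim M} = X_{[\ddim M]_+} \cdot \prod_{d_i<0} u_i^{-d_i} = X_{\ddim M_0} \cdot X_{P_M[1]} = X_{M_0}\cdot X_{P_M[1]} = X_{M_0 \oplus P_M[1]} = X_M.
\]
The main obstacle is the middle step: one must check carefully that rigidity of $M$ in the cluster category (not merely in $kQ\textrm{-mod}$) is strong enough to force both that $M_0$ is module-rigid \emph{and} that the supports of $M_0$ and $P_M$ are disjoint at vertex level, so that $[\ddim M]_+$ really coincides with $\ddim M_0$. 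Everything else is a direct application of Voigt-type orbit dimension formulas together with Corollary \ref{corol:Ud}.
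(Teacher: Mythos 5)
Your argument is correct and follows essentially the same path as the paper's proof: decompose $M = H^0(M) \oplus P_M[1]$, use rigidity in $\mathcal C_Q$ to deduce that $H^0(M)$ is a rigid $kQ$-module with support disjoint from that of $P_M$ (so $[\ddim M]_+ = \ddim H^0(M)$), then invoke openness of the orbit of a rigid module to identify $X_{H^0(M)}$ with $X_{[\ddim M]_+}$, and finally assemble by multiplicativity. The only cosmetic difference is that the paper routes the last step through Lemma \ref{lem:multiplicativity}, whereas you read the conclusion directly off the definition of $X_{\textbf d}$; both are equally valid here since $[\ddim M]_+$ and $[\ddim M]_-$ have disjoint supports.
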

	
	We set
	$$\mathcal B'(Q)=\ens{X_{\textbf d} \ : \ \textbf d \in \Z^{Q_0}}$$
	to be the set of all generic variables. We can explicitly describe the generic variables for an affine quiver. If $\mathcal E$ is a set of objects in $\mathcal C_Q$, we denote by $\add \mathcal E$ its additive closure, that is, the full subcategory of $\mathcal C_Q$ whose objects are direct sums of directs summands of objects in $\mathcal E$. The result is the following:
	\begin{propexplicitbase}
		Let $Q$ be an affine quiver and $\textbf d \in \Z^{Q_0}$, denote by $\mathcal E$ a set of representatives of isoclasses of regular modules $M$. Then
		$$\mathcal B'(Q)=\ens{\textrm{cluster monomials}} \sqcup \ens{X_{M_\lambda^{\oplus n} \oplus E} \ : \ E \in \add \mathcal E \textrm{ is rigid}, n\geq 1}$$
		where $M_\lambda$ is any quasi-simple in an homogeneous tube.
	\end{propexplicitbase}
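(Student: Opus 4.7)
The plan is to compute $X_\textbf{d}$ explicitly for each $\textbf{d}\in\Z^{Q_0}$ by identifying the generic object associated to $\textbf{d}$ and exploiting the multiplicativity $X_{M\oplus N}=X_M X_N$.

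I begin with a reduction to $\textbf{d}\in\N^{Q_0}$. From the defining formula $X_\textbf{d}=X_{[\textbf{d}]_+}\prod_{d_i<0}u_i^{-d_i}$ and the identification $u_i=X_{P_i[1]}$, multiplicativity yields $X_\textbf{d}=X_{M\oplus\bigoplus_i P_i[1]^{\oplus(-d_i)}}$ where $M$ is the generic module in $\rep(Q,[\textbf{d}]_+)$. When some $d_i<0$, the $i$-th coordinate of $[\textbf{d}]_+$ vanishes, so $M(i)=0$; as $\delta$ is sincere, no homogeneous quasi-simple can occur in $M$, and the affine generic decomposition then forces $M$ itself to be rigid. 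Moreover, the 2-Calabi--Yau property gives $\Ext^1_{\mathcal C_Q}(M,P_i[1])\simeq DM(i)=0$, so the added shifts of projectives create no new self-extensions. Hence $X_\textbf{d}$ is a cluster monomial by Theorem~\ref{theorem:correspondanceCK2}.

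For $\textbf{d}\in\N^{Q_0}$, I invoke the generic decomposition for affine representations (recalled in Section~\ref{section:affine}) to write the generic module in $U_\textbf{d}$ as
$M\cong E\oplus M_{\lambda_1}\oplus\cdots\oplus M_{\lambda_n},$
where the $M_{\lambda_i}$ are pairwise non-isomorphic quasi-simples from homogeneous tubes (summing in dimension to $n\delta$) and $E$ is rigid. The delicate point is that for $n\geq 1$ the rigid summand $E$ must already lie in $\add\mathcal E$: any preprojective or preinjective indecomposable in $E$ could be combined with some $M_{\lambda_i}$ through a non-trivial extension into a larger rigid indecomposable, producing a decomposition whose $G_\textbf{d}$-orbit strictly contains that of $M$ and contradicting genericity. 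Multiplicativity, together with the $\lambda$-invariance $X_{M_\lambda}=X_{M_{\lambda'}}$ for $\lambda,\lambda'\in\P^1_0(Q)$ (which itself follows from Corollary~\ref{corol:Ud} applied to $\textbf{d}=\delta$), then gives
$X_\textbf{d}=X_E\prod_{i=1}^n X_{M_{\lambda_i}}=X_{E\oplus M_\lambda^{\oplus n}}$
for any fixed $\lambda\in\P^1_0$. The subcase $n=0$ produces a cluster monomial and the subcase $n\geq 1$ produces the second form of the proposition.

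Disjointness of the two sets follows from the denominators theorem (Theorem~\ref{theorem:denominators}): for $n\geq 1$, the element $X_{E\oplus M_\lambda^{\oplus n}}$ has denominator vector $n\delta+\ddim E$, which cannot be the dimension vector of any rigid object in $\mathcal C_Q$ since $M_\lambda^{\oplus n}$ has non-trivial self-extensions. The main obstacle of the argument lies in paragraph two, namely proving that whenever homogeneous quasi-simples appear in the generic decomposition no preprojective or preinjective summand coexists with them; this requires careful dimension estimates on $G_\textbf{d}$-orbits in $\rep(Q,\textbf{d})$ and a detailed analysis of the non-trivial extensions available in the cluster category between the transjective component and the homogeneous tubes.
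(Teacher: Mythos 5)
Your overall structure — reduce to $\textbf d\in\N^{Q_0}$, read off the generic module from the canonical decomposition, exploit multiplicativity, and appeal to the denominators theorem for disjointness — matches the paper's proof. However, the step you yourself flag as ``the main obstacle'' is a genuine gap, and the approach you sketch for it is not the right one. You do not need dimension estimates on $G_\textbf{d}$-orbits. The information is already built into the canonical decomposition: by Proposition~\ref{prop:Kacdcp}, the summands $M_{\lambda_j}$ and $M_i$ of a generic module satisfy $\Ext^1_{kQ}(U,V)=0$ for any two distinct indecomposable summands. If some $M_i$ were preprojective, say $M_i\simeq\tau^{-s}P_v$, then
$$\Ext^1_{kQ}(M_{\lambda_1},M_i)\simeq D\Hom_{kQ}(M_i,\tau M_{\lambda_1})\simeq D\Hom_{kQ}(P_v,\tau^{s+1}M_{\lambda_1})\simeq D\Hom_{kQ}(P_v,M_{\lambda_1})=DM_{\lambda_1}(v),$$
which is nonzero because $\dim M_{\lambda_1}(v)=\delta_v\geq 1$ ($\delta$ is sincere). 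This contradicts the $\Ext$-vanishing of the canonical decomposition directly; preinjective summands are excluded dually. Your heuristic (``combine a preprojective summand with some $M_{\lambda_i}$ into a larger rigid indecomposable and compare orbits'') is not a proof: the middle term of such an extension need not be rigid nor indecomposable, and orbit containment does not follow.

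There is a second, smaller inaccuracy. You assert that the $\lambda$-invariance $X_{M_\lambda}=X_{M_{\lambda'}}$ for all $\lambda,\lambda'\in\P^1_0$ ``follows from Corollary~\ref{corol:Ud} applied to $\textbf{d}=\delta$.'' It does not: Corollary~\ref{corol:Ud} only gives a dense open $G_\delta$-invariant $U_\delta\subset\rep(Q,\delta)$, and its complement can still contain finitely many orbits $\mathcal O_{M_\lambda}$. Constancy on all of $\P^1_0$ requires Lemma~\ref{lem:XMlambda}, whose proof uses Crawley-Boevey's one-point extension and the one-dimensional multiplication formula and is not a formal consequence of genericity. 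For the statement as phrased (``$M_\lambda$ is any quasi-simple in an homogeneous tube'') this lemma is an essential input. Finally, you only prove the inclusion $\mathcal B'(Q)\subset\{\textrm{cluster monomials}\}\sqcup\{X_{M_\lambda^{\oplus n}\oplus E}\}$; you should also observe that for each rigid $E\in\add\mathcal E$ and $n\geq 1$, setting $\textbf d=n\delta+\ddim E$ and using the absence of extensions between tubes shows $\delta^{\oplus n}\oplus\bigoplus_i\ddim E_i$ is the canonical decomposition of $\textbf d$, hence $X_\textbf{d}=X_{M_\lambda^{\oplus n}\oplus E}$ and the reverse inclusion holds.
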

			
	We will prove several results for cluster algebras of affine types. The following first result is expected whereas the second one is very surprising. It gives very nice relations between generalized variables associated to indecomposable modules having the same dimension vectors for a cluster algebra of affine type $\Aaffine$. For any $\lambda \in \P^1_0$, we denote by $M_\lambda$ the unique indecomposable module of dimension $\delta$ belonging to the homogeneous tube $\mathcal T_\lambda$ and by $M_\lambda^{(n)}$ the unique indecomposable regular module of quasi-length $n$ and quasi-socle $M_\lambda$. For any quasi-simple module $E$ in an exceptional tube, we write $M_E$ the unique indecomposable module of dimension $\delta$ with quasi-socle $E$. 		
	\begin{lemXMlambda}
		Let $Q$ be a quiver of affine type. Then for any $\lambda,\mu \in \P^1_0$ and any $n \geq 1$, we have 
		$$X_{M_\lambda^{(n)}}=X_{M_\mu^{(n)}}$$
	\end{lemXMlambda}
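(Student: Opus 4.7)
The plan is to prove this lemma by induction on $n$, handling the essential base case $n=1$ by constructibility of the Caldero--Chapoton map together with a geometric analysis of $\rep(Q,\delta)$, and the inductive step via a cluster-type multiplication formula.

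For $n=1$, the modules $\{M_\lambda\}_{\lambda \in \P^1_0}$ form an algebraic family of pairwise non-isomorphic indecomposable modules of dimension vector $\delta$, parametrized by the irreducible quasi-projective variety $\P^1_0 \subset \P^1$. Each $M_\lambda$ has one-dimensional endomorphism ring, so the $G_\delta$-orbit of each has the same (maximal) dimension, and the union of these orbits is a constructible open subset of $\rep(Q,\delta)$. I would then identify this union with the dense open stratum $U_\delta$ provided by Corollary \ref{corol:Ud}: no alternative module of dimension $\delta$, be it a sum of quasi-simples in exceptional tubes or a combination of preprojective and preinjective summands, has an endomorphism ring of dimension as small as one, so no such module lies in $U_\delta$. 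This yields $X_{M_\lambda}=X_\delta$ for every $\lambda \in \P^1_0$, proving the base case.

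For the inductive step, assume $X_{M_\lambda^{(k)}}$ is $\lambda$-independent for every $k<n$. The unique non-split short exact sequence
\[
0 \to M_\lambda^{(n-1)} \to M_\lambda^{(n)} \to M_\lambda \to 0
\]
inside the rank-one tube $\mathcal T_\lambda$ yields a non-zero class in $\Ext^1_{kQ}(M_\lambda,M_\lambda^{(n-1)})$, and by the 2-Calabi--Yau duality also a class in the cluster category. Applying the Caldero--Chapoton multiplication formula (Caldero--Keller, and its extension by Palu to non-rigid objects) produces an identity of the form
\[
X_{M_\lambda}\cdot X_{M_\lambda^{(n-1)}} = X_{M_\lambda^{(n)}} + X_{L_\lambda},
\]
where $L_\lambda$ corresponds to the second non-split triangle in $\mathcal C_Q$ and, using the orthogonality between distinct tubes and the uniseriality of $\mathcal T_\lambda$, decomposes as a direct sum of indecomposable regular modules in $\mathcal T_\lambda$ of quasi-length strictly less than $n$. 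By the inductive hypothesis every term except $X_{M_\lambda^{(n)}}$ is $\lambda$-independent, and the conclusion follows.

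The main obstacle is the base case: verifying rigorously that the common open stratum swept out by the $G_\delta$-orbits of the $M_\lambda$ for $\lambda \in \P^1_0$ coincides with $U_\delta$, i.e.\ that the generic module of dimension $\delta$ lies in a homogeneous tube rather than arising from a decomposable alternative. A secondary technical point is pinning down the explicit form of the second middle term $L_\lambda$ so that it involves only modules of quasi-length less than $n$ in the same tube (so that the inductive hypothesis can be invoked termwise); this requires careful use of the 2-Calabi--Yau structure of $\mathcal C_Q$ together with the vanishing of $\Hom$ and $\Ext^1$ between modules belonging to different tubes.
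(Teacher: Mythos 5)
Your proposal has genuine gaps in both the base case and the inductive step.

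\textbf{Base case ($n=1$).} Your geometric argument proves less than what the statement asserts. Since $M_\lambda$ is a brick and $\<\delta,\delta\>=0$, one has $\dim\Ext^1_{kQ}(M_\lambda,M_\lambda)=1$, so each orbit $\mathcal{O}_{M_\lambda}$ has codimension $1$ in $\rep(Q,\delta)$. The union $\bigsqcup_{\lambda\in\P^1_0}\mathcal{O}_{M_\lambda}$ is therefore a $\P^1$-family of codimension-one orbits — dense, but not open — and the dense open subset $U_\delta$ could a priori miss any finite subfamily of these orbits (the complement of $U_\delta$ is a proper closed set of codimension $\geq 1$, exactly the size of an orbit). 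Thus the argument only yields $X_{M_\lambda}=X_\delta$ for $\lambda$ in a cofinite subset of $\P^1_0$, not for all $\lambda$. Furthermore, the claim that no other module of dimension $\delta$ has a one-dimensional endomorphism ring is false: for a quasi-simple $E$ in an exceptional tube of rank $p$, the indecomposable $M_E=E^{(p)}$ of dimension $\delta$ has pairwise non-isomorphic quasi-composition factors and is also a brick, so $\dim\mathcal{O}_{M_E}=\dim\mathcal{O}_{M_\lambda}$; orbit dimension alone cannot exclude $M_E$ from $U_\delta$. (The paper excludes $M_E$ by a grassmannian argument, namely that $\Gr_{\ddim E}(-)$ vanishes generically but not on $\mathcal{O}_{M_E}$; this is Lemma \ref{lem:imaginary}, which however only concludes $X_\delta = X_{M_\lambda}$ for \emph{some} $\lambda$ and then \emph{invokes} Lemma \ref{lem:XMlambda} to extend to all $\lambda$. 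Running the genericity argument to prove Lemma \ref{lem:XMlambda} itself is therefore circular.) The paper's actual proof for $n=1$ is non-geometric: it uses Crawley-Boevey's one-point extension at an extending vertex $e$, showing $\dim\Ext^1_{\mathcal C_Q}(P_e,M_\lambda)=1$ for all $\lambda\in\P^1_0$, and then the one-dimensional multiplication formula $X_{P_e}X_{M_\lambda}=X_L+X_B$, where both $L$ and $B$ are shown to be independent of $\lambda$.

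\textbf{Inductive step.} Here $\dim\Ext^1_{\mathcal C_Q}(M_\lambda,M_\lambda^{(n-1)})=\dim\Ext^1_{kQ}(M_\lambda,M_\lambda^{(n-1)})+\dim\Ext^1_{kQ}(M_\lambda^{(n-1)},M_\lambda)=1+1=2$, so the Caldero--Keller one-dimensional multiplication formula does not apply to the pair $(M_\lambda, M_\lambda^{(n-1)})$, and the identity you invoke is unjustified as stated. The correct tool is the almost-split multiplication formula (Proposition \ref{prop:almostsplitmult}) applied to the almost split sequence centered at $M_\lambda^{(n)}$, which gives $X_{M_\lambda^{(n)}}^2=X_{M_\lambda^{(n-1)}}X_{M_\lambda^{(n+1)}}+1$ and hence $X_{M_\lambda^{(n)}}=C_n(X_{M_\lambda})$ for the normalized Chebyshev polynomials $C_n$ — this is Lemma \ref{lem:Chebyshev}. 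Once this identity is in hand, the $\lambda$-independence for all $n$ follows immediately from the case $n=1$, with no further induction needed.
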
		
	\begin{theoremdifferencedelta}
		Fix $Q$ a quiver of affine type $\Aaffine$, $E$ a regular simple module in an exceptional tube, $\lambda \in \P^1_0$, then
		$$X_{M_E}=X_{M_\lambda}+X_{\regrad M_E/E}$$
	\end{theoremdifferencedelta}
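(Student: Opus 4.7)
The plan is to prove the identity by a combination of a direct comparison of Caldero-Chapoton expansions and an induction on the rank $p$ of the exceptional tube containing $E$. Write the quasi-simples of this tube as $E = E_1, E_2 = \tau^{-1} E_1, \ldots, E_p = \tau^{-1} E_{p-1}$, so that $M_E$ is the regular indecomposable of quasi-length $p$ and quasi-socle $E_1$, and $\regrad M_E / E$ is the regular indecomposable of quasi-length $p-2$ and quasi-socle $E_2$.

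As a first reduction, observe that $\ddim M_E = \ddim M_\lambda = \delta$, so formula (\ref{CCmap}) writes $X_{M_E}$ and $X_{M_\lambda}$ as sums over the same range of $\textbf v$ with the identical monomial factors $\prod_i u_i^{-\langle \textbf v, \alpha_i\rangle - \langle \alpha_i, \delta - \textbf v\rangle}$. The relation $\langle \alpha_i, \delta \rangle = -\langle \delta, \alpha_i \rangle$, coming from $c(\delta) = \delta$, is used to match these with the monomials arising in the expansion of $X_{\regrad M_E/E}$ under a suitable affine shift on the subdimension parameter, so that the theorem reduces to a pointwise Euler-characteristic identity. I would then partition $\Gr_\textbf v(M_E)$ according to whether the subrepresentation meets the quasi-socle $E$ or not, which yields
\[
\chi(\Gr_\textbf v(M_E)) \;=\; \chi(\Gr_{\textbf v - \dim E}(M_E/E)) \;+\; \chi(\Gr^{0}_\textbf v(M_E)),
\]
where $\Gr^{0}_\textbf v(M_E)$ denotes the subrepresentations whose intersection with $E$ is trivial (the other piece is isomorphic to $\Gr_{\textbf v - \dim E}(M_E/E)$ via $N \mapsto N/E$). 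The first summand is to be identified, either by iteration or by induction applied to the smaller module $M_E/E$, with the $X_{\regrad M_E/E}$ contribution, and the plan is then to prove that $\chi(\Gr^{0}_\textbf v(M_E)) = \chi(\Gr_\textbf v(M_\lambda))$ for every $\textbf v$.

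The crux of the argument, and the place where the hypothesis of affine type $\Aaffine$ enters crucially, is this last identity. My strategy is to use the combinatorial description of indecomposable $\Aaffine$-representations as string and band modules (Butler--Ringel), together with the affine pavings of the corresponding quiver grassmannians (Cerulli Irelli, Haupt), to produce an explicit cellular matching between subrepresentations of $M_E$ that avoid the quasi-simple $E$ and subrepresentations of $M_\lambda$ of the same dimension vector. The heuristic is that forbidding $E$ in the rank-$p$ exceptional tube effectively cuts the cycle of quasi-simples, leaving a linear combinatorial picture that matches precisely the band combinatorics of the homogeneous tube $\mathcal T_\lambda$. I expect this combinatorial identification to be the main technical obstacle. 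The base case $p = 2$ would be handled by a direct computation on the smallest $\Aaffine$-quivers admitting an exceptional rank-$2$ tube, and the induction step would use the short exact sequence $0 \to E \to M_E \to M_E/E \to 0$ together with the induction hypothesis applied to $M_E/E$, which is itself a regular indecomposable of quasi-length $p - 1$ in the same tube, combined with Lemma \ref{lem:XMlambda} to keep track of the homogeneous-tube contribution independently of $\lambda$.
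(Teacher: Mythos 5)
Your plan has a genuine gap at its central step. You propose to partition $\Gr_{\textbf v}(M_E)$ into subrepresentations containing $E$ (mapped via $N \mapsto N/E$ to $\Gr_{\textbf v - \textbf e}(M_E/E)$) and those that do not, and then to prove $\chi(\Gr^0_{\textbf v}(M_E)) = \chi(\Gr_{\textbf v}(M_\lambda))$. But this partition isolates the wrong object: $M_E/E$ has quasi-length $p-1$, whereas the theorem involves $\regrad M_E/E$ of quasi-length $p-2$. Moreover the complementary identity is false. In the $\affA{2}{1}$ example of the paper with $E = E_1 = S_2$, for $\textbf v = [011]$ the unique submodule of $M_{E_1}$ of that dimension is $S_2 \oplus S_3$, which contains $E$, so $\Gr^0_{[011]}(M_{E_1}) = \emptyset$ and $\chi = 0$, yet $\Gr_{[011]}(M_\lambda) = \ens{P_2}$ has $\chi = 1$; the same failure occurs at $\textbf v = \delta$. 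The extra contributions from the ``contains $E$'' bucket corresponding to submodules that are not contained in $\regrad M_E$ must be reassigned to the $M_\lambda$ side, and your partition has no mechanism to do this.

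The paper's argument replaces your dichotomy by a trichotomy on the unique submodule $V \subset M_E$ of dimension $\textbf v$ (uniqueness is Lemma \ref{lem:caracun}, using $\delta_i = 1$): (a) $E \not\subset V$, (b) $E \subset V \subset \regrad M_E$, (c) $E \subset V \not\subset \regrad M_E$. Proposition \ref{prop:grassmannians} shows $\Gr_{\textbf v}(M_\lambda) \neq \emptyset$ precisely in cases (a) and (c), and $= \emptyset$ precisely in case (b); the $\regrad M_E/E$ contribution comes only from case (b). The proof of this trichotomy is the real content and is not combinatorial in the sense you suggest: it uses Schofield's theorem \ref{theorem:Schofield} to transfer nonemptiness of grassmannians to generic $\Ext$-vanishing, plus defect arguments and uniseriality of tubes to analyze when $\Ext^1_{kQ}(\textbf v, \delta - \textbf v)$ vanishes generically. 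Case (c) in particular requires constructing a preprojective module $X$ from a nonsplit extension of $V_R$ by $V_P$ and showing $\Ext^1_{kQ}(X, M_E/V) = 0$. The parts of your sketch that do align with the paper are the reduction to a pointwise Euler-characteristic identity, the monomial bookkeeping via $c(\delta) = \delta$, and the shift $N \mapsto N/E$ onto $\Gr_{\textbf v - \textbf e}(\regrad M_E/E)$ — but these are peripheral; without the correct trichotomy the argument does not close, and no induction on $p$ or string/band cellular matching is used or needed.
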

	We say that an affine quiver $Q$ satisfies the \emph{difference property}\index{difference property} if it satisfies the above theorem. The main result of this article is the following theorem:
	\begin{theoremsemicanonicalbasisA}
		Let $Q$ be an affine quiver such that every quiver reflection-equivalent to $Q$ satisfies the difference property. Then $\mathcal B'(Q)$ is a $\Z$-basis for the $\Z$-module $\mathcal A(Q)$ called the \emph{semicanonical basis of $\mathcal A(Q)$}\index{semicanonical basis}.
	\end{theoremsemicanonicalbasisA}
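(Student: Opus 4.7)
The statement packages two non-trivial claims: that $\mathcal B'(Q)$ spans $\mathcal A(Q)$ over $\Z$, and that it is $\Z$-linearly independent. I would treat these separately, using the difference property for the first and the reflection-functor machinery for the second.

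For the spanning part, the plan is to start from the explicit description of $\mathcal B'(Q)$ given by Proposition \ref{prop:explicitbase}, namely cluster monomials together with the elements $X_{M_\lambda^{\oplus n} \oplus E}$ for $E$ rigid regular, and to show that every product of two elements of $\mathcal B'(Q)$ decomposes as a $\Z$-linear combination of elements of $\mathcal B'(Q)$. Since $\mathcal A(Q)$ is generated over $\Z$ by the cluster variables (hence by a subset of cluster monomials), this closure property will imply that $\mathcal B'(Q)$ is already a generating set. The key ingredients are (i) Caldero-Keller style multiplication formulas between $X$-functions, (ii) Lemma \ref{lem:XMlambda}, which says that $X_{M_\lambda^{(n)}}$ does not depend on $\lambda\in\P^1_0$, and (iii) the difference property $X_{M_E}=X_{M_\lambda}+X_{\regrad M_E/E}$, which lets us rewrite any occurrence of $X_{M_E}$ (with $E$ in an exceptional tube) in terms of the "homogeneous" generic variable $X_{M_\lambda}$ plus a product with strictly smaller regular part. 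This gives an induction on quasi-length (and on the "imaginary content" $n$ of the homogeneous factor) that reduces all products to elements already in $\mathcal B'(Q)$.

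For linear independence, the first step is to exploit the denominators theorem (Theorem \ref{theorem:denominators}): the denominator vector of $X_{\textbf d}$ is $[\textbf d]_+$, so generic variables whose positive parts differ lie in different Laurent layers and cannot combine. This immediately separates all cluster-monomial terms from each other (Kac's theorem ensures distinct rigid indecomposables have distinct dimension vectors) and separates them from the imaginary generic variables whose denominator involves positive multiples of $\delta$. What remains is to distinguish, inside a fixed denominator class containing a nonzero $\delta$-component, the single element $X_{M_\lambda^{\oplus n}\oplus E}$ from potential $\Z$-combinations involving itself and the cluster monomials $X_{M_E\oplus \cdots}$. Here I would use the compatibility of the Caldero-Chapoton map with Zhu's extended BGP-reflection functors (Proposition \ref{prop:reflexionaffine}) and its corollary for the generic base (Corollary \ref{corol:reflectionbase}) to transport any hypothetical linear dependence to a reflection-equivalent quiver, and then invoke the gradability hypothesis: on a suitable $\Z$-grading of $\mathcal A(Q)$ (in which cluster variables associated to transjective indecomposables are homogeneous of a controlled degree), the terms $X_{M_\lambda^{\oplus n}\oplus E}$ become homogeneous of a degree that cannot be matched by a $\Z$-combination of genuinely different generic variables, forcing all coefficients of a putative dependence to vanish.

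The main obstacle is clearly the independence argument, and specifically the interaction between reflection functors and the grading. The difference property alone gives relations, not freeness, so to rule out unexpected collapses inside a single denominator class one must combine: (a) the fact that reflection functors permute denominator classes in a way that does not fuse imaginary and preprojective/preinjective parts, and (b) the fact that the gradability hypothesis is preserved by the passage from $Q$ to reflection-equivalent quivers. The hypothesis that \emph{every} quiver reflection-equivalent to $Q$ satisfies the difference property is exactly what guarantees that, after any reflection chain used to isolate a particular generic variable, we still have a complete enough multiplication calculus to conclude. Once the independence is verified locally in each denominator class, combining with the spanning argument of the preceding paragraph will finish the proof.
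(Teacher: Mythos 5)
Your overall architecture (prove spanning using the difference property, prove independence using reflection functors plus gradability) matches the paper's, but both halves have concrete problems.

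For the spanning half, you propose to show directly that $\Z\mathcal B'(Q)$ is closed under multiplication and contains the cluster variables. This is not how the paper proceeds, and there is a reason: the available cluster-multiplication formulas (Theorem \ref{theorem:multiplication}, Corollary \ref{corol:expansion}, Corollary \ref{corol:inductionstep}) only produce $\Q$-linear combinations, because Caldero-Keller-type multiplication in higher extension dimension introduces genuine rational coefficients. So your ``closure over $\Z$'' step is not available as stated. The paper first establishes only $\Q$-spanning (Proposition \ref{prop:Qspan}), and then, \emph{after} reducing to a graded quiver, recovers integrality by a triangularity argument with respect to the filtration of Proposition \ref{prop:graduationCK1}: the $\lambda_{\textbf d}$-component of $X_{\textbf d}$ is $1$ (Proposition \ref{prop:supportconeCK1}), the Euler characteristics $\chi(\Gr_{\textbf e}(M))$ are integers, so when $X_M$ is expanded on the $X_{\textbf d}$ one descends through the filtration and sees the rational coefficients are forced to be integers. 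Without this triangularity step (or an equivalent), the integrality of the span does not follow.

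For independence, you invoke the denominators theorem and speak of distinguishing things ``inside a fixed denominator class,'' but by the denominators theorem the denominator vector of $X_{\textbf d}$ is $\textbf d$ itself (not $[\textbf d]_+$, a small slip), so all generic variables already have pairwise distinct denominators and there are no nontrivial ``denominator classes'' to worry about. The real issue is different: distinct denominator vectors alone do not immediately give linear independence of an infinite family of Laurent polynomials; one needs the linear form $\epsilon$ making $Q$ graded to produce a cone-and-filtration argument. The paper resolves this cleanly by noting every affine quiver is gradable (Lemma \ref{lem:Agradable} and the alternating-orientation argument for trees), then using Theorem \ref{theorem:invariancebase} (reflections carry $\mathcal B'(Q)$ to $\mathcal B'(Q')$) to reduce to a graded quiver, and finally citing Lemma \ref{lem:lemCK1}. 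Note also that gradability is not a hypothesis of the theorem but a fact proved for all affine quivers; your proposal treats it as an assumption, which obscures that this must actually be established.
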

	It proves in particular that generic variables form a $\Z$-basis in an affine cluster algebra of type $\Aaffine$.
	
	In subsection \ref{subsection:reflexions}, we will study the interaction between Zhu's extended BGP-reflection functors and the Caldero-Chapoton map. For terminology, the reader can refer to subsection \ref{subsection:reflexions}. An important result is the following:
	\begin{propreflexionaffine}
		Let $Q$ be an affine quiver with at least three vertices. Let $i$ be a sink in $Q_0$. Assume that $Q$ and $\sigma_i Q$ satisfy the difference property. Denote by $\Phi_i:\mathcal A(Q) \fl \mathcal A(\sigma_iQ)$ the canonical isomorphism and by $R_i^+:\mathcal C_Q \fl \mathcal C_{\sigma_iQ}$ the extended BGP functor. Then for any object $M$ in $\mathcal C_Q$, we have $\Phi_i(X^Q_M)=X^{\sigma_i Q}_{R_i^+M}$.
	\end{propreflexionaffine}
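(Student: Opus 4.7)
The plan is to exploit the multiplicativity of both sides of the identity: $\Phi_i(X^Q_M)$ is multiplicative in $M$ since $X^Q_?$ is multiplicative over direct sums and $\Phi_i$ is a ring morphism, while $X^{\sigma_iQ}_{R_i^+M}$ is multiplicative because $R_i^+$ is additive (being a triangle equivalence) and $X^{\sigma_iQ}_?$ is itself multiplicative. I would therefore reduce at once to the case where $M$ is indecomposable, and organise the indecomposable cases in three groups according to the Auslander-Reiten quiver of $\mathcal C_Q$: rigid indecomposables, non-rigid indecomposables in exceptional tubes, and non-rigid indecomposables in homogeneous tubes.

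For rigid indecomposable $M$---the class comprising the transjective objects, the rigid regulars of quasi-length strictly less than the rank of their tube, and the shifts $P_j[1]$---Theorem \ref{theorem:correspondanceCK2} says that $X^Q_M$ is a cluster variable of $\mathcal A(Q)$. Since $R_i^+$ preserves rigidity and indecomposability, $X^{\sigma_iQ}_{R_i^+M}$ is likewise a cluster variable of $\mathcal A(\sigma_iQ)$. The canonical isomorphism $\Phi_i$ is the one induced by mutation at the vertex $i$, and the extended BGP functor $R_i^+$ realises this same mutation categorically (sending $P_i[1]$ to the simple at vertex $i$ in $\sigma_iQ$ and fixing the other shifted projectives under the natural identification $Q_0=(\sigma_iQ)_0$). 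Verifying the identity on the initial seed and then propagating along mutation sequences yields the result for every rigid indecomposable.

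The non-rigid indecomposables of $\mathcal C_Q$ are all regular, lying either in an exceptional tube or in a homogeneous tube. My main lever here is the difference property: for any quasi-simple $E$ in an exceptional tube $\mathcal T$ of rank $r$ and any $\lambda \in \P^1_0(Q)$,
$$X^Q_{M_E} = X^Q_{M_\lambda} + X^Q_{\regrad M_E / E},$$
where the last term is covered by the rigid case since $\regrad M_E / E$ has quasi-length $r-2$. Because $|Q_0|\geq 3$, the exceptional tubes occupy only finitely many points of $\P^1$, so I can choose $\lambda \in \P^1_0(Q)$ whose homogeneous tube is not perturbed by the reflection at $i$; then $R_i^+M_\lambda$ is again a quasi-simple of a homogeneous tube of $\sigma_iQ$. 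Writing down the analogous difference-property identity in $\mathcal A(\sigma_iQ)$ (available because $\sigma_iQ$ satisfies the difference property), the identity for $M=M_\lambda$ becomes equivalent to the identity for $M=M_E$. To close the loop, I would compute $X^Q_{M_E}$ from the Caldero-Keller multiplication formula applied to the almost split triangle in $\mathcal T$ ending at $M_E$: the other terms of that triangle are rigid regulars of smaller quasi-length, hence already handled, and applying $\Phi_i$ term-by-term against the parallel computation in $\mathcal C_{\sigma_iQ}$ for $R_i^+$ of the triangle produces the matching identity. Non-rigid regulars of quasi-length $n\geq 2$ are then reached by induction on $n$, combining Lemma \ref{lem:XMlambda} with iterated multiplication formulas inside the tube.

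The hardest part will be the last matching step: one must check that $R_i^+$ identifies the quasi-socle and quasi-radical structure of tubes of $Q$ with those of $\sigma_iQ$ so that the two difference-property identities really correspond term by term after application of $\Phi_i$. This is essentially a categorical compatibility, depending on $R_i^+$ being a triangle equivalence that preserves the regular components, together with the hypothesis that both $Q$ and $\sigma_iQ$ satisfy the difference property. The assumption $|Q_0|\geq 3$ is what guarantees the existence of a generic $\lambda$ in a homogeneous tube left undisturbed by the reflection, so that the reduction is never blocked by a coincidence of $M_\lambda$ with the simple $S_i$.
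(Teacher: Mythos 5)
Your overall strategy (reduce to indecomposables, dispatch the rigid case via the cluster-variable/mutation machinery, then handle the non-rigid regulars in exceptional and homogeneous tubes) is the right skeleton and matches the paper's. The rigid case and the homogeneous-tube case are essentially sound, and you correctly identified that the difference property on both $Q$ and $\sigma_iQ$, combined with $R_i^+$ respecting tube/quasi-socle/quasi-radical structure, is the key to the homogeneous tube modules.

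However, there is a genuine gap in your treatment of the exceptional tube modules, and it contains an error. You propose to break the circularity between the identity for $M_E$ and the identity for $M_\lambda$ by computing $X^Q_{M_E}$ from "the Caldero-Keller multiplication formula applied to the almost split triangle in $\mathcal T$ ending at $M_E$," asserting that the other terms are rigid regulars of smaller quasi-length. That is false: the AR-sequence ending at $M_E$ (quasi-length $p$, $p$ the rank of $\mathcal T$) is $0\to \tau M_E\to M'\oplus M''\to M_E\to 0$ where $\tau M_E$ has quasi-length $p$ (not rigid) and $M''$ has quasi-length $p+1$ (not rigid). So the reduction does not bottom out, and moreover $\dim\Ext^1_{\mathcal C_Q}(M_E,\tau M_E)\neq 1$ so the one-dimensional multiplication formula does not even apply. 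Your plan also has an unnecessary worry: you hedge about choosing $\lambda$ so that $M_\lambda$ is "not perturbed" and does not coincide with $S_i$; but $\ddim M_\lambda=\delta$ is sincere while $\ddim S_i=\alpha_i$ is not, so this coincidence can never occur, and $R_i^+$ always sends homogeneous tubes to homogeneous tubes regardless of $\lambda$. The paper sidesteps all of this: for a module $M$ in an exceptional tube it invokes the identity $X^Q_M=P_{l,p}(X^Q_{E_0},\ldots,X^Q_{E_{p-1}})$ in the generalized Chebyshev polynomials of rank $p$, where every $E_j$ is a rigid quasi-simple. Since $\Phi_i(X^Q_{E_j})=X^{\sigma_iQ}_{R_i^+E_j}$ is already known from the rigid case, and $R_i^+$ carries quasi-socles to quasi-socles and preserves quasi-lengths (Corollary \ref{corol:Flongueur}), applying $\Phi_i$ to the polynomial directly yields $X^{\sigma_iQ}_{R_i^+M}$, with no need for the difference property in the exceptional tube case at all. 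The difference property then cleanly finishes the homogeneous case alone, which removes the circularity that lurks in your plan.
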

	We can then deduce the behaviour of the generic variables under reflection functors:
	\begin{corolreflectionbase}
		Let $Q$ be an affine quiver of affine type with at least three vertices. Let $i$ be a sink in $Q_0$. Assume that $Q$ and $Q'$ satisfy the difference property. Denote by $\Phi: \mathcal A(Q) \fl \mathcal A(\sigma_i Q)$ the canonical isomorphism. Then for any $\textbf d \in \Z^{Q_0}$, we have
		$$\Phi(X^Q_{\textbf d})=X^{\sigma_i Q}_{\sigma_i (\textbf d)}.$$
	\end{corolreflectionbase}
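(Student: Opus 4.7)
The plan is to reduce this corollary to Proposition \ref{prop:reflexionaffine} by choosing an explicit object $M\in\mathcal C_Q$ whose generalized variable coincides with $X^Q_{\textbf d}$, and then identifying $R_i^+M$ as a generic representative of the dimension vector $\sigma_i(\textbf d)$ in $\mathcal C_{\sigma_iQ}$.

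First, I would use Proposition \ref{prop:explicitbase} to make a convenient choice of $M$. Writing $\textbf d=[\textbf d]_+ -\sum_{d_j<0}|d_j|\alpha_j$, pick a module $M_0\in\rep(Q,[\textbf d]_+)$ such that $X^Q_{M_0}=X^Q_{[\textbf d]_+}$; by Proposition \ref{prop:explicitbase} one may take $M_0$ either rigid (the cluster monomial case) or of the form $M_\lambda^{\oplus n}\oplus E$ with $E$ rigid regular and $M_\lambda$ quasi-simple in a homogeneous tube. Setting
$$M=M_0\oplus\bigoplus_{d_j<0}P_j[1]^{\oplus -d_j},$$
one has $\ddim M=\textbf d$ in $\mathcal C_Q$ and, by the definition of generic variables together with the multiplicativity of $X^Q_?$ on direct sums, $X^Q_M=X^Q_{\textbf d}$.

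Next, I would apply Proposition \ref{prop:reflexionaffine} directly to $M$, obtaining
$$\Phi(X^Q_{\textbf d})=\Phi(X^Q_M)=X^{\sigma_iQ}_{R_i^+M}.$$
The remaining step is to check that $X^{\sigma_iQ}_{R_i^+M}=X^{\sigma_iQ}_{\sigma_i(\textbf d)}$. For the dimension vector: since $i$ is a sink in $Q$, it is a source in $\sigma_iQ$, and the extended BGP functor $R_i^+$ acts on the extended dimension vector of $\mathcal C_Q$ (where $\ddim P_j[1]=-\alpha_j$) by the reflection $\sigma_i$; hence $\ddim R_i^+M=\sigma_i(\textbf d)$. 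For the genericity: because $R_i^+:\mathcal C_Q\fl\mathcal C_{\sigma_iQ}$ is an equivalence of triangulated categories, it preserves rigid objects and respects the decomposition of the cluster category into the transjective component and the tubular components. In particular, it sends each tube of rank $p$ in $\mathcal C_Q$ to a tube of rank $p$ in $\mathcal C_{\sigma_iQ}$, preserving the homogeneous vs.\ exceptional distinction, and it sends quasi-simples to quasi-simples. Therefore $R_i^+M$ belongs again to the list of Proposition \ref{prop:explicitbase} (with quiver $\sigma_iQ$) and is thus a generic object of dimension vector $\sigma_i(\textbf d)$, yielding $X^{\sigma_iQ}_{R_i^+M}=X^{\sigma_iQ}_{\sigma_i(\textbf d)}$ and the claimed identity.

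The main obstacle is really accounting correctly for the interaction between the reflection at the vertex $i$ and the possible negative components of $\textbf d$: the functor $R_i^+$ swaps $S_i$ with $P_i[1]$ in the appropriate direction, so one must keep careful track of which indecomposable summands become shifts of projectives under reflection and which do not. Once Proposition \ref{prop:reflexionaffine} is available and the explicit parametrization of $\mathcal B'(Q)$ from Proposition \ref{prop:explicitbase} is combined with the categorical properties of $R_i^+$ (equivalence of triangulated categories preserving rigidity and tubular structure), the rest of the verification is essentially bookkeeping on the root lattice.
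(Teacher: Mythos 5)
Your approach is essentially the same as the paper's: pick a specific object $M$ with $X^Q_M=X^Q_{\textbf d}$, apply Proposition \ref{prop:reflexionaffine}, and then identify $X^{\sigma_iQ}_{R_i^+M}$ with $X^{\sigma_iQ}_{\sigma_i(\textbf d)}$. The paper invokes Lemma \ref{lem:dimensionreflexion} for the dimension computation and Theorem \ref{theorem:invariancebase} for the genericity; you effectively re-derive the content of both inline, which is fine.

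One caveat: the assertion that ``$R_i^+$ acts on the extended dimension vector by $\sigma_i$, hence $\ddim R_i^+M=\sigma_i(\textbf d)$'' is stated as if it were a general categorical fact, but it is not true for an arbitrary object of $\mathcal C_Q$. The piecewise-linear map $\sigma_i$ on $\Z^{Q_0}$ is defined through the canonical decomposition of $[\textbf d]_+$, and the identity $\ddim R_i^+M=\sigma_i(\ddim M)$ requires that $M$ decompose into indecomposable summands $M_k$ with $\Ext^1_{\mathcal C_Q}(M_k,M_j)=0$ for $k\neq j$ --- this is exactly the hypothesis of the paper's Lemma \ref{lem:dimensionreflexion}, and the proof there hinges on it (e.g., to ensure $[\ddim M]_+=\ddim H^0(M)$ and that the shift $P_i[1]\mapsto S_i$ contributes the $+2d_i\alpha_i$ term correctly). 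Your chosen $M=M_0\oplus\bigoplus_{d_j<0}P_j[1]^{\oplus(-d_j)}$ does satisfy that hypothesis (in the rigid case trivially; in the case $M_0=M_{\lambda_1}\oplus\cdots\oplus M_{\lambda_n}\oplus E$ by taking pairwise distinct $\lambda_j$ in $\P^1_0$ and noting the absence of extensions between distinct tubes, and noting $\textbf d\in\N^{Q_0}$ forces no $P_j[1]$ summands since $\delta$ is sincere), so the conclusion holds --- but you should make this dependence explicit rather than presenting it as an unconditional property of $R_i^+$.
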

\end{subsection}

\section{Affine cluster algebras and cluster categories}\label{section:affine}
	\begin{subsection}{Cluster algebras of affine types}\label{subsection:affinetype}	
	Fomin and Zelevinsky proved in \cite{cluster2} that if $Q$ is a quiver (or more generally a valued graph) of a given Dynkin type, then every acyclic quiver mutation-equivalent to $Q$ is also a Dynkin quiver of the same Dynkin type. In this subsection, we generalize this result to the affine case.
	
	\begin{defi}
		A quiver $Q$ is said to be of \emph{affine type $\affA{r}{s}$} if it is isomorphic to a quiver with underlying diagram of affine type $\tilde{\mathbb A}_{r+s-1}$ with $r$ arrows going clockwise and $s$ arrows going anticlockwise for some integers $r,s>0$.
		It is said of \emph{affine type $\tilde{\mathbb D_{n}}$} (resp. $\tilde{\E_{n}}$) if the underlying diagram is of type $\tilde{\mathbb D_{n}}$ for some integer $n \geq 4$ (resp. $n=6,7,8$).
	\end{defi}
	
	\begin{rmq}
	 	Note in particular that a quiver is of affine type $\affA rs$ if and only if it is of affine type $\affA sr$.
	\end{rmq}

	The following proposition proves that there is a finite-affine-wild classification of acyclic cluster algebras. It is actually a particular case of \cite[Corollary 4]{CK2}.
	\begin{maprop}\label{prop:mutationtype}
	 	Let $Q$ be an acyclic quiver of a given affine type $\mathbb X$. Then all the acyclic quivers mutation-equivalent to $Q$ are of affine type $\mathbb X$. Moreover, if $Q'$ is a quiver of affine type $\mathbb X$, then $Q'$ is mutation equivalent to $Q$.
	\end{maprop}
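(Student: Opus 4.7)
The plan is to deduce the proposition from \cite[Corollary 4]{CK2}, which handles the general acyclic case; the remaining work is elementary and combinatorial. The first step is the observation that if $Q$ is acyclic and $\mu_i Q$ is also acyclic, then $i$ must be a sink or a source of $Q$. Indeed, if $i$ were neither, pick arrows $\alpha:j\to i$ and $\beta:i\to k$ in $Q$; acyclicity of $Q$ forbids any arrow $k\to j$ (which would already yield a triangle $j\to i\to k\to j$ in $Q$), so the new arrow $j\to k$ created by the mutation does not cancel, and together with the reversed arrows $i\to j$ and $k\to i$ it produces an oriented triangle in $\mu_i Q$, a contradiction. At a sink or source, $\mu_i$ coincides with the BGP reflection $\sigma_i$, which reverses only the arrows incident to $i$ and therefore preserves the underlying unoriented graph.

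Combining this with \cite[Corollary 4]{CK2} (which allows one to choose, within a mutation class, a path of acyclic quivers relating any two acyclic representatives), the underlying unoriented graph is a mutation-invariant on the acyclic locus. This settles the preservation of affine type for $\tilde{\mathbb D}_n$ and $\tilde{\mathbb E}_n$. For type $\affA{r}{s}$ there is one orientation-sensitive datum to verify: at a sink $i$ lying on the cycle, the two arrows adjacent to $i$ both point toward $i$, so exactly one of them is clockwise and the other anticlockwise, and reversing them swaps their orientations while keeping the counts $r$ and $s$ unchanged.

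For the converse, given two acyclic quivers $Q$ and $Q'$ of the same affine type $\mathbb X$, I would exhibit an explicit sequence of sink/source reflections between them. When the underlying graph is a tree (types $\tilde{\mathbb D}_n$ and $\tilde{\mathbb E}_n$), this is the classical statement that any two acyclic orientations of a tree are sink/source-reflection equivalent. When the graph is a cycle of length $n=r+s$ (type $\affA{r}{s}$), encode an orientation as a cyclic word in two letters C and A; a sink or source flip amounts to swapping a pair of distinct adjacent letters. Acyclicity forces $r,s\geq 1$ so such pairs always exist, and a direct combinatorial argument, using that adjacent transpositions generate all permutations of a multiset, shows that any two such words with the same counts up to swap are flip-equivalent.

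The main obstacle is the reliance on \cite[Corollary 4]{CK2} to guarantee that a mutation class of acyclic quivers is connected by paths of acyclic quivers: without this categorical input, one would have to recover the affine type directly from the cluster category $\mathcal C_Q$ (for instance through the tubular type of its regular part and the shape of its transjective component), which is substantially more technical than the elementary combinatorial content one obtains once the corollary is invoked.
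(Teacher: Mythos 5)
Your approach is genuinely different from the paper's, and the comparison is instructive. For the first assertion, the paper works intrinsically: it observes that mutation-equivalent acyclic quivers have triangle-equivalent cluster categories (citing Keller--Reiten), and then uses the tubular type, an invariant visible in the AR-quiver of the cluster category, to pin down the affine type; no statement about which mutations preserve acyclicity is needed. You instead reduce to sink/source reflections via the neat elementary observation that mutating an acyclic quiver at a vertex that is neither a sink nor a source always creates an oriented $3$-cycle, and then note that such reflections preserve the underlying graph and, in type $\affA{r}{s}$, the pair $(r,s)$. The price of this shortcut is that it only applies to mutation paths that stay inside the acyclic locus, so you must additionally know that any two acyclic representatives of a mutation class are joined by such a path. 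You attribute this to \cite[Corollary 4]{CK2}, but be careful: the paper's own phrasing (``this is actually a particular case of [Corollary 4, CK2]'') indicates that corollary is the mutation-invariance of cluster type, not an acyclic-connectivity statement. If the cited corollary does not literally deliver the acyclic-path fact (which, as you note yourself, is the real categorical input), there is a genuine gap: your sink/source observation alone cannot exclude the possibility that every mutation sequence between $Q$ and $Q'$ exits the acyclic locus, and it is precisely to sidestep this issue that the paper argues through the cluster category directly. For the converse assertion, both proofs boil down to the same two facts --- all orientations of a tree are reflection-equivalent, and in a cycle the pair $(r,s)$ determines the reflection class --- with the paper citing \cite{ASS} where you supply the cyclic-word argument; your combinatorial sketch of the cycle case is correct (you already used that acyclicity forces $r,s\geq 1$, which guarantees a sink or source always exists).
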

	\begin{proof}
		If $Q'$ is an acyclic quiver mutation-equivalent to $Q$, then $Q'$ is the quiver of the opposite endomorphism ring of a cluster tilting object $T$ in $\mathcal C_Q$. It thus follows from \cite{KR} that the cluster categories $\mathcal C_Q$ and $\mathcal C_{Q'}$ are triangle equivalent. If we denote by $\textbf n=(n_1, \ldots, n_s)$ the tubular type of $Q$, then the AR-quiver of the cluster category of $\mathcal C_Q$ contains $s$ tubes of respective ranks $n_1, \ldots, n_s$. Assume now that $Q'$ is not of affine type. Then either $Q'$ is Dynkin but then $\mathcal C_{Q'}$ has only finitely many non-isomorphic objects and thus is not equivalent to $\mathcal C_Q$, or $Q'$ is wild and the the AR-quiver of $\mathcal C_Q$ does not contain tubes. It follows that $Q'$ is also a quiver of affine type and moreover it has the same tubular type $\textbf n$. Now, it is known that two affine quivers have the same tubular type if and only if they have the same affine type. This proves the first assertion.
		
		For the second assertion, assume first that $Q$ is a quiver of affine type $\affA rs$. If $Q'$ is another quiver of type $\affA rs$, then it is known (see \cite{ASS} for example) that $Q'$ is reflection-equivalent to $Q$ and in particular, it is mutation-equivalent to $Q$. Now assume that $Q$ and $Q'$ are of affine type $\tilde{\mathbb D}$ or $\tilde{\mathbb E}$. Then $Q$ and $Q'$ are two different orientations of a tree but is is known (see also \cite{ASS}) that any different orientations of a tree are reflection-equivalent. It follows that $Q$ and $Q'$ are mutation-equivalent.
	\end{proof}
	
	Proposition \ref{prop:mutationtype} allows to speak of cluster algebras of affine type and to define the \emph{affine type of such a cluster algebra}\index{affine cluster algebra}.	
	\begin{defi}
	 	A cluster algebra $\mathcal A$ is said to be \emph{of affine type $\mathbb X$} where $\mathbb X=\affA rs, \tilde{\mathbb D}_r$, or $\tilde{\mathbb E}_n$ for some non-negative integers $r,s$ or some $n=6,7,8$ if it contains a seed $(\textbf x, Q)$ where $Q$ is an acyclic quiver of affine type $\mathbb X$. 
	 	
	 	For simplicity, we say that $\mathcal A$ if \emph{of affine type $\tilde{\mathbb A}$} if it is of affine type $\affA rs$ for some non-negative integers $r,s$.
	\end{defi}
	
	\begin{rmq}
	 	Note that we exclude the quivers $Q$ of cyclic types $\tilde {\mathbb A}_{r,0}$ for some integer $r>0$. Indeed, it is known that in this case, the cluster algebra is of Dynkin type $\mathbb D$.
	\end{rmq}
\end{subsection}

\begin{subsection}{Multiplications in affine cluster algebras}\label{subsection:multiplications}
	
	Following the ideas of \cite{CK1} for cluster algebras of finite type, we will use cluster multiplication theorems in order to prove that $\mathcal B'(Q)$ generates the $\Z$-module $\mathcal A(Q)$. These theorems are fairly important for the study of cluster algebras because they provide a certain structure of Hall algebra for cluster categories. For different multiplication theorems, the reader can for example refer to \cite{CK1,CK2,Dupont:stabletubes,Hubery:cluster,XX,Xu}.
	
	The following multiplication formula will be referred to as the \emph{almost split multiplication formula}:
	\begin{maprop}[\cite{CC}]\label{prop:almostsplitmult}
		Let $Q$ be an acyclic quiver and $M$ be a non-projective $kQ$-module. Then 
		$$X_MX_{\tau M}=X_B +1$$
		where $B$ is the central term of the almost split sequence 
		$$0 \fl \tau M \fl B \fl M \fl 0.$$
	\end{maprop}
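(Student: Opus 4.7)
The plan is to expand both sides of the identity via the Caldero-Chapoton formula and match contributions in $\Z[\textbf u^{\pm 1}]$ term by term. \emph{Matching the exponents.} The almost split sequence gives $\ddim B = \ddim M + \ddim \tau M$, and the bilinearity of the Euler form then yields, for every $\mathbf v, \mathbf w \in \N^{Q_0}$,
\begin{equation*}
\prod_i u_i^{-\langle \mathbf v, \alpha_i\rangle - \langle \alpha_i, \ddim M - \mathbf v\rangle} \cdot \prod_i u_i^{-\langle \mathbf w, \alpha_i\rangle - \langle \alpha_i, \ddim \tau M - \mathbf w\rangle} = \prod_i u_i^{-\langle \mathbf v + \mathbf w, \alpha_i\rangle - \langle \alpha_i, \ddim B - \mathbf v - \mathbf w\rangle},
\end{equation*}
so the Laurent monomials appearing in $X_M X_{\tau M}$ coincide with those appearing in $X_B$, and the identity reduces to a family of scalar equalities between Euler characteristics, one for each $\mathbf e \leq \ddim B$.

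The goal is then to prove, for every $\mathbf e \in \N^{Q_0}$,
\begin{equation*}
\sum_{\mathbf v + \mathbf w = \mathbf e} \chi(\Gr_{\mathbf v}(M))\, \chi(\Gr_{\mathbf w}(\tau M)) = \chi(\Gr_{\mathbf e}(B)) + \delta_{\mathbf e,\, \ddim \tau M},
\end{equation*}
so that the single $\delta$-contribution furnishes the constant $+1$ on the right-hand side of the multiplication formula. I would introduce the natural morphism
\begin{equation*}
\Phi \colon \Gr_{\mathbf e}(B) \longrightarrow \bigsqcup_{\mathbf v + \mathbf w = \mathbf e} \Gr_{\mathbf v}(M) \times \Gr_{\mathbf w}(\tau M), \qquad E \longmapsto \bigl(\pi(E),\, E \cap \tau M\bigr),
\end{equation*}
where $\pi \colon B \to M$ is the projection in the almost split sequence. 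A pair $(N,L)$ lies in the image of $\Phi$ precisely when the pullback of the AR sequence along $N \hookrightarrow M$ followed by the pushout along $\tau M \twoheadrightarrow \tau M/L$ represents the zero class in $\Ext^1_{kQ}(N, \tau M/L)$. The defining property of an almost split sequence ensures that this vanishing holds whenever $N \hookrightarrow M$ is not a split epimorphism, equivalently for every pair $(N,L) \neq (M, \tau M)$. On its image, $\Phi$ is a fibration whose fibers are affine spaces (torsors under a quotient of $\Hom_{kQ}(N, \tau M/L)$) and therefore have Euler characteristic one; the multiplicativity of $\chi$ on such fibrations matches the two sides \emph{except} for the pair $(M, \tau M)$, which contributes $\chi(\{M\})\chi(\{\tau M\}) = 1$ to the left-hand side but is excluded from the image because the AR sequence itself is non-split. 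This is exactly the source of the additive~$+1$.

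The main obstacle is the fiber analysis of $\Phi$: one has to identify subrepresentations of $B$ with prescribed intersection with $\tau M$ and prescribed image in $M$ with certain classes of extensions obtained as pullback-pushouts of the AR sequence, and to prove that these parameter sets are affine spaces with Euler characteristic one. Once this geometric input is in place, together with the exponent matching and the isolation of $(M, \tau M)$ as the unique non-image pair, the multiplication formula $X_M X_{\tau M} = X_B + 1$ follows by direct summation over $\mathbf e$.
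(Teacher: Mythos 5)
The paper states this proposition as a citation from \cite{CC} and does not reproduce a proof, so the comparison has to be against the original Caldero--Chapoton argument, which your plan does follow in outline: match the Laurent monomials using $\ddim B = \ddim M + \ddim \tau M$, reduce to a fiber-wise equality of Euler characteristics via the map $E \mapsto (\pi(E),\, E \cap \tau M)$, show the non-empty fibers are affine spaces of Euler characteristic one, and account for the single pair not in the image by the $+1$. That is the right strategy.

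However, you have misidentified the exceptional pair, and this breaks the accounting. The pair $(N,L) = (M, \tau M)$ is \emph{always} in the image of $\Phi$: take $E = B$, which gives $\pi(B) = M$ and $B \cap \tau M = \tau M$. The pair that is genuinely missing is $(N,L) = (M, 0)$: a submodule $E \subset B$ with $\pi(E) = M$ and $E \cap \tau M = 0$ would be the image of a section of $B \to M$, and no such section exists precisely because the almost split sequence is non-split. Accordingly the Kronecker $\delta$ in your displayed identity should read $\delta_{\mathbf e,\, \ddim M}$, not $\delta_{\mathbf e,\, \ddim \tau M}$; one can check this directly on $Q : 1 \to 2$ with $M = S_1$, $\tau M = S_2$, $B = P_1$, where the extra $+1$ occurs at $\mathbf e = (1,0) = \ddim M$. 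Your sentence asserting that the obstruction class vanishes ``whenever $N \hookrightarrow M$ is not a split epimorphism, equivalently for every pair $(N,L) \neq (M, \tau M)$'' also conflates two non-equivalent conditions: ``$N \hookrightarrow M$ not a split epi'' means $N \neq M$, whereas the obstruction in fact vanishes whenever $N \neq M$ \emph{or} $L \neq 0$ (the latter by the dual AR property applied to $\tau M \twoheadrightarrow \tau M / L$), isolating exactly $(M,0)$. Once the exceptional pair is corrected to $(M,0)$ and the $\delta$ moved to $\mathbf e = \ddim M$, the fiber analysis (the non-empty fiber over $(N,L)$ is a torsor under $\Hom_{kQ}(N, \tau M / L)$, hence an affine space of Euler characteristic one) does close the argument as in \cite{CC}.
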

	
	Another important formula is the \emph{one-dimensional multiplication formula}:
	\begin{theorem}[\cite{CK2}]\label{theorem:onedimmult}
		Let $Q$ be an acyclic quiver and $M,N$ be any two objects in $\mathcal C_Q$ such that $\dim \Ext^1_{\mathcal C_Q}(M,N)=1$. Then 
		$$X_MX_N=X_B +X_{B'}$$
		where $B$ and $B'$ are the unique objects such that there exists non-split triangles
		$$M \fl B \fl N \fl M[1] \textrm{ and }N \fl B' \fl M \fl N[1].$$
	\end{theorem}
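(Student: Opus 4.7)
The plan is to expand both sides of the identity via the Caldero-Chapoton formula and match Euler characteristics of quiver Grassmannians dimension by dimension. First I would dispose of the cases where $M$ or $N$ is of the form $P_i[1]$: these can be handled by a direct computation using $X_{P_i[1]}=u_i$ together with the almost split multiplication formula (Proposition~\ref{prop:almostsplitmult}), reducing to the situation where both arguments are genuine $kQ$-modules. Using the $2$-Calabi-Yau decomposition
$$\Ext^1_{\mathcal C_Q}(M,N)\simeq \Ext^1_{kQ}(M,N)\oplus D\Ext^1_{kQ}(N,M),$$
the hypothesis forces exactly one summand to be one-dimensional; up to swapping $M$ and $N$, assume $\Ext^1_{kQ}(M,N)\simeq k$ and $\Ext^1_{kQ}(N,M)=0$. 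Then $B$ is the $kQ$-module sitting in the unique non-split short exact sequence $0\to N\to B\to M\to 0$, while $B'$ is the object of $\mathcal C_Q$ produced by the triangle associated to the nonzero class in $D\Ext^1_{kQ}(M,N)$.

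Next I would check that the monomial prefactors produced on both sides of the Caldero-Chapoton expansion agree. Since $\ddim B=\ddim B'=\ddim M+\ddim N$ in the Grothendieck group and the Euler form is bilinear, the exponents of each $u_i$ line up, and the theorem reduces to establishing the Euler characteristic identity
$$\sum_{\mathbf{v}+\mathbf{w}=\mathbf{g}} \chi\bigl(\Gr_{\mathbf{v}}(M)\bigr)\,\chi\bigl(\Gr_{\mathbf{w}}(N)\bigr) \;=\; \chi\bigl(\Gr_{\mathbf{g}}(B)\bigr)+\chi\bigl(\Gr_{\mathbf{g}}(B')\bigr)$$
for every dimension vector $\mathbf{g}$. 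The left-hand side parametrises pairs of subrepresentations $(L_M,L_N)$ with $\ddim L_M+\ddim L_N=\mathbf{g}$, while the right-hand side counts subobjects of total dimension $\mathbf{g}$ inside $B$ and inside $B'$.

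The geometric heart of the argument, and the main obstacle, is proving this identity. The strategy is to stratify $\Gr_{\mathbf{g}}(B)$ according to the pair $(L_M,L_N)$ determined by $L\cap N$ and the image of $L$ in $B/N\simeq M$: each stratum is shown to be an affine bundle over a locally closed subvariety of $\Gr_{\mathbf{w}}(N)\times\Gr_{\mathbf{v}}(M)$, and a pair $(L_M,L_N)$ lifts to an actual submodule of $B$ precisely when a connecting obstruction in $\Ext^1_{kQ}(L_M,N/L_N)$ vanishes. A parallel analysis, applied to the triangle $N\to B'\to M\to N[1]$ in the cluster category, stratifies $\Gr_{\mathbf{g}}(B')$ by the dual Calabi-Yau obstruction living in $D\Ext^1_{kQ}(M/L_M,L_N)$. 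Because $\dim\Ext^1_{\mathcal C_Q}(M,N)=1$, these two obstructions are jointly governed by a single scalar, so each pair $(L_M,L_N)$ contributes to exactly one of $\Gr_{\mathbf{g}}(B)$ or $\Gr_{\mathbf{g}}(B')$. The delicate point is verifying that the strata are constructible affine bundles, so Euler characteristics multiply on fibres and add on disjoint strata, and that the dichotomy between $B$ and $B'$ is exhaustive and disjoint; this is exactly where the $2$-Calabi-Yau structure of $\mathcal C_Q$ is genuinely used.
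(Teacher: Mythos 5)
The paper does not give a proof of Theorem \ref{theorem:onedimmult}; it is cited directly from \cite{CK2} and used as a black box, so there is no in-paper argument to compare against. Your sketch does follow the known Caldero--Keller strategy: reduce to $kQ$-modules, expand both sides through the Caldero--Chapoton formula, and compare Euler characteristics of quiver Grassmannians by stratification.

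Two of your steps conceal real difficulties. After the $2$-Calabi--Yau splitting, the middle term whose triangle class comes from the $D\Ext^1_{kQ}$ summand is generally \emph{not} a $kQ$-module: it has a nonzero shifted-projective part $P[1]$, so the central identity you write, relating $\sum_{\textbf v+\textbf w=\textbf g}\chi(\Gr_{\textbf v}(M))\chi(\Gr_{\textbf w}(N))$ to $\chi(\Gr_{\textbf g}(B))+\chi(\Gr_{\textbf g}(B'))$, does not even typecheck as stated; both sides must first be corrected by Laurent-monomial prefactors coming from $P[1]$ and the Euler-form bookkeeping before Grassmannian counts can be compared, and your initial reduction to ``genuine $kQ$-modules'' does not remove this, since it is $B$ or $B'$ (not $M$ or $N$) that fails to be a module. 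More fundamentally, the dichotomy claim --- that each pair $(L_M,L_N)$ contributes to exactly one of $\Gr_{\textbf g}(B)$ and $\Gr_{\textbf g}(B')$ --- is asserted rather than established. The obstruction to lifting $(L_M,L_N)$ into the Grassmannian of one middle term is the image of the fixed extension class under a restriction/corestriction map landing in $\Ext^1_{kQ}(L_M,N/L_N)$, and without showing that the corresponding condition for the other triangle, obtained via the $2$-Calabi--Yau pairing, is exactly complementary, a pair could a priori contribute to both Grassmannians or to neither. Saying the two obstructions are ``jointly governed by a single scalar'' names the phenomenon but does not prove it; that step is the substance of Caldero and Keller's theorem.
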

	The following theorem is a consequence of \cite{XX} or \cite{Xu} (see also \cite[Theorem 9.2]{Dupont:stabletubes}).
	
	\begin{theorem}\label{theorem:multiplication}
		Let $Q$ be an acyclic quiver and $M,N$ be two indecomposable $kQ$-modules. Then $X_MX_N$ can be written as a $\Q$-linear combination of $X_Y$ where $Y$ is either the middle term of short exact sequences of $kQ$-modules
		$$0 \fl M \fl Y \fl N \fl 0 \textrm{ or }0 \fl N \fl Y \fl M \fl 0$$
		or is isomorphic to $\ker f \oplus \coker f[-1]$ for some morphism $f$ in $\Hom_{kQ}(M,\tau N)$ or in $\Hom_{kQ}(N,\tau M)$.
	\end{theorem}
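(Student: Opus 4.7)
The plan is to expand $X_M X_N$ via the Caldero-Chapoton formula and match the resulting Laurent polynomial against a sum $\sum_Y c_Y X_Y$ over the three families of objects $Y$ in the statement, choosing the coefficients $c_Y$ from the geometry of extensions and morphisms in $kQ$-mod. The structural input is the 2-Calabi-Yau identity
$$\Ext^1_{\mathcal C_Q}(M,N) = \Ext^1_{kQ}(M,N) \oplus D\Ext^1_{kQ}(N,M),$$
together with its exchanged counterpart for $\Ext^1_{\mathcal C_Q}(N,M)$; these dictate precisely the three geometric origins of the middle terms appearing on the right-hand side.

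The non-split triangles in $\mathcal C_Q$ involving $M$ and $N$ thus split, up to duality, into two geometric regimes. On the one hand, the module summands $\Ext^1_{kQ}(M,N)$ and $\Ext^1_{kQ}(N,M)$ stratify by isomorphism classes of middle terms $Y$ of the two families of short exact sequences $0 \to N \to Y \to M \to 0$ and $0 \to M \to Y \to N \to 0$, in which $Y$ is a $kQ$-module. On the other hand, $D\Ext^1_{kQ}(N,M)$ is related by Auslander-Reiten duality to $\Hom_{kQ}(M,\tau N)$: given a morphism $f \in \Hom_{kQ}(M,\tau N)$, the octahedral axiom applied to its factorization $M \twoheadrightarrow \im f \hookrightarrow \tau N$ yields a triangle in $\mathcal C_Q$ whose third term is precisely $\ker f \oplus \coker f[-1]$, and a symmetric argument handles morphisms in $\Hom_{kQ}(N,\tau M)$. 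This accounts for exactly the three families of $Y$ listed in the statement.

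With this stratification in hand, I would invoke a Green-type identity in the spirit of Xiao-Xu, Xu, or Palu to match Euler characteristics. Expanding
$$X_M X_N = \sum_{\textbf e,\textbf f} \chi(\Gr_{\textbf e}(M))\,\chi(\Gr_{\textbf f}(N))\,u^{\alpha(\textbf e,\textbf f)},$$
one would set each $c_Y$ proportional to the Euler characteristic of the stratum attached to $Y$, divided by the appropriate automorphism factor, and check that the two sides agree coefficient by coefficient. The denominators theorem (Theorem \ref{theorem:denominators}) ensures that all monomial exponents lie in a common finite set, while the one-dimensional multiplication formula (Theorem \ref{theorem:onedimmult}) handles the generic line in each one-dimensional $\Ext^1_{\mathcal C_Q}$-family and supplies the correct shape of the relation in that case.

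The main obstacle is this last comparison of Euler characteristics: it ultimately reduces to Green's formula for Hall numbers, together with its Auslander-Reiten twisted variant for the $\Hom_{kQ}(M,\tau N)$ contribution. The careful bookkeeping of stabilizer actions on the extension and morphism spaces, and of the 2-Calabi-Yau duality between the two summands of $\Ext^1_{\mathcal C_Q}(M,N)$, is the delicate step; it is significantly lightened by the fact that the theorem only asserts existence of a $\Q$-linear combination, so no explicit formula for the $c_Y$ and no positivity or integrality need be established.
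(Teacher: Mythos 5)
The paper itself offers no proof of Theorem \ref{theorem:multiplication}; it simply states the result as a direct consequence of the cluster multiplication theorems of Xiao--Xu and Xu (citing \cite{XX}, \cite{Xu}, and \cite[Theorem 9.2]{Dupont:stabletubes}), and your proposal is an accurate outline of exactly the mechanism behind those theorems: split $\Ext^1_{\mathcal C_Q}(M,N)$ via the $2$-Calabi--Yau decomposition $\Ext^1_{kQ}(M,N)\oplus D\Ext^1_{kQ}(N,M)$, identify (via the octahedral axiom and heredity of $kQ$, which kills $\Ext^2$) the cone of $f\in\Hom_{kQ}(M,\tau N)$ with $\ker f\oplus\coker f[-1]$, and match Euler characteristics using Green's formula. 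You are therefore taking essentially the same route the paper relies on, deferring the delicate Green-formula bookkeeping to the same cited sources.
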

	
	\begin{corol}\label{corol:expansion}
		Let $Q$ be an acyclic quiver and $M,N$ be any two objects in $\mathcal C_Q$. Then $X_MX_N$ can be written as a $\Q$-linear combination of $X_Y$ where $Y$ is the middle term of a non split triangle in $\Ext^1_{\mathcal C_Q}(N,M)$ or in $\Ext^1_{\mathcal C_Q}(M,N)$.
	\end{corol}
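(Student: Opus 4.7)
The plan is to derive this corollary from Theorem \ref{theorem:multiplication} by translating the four types of summands listed there into middle terms of triangles in the cluster category $\mathcal C_Q$, and then extend the resulting formula to the shifted-projective and decomposable cases.

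First, assume $M$ and $N$ are both indecomposable $kQ$-modules. Using the orbit-category description together with Auslander-Reiten duality and the fact that $kQ$ is hereditary, we have
$$\Ext^1_{\mathcal C_Q}(M,N)=\Ext^1_{kQ}(M,N)\oplus\Hom_{kQ}(M,\tau N).$$
A non-split triangle $N\to Y\to M\to N[1]$ in $\mathcal C_Q$ whose class lies in the first summand is exactly a non-split module extension $0\to N\to Y\to M\to 0$, so $Y$ is a module. If its class lies in the second summand, coming from a morphism $f\colon M\to\tau N$, then rotating the triangle $M\xrightarrow{f}\tau N\to\cone(f)\to M[1]$ in $D^b(kQ)$ and using that $\cone(f)\simeq\coker f\oplus\ker f[1]$ (since $kQ$ is hereditary), together with the identification $\tau N[-1]\simeq N$ in $\mathcal C_Q$, one finds that the middle term equals $\ker f\oplus\coker f[-1]$. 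Swapping $M$ and $N$ and applying the same analysis to $\Ext^1_{\mathcal C_Q}(N,M)$ accounts for the remaining two types of summands appearing in Theorem \ref{theorem:multiplication}. This establishes the corollary for indecomposable modules.

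The next step is to handle the case in which $M$ or $N$ is a shifted projective $P_i[1]$. Since $X_{P_i[1]}=u_i$ and $\Ext^1_{\mathcal C_Q}(P_i[1],N)\simeq\Hom_{kQ}(P_i,N)$ when $N$ is a module, each non-zero $f\colon P_i\to N$ still produces a triangle $N\to\coker f\oplus\ker f[1]\to P_i[1]\to N[1]$ in $\mathcal C_Q$. A direct verification from the definition of the Caldero-Chapoton map, or alternatively an appeal to Zhu's extended BGP-reflection functors (Section \ref{section:reflection}) to transport the situation to a purely module-theoretic one, yields the required expansion of $u_iX_N$. Two shifted projectives have vanishing $\Ext^1$ in $\mathcal C_Q$, so this subcase is trivial.

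Finally, the general case follows by induction on the number of indecomposable summands of $M\oplus N$, using the multiplicativity $X_{A\oplus B}=X_AX_B$ to reduce iterated products to individual expansions. The main obstacle I anticipate lies in this last step: iterating the indecomposable formula produces a $\Q$-linear combination of $X_Y$'s, but expressing the index set as middle terms of triangles whose \emph{endpoints} are the full decomposable objects $M$ and $N$, rather than their indecomposable factors, requires a careful regrouping. I expect this to be handled either via a delicate combinatorial argument on the Ext-spaces of direct sums, or by directly invoking the general form of Xu's multiplication theorem in \cite{Xu}, from which Theorem \ref{theorem:multiplication} is itself derived and which already applies in the cluster-category setting.
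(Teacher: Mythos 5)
Your plan follows the same route as the paper---isolate indecomposable summands $M_1$ of $M$ and $N_1$ of $N$, expand $X_{M_1}X_{N_1}$ via Theorem \ref{theorem:multiplication}, and then re-attach the remaining summands $M_2\oplus N_2$---but you stop exactly at the step the paper actually has to carry out, and you flag it yourself as the "main obstacle." That step is not a delicate combinatorial issue on Ext-spaces, nor does it require re-deriving Xu's theorem in the cluster category: the resolution is a one-line direct-sum construction. Given a non-split triangle $M_1 \xrightarrow{i} Y \xrightarrow{p} N_1 \xrightarrow{\epsilon} M_1[1]$, one forms the triangle
$$M_1\oplus M_2 \fl Y\oplus M_2\oplus N_2 \fl N_1\oplus N_2 \fl M_1[1]\oplus M_2[1]$$
with maps built in block form from $i,p,\epsilon$ and the identity on $M_2$, the zero map on $N_2$; this is still a triangle, it is non-split because $\epsilon\neq 0$, and its endpoints are precisely $M$ and $N$. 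Consequently $X_{M_2\oplus N_2}\cdot n_Y X_Y = n_Y X_{Y\oplus M_2\oplus N_2}$ indexes over middle terms of non-split triangles between $M$ and $N$, which is exactly the statement to be proved. Without this construction your argument is incomplete: multiplicativity of $X_?$ tells you $X_MX_N=\sum n_Y X_{Y\oplus M_2\oplus N_2}$, but says nothing about $Y\oplus M_2\oplus N_2$ being realizable as a middle term of a triangle between $M$ and $N$, which is the whole content of the corollary.

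A secondary remark: your first paragraph, identifying the two families of middle terms in Theorem \ref{theorem:multiplication} with non-split triangles in $\mathcal C_Q$, is correct and is implicitly used by the paper (e.g., in Corollary \ref{corol:inductionstep}), so it is a reasonable elaboration. Your treatment of the shifted-projective case is also a sensible addition that the paper glosses over, since Theorem \ref{theorem:multiplication} is only stated for modules while the base case $n=2$ must also cover objects like $P_i[1]$.
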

	\begin{proof}
		We assume that $M,N \neq 0$. We prove it by induction on the number $n$ of indecomposable direct summands of $M \oplus N$. If $n=2$, then $M$ and $N$ are decomposable and the result is nothing but theorem \ref{theorem:multiplication}. Assume that $n>2$. Then $M\oplus N=M_1 \oplus M_2 \oplus N_1 \oplus N_2 $ with $M_1,N_1$ indecomposable. Then
		\begin{align*}
			X_MX_N 	&= X_{M \oplus N}\\
				&= X_{M_1 \oplus M_2 \oplus N_1 \oplus N_2}\\
				&= X_{M_2 \oplus N_2} \left(X_{M_1}X_{N_1}\right)\\
				&= X_{M_2 \oplus N_2} \sum_Y n_Y X_Y\\
		\end{align*}
		where the $n_Y$ are rational numbers and $Y$ runs over middle terms of non split triangles 
		$$M_1 \fl Y \fl N_1 \fl M_1[1] \textrm{ or }N_1 \fl Y \fl M_1 \fl N_1[1].$$
		But if $M_1 \xrightarrow{i} Y \xrightarrow{p} N_1 \xrightarrow{\epsilon} M_1[1]$ is a non-split triangle, there is a non-split triangle
		$$\xymatrix{
		M_1 \oplus M_2 
			\ar[r]^{\left[\begin{array}{cc}
				i & 0 \\ 0 & 1 \\ 0 & 0
			\end{array}\right]}
 		&  Y \oplus M_2 \oplus N_2 \ar[r] 
 			\ar[r]^{\left[\begin{array}{ccc}
				p & 0 & 0 \\ 0 & 0 & 1 
			\end{array}\right]}
 		&  N_1 \oplus N_2 \ar[r] 
 			\ar[r]^{\left[\begin{array}{cc}
				\epsilon & 0 \\ 0 & 0
			\end{array}\right]}
 		& M_1[1] \oplus M_2[1]
		}
		.$$
		It follows that
		$$X_MX_N=\sum_Y n_YX_{Y \oplus M_2 \oplus N_2}$$
		where $n_Y$ are rational numbers and $Y \oplus M_2 \oplus N_2$ runs over middle terms of non split triangles
		$$M \fl Y \oplus M_2 \oplus N_2 \fl N \fl M[1] \textrm{ or }N \fl Y \oplus M_2 \oplus N_2 \fl M \fl N[1].$$
	\end{proof}
	
	Now, we get interested in middle terms occurring in these non-split triangles. For any two $kQ$-modules $M,N$, we write $\Ext^1_{\mathcal C_Q}(N,M)_Z$ the set of triangles in $\Ext^1_{\mathcal C_Q}(N,M)$ with middle term isomorphic to $Z$. For any object $M$ in $\mathcal C_Q$, we write $[M,M]^1=\dim \Ext^1_{\mathcal C_Q}(M,M)$. We denote by $K_0^{\textrm{split}}(\mathcal C_Q)$ the split Grothendieck group, that is, the free abelian group over the isoclasses of $\mathcal C_Q$ modulo the split triangles. 
	
	\begin{monlem}\label{lem:dimautoext}
		Let $Q$ be an acyclic quiver, and $M,N$ be two objects $\Ext^1_{\mathcal C_Q}(N,M) \neq 0$. Fix $Z \neq M \oplus N$ an object such that $\Ext^1_{\mathcal C_Q}(N,M)_Z \neq \emptyset$. Then
		$$[Z,Z]^1 < [M \oplus N, M \oplus N]^1$$
	\end{monlem}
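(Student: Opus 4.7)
Since $Z \not\simeq M \oplus N$ and $\mathcal{C}_Q$ is Krull-Schmidt, every triangle in $\Ext^1_{\mathcal{C}_Q}(N, M)_Z$ is non-split: write such a triangle as $M \xrightarrow{i} Z \xrightarrow{p} N \xrightarrow{\epsilon} M[1]$ with $\epsilon \neq 0$. By 2-Calabi-Yau duality,
$$[M \oplus N, M \oplus N]^1 = [M, M]^1 + [N, N]^1 + 2[N, M]^1,$$
so it suffices to produce a strict inequality between $[Z, Z]^1$ and this quantity.

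The plan is a standard cohomological dimension count. Apply the six functors $\Hom_{\mathcal{C}_Q}(-, M), \Hom_{\mathcal{C}_Q}(-, N), \Hom_{\mathcal{C}_Q}(M, -), \Hom_{\mathcal{C}_Q}(N, -)$, $\Hom_{\mathcal{C}_Q}(-, Z)$ and $\Hom_{\mathcal{C}_Q}(Z, -)$ to the triangle, obtaining long exact sequences that express each dimension $[X, Y]$ and $[X, Y]^1$ for $X, Y \in \{M, N, Z\}$ in terms of $[M, M]^?, [M, N]^?, [N, M]^?, [N, N]^?$ together with the ranks of the connecting maps induced by $\epsilon$. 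The crucial maps are
$$\epsilon^* : \Hom(M, M) \to \Ext^1(N, M),\; f \mapsto f[1]\circ \epsilon, \qquad \epsilon_* : \Hom(N, N) \to \Ext^1(N, M),\; g \mapsto \epsilon \circ g,$$
both of which send the identity to $\epsilon \neq 0$ and therefore have rank $\geq 1$. Combining the data from all six LES, together with the 2-CY identifications $\Ext^1(X, Y) \simeq D \Ext^1(Y, X)$, yields a formula of the shape $[Z, Z]^1 = [M \oplus N, M \oplus N]^1 - c$ with $c \geq 2$ a positive correction term encoding the non-vanishing of $\epsilon$ via $\dim \im \epsilon^*$ and $\dim \im \epsilon_*$.

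The main obstacle is keeping the bookkeeping straight: the LES for $\Hom_{\mathcal C_Q}(-, Z)$ applied to the triangle produces an $\Ext^2_{\mathcal C_Q}(N, Z)$ term that is \emph{a priori} nonzero in the cluster category, and must be handled via the 2-CY identification $\Ext^2(N, Z) \simeq D\Hom(Z, N)$. Correctly assembling all the partial dimension formulas, with signs and CY dualities in the right places, is the delicate computational step; the conceptual reason for the inequality is simply that a non-trivial extension strictly lowers the self-extension dimension.
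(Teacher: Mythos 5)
Your high-level plan identifies the right ingredients, but what you have written is a proof \emph{outline}, not a proof: you acknowledge that ``correctly assembling all the partial dimension formulas'' is ``the delicate computational step'' and ``the main obstacle,'' and then you do not carry it out. That assembly is precisely where the content of the lemma lies, so as written there is a genuine gap.

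Two more specific concerns. First, your claimed identity $[Z,Z]^1 = [M\oplus N, M\oplus N]^1 - c$ is not an equality in general — the long exact sequences produce a collection of inequalities, not a closed formula for $[Z,Z]^1$ in terms of $M$ and $N$ alone; the slack terms (such as the kernel and cokernel appearing when one applies $\Hom_{\mathcal C_Q}(-,Z[1])$) depend on $Z$ and are not controlled by $\epsilon$. Second, you worry about the $\Ext^2_{\mathcal C_Q}(N,Z)$ term cluttering the bookkeeping, and propose to tame it via $\Ext^2 \simeq D\Hom$. The paper's argument avoids that issue entirely. It applies $\Hom_{\mathcal C_Q}(-,R[1])$ to the triangle $M\to Z\to N\xrightarrow{\epsilon}M[1]$ for an arbitrary test object $R$ and, rather than chasing the full long exact sequence, truncates to the five-term exact sequence
$$0 \fl K_R \fl \Hom_{\mathcal C_Q}(N,R[1]) \fl \Hom_{\mathcal C_Q}(Z,R[1]) \fl \Hom_{\mathcal C_Q}(M,R[1]) \fl C_R \fl 0,$$
giving $[Z,R]^1 + \dim K_R + \dim C_R = [N,R]^1 + [M,R]^1$ with no $\Ext^2$ in sight. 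Taking $R=N$, $R=Z$, $R=M$ yields the three estimates $[Z,N]^1 \leq [N,N]^1+[M,N]^1$, $[Z,Z]^1 \leq [N,Z]^1+[M,Z]^1$, and — since $\epsilon$ lies in $K_M$, forcing $\dim K_M \geq 1$ — the \emph{strict} inequality $[Z,M]^1 < [N,M]^1+[M,M]^1$. Combining these three with $2$-CY symmetry $[Z,N]^1=[N,Z]^1$, $[Z,M]^1=[M,Z]^1$ gives the result with a single well-placed strict inequality, which is cleaner than tracking ranks of both $\epsilon^*$ and $\epsilon_*$ as you propose (and the lemma only needs $c\geq 1$, not $c\geq 2$). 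You should either carry out your plan completely or adopt this more economical reduction.
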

	\begin{proof}
		Fix a non-split triangle 
		$$M \fl Z \fl N \xrightarrow{\epsilon} M[1].$$
		
		$[-,-]^1$ induces a bilinear form on $K_0^{\textrm{split}}(\mathcal C_Q)$. 
		For any object $R$, the contravariant functor $\Hom_{\mathcal C_Q}(-,R[1])$ applied to the triangle 
		$M \fl Z \fl N \xrightarrow{\epsilon} M[1]$ gives rise to the exact sequence
		$$0 \fl K_R \fl \Hom_{\mathcal C_Q}(N,R[1]) \fl \Hom_{\mathcal C_Q}(Z,R[1]) \fl \Hom_{\mathcal C_Q}(M,R[1]) \fl C_R \fl 0$$
		where $K_R$ is the kernel of the map $\Hom_{\mathcal C_Q}(N,R[1]) \fl \Hom_{\mathcal C_Q}(Z,R[1])$ and $C_R$ is the cokernel.
		
		In particular, for any object $R$, we have
		$$\dim K_R + \dim C_R + \dim \Hom_{\mathcal C_Q}(Z,R[1])=\dim \Hom_{\mathcal C_Q}(N,R[1])+\dim \Hom_{\mathcal C_Q}(M,R[1]),$$
		thus
		$$\dim K_R + \dim C_R + [Z,R]^1=[N,R]^1+[M,R]^1.$$
		We the get
		$$[Z,N]^1 \leq [N,N]^1+[M,N]^1 \textrm{ and } [Z,Z]^1 \leq [N,Z]^1+[M,Z]^1.$$
		Moreover, $\epsilon$ is a non-zero element in the kernel $K_M$ of $\Hom_{\mathcal C_Q}(N,M[1]) \fl \Hom_{\mathcal C_Q}(Z,M[1])$, and thus $\dim K_M >0$. Finally,
		$$[Z,M]^1<[N,M]^1+[M,M]^1.$$
		
		It follows that
		\begin{align*}
			[M+N, M+N]^1
				&=[M,M]^1 + [N,N]^1 + [M,N]^1+[N,M]^1\\
				&> [Z,N]^1 + [Z,M]^1 \\
				&=[N,Z]^1 + [M,Z]^1\\
				& \geq [Z,Z]^1
		\end{align*}
		which proves the lemma.
	\end{proof}
	
	This implies the following corollary which will be essential to us:

	\begin{corol}\label{corol:inductionstep}
		Let $Q$ be an affine quiver and $M,N$ be two objects such that $\Ext^1_{\mathcal C_Q}(N,M) \neq 0$. Then, $X_MX_N$ is a (finite) $\Q$-linear combination of $X_Y$ where $Y$ runs over objects in $\mathcal C_Q$ such that $[Y,Y]^1<[M\oplus N,M\oplus N]^1$.
	\end{corol}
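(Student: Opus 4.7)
The plan is to derive this corollary as an essentially immediate consequence of Corollary~\ref{corol:expansion} combined with Lemma~\ref{lem:dimautoext}, with the 2-Calabi-Yau property of $\mathcal C_Q$ bridging the two ``directions'' of extensions that appear. I would proceed as follows.

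First, I would apply Corollary~\ref{corol:expansion} to the objects $M$ and $N$, obtaining a finite expansion
$$X_M X_N = \sum_Y n_Y\, X_Y, \qquad n_Y \in \Q,$$
where each $Y$ arises as the middle term of a non-split triangle of one of the two forms
$$M \fl Y \fl N \fl M[1] \quad \text{or} \quad N \fl Y \fl M \fl N[1].$$
The key observation is that because these triangles are non-split, one has $Y \not\simeq M \oplus N$ in every term of the sum.

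Next, for a middle term $Y$ coming from the first type of triangle, the hypothesis $\Ext^1_{\mathcal C_Q}(N,M) \neq 0$ of Lemma~\ref{lem:dimautoext} is exactly the standing assumption of the corollary, so the lemma applies directly and yields
$$[Y,Y]^1 < [M \oplus N, M \oplus N]^1.$$
For a middle term $Y$ coming from the second type of triangle, I would invoke the 2-Calabi-Yau duality
$$\Ext^1_{\mathcal C_Q}(M,N) \simeq D\,\Ext^1_{\mathcal C_Q}(N,M),$$
which guarantees $\Ext^1_{\mathcal C_Q}(M,N) \neq 0$ as well; then I apply Lemma~\ref{lem:dimautoext} with the roles of $M$ and $N$ interchanged, obtaining
$$[Y,Y]^1 < [N \oplus M, N \oplus M]^1 = [M \oplus N, M \oplus N]^1.$$

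Since every $Y$ appearing in the expansion satisfies the required strict inequality, the conclusion follows. There is no real obstacle here: the substantive work has already been done in establishing Corollary~\ref{corol:expansion} (finite expansion into middle-term cluster characters) and Lemma~\ref{lem:dimautoext} (the $[-,-]^1$ decrease along non-split extensions). The only small subtlety is the symmetric treatment of both types of triangles, which is handled cleanly by the 2-Calabi-Yau property.
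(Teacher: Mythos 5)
Your argument has the right skeleton --- feed the expansion of Corollary~\ref{corol:expansion} into Lemma~\ref{lem:dimautoext} --- but it skips a step the paper treats as essential, and the omission shows up in a telling way: your proof never uses the hypothesis that $Q$ is affine, even though the corollary is stated (deliberately) only for affine quivers, in contrast to Corollary~\ref{corol:expansion} and Lemma~\ref{lem:dimautoext}, which are stated for all acyclic quivers.

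The paper's proof begins by using that $\tau$ is an auto-equivalence of $\mathcal C_Q$ and that the transjective component of $\mathcal C_Q$ is infinite (this is where affineness enters) to replace $M$ and $N$ by $\tau^k M$ and $\tau^k N$ for a suitable $k$, thereby reducing to the case where $M$ and $N$ have no summands of the form $P_i[1]$, i.e.\ are modules. This reduction matters because Corollary~\ref{corol:expansion} is proved by induction whose base case is Theorem~\ref{theorem:multiplication}, which is formulated only for indecomposable $kQ$-\emph{modules}; the statement of Corollary~\ref{corol:expansion} does say ``any two objects,'' but the author evidently does not trust it off the shelf when shifted projectives appear, and the present corollary exists precisely to produce a version with a clean non-split hypothesis that Lemma~\ref{lem:dimautoext} can consume. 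By citing Corollary~\ref{corol:expansion} as a black box without this $\tau$-shift, you have implicitly proved the stronger claim for any acyclic quiver; that stronger claim may well be true, but your proof does not account for why the paper restricts to the affine case, which is the signal that a step is missing.

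Two smaller remarks. First, your appeal to the $2$-Calabi--Yau duality for the triangles of the form $N \fl Y \fl M \fl N[1]$ is unnecessary: the existence of any non-split triangle of this shape already forces $\Ext^1_{\mathcal C_Q}(M,N) \neq 0$, so Lemma~\ref{lem:dimautoext} applies with the roles of $M$ and $N$ swapped, and $[M\oplus N, M\oplus N]^1 = [N\oplus M, N\oplus M]^1$ trivially; no duality is needed. Second, your observation that $Y \not\simeq M\oplus N$ because the triangle is non-split is correct (this is the Krull--Schmidt cancellation fact) and is exactly the point that lets Lemma~\ref{lem:dimautoext} apply, so that part of the argument is sound.
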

	\begin{proof}
		As $\tau$ is an auto-equivalence of $\mathcal C_Q$ and $\mathcal C_Q$ has infinitely many indecomposable objects in its transjective component, we can assume that $M$ and $N$ are modules. Then, every $Y$ occurring in the expansion of corollary \ref{corol:expansion} is the middle term of a non-split triangle in the cluster category. Lemma \ref{lem:dimautoext} gives the result.
	\end{proof}
	
	The following is an easy but useful lemma:
	\begin{monlem}\label{lem:dimtermecentral}
		Let $Q$ be an acyclic quiver and $M,N$ be two objects in $\mathcal C_Q$. Then for any triangle 
		$$M \fl E \fl N \fl M[1]$$
		we have $\ddim H^0(E) \leq \ddim H^0(M \oplus N)$.
	\end{monlem}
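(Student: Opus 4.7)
The plan is to apply the cohomological functor $H^0 = \Hom_{\mathcal{C}_Q}(kQ, -)$ recalled in the excerpt to the triangle $M \to E \to N \to M[1]$ and to read the desired inequality off the resulting long exact sequence in $kQ\textrm{-mod}$. Concretely, since $\mathcal{C}_Q$ is triangulated, $H^0$ sends triangles to long exact sequences; splicing the ones coming from the given triangle and one of its rotations yields the five-term exact sequence
$$H^{-1}(N) \to H^0(M) \to H^0(E) \to H^0(N) \to H^1(M)$$
in $kQ\textrm{-mod}$, whose middle three terms are the module parts of $M$, $E$, and $N$ by the paper's identification of $H^0$ with the projection onto the module summand of the canonical decomposition $X = H^0(X) \oplus P_X[1]$.

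From exactness at $H^0(E)$ I extract the short exact sequence
$$0 \to \im(H^0(M) \to H^0(E)) \to H^0(E) \to \im(H^0(E) \to H^0(N)) \to 0,$$
which gives $\ddim H^0(E)$ as the sum of the dimension vectors of the two images. The first is a quotient of $H^0(M)$, hence has dimension vector componentwise bounded by $\ddim H^0(M)$; the second is a submodule of $H^0(N)$, hence has dimension vector componentwise bounded by $\ddim H^0(N)$. Summing these inequalities and noting that $\ddim H^0(M) + \ddim H^0(N) = \ddim H^0(M \oplus N)$ yields the claim.

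I do not anticipate any substantial obstacle: the argument is a formal consequence of $H^0$ being cohomological, together with additivity of dimension vectors on short exact sequences of $kQ$-modules. The only point worth noting is that shifted-projective summands of $M$ or $N$ require no special treatment, as they are annihilated by $H^0$ and therefore contribute nothing to the relevant portion of the long exact sequence.
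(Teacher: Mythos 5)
Your proof is correct and takes essentially the same approach as the paper: both apply the cohomological functor $H^0$ to the triangle, obtain the long exact sequence, and decompose $\ddim H^0(E)$ via exactness at $H^0(E)$. The paper phrases this through Grothendieck-group identities involving $\ker$ and $\coker$ of $H^0(\alpha)$ and $H^0(\beta)$, whereas you work directly with the image-based short exact sequence and bound each image as a quotient of $H^0(M)$ or a submodule of $H^0(N)$; this is a minor reorganization of the same argument, not a different route.
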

	\begin{proof}
		Applying $H^0$ to the triangle $M \xrightarrow{\alpha} E \xrightarrow{\beta} N \fl M[1]$, we get a long exact sequence 
		\begin{equation}\label{eq:longexactsequence}
			\cdots \fl H^0(M) \xrightarrow{H^0(\alpha)} H^0(E) \xrightarrow{H^0(\beta)} H^0(N) \fl \cdots
		\end{equation}
		We denote by 
		$$K_\alpha=\ker H^0(\alpha),\, C_\alpha=\coker H^0(\alpha),$$
		$$K_\beta=\ker H^0(\beta),\, C_\beta=\coker H^0(\beta).$$
		
		We have two exact sequences
		$$0 \fl K_\alpha \fl H^0(M) \xrightarrow{H^0(\alpha)} H^0(E) \fl C_\alpha \fl 0,$$
		$$0 \fl K_\beta \fl H^0(E) \xrightarrow{H^0(\beta)} H^0(N) \fl C_\beta \fl 0.$$
		
		In the Grothendieck group $K_0(kQ)$, we thus get the equalities 
		$$H^0(E)=H^0(M)+C_\alpha-K_\alpha$$
		$$H^0(E)=H^0(N)+K_\beta-C_\beta$$
		and thus 
		$$2H^0(E)=H^0(M) + H^0(N) +(C_\alpha-C_\beta)+(K_\beta-K_\alpha)$$
		
		As the sequence (\ref{eq:longexactsequence}) is exact, we have $K_\beta=\im(H^0(\alpha))$ and thus $H^0(E)-K_\beta=C_\alpha$. It follows that 
		$$H^0(E)=H^0(M) + H^0(N)-(K_\alpha+C_\beta)$$
		and thus identifying $K_0(kQ)$ and $\Z^{Q_0}$ with the dimension vector, we get 
		$$\ddim H^0(E) \leq \ddim H^0(M \oplus N).$$
	\end{proof}

	\begin{monlem}\label{lem:Centremodule}
		Let $Q$ be an affine quiver, $\mathcal T$ be a tube of the AR-quiver of $Q$ and $M,N$ be two indecomposable regular modules in $\mathcal T$. Then for any triangle 
		$$M \fl E \fl N \fl M[1]$$
		$E$ is a regular module in $\mathcal T$.
	\end{monlem}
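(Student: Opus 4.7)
My plan is to decompose the extension class using
$$\Ext^1_{\mathcal C_Q}(N,M) = \Ext^1_{kQ}(N,M) \oplus D\Ext^1_{kQ}(M,N)$$
and analyze each summand. By Auslander-Reiten duality, $D\Ext^1_{kQ}(M,N) \cong \Hom_{kQ}(N, \tau M)$, so I write the connecting morphism as $\epsilon = \epsilon_1 + \epsilon_2$ with $\epsilon_1 \in \Ext^1_{kQ}(N,M)$ and $\epsilon_2 \in \Hom_{kQ}(N, \tau M)$. The key facts I will use are that $\tau$ preserves the tube $\mathcal T$ and that $\mathcal T$ is an abelian subcategory of $kQ$-mod closed under extensions, kernels, and cokernels.

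In the pure case $\epsilon_2 = 0$, the triangle in $\mathcal C_Q$ arises from a short exact sequence $0 \to M \to E \to N \to 0$ of $kQ$-modules, and closure of $\mathcal T$ under extensions gives $E \in \mathcal T$. In the other pure case $\epsilon_1 = 0$, the connecting map is a module morphism $f = \epsilon_2 : N \to \tau M$ between objects of $\mathcal T$, so $K = \ker f$ and $C = \coker f$ both lie in $\mathcal T$. Since $kQ$ is hereditary, the cone of $f$ in $D^b(kQ)$ splits as $C \oplus K[1]$; transferring the rotated triangle back to $\mathcal C_Q$ via the identifications $\tau M \cong M[1]$ and $(-)[-1] \cong \tau^{-1}(-)$ produces $E \cong \tau^{-1}C \oplus K$, which lies in $\add \mathcal T$.

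For the general mixed case, I plan to apply the octahedron axiom to the image factorization $N \twoheadrightarrow \im \epsilon_2 \hookrightarrow \tau M$ combined with the extra $\Ext^1_{kQ}$-contribution from $\epsilon_1$. The octahedron decomposes $E$ into pieces built from the kernel, image, and cokernel of $\epsilon_2$ (each in $\mathcal T$, up to applying $\tau^{-1}$) together with an extension of modules in $\mathcal T$ controlled by $\epsilon_1$ (also in $\mathcal T$ by closure under extensions); closure of $\mathcal T$ under the relevant operations then forces $E \in \add \mathcal T$. Tracking how the two components of $\epsilon$ interact through the octahedron is the main obstacle of the proof, since in the 2-Calabi-Yau category $\mathcal C_Q$ the middle term does not depend in a naive way on a direct-sum decomposition of the extension class.

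Finally, to rule out any summand of the form $P_j[1]$ for a projective $P_j$, note that $\Ext^1_{\mathcal C_Q}(X, P_j) = \Ext^1_{kQ}(X, P_j) \oplus D\Ext^1_{kQ}(P_j, X) = 0$ for any module $X$, by projectivity of $P_j$. Hence $\Hom_{\mathcal C_Q}(M, P_j[1]) = \Hom_{\mathcal C_Q}(N, P_j[1]) = 0$, and the long exact sequence obtained by applying $\Hom_{\mathcal C_Q}(-, P_j[1])$ to the triangle forces $\Hom_{\mathcal C_Q}(E, P_j[1]) = 0$. Thus $E$ contains no $P_j[1]$ summand, and together with the previous steps this shows $E$ is a direct sum of regular modules in $\mathcal T$, i.e., a regular module in $\mathcal T$.
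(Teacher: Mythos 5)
Your analysis of the two \emph{pure} cases is correct: when $\epsilon_2 = 0$ the triangle comes from a short exact sequence of modules in $\mathcal T$, and when $\epsilon_1 = 0$ the hereditary splitting of $\cone(\epsilon_2)$ in $D^b(kQ)$ indeed gives $E \cong \tau^{-1}\coker(\epsilon_2) \oplus \ker(\epsilon_2) \in \add\mathcal T$. The final step ruling out $P_j[1]$ summands is also fine. However, the \emph{mixed} case is a genuine gap, and you say as much yourself (``the main obstacle''). There is no general principle by which the cone of $\epsilon = \epsilon_1 + \epsilon_2$ in $\mathcal C_Q$ is assembled from the cones of $\epsilon_1$ and $\epsilon_2$ taken separately; the cone is not additive in the connecting morphism, and the octahedral axiom applied to the image factorization of $\epsilon_2$ does not see $\epsilon_1$ at all, so it cannot by itself reduce the mixed case to the pure ones. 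Until the mixed case is handled, the proof is incomplete precisely where the difficulty lies.

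The paper avoids this difficulty entirely by \emph{not} decomposing the extension class. It applies $H^0$ to the triangle to get a long exact sequence in $kQ$-mod, and observes that the defect of $H^0(E)$ vanishes, because the relevant kernels and images are those of a morphism between regular modules (namely $H^0(\phi): N \to \tau M$ and its translate). It then writes $E = P_E[1] \oplus P \oplus R \oplus I$, and uses the fact that the tube $\mathcal T$ has finite rank $p$ to shift the triangle by $[-k]$ with $k \in p\Z$: since $M[-k]\cong M$ and $N[-k]\cong N$, the shifted triangle has the same endpoints, so its middle term also has module part of defect zero. Choosing $k$ so that the preinjective summand $I$ is shifted out of the module range forces $P_E = P = I = 0$ by a defect count. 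This route is more robust than the extension-class decomposition because the defect argument is insensitive to how $\epsilon$ splits between $\Ext^1_{kQ}(N,M)$ and $\Hom_{kQ}(N,\tau M)$, which is exactly what your sketch is unable to control. If you want to rescue your approach, you would need to show directly that $H^0(E)$ has defect zero for arbitrary $\epsilon$, at which point you have essentially rediscovered the paper's argument.
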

	\begin{proof}
		Fix a triangle $M \xrightarrow{\alpha} E \xrightarrow{\beta} N \xrightarrow{\phi} M[1]$. With notations of lemma \ref{lem:dimtermecentral}, we have 
		$$H^0(E)=H^0(M) + H^0(N)-(K_\alpha+C_\beta).$$
		 
		Thus, $\d_{H^0(E)}=\d_{H^0(M)}+\d_{H^0(N)}-\d_{K_\alpha}-\d_{C_\beta}=-\d_{K_\alpha}-\d_{C_\beta}$. 
		The exact sequences
		$$0 \fl K_\alpha \fl H^0(M) \xrightarrow{H^0(\alpha)} H^0(E) \fl C_\alpha \fl 0,$$
		$$0 \fl K_\beta \fl H^0(E) \xrightarrow{H^0(\beta)} H^0(N) \fl C_\beta \fl 0.$$
		yield 
		$$\left\{\begin{array}{ll}
			\d_{H^0(E)} &=\d_{C_\alpha}-\d_{K_\alpha}\\
			\d_{H^0(E)} &=\d_{K_\beta}-\d_{C_\beta}\\
		\end{array}\right.$$
 		so that
 		$$\left\{\begin{array}{ll}
 			\d_{C_\beta} &=-\d_{C_\alpha}\\
 			\d_{K_\beta} &=-\d_{K_\alpha}.
 		\end{array}\right.$$

		Set $f=H^0(\phi): N \fl \tau M$, it is a morphism between regular modules. It follows that $\d_{\ker f}=0=\d_{\coker f}$. But from the long exact sequence \ref{eq:longexactsequence}, we get $\im(H^0(\beta))=\ker f$ so that the exact sequence 
		$$0 \fl \im(H^0(\beta)) \fl H^0(N) \simeq N \fl C_\beta \fl 0$$
		gives $\d_{C_\beta}=0$ and thus $\d_{H^0(E)}=\d_{K_\beta}$.  On the other hand, $K_\alpha=\ker H^0(\alpha) =\im(H^0(f[-1])) \simeq \im(H^0(\tau f))$ but $\tau f$ is a morphism between regular modules so $\im(\tau f)$ is a regular module and thus $\d_{K_\alpha}=0$. As $\d_{K_\alpha}=-\d_{K_\beta}=-\d_{H^0(E)}$, it follows that $\d_{H^0(E)}=0$.

		Write $E=P_E[1] \oplus P \oplus R \oplus I$ where $P_E$ is a projective module, $P$ is a preprojective module, $R$ is a regular module and $I$ is a preinjective module. Assume that $I$ is non-zero, thus by defect, $P$ is also non-zero. Denote by $p$ the rank of $\mathcal T$, there exists some $k \in p\Z$ such that $\tau^{-k}I=0$. Thus $I[-k]$ is either the shift of a projective module, or a preprojective module. Note that $P[-k]$ is a non-zero module. Consider the triangle associated to the morphism $\phi[-k]$, it is given by 
		$M \xrightarrow{\alpha[-k]} E[-k] \xrightarrow{\beta[-k]} N \xrightarrow{\phi[-k]} M[1]$
		It follows from the above discussion that $\d_{H^0(E[-k])}=0$ but 
		$$E[-k] = P_E[1-k] \oplus P[-k] \oplus R[-k] \oplus I[-k]$$
		so that $P_E[1-k] \oplus P[-k] \oplus I[-k]=0$ and thus $P_E=P=I=0$, which is a contradiction. It follows that $E=R$ is a regular module. Moreover, as there are neither morphisms nor extensions between different tubes, it follows that $E \in \add \mathcal T$.
	\end{proof}

	\begin{maprop}\label{prop:dcpXM}
		\begin{enumerate}
			\item Let $Q$ be an affine quiver and $M$ be an object in $\mathcal C_Q$. Then $X_M$ can be written as a $\Q$-linear combination of $X_Y$ where $Y$ is an object such that $\ddim H^0(Y) \leq \ddim H^0(M)$ and $\Ext^1_{\mathcal C_Q}(Y_i,Y_j)=0$ for any two distinct direct summands of $Y$. 
			\item If moreover $M$ is taken in $\add \mathcal T$ for some tube $\mathcal T$ of the AR-quiver of $Q$, then the $Y$ can be taken also is $\add \mathcal T$.
		\end{enumerate}
	\end{maprop}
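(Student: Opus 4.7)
The plan is to proceed by induction on the non-negative integer $\phi(M):=[M,M]^1 = \dim \Ext^1_{\mathcal C_Q}(M,M)$. The base case covers every $M=\bigoplus M_i$ such that $\Ext^1_{\mathcal C_Q}(M_i,M_j)=0$ for every pair of distinct indecomposable summands: in that situation $Y:=M$ already satisfies the required properties and $X_M=X_M$ is the trivial decomposition (note this includes $\phi(M)=0$ but is strictly larger, since self-extensions are allowed).

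For the inductive step, assume there exist distinct indecomposable summands $M_i,M_j$ of $M$ with $\Ext^1_{\mathcal C_Q}(M_i,M_j)\neq 0$ (equivalently $\Ext^1_{\mathcal C_Q}(M_j,M_i)\neq 0$ by the 2-Calabi-Yau duality), and write $M=M'\oplus M_i\oplus M_j$. Corollary \ref{corol:expansion} applied to $X_{M_i}X_{M_j}$ yields an expansion
$$X_{M_i}X_{M_j}=\sum_Y c_Y X_Y,$$
where $Y$ runs over middle terms of non-split triangles in $\Ext^1_{\mathcal C_Q}(M_j,M_i)$ or $\Ext^1_{\mathcal C_Q}(M_i,M_j)$. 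Multiplying by $X_{M'}$ gives $X_M=\sum_Y c_Y X_{M'\oplus Y}$, and the goal is then to apply the inductive hypothesis to each $M'\oplus Y$.

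To invoke the hypothesis I need two estimates. First, Lemma \ref{lem:dimtermecentral} applied to the triangle defining $Y$ yields $\ddim H^0(Y)\leq \ddim H^0(M_i\oplus M_j)$, so $\ddim H^0(M'\oplus Y)\leq \ddim H^0(M)$. Second, to obtain the strict decrease $\phi(M'\oplus Y)<\phi(M)$ I would compare
$$\phi(M'\oplus Y)=[M',M']^1+2[M',Y]^1+[Y,Y]^1,$$
$$\phi(M)=[M',M']^1+2[M',M_i\oplus M_j]^1+[M_i\oplus M_j,M_i\oplus M_j]^1.$$
Lemma \ref{lem:dimautoext} furnishes the strict inequality $[Y,Y]^1<[M_i\oplus M_j,M_i\oplus M_j]^1$ (the non-split hypothesis providing $Y\neq M_i\oplus M_j$), while applying $\Hom_{\mathcal C_Q}(M',-[1])$ to the triangle defining $Y$ produces a three-term exact sequence $\Hom(M',M_i[1])\fl \Hom(M',Y[1])\fl \Hom(M',M_j[1])$, from which $[M',Y]^1\leq [M',M_i]^1+[M',M_j]^1=[M',M_i\oplus M_j]^1$. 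Adding the two estimates gives $\phi(M'\oplus Y)<\phi(M)$ and the induction hypothesis concludes, the dimension bound being preserved by transitivity.

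For statement (2), if $M\in \add\mathcal T$ then $M_i$ and $M_j$ are regular modules in $\mathcal T$, and Lemma \ref{lem:Centremodule} forces every middle term $Y$ appearing in the expansion to lie in $\add\mathcal T$; thus $M'\oplus Y\in \add\mathcal T$ and the induction remains inside the additive closure of the tube. The delicate point I expect to be the main obstacle is precisely the strict-decrease of $\phi$: the self-extension term shrinks via Lemma \ref{lem:dimautoext}, but one must simultaneously control the cross-term $2[M',Y]^1$ through the long-exact-sequence argument and exploit the 2-Calabi-Yau symmetry to compare $[-,-]^1$ consistently in both directions; without this combined estimate, the induction would not terminate.
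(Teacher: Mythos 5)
Your proof is correct and follows the same inductive strategy as the paper's (induction on $[M,M]^1=\dim \Ext^1_{\mathcal C_Q}(M,M)$, resolving via the cluster multiplication theorem, invoking Lemma \ref{lem:dimtermecentral} for the dimension bound and Lemma \ref{lem:Centremodule} for part (2)). The difference is in how the inductive step is packaged. The paper splits $M=M_1\oplus M_2$ into two possibly-decomposable halves with $[M_1,M_2]^1\neq 0$ and applies Corollary \ref{corol:inductionstep} to that pair; the $Y$'s it produces are middle terms of triangles between $M_1$ and $M_2$ — so the "spectator" summands are already inside the triangle — and Lemma \ref{lem:dimautoext} applied once to that global triangle delivers $[Y,Y]^1<[M,M]^1$ outright. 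You instead isolate just two indecomposable summands $M_i,M_j$, expand $X_{M_i}X_{M_j}$ alone via Corollary \ref{corol:expansion}, and then tensor back the remainder $M'$. Because $M'$ sits outside the triangle, you pick up the cross-term $2[M',Y]^1$ in $\phi(M'\oplus Y)$ and must control it separately; your long-exact-sequence bound $[M',Y]^1\leq[M',M_i\oplus M_j]^1$, combined with the strict inequality from Lemma \ref{lem:dimautoext} and the 2-Calabi-Yau symmetry, does this correctly. So the argument closes, but you are re-deriving internally the estimate that Corollary \ref{corol:inductionstep} already encapsulates; invoking it directly on the split $(M_i,\,M'\oplus M_j)$ would let you drop the cross-term step entirely.
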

	\begin{proof}
		We prove the it assertion by induction on the dimension of the self-extension. If $M$ is rigid, the result holds. Assume now that $M$ is not rigid. If $M$ is indecomposable, the result holds so we can assume that $M=M_1 \oplus M_2$ with $[M_1,M_2]^1\neq 0$. Corollary \ref{corol:inductionstep} implies that $X_M=X_{M_1}X_{M_2}$ is a $\Q$-linear combination of $X_Y$ with $[Y,Y]^1 < [M_1 \oplus M_2,M_1 \oplus M_2]=[M,M]^1$. Thus, by induction each $X_Y$ is a $\Q$-linear combination of $X_Z$ where $Z$ is such that $\Ext^1_{\mathcal C_Q}(Z_i,Z_j)=0$ for any two distinct direct summands of $Z$. The fact that $\ddim H^0(Y) \leq \ddim H^0(M)$ follows directly from lemma \ref{lem:dimtermecentral}. This proves the first point. The second point follows from lemma \ref{lem:Centremodule}.
	\end{proof}	
\end{subsection}

\section{Generic variables}\label{section:generic}
	\begin{subsection}{Generic generalized variables for acyclic quivers}\label{subsection:generic}
	In this subsection, we introduce the notion of generic variables, which will be the central objects in this article. They will turn out to be good candidates to construct a $\Z$-basis in a cluster algebra of affine type. As in the work of \cite{GLS}, these elements will be generic values for some particular constructible functions. The following construction holds for any acyclic quiver.
	
	\begin{subsubsection}{Existence of generic variables}	
		Let $Q$ be an acyclic quiver, we denote by $Q_0$ the set of vertices, $Q_1$ the set of arrows. For any $\alpha: i \fl j \in Q_1$, we denote by $s(\alpha)=i$ the source of $\alpha$ and $t(\alpha)=j$ the tail of $\alpha$. For any dimension vector $\textbf d \in \N^{Q_0}$ , we denote by $\rep(Q,\textbf d)$ the \emph{representation variety of dimension $\textbf d$}\emph{representation variety}, it is isomorphic to
		$$\prod_{\alpha \in Q_1} k^{d_{s(\alpha)}} \times k^{d_{t(\alpha)}},$$
		in particular it is an irreducible affine variety.
	
		For the existence of subset of $\rep(Q,\textbf d)$ giving a generic value to the Caldero-Chapoton map, we prove the existence of generic values for Euler characteristics of submodule grassmannians.
			
		The following lemma will be essential:
		\begin{monlem}\label{lem:Ude}
			Fix $Q$ an acyclic quiver, $\textbf d$ a dimension vector. For every dimension vector $\textbf e$, there exists a dense open subset $U_{\textbf d,\textbf e} \subset \rep(Q,\textbf d)$ such that the map 
			$$M \mapsto \chi(\Gr_{\textbf e}(M))$$ 
			is constant over $U_{\textbf d, \textbf e}$. Moreover, if $U'_{\textbf d, \textbf e}$ is another such dense open subset, the value is the same on $U_{\textbf d, \textbf e}$ and $U'_{\textbf d, \textbf e}$.
		\end{monlem}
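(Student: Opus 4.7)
The plan is to realize $\phi : M \mapsto \chi(\Gr_{\textbf e}(M))$ as the fiberwise Euler characteristic of a projective morphism of algebraic varieties, invoke its constructibility, and then exploit the irreducibility of $\rep(Q,\textbf d)$. Concretely, fix vector spaces $V_i = k^{d_i}$ for $i \in Q_0$ and form the incidence variety
$$\mathcal G_{\textbf d, \textbf e} = \left\{ (M, (W_i)_{i \in Q_0}) \in \rep(Q,\textbf d) \times \prod_{i \in Q_0} \Gr_{e_i}(V_i) \ : \ M(\alpha)(W_{s(\alpha)}) \subset W_{t(\alpha)} \textrm{ for all } \alpha \in Q_1 \right\}.$$
This is closed in the product, and the first projection $\pi : \mathcal G_{\textbf d, \textbf e} \to \rep(Q, \textbf d)$ is projective, with fiber above $M$ canonically identified with $\Gr_{\textbf e}(M)$.

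By the standard constructibility theorem for the fiberwise Euler characteristic of a morphism of finite type between algebraic varieties, $\phi$ is a constructible function on $\rep(Q,\textbf d)$; equivalently, there is a finite stratification of $\rep(Q,\textbf d)$ into locally closed subsets on each of which $\phi$ is constant. Since $\rep(Q,\textbf d)$ is an affine space and hence irreducible, exactly one stratum contains a dense open subset of $\rep(Q,\textbf d)$, and taking $U_{\textbf d, \textbf e}$ inside this stratum gives the required dense open subset on which $\phi$ is constant. For the uniqueness statement, if $U'_{\textbf d, \textbf e}$ is another dense open subset on which $\phi$ is constant, then by irreducibility the intersection $U_{\textbf d, \textbf e} \cap U'_{\textbf d, \textbf e}$ is non-empty, and the two constants must both agree with the common value of $\phi$ on this intersection.

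The main obstacle is justifying the constructibility of $\phi$. The cleanest route is to quote Deligne's theorem on the constructibility of the higher direct images $R^i\pi_!\Q_\ell$ applied to our projective $\pi$, after which the fiberwise Euler characteristic is constructible as an alternating sum. A more hands-on alternative rests on a flattening or Whitney stratification: after refining to a suitable finite stratification of $\rep(Q,\textbf d)$, the restriction of $\pi$ to each stratum becomes a locally trivial fibration in the \'etale (or, since $k=\mathbb C$, the analytic) topology, so that $\phi$ is locally constant on each stratum. Once constructibility is secured, irreducibility of $\rep(Q,\textbf d)$ disposes of the rest of the argument immediately.
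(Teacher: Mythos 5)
Your proof is correct and follows essentially the same line as the paper's: both construct an incidence variety over $\rep(Q,\textbf d)$ whose fibers under the first projection are the quiver grassmannians, then invoke generic constancy of the fibers and use irreducibility to get existence and uniqueness. The only real difference is the cited input: the paper appeals to Verdier's stratification theorem (generic isomorphism of fibers for a dominant morphism, with a separate trivial case when the projection is not dominant), whereas you go via constructibility of the fiberwise Euler characteristic (Deligne's theorem on $R^i\pi_!\Q_\ell$, or equivalently the Whitney-stratification fibration argument, which is essentially what Verdier's result provides) — a marginally weaker but uniform tool that avoids the dominant/non-dominant dichotomy.
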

		\begin{proof}
			We consider the algebraic variety 
			$$V_{\textbf e}=\ens{(M,N) \in \rep(Q,\textbf d) \times \rep(Q,\textbf e) \ : \ N\textrm{ is a submodule of }M}$$
			We denote by $f_{\textbf e}: V_{\textbf e} \fl \rep(Q,\textbf{d})$ the first projection. 
			For any representation $M$, the grassmannian $\Gr_{\textbf e}(M)$ is the fiber of $M$ under the morphism $f_{\textbf e}$. If $f_{\textbf e}$ is a dominant morphism, then according to \cite[Corollary 5.1]{verdier}, there exists a non-empty subset $U_{\textbf d,\textbf e} \subset \rep(Q,\textbf{d})$, open for the Zariski topology (and thus dense by irreducibility) such that the fibers of $f_{\textbf e}$ are isomorphic on $U_{\textbf d,\textbf e}$. In particular, they have the same Euler characteristic. Now if $f_{\textbf e}$ is not dominant, then $\codim(\im f_{\textbf e})>0$ and thus the Euler characteristic of $\Gr_{\textbf e}(M)$ vanishes on an open subset of $\rep(Q,\textbf d)$.
			Assume now that $U'_{\textbf d, \textbf e}$ is another such subset. Then by irreducibility $U_{\textbf d, \textbf e} \cap U'_{\textbf d, \textbf e} \neq \emptyset$ and thus the values coincide.
		\end{proof}
			
		\begin{notation}
			Fix $\textbf d, \textbf e \in \N^{Q_0}$, we will use the following notations:
			$$\textbf e \leq \textbf d \Leftrightarrow e_i \leq d_i \textrm{ for all }i \in Q_0$$
			$$\textbf e \lneqq \textbf d \Leftrightarrow e_i \leq d_i \textrm{ for all }i \in Q_0 \textrm{ and } \textbf e \neq \textbf d$$
			
			If $\textbf d \in \Z^{Q_0}$, we write
			$$[\textbf d]_+=\left(\max(d_i,0)\right)_{i \in Q_0}$$
			$$[\textbf d]_-=\left(\min(d_i,0)\right)_{i \in Q_0}$$
		\end{notation}
		
		Note that if $M$ is a rigid object in $\mathcal C_Q$, we have 
		$$[\ddim M]_+=\ddim H^0(M).$$
		Indeed, if $M$ is a module the result is clear. Now assume that $P_i[1]$ is a direct summand of $M$, then 
		$$\dim H^0(M)(i)=\dim \Hom_{kQ}(P_i,H^0(M))\leq \dim \Ext^1_{\mathcal C_Q}(H^0(M),P_i) =0.$$
	
		\begin{corol}\label{corol:Ud}
			Fix $Q$ an acyclic quiver and $\textbf d \in \N^{Q_0}$. Then there exists a dense open subset $U_{\textbf d} \subset \rep(Q,\textbf d)$ such that the Caldero-Chapoton map is constant over $U_{\textbf d}$. Moreover, if $U'_{\textbf d}$ is another such open subset, the values of the Caldero-Chapoton map are the same on $U_{\textbf d}$ and $U'_{\textbf d}$.
		\end{corol}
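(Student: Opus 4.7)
The plan is to deduce Corollary \ref{corol:Ud} directly from Lemma \ref{lem:Ude} by an intersection-of-finitely-many-open-sets argument, exploiting the fact that the Caldero-Chapoton formula (\ref{CCmap}) is a finite sum.

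First I would observe that in the definition
\[
X_M = \sum_{\textbf v} \chi(\Gr_{\textbf v}(M)) \prod_{i \in Q_0} u_i^{-\<\textbf v, \alpha_i\> - \<\alpha_i, \ddim M - \textbf v\>},
\]
the grassmannian $\Gr_{\textbf v}(M)$ is empty (hence contributes zero) whenever $\textbf v \not\leq \textbf d = \ddim M$. Therefore, for any $M \in \rep(Q, \textbf d)$, the sum is actually indexed by the finite set
\[
S_{\textbf d} = \{ \textbf v \in \N^{Q_0} \mid \textbf v \leq \textbf d \},
\]
and the exponents $-\<\textbf v, \alpha_i\> - \<\alpha_i, \textbf d - \textbf v\>$ depend only on $\textbf d$ and $\textbf v$, not on $M$.

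Next I would apply Lemma \ref{lem:Ude} to each $\textbf v \in S_{\textbf d}$: this produces a dense open subset $U_{\textbf d, \textbf v} \subset \rep(Q, \textbf d)$ on which $M \mapsto \chi(\Gr_{\textbf v}(M))$ is constant. I then set
\[
U_{\textbf d} = \bigcap_{\textbf v \in S_{\textbf d}} U_{\textbf d, \textbf v}.
\]
Since $S_{\textbf d}$ is finite and $\rep(Q, \textbf d)$ is irreducible (being isomorphic to an affine space), this finite intersection of dense open subsets is again dense and open. On $U_{\textbf d}$, every term in the Caldero-Chapoton sum is constant in $M$, and the monomial factors depend only on $\textbf d$ and $\textbf v$; hence $M \mapsto X_M$ is constant on $U_{\textbf d}$.

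For the second assertion, suppose $U'_{\textbf d}$ is another dense open subset of $\rep(Q, \textbf d)$ on which $X_?$ is constant. By irreducibility $U_{\textbf d} \cap U'_{\textbf d}$ is non-empty, so picking any $M$ in the intersection shows that the two constant values coincide. No real obstacle arises here; the only point that needs care is noticing that the Caldero-Chapoton sum is finite for a given $\textbf d$, which is exactly what allows Lemma \ref{lem:Ude} to be applied only finitely many times.
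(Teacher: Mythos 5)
Your proposal is correct and follows essentially the same line as the paper's proof: take the finite intersection $U_{\textbf d}=\bigcap_{\textbf v \leq \textbf d}U_{\textbf d,\textbf v}$ of the dense open sets from Lemma \ref{lem:Ude}, use irreducibility of $\rep(Q,\textbf d)$ to keep it dense and open, and conclude by constancy of each term in the (finite) Caldero-Chapoton sum, with the uniqueness again coming from irreducibility.
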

		\begin{proof}
			Consider 
			$$U_{\textbf d}=\bigcap_{\textbf e \leq \textbf d} U_{\textbf d, \textbf e}.$$
			It is a finite intersection of dense open subsets, it is thus open and dense in $\rep(Q,\textbf d)$. If $\textbf e \leq \textbf d$, we denote by $n_{\textbf e}$ the value of $\chi(\Gr_{\textbf e}(-))$ on $U_{\textbf d, \textbf e}$. For any $M \in U_{\textbf d}$, we thus have
			\begin{align*}
				X_M 
					&= \sum_{\textbf e \leq \textbf d} \chi(\Gr_{\textbf e}(M)) \prod_i u_i^{-\<\textbf e, \alpha_i\>-\<\alpha_i, \textbf d - \textbf e\>}\\
					&=\sum_{\textbf e \leq \textbf d} n_{\textbf e} \prod_i u_i^{-\<\textbf e, \alpha_i\>-\<\alpha_i, \textbf d - \textbf e\>}\\
			\end{align*}
			it is thus constant over $U_{\textbf d}$.
			
			Assume now that $U'_{\textbf d}$ is another such subset, then $U_{\textbf d} \cap U'_{\textbf d} \neq \emptyset$ and the values coincide on both subsets.
		\end{proof}
		
		Note that for every $\textbf d$, the set $U_{\textbf d}$ is a $G_{\textbf d}$-invariant subset of $\rep(Q,\textbf d)$.
		
		\begin{defi}
			Fix $\textbf d \in \N^{Q_0}$, we denote by $X_{\textbf d}$ the value of the Caldero-Chapoton map on $U_{\textbf d}$. Fix now $\textbf d \in \Z^{Q_0}$, we set
			\begin{align*}
				X_{\textbf d} 
					&=X_{[\textbf d]_+}\prod_{d_i<0} u_i^{-d_i}\\
					&=X_{[\textbf d]_+}\prod_{d_i<0} X_{P_i[1]}^{-d_i}\\
					&=X_{[\textbf d]_+}\prod_{d_i<0} X_{P_i[1]^{\oplus (-d_i)}}
			\end{align*}
			and $X_{\textbf d}$ is called the \emph{generic variable of dimension $\textbf d$}\index{generic variable}.
		\end{defi}
	\end{subsubsection}
	
	\begin{subsubsection}{Generic variables and canonical decomposition}\label{subsection:dcpcanonique}
		Fix $Q$ an acyclic quiver and $\textbf d \in \N^{Q_0}$ a dimension vector. According to \cite{Kac:infroot1, Kac:infroot2}, there exists a dense open subset $\mathfrak M_{\textbf d} \subset \rep(Q,\textbf d)$ and a family of Schur root $\ens{\textbf e_1, \ldots, \textbf e_n}$ in $\N^{Q_0}$ such that every representation $M \in \mathfrak M_{\textbf d}$ decomposes into 
		$$M=\bigoplus_{i=1}^n M_i$$
		with $M_i$ an indecomposable Schur representation of dimension $\textbf e_i$. In particular $\textbf d=\sum_{i=1}^n \textbf e_i$ and the family $\ens{\textbf e_1, \ldots, \textbf e_n}$ is uniquely determined. This decomposition of $\textbf d$ is called the \emph{canonical decomposition of $\textbf d$}\index{canonical decomposition} and is denoted by 
		$$\textbf d = \bigoplus_{i=1}^n \textbf e_i.$$
		
		We have the following characterization:
		\begin{maprop}[\cite{Kac:infroot2}]\label{prop:Kacdcp}
			Fix $Q$ an acyclic quiver and $\textbf d \in \N^{Q_0}$. Then $\textbf d=\bigoplus_{i=1}^n \textbf e_i$ if and only if there exist Schur representations $M_i \in \rep(Q,\textbf e_i)$ for every $i=1,\ldots, n$ such that $\Ext^1_{kQ}(M_i,M_j)=0$ if $i \neq j$. 
		\end{maprop}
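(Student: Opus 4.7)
The plan is to derive both implications from a single codimension computation for the locus of representations with a prescribed Schur decomposition type, following Kac's strategy in \cite{Kac:infroot2}.

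Fix a tuple of Schur roots $(\textbf e_1,\ldots,\textbf e_n)$ summing to $\textbf d$ and consider the constructible locus $\mathfrak S\subset\rep(Q,\textbf d)$ of representations isomorphic to some $\bigoplus_{i=1}^n N_i$ with each $N_i\in\rep(Q,\textbf e_i)$ indecomposable Schur. By Voigt's lemma, for any $M=\bigoplus_i M_i$ with $M_i$ Schur, the $G_{\textbf d}$-orbit of $M$ has codimension $\dim\Ext^1_{kQ}(M,M)=\sum_{i,j}\dim\Ext^1_{kQ}(M_i,M_j)$ in $\rep(Q,\textbf d)$. The key observation is that $\mathfrak S$ is not a single orbit but a union of them, swept out by letting each $N_i$ vary in the Schur locus of $\rep(Q,\textbf e_i)$; that locus, modulo $G_{\textbf e_i}$, has dimension $\dim\Ext^1_{kQ}(M_i,M_i)$ (a straightforward Euler form calculation using that $\dim\End_{kQ}(M_i)=1$). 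Combining both contributions, the codimension of $\mathfrak S$ in $\rep(Q,\textbf d)$ equals $\sum_{i\neq j}\dim\Ext^1_{kQ}(M_i,M_j)$ for generic Schur $M_i$.

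For the forward implication, density of $\mathfrak M_{\textbf d}\subset\mathfrak S$ forces this codimension to vanish, and upper semicontinuity of $\dim\Ext^1_{kQ}(-,-)$ then allows one to choose Schur representatives $M_i\in\rep(Q,\textbf e_i)$ satisfying $\Ext^1_{kQ}(M_i,M_j)=0$ for $i\neq j$. Conversely, given such $M_i$, the same formula gives $\codim_{\rep(Q,\textbf d)}\mathfrak S=0$; since $\rep(Q,\textbf d)$ is irreducible, $\mathfrak S$ is then dense, which identifies $\bigoplus_i\textbf e_i$ as the canonical decomposition of $\textbf d$.

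The main obstacle is making rigorous the moduli statement underlying the codimension count: for each Schur root $\textbf e_i$ the Schur locus of $\rep(Q,\textbf e_i)$ must be parametrized by a variety of the expected dimension $\dim\Ext^1_{kQ}(M_i,M_i)$. When the same imaginary Schur root occurs with multiplicity, this is also what ensures that generic distinct representatives have automatically vanishing cross-extensions, since the non-vanishing of $\Ext^1_{kQ}(-,-)$ between Schur representatives of a fixed imaginary Schur root is a proper closed condition on the moduli. These geometric ingredients constitute the technical heart of Kac's argument and are what allow the dimension count to be applied unambiguously.
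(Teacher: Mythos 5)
The paper does not prove this proposition; it is cited from Kac and then used as a black box in Proposition~\ref{prop:dcpcanonique}, so there is no in-text argument to compare against. Your sketch reproduces the geometric strategy underlying Kac's proof at the structural level: Voigt's lemma gives the orbit codimension $\dim\Ext^1_{kQ}(M,M)=\sum_{i,j}\dim\Ext^1_{kQ}(M_i,M_j)$, each Schur locus contributes $\dim\Ext^1_{kQ}(M_i,M_i)$ parameters, and the difference $\sum_{i\neq j}\dim\Ext^1_{kQ}(M_i,M_j)$ is the codimension of the stratum $\mathfrak S$ of prescribed decomposition type. Both directions then reduce to comparing this codimension with zero, with upper semicontinuity of $\dim\Ext^1$ bridging ``generic'' and ``some'' representatives, and irreducibility of $\rep(Q,\textbf d)$ plus Krull--Schmidt closing the converse. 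This is the right architecture.

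The gaps you flag are the actual content of the argument, and two deserve sharper attention than a parenthetical remark. First, ``the Schur locus modulo $G_{\textbf e_i}$ has dimension $\dim\Ext^1_{kQ}(M_i,M_i)$'' is a parameter count, not a statement about an existing quotient; there is no readily available GIT quotient when $\textbf e_i$ is imaginary, and the count is made honest only by bounding dimensions of strata and of the fibers of the direct-sum morphism $\prod_i\rep(Q,\textbf e_i)\fl\rep(Q,\textbf d)$, which is precisely where the work in Kac's paper lies. Second, your closing claim that non-vanishing of $\Ext^1$ between distinct Schur representatives of a fixed imaginary Schur root is a \emph{proper} closed condition is true only for isotropic roots; for an anisotropic imaginary Schur root $\textbf e$ one has $\<\textbf e,\textbf e\> <0$, so $\Ext^1$ between two Schur representatives \emph{never} generically vanishes. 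Such a root indeed never appears with multiplicity in a canonical decomposition, but that non-repetition is a \emph{consequence} of the theorem you are proving, so invoking it here would be circular. The repeated-root case must be resolved by the codimension formula itself, not by an a priori openness-of-vanishing argument.
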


		Following Schofield, we set the definition:
		\begin{defi}
			For any $\textbf d, \textbf d' \in \N^{Q_0}$, we say that $\Ext^1_{kQ}(\textbf d, \textbf d')$ \emph{vanishes generally}\index{general vanishing} if there exists $M \in \rep(Q,\textbf d)$, $M' \in \rep(Q,\textbf d')$ such that 
			$$\Ext^1_{kQ}(M,M')=0$$
			and if it is the case, we denote it by $\Ext^1_{kQ}(\textbf d, \textbf d')=0$.
		\end{defi}
	
		For any dimension vector $\textbf d \in \N^{Q_0}$, one has $U_{\textbf d} \cap \mathfrak M_{\textbf d} \neq \emptyset$, thus $X_{\textbf d}$ is equal to a $X_M$ for some module $M \in \mathfrak M_{\textbf d}$. One can thus use the canonical decomposition to compute generic variables. More precisely, a helpful result is the following:

		\begin{maprop}\label{prop:dcpcanonique}
			Fix $Q$ an acyclic quiver, $\textbf d \in \N^{Q_0}$ a dimension vector with canonical decomposition $\textbf d= \textbf e_1 \oplus \cdots \oplus \textbf e_n$. Then
			$$X_{\textbf d}=\prod_{i=1}^n X_{\textbf e_i}.$$
		\end{maprop}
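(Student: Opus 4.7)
The plan is to exhibit a single module $M_0 \in U_{\textbf{d}}$ that decomposes as a direct sum of modules $M_i$ lying in the respective generic loci $U_{\textbf{e}_i}$, and then to conclude by multiplicativity of the Caldero-Chapoton map.

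More precisely, first I would invoke Proposition \ref{prop:Kacdcp} to fix Schur representations $N_i \in \rep(Q,\textbf{e}_i)$ with $\Ext^1_{kQ}(N_i,N_j)=0$ for $i \neq j$. I then consider the morphism of varieties
\[
 \phi: G_{\textbf{d}} \times \prod_{i=1}^n \rep(Q,\textbf{e}_i) \fl \rep(Q,\textbf{d}),\qquad
 (g,(M_1,\ldots,M_n)) \longmapsto g\cdot (M_1 \oplus \cdots \oplus M_n),
\]
where $\bigoplus M_i$ is read as a point of $\rep(Q,\textbf d)$ via any fixed decomposition of $k^{\textbf d}$ compatible with the $\textbf e_i$. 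The source is irreducible, the image contains the orbit of $N_1\oplus \cdots \oplus N_n$, hence by definition of the canonical decomposition it meets $\mathfrak M_{\textbf d}$, which is dense in $\rep(Q,\textbf d)$; consequently the image of $\phi$ is a dense constructible subset of $\rep(Q,\textbf{d})$.

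Next I would restrict $\phi$ to $G_{\textbf{d}} \times \prod_i U_{\textbf{e}_i}$, which is dense open in the source. By Chevalley's theorem, the image $\phi(G_{\textbf{d}} \times \prod_i U_{\textbf{e}_i})$ is still constructible and dense in $\rep(Q,\textbf{d})$, so it contains a non-empty open subset and in particular meets the dense open subset $U_{\textbf{d}}$ of Corollary \ref{corol:Ud}. Pick $M_0 \in U_{\textbf{d}} \cap \phi\bigl(G_{\textbf{d}} \times \prod_i U_{\textbf{e}_i}\bigr)$ and write $M_0 = g\cdot(M_1 \oplus \cdots \oplus M_n)$ with $M_i \in U_{\textbf{e}_i}$. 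Since the Caldero-Chapoton map is $G_{\textbf{d}}$-invariant and multiplicative on direct sums,
\[
 X_{\textbf{d}} \;=\; X_{M_0} \;=\; X_{M_1 \oplus \cdots \oplus M_n} \;=\; \prod_{i=1}^n X_{M_i} \;=\; \prod_{i=1}^n X_{\textbf{e}_i},
\]
which is the claimed identity.

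The only delicate point is the density step: the canonical decomposition only guarantees that generic $M \in \rep(Q,\textbf{d})$ splits as $\bigoplus M_i$ with $M_i$ Schur of dimension $\textbf{e}_i$, but it does \emph{not} say that each summand is individually generic in $\rep(Q,\textbf{e}_i)$. The argument above bypasses this by building representations of dimension $\textbf{d}$ out of generic pieces via $\phi$ and using Chevalley's theorem to guarantee that such representations form a dense constructible set, which therefore must meet $U_{\textbf{d}}$. This is the main obstacle, and once it is handled the rest is just the defining multiplicativity property of $X_?$ applied to one cleverly chosen $M_0$.
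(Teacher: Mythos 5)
Your overall strategy coincides with the paper's: find a single $M_0 \in U_{\textbf d}$ that decomposes with each summand lying in the corresponding generic locus $U_{\textbf e_i}$, then apply $G_{\textbf d}$-invariance and multiplicativity of the Caldero-Chapoton map. The final two steps (restrict $\phi$ to $G_{\textbf d}\times\prod_i U_{\textbf e_i}$, intersect with $U_{\textbf d}$, pick $M_0$ and compute) are correct.

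The gap is in your justification that the image of $\phi$ is dense. You argue that the image ``contains the orbit of $N_1\oplus\cdots\oplus N_n$, hence\ldots it meets $\mathfrak M_{\textbf d}$\ldots consequently the image of $\phi$ is a dense constructible subset.'' But a constructible set that merely \emph{meets} a dense set need not be dense (a single point already meets any dense set), so this deduction fails. In your closing paragraph you also attribute the density to Chevalley's theorem, which only supplies constructibility, never density. The correct observation is stronger and simpler: the image of your $G_{\textbf d}$-extended $\phi$ in fact \emph{contains} all of $\mathfrak M_{\textbf d}$, because every $M\in\mathfrak M_{\textbf d}$ is, by definition of the canonical decomposition, isomorphic to a direct sum of representations of dimensions $\textbf e_1,\ldots,\textbf e_n$ and is therefore in the $G_{\textbf d}$-orbit of a block-diagonal point; this containment gives density immediately. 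The paper sidesteps the question by citing Crawley-Boevey--Schr\"oer \cite{CBS} directly, using the $\Ext^1$-vanishing from Proposition \ref{prop:Kacdcp} to conclude that the plain direct-sum morphism $\prod_i\rep(Q,\textbf e_i)\to\rep(Q,\textbf d)$ is already dominant, with no need for the extra $G_{\textbf d}$-factor. Either route works once the dominance is established; as written, yours leaves that key step unproved.
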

		\begin{proof}
		 	Consider the injective morphism :
		 	$$\phi : \left\{ \begin{array}{rcl}
				\rep(Q,\textbf e_1) \times \cdots \times \rep(Q,\textbf e_n) & \fl & \rep(Q,\textbf d)\\
				(M_1, \ldots, M_n) & \mapsto & \bigoplus_{i=1}^n M_i
			\end{array}\right.$$

			By proposition \ref{prop:Kacdcp}, as $\textbf d=\textbf e_1 \oplus \cdots \oplus \textbf e_n$ is the canonical decomposition of $\textbf d$, one has $\Ext^1_{kQ}(\textbf e_i, \textbf e_j)=0$ for every $i \neq j$. It thus follows from \cite{CBS} that $\phi$ is a dominant morphism.
			
			We set
			$$\mathcal U=(\mathfrak M_{\textbf e_1} \cap U_{\textbf e_1})\times \cdots \times (\mathfrak M_{\textbf e_n} \cap U_{\textbf e_n}),$$
			this is a dense open subset in $\rep(Q,\textbf e_1) \times \cdots \times \rep(Q,\textbf e_n)$ and thus $\phi(\mathcal U)$ is a dense open subset in $\rep(Q,\textbf d)$. In particular $\phi(\mathcal U) \cap U_{\textbf d} \neq \emptyset$ and we can choose some $M \in \phi(\mathcal U) \cap U_{\textbf d}$. We thus have $X_{\textbf d}=X_M$ and $M$ decomposes into a direct sum 
			$$M=\bigoplus_{i=1}^n M_i$$
			with $M_i \in \mathfrak M_{\textbf e_i} \cap U_{\textbf e_i}$. It follows that 
			\begin{align*}
			 	X_\textbf d
			 		&=X_M \\
			 		&=X_{\bigoplus_{i=1}^n M_i}\\
			 		&=\prod_{i=1}^n X_{M_i}\\
			 		&=\prod_{i=1}^n X_{\textbf e_i}.
			\end{align*}
		\end{proof}
		
		The following lemma gives a multiplicative property for generic variables. For this, we extend the notion of general vanishing of the Ext spaces to the cluster category.
		
		\begin{defi}
		 	Let $\textbf d, \textbf d'$ be two elements in $\Z^{Q_0}$, we say that $\Ext^1_{\mathcal C_Q}(\textbf d, \textbf d')$ \emph{vanishes generally}\index{general vanishing} if there exists $M \in \rep(Q, [\textbf d]_+)$, $M' \in \rep(Q,[\textbf d']_+)$ such that
		 	$$\Ext^1_{\mathcal C_Q}\left( M \oplus \bigoplus_{d_i<0} P_i[1]^{\oplus(-d_i)}, M' \oplus \bigoplus_{d'_i<0} P_i[1]^{\oplus(-d'_i)}\right)=0$$
		 	and we denote this by $\Ext^1_{\mathcal C_Q}(\textbf d, \textbf d')=0$.
		\end{defi}

		Note in particular that for $\textbf d, \textbf d' \in \N^{Q_0}$, if $\Ext^1_{\mathcal C_Q}(\textbf d, \textbf d')=0$ then $\Ext^1_{kQ}(\textbf d, \textbf d')=0$ and $\Ext^1_{kQ}(\textbf d', \textbf d)=0$.

		\begin{monlem}\label{lem:multiplicativity}
			Let $Q$ be an acyclic quiver and $\textbf d, \textbf d' \in \Z^{Q_0}$ such that $\Ext^1_{\mathcal C_Q}(\textbf d, \textbf d')=0$. Then 
			$$X_{\textbf d}X_{\textbf d'}=X_{\textbf d+\textbf d'}.$$
		\end{monlem}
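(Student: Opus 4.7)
The plan is to reduce the statement to the case $\textbf d,\textbf d'\in\N^{Q_0}$, where the identity follows from Crawley-Boevey-Schr\"oer's dominance theorem combined with the density of the open subsets from lemma \ref{lem:Ude}.

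Set $\textbf e=[\textbf d]_+$, $\textbf p=-[\textbf d]_-$, and analogously $\textbf e',\textbf p'$; write $P=\bigoplus_j P_j^{\oplus p_j}$ and $P'=\bigoplus_j P_j^{\oplus p'_j}$. The hypothesis furnishes $M\in\rep(Q,\textbf e)$ and $M'\in\rep(Q,\textbf e')$ with $\Ext^1_{\mathcal C_Q}(M\oplus P[1],M'\oplus P'[1])=0$. I would first extract a support compatibility: if $p_j>0$ or $p'_j>0$, then $e_j=e'_j=0$. The easy half of this claim (for instance $e_j=0$ whenever $p_j>0$) is built into the decomposition $\textbf d=[\textbf d]_++[\textbf d]_-$. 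For the genuine content, suppose $p'_j>0$; then $P_j[1]$ is a direct summand of $P'[1]$, so $\Ext^1_{\mathcal C_Q}(M,P_j[1])=0$, and combining this with the inequality $\dim\Hom_{kQ}(P_j,M)\leq\dim\Ext^1_{\mathcal C_Q}(M,P_j[1])$ already exploited in the paragraph preceding corollary \ref{corol:Ud} forces $M(j)=0$, whence $e_j=0$. The case $p_j>0$ is handled symmetrically via the 2-Calabi-Yau duality.

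A case-by-case verification then gives $[\textbf d+\textbf d']_+=\textbf e+\textbf e'$ and $-[\textbf d+\textbf d']_-=\textbf p+\textbf p'$, so both $X_{\textbf d}X_{\textbf d'}$ and $X_{\textbf d+\textbf d'}$ factor as $(\cdots)\prod_j u_j^{p_j+p'_j}$, and the problem reduces to showing $X_{\textbf e}X_{\textbf e'}=X_{\textbf e+\textbf e'}$. The cluster-category Ext formula for modules recalled in section \ref{section:background} yields $\Ext^1_{kQ}(\textbf e,\textbf e')=0=\Ext^1_{kQ}(\textbf e',\textbf e)$, and \cite{CBS} (as used in the proof of proposition \ref{prop:dcpcanonique}) then implies that the direct sum morphism $\phi:\rep(Q,\textbf e)\times\rep(Q,\textbf e')\fl\rep(Q,\textbf e+\textbf e')$ is dominant. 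The preimage $\phi^{-1}(U_{\textbf e+\textbf e'})$ is a non-empty open subset, hence meets the dense open $U_{\textbf e}\times U_{\textbf e'}$ by irreducibility, and any $(N,N')$ in this intersection yields $X_{\textbf e+\textbf e'}=X_{N\oplus N'}=X_NX_{N'}=X_{\textbf e}X_{\textbf e'}$, finishing the proof.

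The main obstacle I anticipate is the support compatibility step: it is where the full strength of the cluster-category hypothesis is used, and without it the identity would fail at vertices of mixed sign. Once compatibility is in hand, the remainder is a routine adaptation of the canonical decomposition technique already deployed in proposition \ref{prop:dcpcanonique}.
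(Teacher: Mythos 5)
Your proposal is correct and follows essentially the same route as the paper's own proof: reduce to the sign-compatibility of $\textbf d$ and $\textbf d'$ via the cluster-category Ext-vanishing hypothesis, then invoke \cite{CBS} for dominance of the direct-sum morphism, and conclude by intersecting dense opens. The one place where you are arguably more careful is the final density step: the paper asserts that the image $\phi(U_{[\textbf d]_+}\times U_{[\textbf d']_+})$ is "open and dense" in $\rep(Q,[\textbf d+\textbf d']_+)$, which is not literally true (images of open sets under dominant morphisms need not be open), whereas your version — pull back $U_{\textbf e+\textbf e'}$ along $\phi$ and intersect with $U_{\textbf e}\times U_{\textbf e'}$ in the irreducible source — is clean and avoids the imprecision. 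You also state the support-compatibility claim slightly more symmetrically than the paper, but the underlying computation ($\Ext^1_{\mathcal C_Q}(M,P_j[1])=0 \Rightarrow M(j)=0$, via $\dim\Hom_{kQ}(P_j,M)=\dim\Ext^1_{\mathcal C_Q}(M,P_j[1])$) is exactly the one used both here and in the paragraph preceding Corollary \ref{corol:Ud}. In short, same proof, with a modest tightening of the final density argument.
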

		\begin{proof}
			Assume that there is some $i \in Q_0$ such that $d_i<0$ and $d_i'>0$, then for every $M' \in \rep(Q,\textbf d')$, $0<d_i' =  \dim \Hom_{kQ}(P_i,M') = \dim \Ext^1_{\mathcal C_Q}(M',P_i[1])$ and thus $\Ext^1_{\mathcal C_Q}(\textbf d, \textbf d')$ does not vanish generally. Thus, we can assume that $d_i$ and $d_i'$ are of the same sign for every $i \in Q_0$, in particular, $[\textbf d+\textbf d']_+=[\textbf d]_++[\textbf d']_+$.
			As $\Ext^1_{\mathcal C_Q}(\textbf d, \textbf d')=0$, we have $\Ext^1_{kQ}(\textbf d, \textbf d')=0$ and $\Ext^1_{kQ}(\textbf d', \textbf d)=0$, it follows from \cite{CBS} that the morphism
			$$\phi: \left\{\begin{array}{rcl}
				\rep(Q,[\textbf d]_+) \times \rep(Q,[\textbf d']_+) & \fl & \rep(Q,[\textbf d+\textbf d']_+)\\
				(U,V) & \mapsto & U\oplus V
			\end{array}\right.$$
			is dominant.
			As $U_{[\textbf d]}$ (resp $U_{[\textbf d']_+}$) is open in $\rep(Q,[\textbf d]_+)$ (resp. in $\rep(Q,[\textbf d']_+)$), it follows that $U_{[\textbf d]} \oplus U_{[\textbf d']_+}$ is open and dense in $\rep(Q,[\textbf d+\textbf d']_+)$. Thus, $X_{[\textbf d+\textbf d']_+}=X_{[\textbf d]_+}X_{[\textbf d']_+}$ and 
			\begin{align*}
				X_{\textbf d}X_{\textbf d'}
					&= X_{[\textbf d]_+}\left(\prod_{d_i<0}X_{P_i[1]^{\oplus(-d_i)}}\right)X_{[\textbf d']_+}\left(\prod_{d'_i<0}X_{P_i[1]^{\oplus(-d'_i)}}\right)\\
					&= X_{[\textbf d+\textbf d']_+}\prod_{d_i+d_i'<0}X_{P_i[1]^{\oplus(-(d_i+d_i'))}}\\
					&= X_{\textbf d+\textbf d'}
			\end{align*}
		\end{proof}
		
		 The following proposition proves that generic variables are indeed generalizing cluster monomials.
		\begin{monlem}\label{lem:clustermonomial}
			Let $Q$ be an acyclic quiver, fix $x$ a cluster monomial in $\mathcal A(Q)$ with denominator vector $\delta(x)$. Then
			$$x=X_{\delta(x)}.$$
			Equivalently, if $M$ is a rigid object, we have
			$$X_M=X_{\ddim M}.$$
		\end{monlem}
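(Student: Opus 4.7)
The plan is to reduce the statement to the case of a rigid $kQ$-module by separating out the shifted-projective summand, and then invoke the classical fact that a rigid module has a dense open orbit.

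First I would write $M = H^0(M) \oplus P_M[1]$ with $P_M = \bigoplus_{i \in Q_0} P_i^{\oplus n_i}$. Since $M$ is rigid in $\mathcal{C}_Q$, the module $H^0(M)$ is rigid in $kQ$-mod, and (as noted in the excerpt just before the corollary on $U_{\textbf d}$) one has $[\ddim M]_+ = \ddim H^0(M)$ with $[\ddim M]_- = -\sum_i n_i \alpha_i$. From the definition of the generic variable and of the Caldero-Chapoton map,
\[
X_{\ddim M} = X_{\ddim H^0(M)} \cdot \prod_{i} u_i^{n_i}, \qquad X_M = X_{H^0(M)} \cdot \prod_{i} u_i^{n_i}.
\]
So it suffices to prove $X_N = X_{\ddim N}$ for any rigid $kQ$-module $N$.

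Set $\textbf d = \ddim N$. The key input is the classical fact that since $\Ext^1_{kQ}(N,N) = 0$, the $G_{\textbf d}$-orbit $\mathcal{O}_N$ of $N$ in $\rep(Q,\textbf d)$ is open (hence, by irreducibility of $\rep(Q,\textbf d)$, open and dense). Indeed the tangent space to $\mathcal{O}_N$ at $N$ has codimension $\dim \Ext^1_{kQ}(N,N) = 0$ inside the tangent space to $\rep(Q,\textbf d)$. By Corollary~\ref{corol:Ud}, $U_{\textbf d}$ is dense open in $\rep(Q,\textbf d)$, so $\mathcal{O}_N \cap U_{\textbf d} \neq \emptyset$. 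Choosing any $N' \in \mathcal{O}_N \cap U_{\textbf d}$, we have $N' \cong N$ (so $X_{N'} = X_N$) and $N' \in U_{\textbf d}$ (so $X_{N'} = X_{\textbf d}$), yielding $X_N = X_{\ddim N}$.

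The equivalence with the cluster monomial formulation then follows from Theorem~\ref{theorem:correspondanceCK2}, which identifies cluster monomials with generalized variables of direct sums of indecomposable rigid objects (together with the multiplicativity $X_{M \oplus N} = X_M X_N$ built into the Caldero-Chapoton map), combined with the denominators theorem (Theorem~\ref{theorem:denominators}) to identify $\ddim M$ with the denominator vector $\delta(x)$ of the corresponding cluster monomial $x$. There is no real obstacle here: the only nontrivial ingredient is the openness of the orbit of a rigid module, which is standard; the rest is bookkeeping between the conventions for $[\,\cdot\,]_+$, $[\,\cdot\,]_-$, and the shifts $P_i[1]$.
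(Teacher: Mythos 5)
Your proof is correct and follows essentially the same route as the paper: decompose $M = H^0(M) \oplus P_M[1]$, use that the rigid module $H^0(M)$ has a dense open orbit meeting $U_{[\ddim M]_+}$ so $X_{H^0(M)} = X_{[\ddim M]_+}$, and fold in the shifted-projective factor. The only cosmetic difference is that you invoke the definition of $X_{\textbf d}$ for $\textbf d \in \Z^{Q_0}$ directly to absorb the $\prod_i u_i^{n_i}$ factor, whereas the paper routes this last step through Lemma~\ref{lem:multiplicativity}; both are valid and the underlying argument is the same.
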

		\begin{proof}
			According to theorem \ref{theorem:correspondanceCK2}, if $x$ is a cluster monomial, it can be written $X_M$ for some rigid object $M$ and theorem \ref{theorem:denominators} implies that $\delta(X_M)=\ddim M$. Moreover, as $M$ is rigid, we have $[\ddim M]_+=\ddim H^0(M)$ and $[\ddim M]_-=\ddim P_M[1]$. $H^0(M)$ being a rigid module in $\rep(Q,[\ddim M]_+)$, its orbit $\mathcal O_{H^0(M)}$ is dense in $\rep(Q,[\ddim M]_+)$ and thus $\mathcal O_{H^0(M)} \cap U_{[\ddim M]_+} \neq \emptyset$. This implies that $X_{H^0(M)}=X_{[\ddim M]_+}$. Now, we always have $X_{P_M[1]}=X_{[\ddim M]_-}$. It follows that
			\begin{align*}
				x
					&= X_M\\
					&= X_{H^0(M)\oplus P_M[1]}\\
					&= X_{H^0(M)}X_{P_M[1]}\\
					&= X_{[\ddim M]_+}X_{[\ddim M]_-}\\
			\end{align*}
			but $\Ext^1_{\mathcal C_Q}([\ddim M]_+,[\ddim M]_-)$ vanishes generally because $M$ is a rigid module and so lemma \ref{lem:multiplicativity} implies that
			\begin{align*}
			 		X_{[\ddim M]_+}X_{[\ddim M]_-}
			 		&=X_{[\ddim M]_++[\ddim M]_-}\\
			 		&=X_{\ddim M}\\
			 		&=X_{\delta(x)}\\
			\end{align*}
			which proves the lemma.
		\end{proof}

		Through the works of \cite{CK1,shermanz, GLS:rigid2, CZ}, it became clear that the $\Z$-bases for cluster algebras should naturally contain the cluster monomials. Lemma \ref{lem:clustermonomial} proves that they turn out to be generic variables. Moreover, the analogy with the works of \cite{GLS:rigid2} in the context of preprojective algebras gives us the hope that the set of generic variables is a good candidate for being a $\Z$-basis in cluster algebras. We thus get interested in the set of all generic variables.
		
		\begin{defi}
		 	Let $Q$ be an acyclic quiver, we denote by $\mathcal B'(Q)$ the set of all generic variables
		 	$$\mathcal B'(Q)=\ens{X_{\textbf d} \ : \ \textbf d \in \Z^{Q_0}}.$$
		\end{defi}
	\end{subsubsection}
\end{subsection}

\begin{subsection}{Generic variables for affine quivers}\label{subsection:genericA}
	In this subsection, we obtain an explicit description of the generic variables when $Q$ is a quiver of affine type.  We denote by $\delta$ its minimal imaginary root. We recall that the positive root system of $Q$ can be written 
	$$\Phi_{\geq 0}(Q)=\Phi_{\geq 0}^{\textrm{re}}(Q) \sqcup \Phi_{\geq 0}^{\textrm{im}}(Q)$$
	where $\Phi_{\geq 0}^{re}(Q)$ is the set of positive real roots and $\Phi_{\geq 0}^{\textrm{im}}(Q)=\N\delta$ is the set of imaginary roots. For details concerning representation theory of affine quivers, one can for example refer to \cite{ringel:1099, DR:memoirs}.
	
	\begin{subsubsection}{Generic variables associated to positive real Schur roots}
		\begin{monlem}\label{lem:realSchur}
			Let $Q$ be an affine quiver and $\textbf d$ be a positive real Schur root of $Q$. Then there exists an unique indecomposable rigid module $M$ of dimension $\textbf d$. Moreover, $X_{\textbf d}=X_{M}$.
		\end{monlem}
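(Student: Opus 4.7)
The statement splits naturally into two claims: an existence/uniqueness part for the module $M$, and the identity $X_{\textbf d}=X_M$. Both will follow by assembling facts already recorded in the background.

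For the first claim, I would invoke Kac's theorem, recalled in subsection on affine quivers: since $\textbf d$ is a positive real root, there exists a unique indecomposable representation $M\in\rep(Q,\textbf d)$. Because $\textbf d$ is moreover a Schur root, the same paragraph asserts that this $M$ is rigid, i.e. $\Ext^1_{kQ}(M,M)=0$. This settles existence and uniqueness of the indecomposable rigid module of dimension $\textbf d$.

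For the second claim, the plan is to argue that the $G_{\textbf d}$-orbit $\mathcal O_M\subset\rep(Q,\textbf d)$ is open and dense, and then intersect with the generic open subset $U_{\textbf d}$ provided by Corollary~\ref{corol:Ud}. Density of $\mathcal O_M$ follows from rigidity: standard deformation theory gives
\[
\codim_{\rep(Q,\textbf d)}\mathcal O_M=\dim\Ext^1_{kQ}(M,M)=0,
\]
so by irreducibility of $\rep(Q,\textbf d)$ the orbit $\mathcal O_M$ is dense and open. Therefore $\mathcal O_M\cap U_{\textbf d}\neq\emptyset$. Picking any $N$ in this intersection, one has $N\simeq M$ (since the intersection lies in $\mathcal O_M$), so by definition of $X_{\textbf d}$ as the generic value of the Caldero-Chapoton map on $U_{\textbf d}$,
\[
X_{\textbf d}=X_N=X_M,
\]
the last equality using that $X_?^Q$ is constant on isomorphism classes. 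Note that since $\textbf d\in\N^{Q_0}$, there is no need to separate a projective-shift contribution: $[\textbf d]_+=\textbf d$ and $[\textbf d]_-=0$.

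In effect this lemma is the specialization of Lemma~\ref{lem:clustermonomial} to the situation where $M$ is already a $kQ$-module (no $P_i[1]$ summand) and the proof really only rests on (i) Kac's theorem plus the standard fact that Schur implies rigid for real roots, and (ii) the density of orbits of rigid modules combined with the definition of $U_{\textbf d}$. I do not expect any genuine obstacle; the only point requiring a moment of care is making sure that density of $\mathcal O_M$ genuinely follows from $\Ext^1_{kQ}(M,M)=0$ together with irreducibility of $\rep(Q,\textbf d)$, which is classical.
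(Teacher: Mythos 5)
Your proof is correct and follows essentially the same route as the paper's. The only cosmetic differences are that the paper derives rigidity explicitly from the Euler form, computing $1=\<\textbf d,\textbf d\>=\dim\End_{kQ}(M)-\dim\Ext^1_{kQ}(M,M)$ to conclude $\Ext^1_{kQ}(M,M)=0$, whereas you cite the background paragraph directly (which states the same fact); and the paper finishes by citing Lemma~\ref{lem:clustermonomial}, whereas you unfold the orbit-density argument that underlies that lemma's proof. Both are fine; your version is slightly more self-contained but covers identical ground.
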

		\begin{proof}
			As $\textbf d$ is a real root, Kac's theorem ensures that there exists an unique indecomposable representation $M$ of dimension $\textbf d$. Moreover, as $\textbf d$ is a Schur root, this representation has a trivial endomorphism ring. Now $1=\<\textbf d, \textbf d\>=\<M,M\>=\dim \End_{kQ}(M)-\dim \Ext^1_{kQ}(M,M)$ so $\dim \Ext^1_{kQ}(M,M)=0$. It follows from lemma \ref{lem:clustermonomial} that $X_M=X_{\ddim M}=X_{\textbf d}$.
		\end{proof} 
	\end{subsubsection}

	\begin{subsubsection}{Generic variables associated to positive imaginary roots}
		We write $\P^1=\P^1(k)$ the index set of tubes in the AR quiver of $kQ$ and by $\P^1_0$ the index set of homogeneous tubes. For any $\lambda \in \P^1$, we will denote by $\mathcal T_\lambda$ the tube indexed by $\lambda$. If $\lambda \in \P^1_0$, $M_\lambda$ denotes the quasi-simple in $\mathcal T_\lambda$ and $M_\lambda^{(n)}$ denotes the indecomposable module in $\mathcal T_\lambda$ with quasi-socle $M_\lambda$ and quasi-length $n$. In particular, $\ddim M_\lambda^{(n)}=n \delta$ for any $n \geq 1$ and $\lambda \in \P^1_0$.
		
		It is convenient to introduce the so-called \emph{normalized Chebyshev polynomials of the second kind}\index{Chebyshev polynomial!normalized of the second kind} (called \emph{generalized Chebyshev polynomials of rank 2} in \cite{Dupont:stabletubes}). For any $n \geq 0$, the $n$-th normalized Chebyshev polynomial of the second kind is the polynomial $C_n$ defined by 
		$$C_n(t+t^{-1})=\sum_{k=0}^{n}t^{n-2k}$$
		On can refer to subsection \ref{subsection:basesKronecker} (see also \cite{CZ} and \cite{Dupont:stabletubes}) for details concerning these polynomials.
		
		The following lemma is proved in \cite{CZ}, we recall it for completeness.
		\begin{monlem}\label{lem:Chebyshev}
			Fix $Q$ an affine quiver, $n \geq 1$ and $\lambda \in \P^1_0$. Then
			$$X_{M_\lambda^{(n)}}=C_n(X_{M_\lambda})$$
		\end{monlem}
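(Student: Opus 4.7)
The plan is to proceed by induction on $n$, using the almost split multiplication formula of Proposition \ref{prop:almostsplitmult} together with a standard quadratic identity satisfied by the polynomials $C_n$.

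The starting point is the structure of the almost split sequences inside a homogeneous tube $\mathcal T_\lambda$. Since $\mathcal T_\lambda$ has rank one, $\tau$ acts as the identity on its indecomposable objects, so for $n \geq 2$ the AR sequence ending at $M_\lambda^{(n)}$ has the form
\begin{equation*}
    0 \fl M_\lambda^{(n)} \fl M_\lambda^{(n-1)} \oplus M_\lambda^{(n+1)} \fl M_\lambda^{(n)} \fl 0,
\end{equation*}
while for $n=1$ the AR sequence reads $0 \fl M_\lambda \fl M_\lambda^{(2)} \fl M_\lambda \fl 0$. Applying Proposition \ref{prop:almostsplitmult} gives the recursions
\begin{equation*}
    X_{M_\lambda}^2 = X_{M_\lambda^{(2)}}+1, \qquad X_{M_\lambda^{(n)}}^2 = X_{M_\lambda^{(n-1)}}X_{M_\lambda^{(n+1)}} + 1 \quad (n \geq 2).
\end{equation*}

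The base case $n=1$ is trivial because $C_1(x)=x$. For $n=2$, the first recursion together with $C_2(x)=x^2-1$ (immediate from $C_2(t+t^{-1})=t^2+1+t^{-2}$) yields $X_{M_\lambda^{(2)}}=C_2(X_{M_\lambda})$. For the inductive step, I would rely on the Chebyshev identity
\begin{equation*}
    C_n(x)^2 = C_{n-1}(x)\, C_{n+1}(x) + 1 \qquad (n \geq 1),
\end{equation*}
which is easily verified directly from the defining expansion $C_n(t+t^{-1})=\sum_{k=0}^n t^{n-2k}$ (or by telescoping in $t$). Assuming by induction that $X_{M_\lambda^{(m)}}=C_m(X_{M_\lambda})$ for all $m \leq n$, the AR recursion becomes $C_n(X_{M_\lambda})^2 = C_{n-1}(X_{M_\lambda})\, X_{M_\lambda^{(n+1)}} + 1$, and comparison with the Chebyshev identity forces $X_{M_\lambda^{(n+1)}}=C_{n+1}(X_{M_\lambda})$.

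There is no substantial obstacle here: the key inputs are the well-known shape of AR sequences in a homogeneous tube and a purely formal polynomial identity, so the only thing to be careful about is to treat the initial case $n=1$ separately (because the AR sequence has a single indecomposable in the middle) before running the general inductive step from $n=2$ onwards.
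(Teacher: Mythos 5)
Your proof is correct and follows essentially the same route as the paper: both derive the quadratic recursion $X_{M_\lambda^{(n)}}^2=X_{M_\lambda^{(n-1)}}X_{M_\lambda^{(n+1)}}+1$ from the AR sequences in a homogeneous tube via the almost split multiplication formula, and then identify the solution with the normalized Chebyshev polynomials. The paper states this more tersely, while you helpfully spell out the needed identity $C_n^2=C_{n-1}C_{n+1}+1$ and the base cases, but the argument is the same.
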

		\begin{proof}
		 	For any $n \geq 1$ and any $\lambda \in \P^1_0$, we have an almost split sequence
			$$0 \fl M_\lambda^{(n)} \fl M_\lambda^{(n+1)} \oplus M_\lambda^{(n-1)} \fl M_\lambda^{(n)} \fl 0.$$
			The almost split multiplication formula implies thus that
			$$X_{M_\lambda^{(n)}}^2=X_{M_\lambda^{(n-1)}}X_{M_\lambda^{(n+1)}}+1$$
			and thus that $M_\lambda^{(n)}=C_n(M_\lambda)$ where $C_n$ is the second kind normalized Chebyshev polynomial. 
		\end{proof}

		We now prove that for an affine quiver, the Caldero-Chapoton map does not depend on the considered homogeneous tube.
		\begin{monlem}\label{lem:XMlambda}
			Let $Q$ be an affine quiver of affine type. Then for any $\lambda,\mu \in \P^1_0$ and any $n \geq 1$, we have 
			$$X_{M_\lambda^{(n)}}=X_{M_\mu^{(n)}}$$
		\end{monlem}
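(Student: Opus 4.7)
The plan begins with a reduction via Lemma~\ref{lem:Chebyshev}: since $X_{M_\lambda^{(n)}} = C_n(X_{M_\lambda})$ with $C_n$ an integer polynomial independent of $\lambda$, it suffices to show $X_{M_\lambda}=X_{M_\mu}$ for all $\lambda,\mu\in\P^1_0$. This turns an infinite family of identities into a single one at quasi-length one.

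To treat this reduced statement I would invoke Corollary~\ref{corol:Ud} to obtain a $G_\delta$-invariant dense open subset $U_\delta\subset\rep(Q,\delta)$ on which the Caldero-Chapoton map is constant with value $X_\delta$. The goal then becomes to show that the orbit $\mathcal{O}_{M_\lambda}$ is contained in $U_\delta$ for every $\lambda\in\P^1_0$, which would yield $X_{M_\lambda}=X_\delta$ uniformly.

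The key geometric input is standard affine-quiver representation theory: since $\delta$ is an isotropic Schur root and each $M_\lambda$ is Schur, every orbit $\mathcal{O}_{M_\lambda}$ has codimension $\dim\End_{kQ}(M_\lambda) - \<\delta,\delta\> = 1$ in the irreducible variety $\rep(Q,\delta)$, and their union $\mathcal{W}:=\bigcup_{\lambda\in\P^1_0}\mathcal{O}_{M_\lambda}$ is open and dense. In particular $\mathcal{W}\cap U_\delta$ is non-empty and, by $G_\delta$-invariance of $U_\delta$, at least one orbit $\mathcal{O}_{M_{\lambda_0}}$ lies entirely in $U_\delta$, giving $X_{M_{\lambda_0}}=X_\delta$.

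Extending this from the particular $\lambda_0$ to every $\lambda\in\P^1_0$ is the main obstacle. My approach would be to assemble the quasi-simples into an algebraic family $\mathbb{M}\to\P^1_0$ with $\mathbb{M}_\lambda\cong M_\lambda$ and, for each dimension sub-vector $\textbf{e}\leq\delta$, to study the relative quiver Grassmannian $\Gr_{\textbf{e}}(\mathbb{M})\to\P^1_0$. By Verdier's constructibility theorem the function $\lambda\mapsto\chi(\Gr_{\textbf{e}}(M_\lambda))$ is constructible on $\P^1_0$ and hence constant on a dense open subset. Excluding jumps at every point of $\P^1_0$ reduces to verifying that the submodule lattice of $M_\lambda$ is uniform throughout $\P^1_0$, which should follow from the fact that $M_\lambda$ is quasi-simple in a rank-one tube for every $\lambda\in\P^1_0$, so that the relative Grassmannian is a locally trivial fibration over $\P^1_0$. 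Once constancy is established for each $\textbf{e}$, assembling the Euler characteristics with the appropriate Laurent monomials in formula~(\ref{CCmap}) yields $X_{M_\lambda}=X_\delta$ for every $\lambda\in\P^1_0$, completing the argument.
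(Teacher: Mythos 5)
The reduction to $n=1$ via Lemma~\ref{lem:Chebyshev} matches the paper exactly. But for the $n=1$ case your approach diverges from the paper's and, as written, has a genuine gap.

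Your argument correctly establishes that the union $\mathcal W=\bigcup_{\lambda\in\P^1_0}\mathcal O_{M_\lambda}$ is dense in $\rep(Q,\delta)$ and that $\mathcal W\cap U_\delta\neq\emptyset$, hence $X_{M_{\lambda_0}}=X_\delta$ for \emph{some} $\lambda_0\in\P^1_0$. The problem is the passage from ``some $\lambda_0$'' to ``every $\lambda\in\P^1_0$''. Constructibility (via Verdier) only gives that each $\lambda\mapsto\chi(\Gr_{\textbf e}(M_\lambda))$ is constant on a dense open subset of $\P^1_0$; since $\P^1_0$ is a cofinite subset of $\P^1$, this still allows finitely many exceptional $\lambda\in\P^1_0$ where the value jumps. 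The assertion that ``the submodule lattice of $M_\lambda$ is uniform throughout $\P^1_0$'' and that the relative Grassmannian is a ``locally trivial fibration over $\P^1_0$'' is exactly what needs to be proved; the fact that each $M_\lambda$ is quasi-simple in a rank-one tube controls the \emph{regular} submodules but says nothing a priori about the preprojective submodules, whose isomorphism types could in principle vary as $\lambda$ moves through $\P^1_0$. In fact the whole point of the lemma is that no jumps occur, so this cannot be taken for granted.

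By contrast, the paper's proof avoids this issue entirely by using Crawley-Boevey's one-point extension: it fixes the projective $P=P_e$ at an extending vertex, checks $\dim\Ext^1_{\mathcal C_Q}(P,M_\lambda)=1$, and applies the one-dimensional multiplication formula (Theorem~\ref{theorem:onedimmult}) to write $X_P X_{M_\lambda}=X_L+X_B$. It then analyzes the term $B\simeq\ker f\oplus\coker f[-1]$ and shows, by a defect computation, that $B$ is (up to isomorphism) the \emph{same} object for every $\lambda\in\P^1_0$. This handles all $\lambda\in\P^1_0$ simultaneously and uniformly, rather than establishing a generic statement and then trying to exclude bad points. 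To salvage your approach you would need an honest proof that the relative quiver Grassmannian $\Gr_{\textbf e}(\mathbb M)\to\P^1_0$ is topologically trivial (or at least has constant Euler characteristic of fibers) for every $\textbf e\leq\delta$; as it stands this is the missing ingredient.
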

		\begin{proof}
			By lemma \ref{lem:Chebyshev}, it is enough to prove it for $n=1$. For this, we recall Crawley-Boevey's construction of regular representations of dimension $\delta$ by one-point extensions \cite{CB:lectures}. Let $e$ be an extending vertex of $Q$, that is a vertex $e \in Q_0$ such that $\delta_e=1$ and that the quiver obtained from $Q$ by deleting the vertex $e$ is of Dynkin type. We denote by $P=P_e$ the projective module associated to the vertex $e$ and by $\textbf p=\ddim P$ its dimension vector. Then the defect of $P$ is $\d_P=-1$. Let $L$ be the unique indecomposable representation of dimension vector $\delta +\textbf p$. Then $L$ has also defect $-1$. Crawley-Boevey proved that $\Hom_{kQ}(P,L) \simeq k^2$ and that for every morphism $0 \neq \lambda \in \Hom_{kQ}(P,L)$, $\coker \lambda$ is an indecomposable regular module of dimension vector $\delta$. Moreover, $\coker \lambda \simeq \coker \lambda'$ if and only if $\lambda = x \lambda'$ for some $0 \neq x \in k$. Then $\lambda \fl \coker \lambda$ induces a bijection from $\P\Hom_{kQ}(P,L)$ to the set of all tubes of $Q$ by sending $\lambda \in \P\Hom_{kQ}(P,L)$ to an indecomposable regular module of dimension $\delta$. We thus identify $\P\Hom_{kQ}(P,L)$ and $\P^1$ and we will say that $\lambda \in \P^1_0$ to signify that $\coker \lambda$ is a quasi-simple in the homogeneous tube $\mathcal T_{\lambda}$. Note that in this case $M_\lambda \simeq \coker \lambda$.
			
			Fix $0 \neq \lambda \in \Hom_{kQ}(P,L)$, then
			\begin{align*}
				\Ext^1_{\mathcal C_Q}(P,\coker \lambda) 
					& \simeq \Ext^1_{kQ}(P,\coker \lambda) \oplus \Ext^1_{kQ}(\coker \lambda,P) \\
					& \simeq \Ext^1_{kQ}(\coker \lambda,P) \\
					& \simeq \Hom_{kQ}(P,\coker \lambda) \\
					& \simeq k
			\end{align*}
			because $P=P_e$ and $e$ is an extending vertex.
			
			It follows from Caldero-Keller's multiplication formula that 
			$$X_PX_{\coker \lambda}=X_L + X_B$$
			where $B \simeq \ker f \oplus \coker f[-1]$ for some morphism $0 \neq f \in \Hom_{kQ}(P,\tau \coker \lambda)$.
			
			Fix $\lambda \in \P^1_0$. Assume that $f$ is not surjective. Then $\im f$ is preprojective and we have a short exact sequence
			$$0 \fl \ker f \fl P \xrightarrow{f} \im f \fl 0$$
			and thus $\d_{\ker f}+\d_{\im f}=\d_P=-1$ but $\ker f$ and $\im f$ are preprojective so necessarily $\d_{\ker f}=0$ and $\ker f=0$. 
			It follows that $B \simeq \tau^{-1}\coker f$. But
			$$\d_{\coker f} =\d_{(\tau \coker \lambda)/P}=-\d_P=1$$
			and $\tau \coker \lambda \simeq \coker \lambda$ is quasi-simple in an homogeneous tube. Thus, $\coker f$ is preinjective and by defect, it is indecomposable. Moreover, $\ddim \coker f=\delta-\textbf p$. Thus, for every $\lambda \in \P^1_0$ and every non-surjective map $0 \neq f \in \Hom_{kQ}(P,\tau \coker \lambda)$, $\coker f$ is the unique indecomposable representation $V$ of dimension $\delta-\textbf p$. If there exists some non-surjective $f \in \Hom_{kQ}(P,\tau \coker \lambda)$, then $f$ is injective, so $\ddim P \lneqq \delta$ and thus every map $g \in \Hom_{kQ}(P,\tau \coker \lambda)$ is non-surjective for any $\lambda \in \P^1_0$. Thus,
			$$X_PX_{\coker \lambda}=X_L+X_V[-1]$$
			for any $\lambda \in \P^1_0$. In particular, $X_{M_\lambda}$ does not depend on the parameter $\lambda \in \P^1_0$.
			
			Assume now that $0 \neq f \in \Hom_{kQ}(P,\tau \coker \lambda)$ is surjective. Then $\ker f$ is a preprojective module of dimension $\textbf p-\delta$. Thus, $\ker f$ has defect $-1$, it is thus necessarily indecomposable. As there exists an unique indecomposable representation $U$ of dimension $\textbf p-\delta$, it follows that $\ker f \simeq U$ for every $\lambda \in \P^1_0$ and every surjective $f \in \Hom_{kQ}(P,\tau \coker \lambda)$. Then, it follows from the above discussion that if one of the $f \in \Hom_{kQ}(P,\tau \coker \lambda)$ is surjective for some $\lambda \in \P^1_0$, then every $f\in \Hom_{kQ}(P,\tau \coker \lambda)$ is surjective for any $\lambda \in \P^1_0$. In this case, we get
			$$X_PX_{\coker \lambda}=X_L+X_U$$
			for every $\lambda \in \P^1_0$. In particular, $X_{M_\lambda}$ does not depend on the parameter $\lambda \in \P^1_0$.
		\end{proof}

		If $E$ is a quasi-simple module in an exceptional tube $\mathcal T$, we denote by $E^{(n)}$ the indecomposable module with quasi-socle $E$ and quasi-length $n$. If $p$ is the rank of $\mathcal T$ and $k \geq 1$ is an integer, we will simplify the notations by writing $M_E^{(k)}=E^{(kp)}$ and $M_E=E^{(p)}$. Note in particular that $\ddim M_E^{(k)}=k\delta$.

		\begin{monlem}\label{lem:imaginary}
			Fix $n \geq 1$ a positive integer. Then
			$$X_{n \delta}=X_{M_\lambda}^n$$
			for any $\lambda \in \P^1_0$.
		\end{monlem}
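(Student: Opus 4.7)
The plan is to apply the canonical decomposition machinery of Subsection \ref{subsection:dcpcanonique} to the dimension vector $n\delta$, reducing the problem to computing $X_\delta$, and then identify $X_\delta$ with $X_{M_\lambda}$ using Lemma \ref{lem:XMlambda}.

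First I would show that the canonical decomposition of $n\delta$ is
$$n\delta = \underbrace{\delta \oplus \cdots \oplus \delta}_{n\text{ times}}.$$
By Proposition \ref{prop:Kacdcp}, it suffices to exhibit, for each $n$, indecomposable Schur representations $M_1,\ldots,M_n$ of dimension $\delta$ with $\Ext^1_{kQ}(M_i,M_j)=0$ for $i\neq j$. Since $\P^1_0$ is the complement in $\P^1$ of a finite set (the parameters of exceptional tubes), it contains infinitely many elements. Pick pairwise distinct $\lambda_1,\ldots,\lambda_n\in\P^1_0$ and set $M_i=M_{\lambda_i}$. Each $M_{\lambda_i}$ is quasi-simple in a homogeneous tube, hence $\End_{kQ}(M_{\lambda_i})\simeq k$, so $\delta$ is a Schur root and each $M_i$ is a Schur representation. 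Since the $M_{\lambda_i}$ lie in pairwise distinct tubes, there are no nonzero homomorphisms or extensions between them in $kQ\textrm{-mod}$, so the required vanishing holds.

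Next, Proposition \ref{prop:dcpcanonique} immediately yields
$$X_{n\delta}=X_\delta^n.$$
It remains to show that $X_\delta=X_{M_\lambda}$ for any $\lambda\in\P^1_0$. The indecomposable representations of dimension $\delta$ form a $\P^1$-family, and only finitely many of them (those associated to exceptional tubes) fail to be quasi-simples in homogeneous tubes. The locus $V\subset\rep(Q,\delta)$ of representations isomorphic to some $M_\lambda$ with $\lambda\in\P^1_0$ is therefore a dense open subset. Intersecting with the dense open subset $U_\delta$ from Corollary \ref{corol:Ud}, we can pick $M\in V\cap U_\delta$, so $M\simeq M_\lambda$ for some $\lambda\in\P^1_0$, and hence $X_\delta=X_{M_\lambda}$. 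By Lemma \ref{lem:XMlambda}, this value is independent of the chosen $\lambda\in\P^1_0$. Combining the two displayed equalities gives $X_{n\delta}=X_{M_\lambda}^n$ for every $\lambda\in\P^1_0$.

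There is no real obstacle here: the only mildly delicate point is checking that the Schur/orthogonality hypotheses of Proposition \ref{prop:Kacdcp} are simultaneously satisfied for $n$ copies of $\delta$, which forces one to work with $n$ distinct homogeneous tubes and so relies on $|\P^1_0|=\infty$ (valid since the number of exceptional tubes is at most three). Everything else is routine application of results already proved in the excerpt.
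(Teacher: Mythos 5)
Your proof is correct and its overall skeleton matches the paper's: first establish that $n\delta = \delta^{\oplus n}$ is the canonical decomposition (via Proposition~\ref{prop:Kacdcp}), then apply Proposition~\ref{prop:dcpcanonique} to get $X_{n\delta}=X_\delta^n$, and finally identify $X_\delta$ with $X_{M_\lambda}$. (In fact the paper does not spell out the first step; you do, which is a nice addition.) Where you diverge is in the last step. The paper argues explicitly with quiver grassmannians: it proves that $\Gr_{\ddim E}(M)=\emptyset$ for any indecomposable $M$ of dimension $\delta$ not isomorphic to $M_E$, hence the generic value of $\chi(\Gr_{\ddim E}(-))$ is $0$, hence $U_\delta\cap\mathcal O_{M_E}=\emptyset$, which forces $U_\delta\cap\mathfrak M_\delta\subset\bigsqcup_{\lambda\in\P^1_0}\mathcal O_{M_\lambda}$. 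You replace that computation by a purely geometric argument: the locus $V$ of representations isomorphic to some $M_\lambda$, $\lambda\in\P^1_0$, is dense, so it meets $U_\delta$. That is a valid shortcut, and arguably more conceptual. However, your assertion that $V$ ``is therefore a dense open subset'' does more than is needed and is not actually justified by what you wrote: you only observe that finitely many indecomposables of dimension $\delta$ are exceptional. To get openness one should further note that (i) the decomposable locus is closed, so the indecomposable locus $W$ is open, and (ii) every indecomposable of dimension $\delta$ is a Schur module, so all orbits in $W$ have the same dimension $\dim G_\delta-1$; hence $\mathcal O_{M_E}$ is closed inside $W$, and removing finitely many such orbits from $W$ leaves an open set. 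For your argument you actually only need $V$ dense, which is the cheaper fact that $\bigcup_E\mathcal O_{M_E}$ has codimension one while $W$ is dense and full-dimensional, so $W\setminus\bigcup_E\mathcal O_{M_E}$ is still dense; a dense set always meets a dense open set such as $U_\delta$. With that one clarification, your argument goes through and is a clean alternative to the paper's grassmannian computation.
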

		\begin{proof}
			We first prove it for $n=1$. The canonical decomposition of $\delta$ is $\delta$ itself. It follows that $X_\delta=X_M$ for some indecomposable module $M$ of dimension vector $\delta$. Now, we know that the indecomposable modules of dimension vector $\delta$ are either the $M_\lambda$ for $\lambda \in \P^1_0$, or the $M_E$ for $E$ quasi-simple in an exceptional tube. Fix now a quasi-simple $E$ in an exceptional tube, we claim that $\Gr_{\ddim E}(M)=\emptyset$ for any indecomposable module $M$ of dimension vector $\delta$ not isomorphic to $M_E$. Indeed, fix $\lambda \in \P^1_0$ and $U \subset M_\lambda$ a submodule such that $\ddim U=\ddim E$. As $M_\lambda$ is quasi-simple, $U$ has to be preprojective and thus $\d_U <0$ but $\d_U=\d_E=0$, which is a contradiction and thus $\Gr_{\ddim E}(M_\lambda)=\emptyset$. Fix now $F$ another quasi-simple and assume that $U \subset M_F$ is a submodule such that $\ddim U=\ddim E$. It follows that $U$ decomposes into $U=U_P \oplus U_R$ where $U_P$ is preprojective and $U_R$ is regular. As $\d_U=\d_E=0$, we have $U_P=0$ and thus $U_R$ is regular. Now $\ddim U_R=\ddim E$ but $U_R$ is a regular submodule of $M_F$. By uniseriality of the regular components, $U_R$ has to be indecomposable. As $\ddim E$ is a real root, it follows that $E \simeq U_R \subset M_F$ and thus $F=E$. As the $G_{\delta}$-orbit of $M_E$ is not open, it follows that the value of $\chi(\Gr_{\ddim E}(-))$ has to be zero on $U_{\delta,\ddim E}$ and so $U_\delta \cap \mathcal O_{M_E} = \emptyset$ for any quasi-simple $E$ in an exceptional tube. It follows that 
			$$U_{\delta} \cap \mathfrak M_{\delta} \subset \bigsqcup_{\lambda \in \P^1_0} \mathcal O_{M_\lambda}.$$ Thus, $X_\delta=X_{M_\lambda}$ for some $\lambda \in \P^1_0$ and lemma \ref{lem:XMlambda} implies that $X_\delta=X_{M_\lambda}$ for any $\lambda \in \P^1_0$.
			
			Now if $n >1$, the canonical decomposition of $n\delta$ is $\delta \oplus \cdots \oplus \delta$. Proposition \ref{prop:dcpcanonique} implies then that 
			$$X_{n\delta}=X_\delta^n=X_{M_\lambda}^n.$$
		\end{proof}
		
		\begin{rmq}
			Note that it is not clear at this time that $X_{M_E} \neq X_{M_\lambda}$ if $\lambda \in \P^1_0$ and $E$ is a quasi-simple in an exceptional tube. It will appear in theorem \ref{theorem:differencedelta} that these values are in fact always different and that the difference can be completely described. Moreover, it will also turn out that if $E$ and $F$ are non-isomorphic quasi-simple modules taken in exceptional tubes, then $X_{M_E} \neq X_{M_F}$ if $E$ and $F$ do not belong both to tubes of rank 2.
		\end{rmq}
	\end{subsubsection}

 	\begin{subsubsection}{Generic variables associated to positive real non-Schur roots}
		Now, it remains to compute $X_{\textbf d}$ when $\textbf d$ is a real root which is not a Schur root. If $\textbf d$ is such a root, then according to Kac's theorem, there exists an unique indecomposable module $M$ of dimension vector $\textbf d$. If $M$ is preprojective or preinjective, it is known that $\End_{kQ}(M) \simeq k$ and thus $\textbf d$ is a real Schur root. It follows that $M$ has to be a regular module and  $\delta \lneqq \textbf d$ (see \cite{CB:lectures} for example). It is proved in \cite{Kac:infroot2} that $\textbf d = \delta \oplus \cdots \oplus \delta \oplus \textbf d_0$ where $\textbf d_0$ is the root of smallest height in $(\textbf d+ \Z \delta) \cap \Phi_{\geq 0}(Q)$. In particular, $\textbf d_0$ is a real Schur root. The following proposition gives an explicit description of $X_{\textbf d}$ in this case.
		
		\begin{maprop}\label{real:nonSchur}
			Let $Q$ be an affine quiver, $\textbf d$ be a real root which is not a Schur root and write $\textbf d=\delta^{\oplus n} \oplus \textbf d_0$ its canonical decomposition. Then 
			$$X_{\textbf d}=X_{M_\lambda}^nX_{M_0}$$
			for any $\lambda \in \P^1_0$ and $M_0$ being the unique indecomposable module of dimension vector $\textbf d_0$.
		\end{maprop}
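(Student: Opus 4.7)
The proof plan is to chain together three results that have already been established in the excerpt, since the proposition is essentially a direct combination of the canonical decomposition formula with the two explicit computations for imaginary roots and real Schur roots.

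First, since $\textbf d = \delta^{\oplus n} \oplus \textbf d_0$ is the canonical decomposition of $\textbf d$, Proposition \ref{prop:dcpcanonique} applies and yields
\[
X_{\textbf d} = X_{\delta}^{\,n}\, X_{\textbf d_0}.
\]
Next, by Lemma \ref{lem:imaginary} we have $X_{\delta} = X_{M_\lambda}$ for every $\lambda \in \P^1_0$, so $X_{\delta}^{\,n} = X_{M_\lambda}^{\,n}$.

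For the remaining factor, the key observation is that $\textbf d_0$ is a positive real Schur root. This is precisely the setting of Lemma \ref{lem:realSchur}, which gives the unique indecomposable $M_0$ of dimension $\textbf d_0$ and identifies $X_{\textbf d_0} = X_{M_0}$. Combining the three identities yields $X_{\textbf d} = X_{M_\lambda}^{\,n}\, X_{M_0}$, as required.

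The only subtle point I would verify explicitly is that the hypotheses of Proposition \ref{prop:dcpcanonique} and Lemma \ref{lem:realSchur} are actually applicable here: one should mention that $\textbf d = \delta^{\oplus n} \oplus \textbf d_0$ is indeed the canonical decomposition (a fact already recalled in the paragraph before the statement, following \cite{Kac:infroot2}) with $n \geq 1$ (otherwise $\textbf d = \textbf d_0$ would itself be a Schur root, contradicting the assumption), and that $\textbf d_0$ being the root of smallest height in $(\textbf d + \Z\delta) \cap \Phi_{\geq 0}(Q)$ forces it to be a real Schur root. Beyond this bookkeeping, no further argument is needed; there is no genuine obstacle, the proposition is a clean corollary of the machinery developed in subsection \ref{subsection:dcpcanonique} and the two preceding lemmas.
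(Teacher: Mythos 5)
Your proof is correct and follows essentially the same route as the paper's: both proofs chain Proposition \ref{prop:dcpcanonique} with Lemmas \ref{lem:realSchur} and \ref{lem:imaginary}. The extra bookkeeping you add (confirming that $\textbf d_0$ is a real Schur root and that $n\geq 1$) is a reasonable sanity check but is already justified by the paragraph preceding the statement in the paper.
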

		\begin{proof}
			According to lemmas \ref{lem:realSchur} and \ref{lem:imaginary}, $X_{\textbf d_0}=X_{M_0}$ where $M_0$ is the unique indecomposable module in $\rep(Q,\textbf d_0)$ and $X_{n \delta}=X_{\delta}^n=X_{M_\lambda}^n$ for any $\lambda \in \P^1_0$. Proposition \ref{prop:dcpcanonique} implies then that 
			$$X_{\textbf d}=X_{n\delta}X_{\textbf d_0}=X_{M_\lambda}^nX_{M_0}.$$
		\end{proof}
		
		We can now give a complete description of the generic variables:		
		\begin{maprop}\label{prop:explicitbase}
			Let $Q$ be an affine quiver and $\textbf d \in \Z^{Q_0}$, denote by $\mathcal E$ a set of representatives of isoclasses of regular modules $M$. Then
			$$\mathcal B'(Q)=\ens{\textrm{cluster monomials}} \sqcup \ens{X_{M_\lambda^{\oplus n} \oplus E} \ : n \geq 1, \ E \in \add \mathcal E \textrm{ is rigid}}$$
		\end{maprop}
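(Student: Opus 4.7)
The plan is to express each generic variable $X_{\textbf d}$ via the canonical decomposition of $[\textbf d]_+$ and then to read off, from the nature of the Schur summands, which of the two families in the statement $X_{\textbf d}$ belongs to. By definition, $X_{\textbf d} = X_{[\textbf d]_+} \cdot \prod_{d_i < 0} X_{P_i[1]}^{-d_i}$, so it suffices to describe $X_{[\textbf d]_+}$. Write the canonical decomposition $[\textbf d]_+ = \bigoplus_j \textbf e_j$; since $Q$ is affine, each Schur root $\textbf e_j$ is either a positive real Schur root or the minimal imaginary root $\delta$. Proposition \ref{prop:dcpcanonique} combined with Lemmas \ref{lem:realSchur} and \ref{lem:imaginary} gives
$$X_{[\textbf d]_+} = X_{M_\lambda}^n \prod_j X_{M_j},$$
where $n \geq 0$ is the multiplicity of $\delta$, each $M_j$ is the unique indecomposable rigid module of real Schur dimension $\textbf e_j$, and $\lambda$ is any chosen element of $\P^1_0$.

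If $n = 0$, every $\textbf e_j$ is a real Schur root; Proposition \ref{prop:Kacdcp} gives pairwise $\Ext^1_{kQ}$-orthogonality of the $M_j$'s, hence pairwise $\Ext^1_{\mathcal C_Q}$-orthogonality. Moreover, when $d_i < 0$ one has $[\textbf d]_+(i) = 0$, forcing every $M_j(i) = 0$; hence $\Ext^1_{\mathcal C_Q}(P_i[1], M_j) \simeq \Hom_{kQ}(P_i, M_j) = 0$, and by 2-Calabi-Yau duality $\Ext^1_{\mathcal C_Q}(M_j, P_i[1]) = 0$ as well. Thus $\bigoplus_j M_j \oplus \bigoplus_i P_i[1]^{\oplus(-d_i)}$ is a rigid object in $\mathcal C_Q$, and Lemma \ref{lem:clustermonomial} identifies $X_{\textbf d}$ as a cluster monomial.

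If $n \geq 1$, the sincerity of $\delta$ forces $[\textbf d]_+$ to be sincere, so $\textbf d \in \N^{Q_0}$ and no shift-of-projective factors arise. The constraint $\Ext^1_{kQ}(\textbf e_j, \delta) = 0 = \Ext^1_{kQ}(\delta, \textbf e_j)$ from Proposition \ref{prop:Kacdcp}, combined with the Coxeter-invariance $\<\textbf e_j, \delta\> = -\<\delta, \textbf e_j\>$, forces $\dim\Hom(M_j, M_\lambda) = \dim\Hom(M_\lambda, M_j) = 0$ and $\d_{M_j} = 0$, so $M_j$ is regular, lies in no homogeneous tube (its dimension being a real root), and is therefore a rigid indecomposable regular module in an exceptional tube, i.e., $M_j \in \mathcal E$. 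Setting $E = \bigoplus_j M_j$, the orthogonality between different tubes and Lemma \ref{lem:multiplicativity} yield $X_{[\textbf d]_+} = X_{M_\lambda^{\oplus n} \oplus E}$ with $E$ rigid. The main obstacle is precisely this last step: transporting the $kQ$-mod-level Ext-orthogonality of Proposition \ref{prop:Kacdcp} into the cluster-categorical Ext-vanishing demanded by Lemma \ref{lem:multiplicativity}, while simultaneously ruling out preprojective and preinjective candidates for $M_j$; both rely on the sincerity of $\delta$ and on the defect identity $\<\textbf e_j,\delta\> = -\<\delta,\textbf e_j\>$.
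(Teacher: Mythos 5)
Your argument for the inclusion $\mathcal B'(Q)\subseteq\ens{\text{cluster monomials}}\sqcup\ens{X_{M_\lambda^{\oplus n}\oplus E}}$ is correct and is a genuine variant of the paper's. To rule out transjective $M_j$, the paper computes $\Ext^1_{\mathcal C_Q}(M_j,M_{\lambda_1})=\dim M_{\lambda_1}(v)=\delta_v=1$ directly for a preprojective $M_j\simeq\tau^{-s}P_v$, while you combine $\Ext^1_{kQ}(\textbf e_j,\delta)=\Ext^1_{kQ}(\delta,\textbf e_j)=0$ with $\<\textbf e_j,\delta\>=-\<\delta,\textbf e_j\>$ to force $\d_{M_j}=0$; your route is a touch cleaner. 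Also, the ``main obstacle'' you flag at the end is not actually one: once the $M_j$ are known to lie in exceptional tubes and the $M_\lambda$ in homogeneous tubes, the needed cluster-categorical Ext-vanishing is immediate from $\Ext^1_{\mathcal C_Q}(X,Y)=\Ext^1_{kQ}(X,Y)\oplus D\Ext^1_{kQ}(Y,X)$ together with the pairwise $\Hom$- and $\Ext$-orthogonality of distinct tubes.

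The genuine gap is that you prove only one inclusion. The proposition asserts a disjoint union equal to $\mathcal B'(Q)$, so two more things must be shown. First, the reverse inclusion: that cluster monomials are generic variables is exactly Lemma~\ref{lem:clustermonomial} (you cite it but don't note it closes this direction), and for the second family one must still show each $X_{M_\lambda^{\oplus n}\oplus E}$ is a generic variable; the paper does this by writing $X_{\ddim E+n\delta}=X_{\ddim E}\,X_{n\delta}=X_{E\oplus M_\lambda^{\oplus n}}$ via Lemma~\ref{lem:multiplicativity}, lemma~\ref{lem:imaginary}, and the fact that $\Ext^1_{\mathcal C_Q}(E,M_{\lambda_1}\oplus\cdots\oplus M_{\lambda_n})=0$ for distinct $\lambda_i\in\P^1_0$. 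Second, the disjointness of the union: if $X_{\textbf d}=X_{M_\lambda^{\oplus n}\oplus E}$ with $n\geq 1$, then the canonical decomposition of $\textbf d$ contains $\delta^{\oplus n}$, so $\rep(Q,\textbf d)$ contains no rigid module, and the denominators theorem (Theorem~\ref{theorem:denominators}) then forbids any rigid $M\in\mathcal C_Q$ with $X_M=X_{\textbf d}$; hence such an $X_{\textbf d}$ is not a cluster monomial. Both points are substantive and cannot be omitted.
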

		\begin{proof}
			Fix $\textbf d \in \Z^{Q_0}$, write $[\textbf d]_+=\delta^k \oplus \bigoplus_{i=1}^n e_i$ the canonical decomposition of $[\textbf d]_+$. If $k \neq 0$, then as $\delta$ is sincere, we have $\textbf d \in \N^{Q_0}$. According to proposition \ref{prop:Kacdcp} and lemma \ref{lem:imaginary}, there is some $M$ in $\mathfrak M_{\textbf d} \cap U_{\textbf d}$ such that
			$$M=\bigoplus_{j=1}^k M_{\lambda_j} \oplus \bigoplus_{i=1}^n M_i$$
			where the $\lambda_j \in \P^1_0$ are pairwise distinct and the $M_i$ are indecomposable rigid modules such that $\End_{kQ}(M_i) \simeq k$ and $\Ext^1_{kQ}(M_i,M_l)=0$ if $i \neq l$.
			
			If one of the $M_i$ is preprojective, then there is some vertex $v \in Q_0$ and some integer $s \geq 0$ such that $M_i\simeq \tau^{-s}P_v$ and then $$\Ext^1_{\mathcal C}(M_i, M_{\lambda_1})=\Hom_{\mathcal C}(P_v, M_{\lambda_1}) =\dim M_{\lambda_1}(v)=\delta_v=1$$ which is a contradiction. Similarly, none of the $M_i$ can be preinjective. It follows that each $M_i$ is regular and thus the $M_i$ are indecomposable regular modules in exceptional tubes such that $\ddim M_i \lneqq \delta$. As $\Ext^1_{kQ}(M_i,M_l)=0$ for $i \neq l$, it follows that $\bigoplus_{i=1}^n M_i$ is regular rigid.
			
			If $k=0$, then $M \in \mathfrak M_{\textbf d} \cap U_{\textbf d}$ is rigid and thus $M \oplus \bigoplus_{d_i<0}P_i[1]^{\oplus -d_i}$ is a rigid object. Thus $X_{\textbf d}$ is a cluster monomial.
			
			This proves the inclusion 
			$$\mathcal B'(Q) \subset \ens{\textrm{cluster monomials}} \sqcup \ens{X_{M_\lambda^{\oplus n} \oplus E} \ : n \geq 1, \ E \in \add \mathcal E \textrm{ is rigid}}.$$
			
			We now prove the reverse inclusion. According to lemma \ref{lem:clustermonomial} it suffices to prove that $\ens{X_{M_\lambda^{\oplus n} \oplus E} \ :n \geq 1 \ E \in \add \mathcal E \textrm{ is rigid}} \subset \mathcal B'(Q)$. Assume that $E \in \add \mathcal E$ is rigid. Then $X_{\ddim E}=X_E$ by lemma \ref{lem:clustermonomial}. On the other hand, $X_{M_\lambda^{\oplus n}}=X_{n\delta}$ by lemma \ref{lem:imaginary}. Fix now $\lambda_1, \ldots, \lambda_n$ pairwise distinct elements in $\P^1_0$, as there are no extensions between the tubes we have
			$$\Ext^1_{\mathcal C_Q}(E, M_{\lambda_1} \oplus \cdots \oplus M_{\lambda_n})=0$$
			so  $\Ext^1_{\mathcal C_Q}(\ddim E, n \delta)$ vanishes generally and by lemma \ref{lem:multiplicativity}, we have 
			$$X_{\ddim E + n \delta}=X_{\ddim E}X_{n \delta}=X_{E \oplus M_\lambda^{\oplus n}}$$
			and this proves the proposition.
			
			It remains to notice that the union is disjoint. Indeed, fix $X_{\textbf d}=X_{M_\lambda^{\oplus n} \oplus E}$ with $E$ rigid in $\add \mathcal E$ and $n\geq 1$. Fix $\lambda_1, \ldots, \lambda_n$ pairwise distinct elements in $\P^1_0$, and decompose $E=E_1 \oplus \ldots \oplus E_m$ into indecomposable summands. Then 
			$$M_{\lambda_1} \oplus \cdots \oplus M_{\lambda_n}\oplus E_1 \oplus \cdots \oplus E_m$$
			is a direct sum of Schur representations such that $\Ext^1_{kQ}(U,V)=0$ for any two indecomposable direct summands $U,V$. It follows that 
			$$\textbf d=\delta^{\oplus n} \oplus \bigoplus_{i=1}^n \ddim E_i$$
			is the canonical decomposition of $\textbf d$. As $\dim \Ext^1_{kQ}(M,M) \neq 0$ for any representation in $\rep(Q,n\delta)$ for $n \geq 1$, there cannot be any rigid module in $\rep(Q,\textbf d)$. The denominators theorem ensures then that there is no rigid object $M$ in $\mathcal C_Q$ such that $X_M=X_{\textbf d}$.
		\end{proof}
	\end{subsubsection}
\end{subsection}

\begin{subsection}{Generic variables as generators in type $\Aaffine$}\label{subsection:generating}
	Following the ideas of \cite{CK1}, in order to prove that $\mathcal B'(Q)$ is a $\Z$-basis in $\mathcal A(Q)$ when $Q$ is affine of type $\Aaffine$. We first prove that every $X_M$ can be written as a $\Z$-linear combination of $X_{\textbf d}$. We will do it in two steps: the first step will consist in a study of the values of $X_M$ when $M$ is indecomposable and the second step will be devoted to the case where $M$ is decomposable.

	Fix $M$ an indecomposable object in $\mathcal C_Q$. If $M$ is in the transjective component. Then $M$ is rigid and thus lemma \ref{lem:clustermonomial} implies that $X_{M}\in \mathcal B'(Q)$. Assume now that $M$ is not in the transjective component. Then there are three possibilities:
	\begin{itemize}
		\item $\ddim M$ is a real Schur root,
		\item $\ddim M$ is an imaginary Schur root,
		\item $\ddim M$ is a positive non-Schur root.
	\end{itemize}

	In the case $\ddim M$ is a real Schur root, $M$ is rigid and thus $X_M \in \mathcal B'(Q)$ by lemma \ref{lem:clustermonomial}. The other cases need a deeper study.
	 	
	\begin{subsubsection}{Modules from homogeneous tubes}
	 	\begin{monlem}\label{lem:XMlambdaninbase}
	 		Let $Q$ be a quiver of affine type, $\lambda \in \P^1_0$ and $n \geq 1$. Then 
	 		$$X_{M_\lambda^{(n)}} \in \Z\mathcal B'(Q).$$
	 	\end{monlem}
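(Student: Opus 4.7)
The plan is to combine Lemma \ref{lem:Chebyshev} and Lemma \ref{lem:imaginary} via the polynomial structure of the normalized Chebyshev polynomials of the second kind.

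First, I would invoke Lemma \ref{lem:Chebyshev} to express
\[
X_{M_\lambda^{(n)}} = C_n(X_{M_\lambda}),
\]
where $C_n$ is the $n$-th normalized Chebyshev polynomial of the second kind, defined by $C_n(t+t^{-1})=\sum_{k=0}^n t^{n-2k}$. From the three-term recurrence $C_{n}(x) = x\,C_{n-1}(x) - C_{n-2}(x)$ with $C_0=1$, $C_1=x$, one sees at once that $C_n(x)$ is a polynomial of degree $n$ in $x$ with integer coefficients. Hence
\[
X_{M_\lambda^{(n)}} = \sum_{k=0}^{n} c_{n,k}\, X_{M_\lambda}^{\,k}
\qquad\text{with } c_{n,k} \in \Z.
\]

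The second step is to identify each power $X_{M_\lambda}^k$ as a generic variable. By Lemma \ref{lem:imaginary}, for every $k\geq 1$ one has $X_{M_\lambda}^k = X_{k\delta} \in \mathcal B'(Q)$, and the constant term $X_{M_\lambda}^0 = 1 = X_{\textbf 0}$ is also a generic variable (trivially). Substituting this into the expansion above gives
\[
X_{M_\lambda^{(n)}} = \sum_{k=1}^{n} c_{n,k}\, X_{k\delta} + c_{n,0},
\]
which exhibits $X_{M_\lambda^{(n)}}$ as a $\Z$-linear combination of elements of $\mathcal B'(Q)$, as required.

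There is no real obstacle here: the entire lemma reduces to combining two already-established identities (Chebyshev recursion on the cluster character side, and independence of the homogeneous tube for the imaginary generic variable) with the elementary fact that the Chebyshev polynomials have integer coefficients. The only thing worth being careful about is to make explicit that $\lambda\in\P^1_0$ is needed precisely so that Lemma \ref{lem:imaginary} applies, and that the result is independent of the chosen homogeneous tube thanks to Lemma \ref{lem:XMlambda}.
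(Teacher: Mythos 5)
Your proof is correct and follows exactly the same route as the paper: apply Lemma \ref{lem:Chebyshev} to get $X_{M_\lambda^{(n)}}=C_n(X_{M_\lambda})$, use that $C_n$ has integer coefficients, and then identify $X_{M_\lambda}^k=X_{k\delta}\in\mathcal B'(Q)$ via Lemma \ref{lem:imaginary}. There is nothing to add.
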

	 	\begin{proof}
	 		We first note that $X_{M_\lambda}=X_{\delta} \in \mathcal B'(Q)$. Now, if $n\geq 1$, it follows from corollary \ref{lem:Chebyshev} that $X_{M_\lambda^{(n)}}=C_n(X_{M_\lambda})$. As $C_n$ is a polynomial of degree $n$ with integer coefficients, $X_{M_\lambda^{(n)}}$ is a $\Z$-linear combination of $X_{M_\lambda}^k$ for $k=0,\ldots,n$. According to lemma \ref{lem:imaginary}, $X_{M_\lambda}^n=X_{n\delta}$ for any $n \geq 1$ and then $X_{M_\lambda^{(n)}} \in \Z \mathcal B'(Q)$.
	 	\end{proof}
	\end{subsubsection} 
	
	\begin{subsubsection}{Modules from exceptional tubes}
		From now on, we focus on the case where $Q$ is a quiver of affine type $\Aaffine$ which is not isomorphic to the Kronecker quiver. We fix a quasi-simple module $E$ in an exceptional tube of rank $p>1$. We denote by $M_E^{(n)}$ the unique indecomposable module of dimension vector $n \delta$ (or equivalently of quasi length $np$) and quasi-socle $E$. If $M$ is indecomposable regular, we denote by $\regrad M$ its quasi-radical. We prove that for any $n \geq 1$, $X_{M_E^{(n)}} \in \Z \mathcal B'(Q)$. First, we prove the striking \emph{difference property}\index{difference property}
		$$X_{M_E}=X_{M_\lambda}+X_{\regrad M_E/E}$$
		for any $\lambda \in \P^1_0$. This will be one of the essential points of this article and require preliminary results. 
		
		In our study, grassmannians of submodules of the quasi-simple modules $M_\lambda$ will be of great interest. The following lemma simplifies this study in the particular case of a quiver of type $\Aaffine$.
		\begin{monlem}\label{lem:caracun}
			Let $Q$ be an acyclic quiver, $\textbf d \in \N^{Q_0}$ be a dimension vector such that $d_i \leq 1$ for any $i \in Q_0$. Fix $M \in \rep(Q,\textbf d)$ and $\textbf e \leq \textbf d$ another dimension vector. Then if $\Gr_{\textbf e}(M) \neq \emptyset$, it is a point. In particular, we have $\chi(\Gr_{\textbf e}(M))=1$.
		\end{monlem}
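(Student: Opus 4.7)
The plan is to observe that the condition $d_i \leq 1$ for every $i \in Q_0$ rigidifies the situation completely: at each vertex, the space $M(i)$ is at most one-dimensional, so there is \emph{at most one} subspace of any prescribed dimension $\leq d_i$.

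More precisely, I would argue as follows. Let $N$ be any subrepresentation of $M$ with $\ddim N = \textbf e$. For each $i \in Q_0$, the space $N(i)$ is a subspace of dimension $e_i$ of the vector space $M(i)$, which has dimension $d_i \in \{0,1\}$. Since $e_i \leq d_i \leq 1$, there is no choice: $N(i) = 0$ if $e_i = 0$, and $N(i) = M(i)$ if $e_i = 1$. Hence the underlying graded subspace of $N$ is uniquely determined by $\textbf e$, and the maps $N(\alpha)$ are forced to be the restrictions of $M(\alpha)$. Therefore $\Gr_{\textbf e}(M)$ contains at most one element.

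By hypothesis $\Gr_{\textbf e}(M) \neq \emptyset$, so this unique candidate is actually a subrepresentation, meaning $\Gr_{\textbf e}(M)$ is reduced to a single point. Its Euler characteristic is $\chi(\Gr_{\textbf e}(M)) = 1$, as claimed.

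There is no real obstacle here: the only nontrivial content is the compatibility check for arrows $\alpha : i \to j$ with $e_i = 1$ and $e_j = 0$, which forces $M(\alpha) = 0$ — but this is automatically granted by the non-emptiness hypothesis, so nothing needs to be verified by hand.
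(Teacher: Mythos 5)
Your proof is correct and follows essentially the same route as the paper's: since $d_i \leq 1$, the graded subspace of any subrepresentation of dimension $\textbf e$ is forced to be $N(i)=M(i)$ when $e_i=1$ and $N(i)=0$ otherwise, so the grassmannian has at most one point, and the non-emptiness hypothesis makes it exactly one. Your explicit remark about the compatibility condition for arrows $i\to j$ with $e_i=1$, $e_j=0$ being guaranteed by non-emptiness is a welcome clarification that the paper leaves implicit.
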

		\begin{proof}
			Assume that $\Gr_{\textbf e}(M) \neq \emptyset$ and fix $N \in \Gr_{\textbf e}(M)$. For every $i$ such that $e_i \neq 0$, $N(i)$ is a non-zero subspace of the one-dimensional vector space $M(i)$ and then $N(i)=M(i)$. 
			Thus, $N$ is the representation given by
			$$N(i) =\left\{\begin{array}{rl}
				M(i) & \textrm{ if } e_i\neq 0\\
				0 & \textrm{ otherwise }
			\end{array}\right.$$
			and $N(i \fl j)=M(i \fl j)_{|N(i)}$. Therefore, $\Gr_{\textbf e}(M)=\ens{N}$ is a point.
		\end{proof}
		
		The following theorem will be referred to as the \emph{Schofield's theorem}. It is of a great use in the study of grassmannians of submodules.
		\begin{theorem}[\cite{Schofield:generalrepresentations}]\label{theorem:Schofield}
			Let $Q$ be an acyclic quiver and $\textbf d, \textbf e \in \N^{Q_0}$ such that $\Ext^1_{kQ}(\textbf d, \textbf e)=0$. Then any representation of dimension vector $\textbf d+ \textbf e$ contains a sub-representation of dimension vector $\textbf d$.
		\end{theorem}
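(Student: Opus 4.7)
The approach is to exhibit the locus of $M \in \rep(Q,\textbf d+\textbf e)$ admitting a sub-representation of dimension vector $\textbf d$ as the image of a proper morphism, and then to combine irreducibility of $\rep(Q,\textbf d+\textbf e)$ with the hypothesis to promote a generic statement to the required universal one. Concretely, I would set
$$Z = \ens{(M,N) \in \rep(Q,\textbf d+\textbf e) \times \prod_{i \in Q_0}\Gr(d_i,d_i+e_i) \ : \ N \textrm{ is a sub-representation of }M},$$
which is a closed subvariety of the product since stability of each $N_i$ under the arrows of $M$ is a closed condition. The first projection $p:Z \fl \rep(Q,\textbf d+\textbf e)$ is proper because the Grassmannian factor is projective, so $\im p$ is closed. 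Since $\rep(Q,\textbf d+\textbf e)$ is irreducible, it will suffice to show that $p$ is dominant: then $\im p$ is closed and dense, hence equal to the whole variety.

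For dominance I would use the hypothesis to pick $N \in \rep(Q,\textbf d)$ and $N' \in \rep(Q,\textbf e)$ with $\Ext^1_{kQ}(N,N')=0$, and then compute dimensions. Projecting $Z$ onto the Grassmannian factor realises $Z$ as a vector bundle (once a graded subspace of dimension $\textbf d$ is fixed, the representations $M$ preserving it form an affine space), and a direct rank computation yields
$$\dim Z = \dim \rep(Q,\textbf d+\textbf e) + \<\textbf d,\textbf e\>,$$
where $\<-,-\>$ is the Ringel form of $Q$. The generic equality $\<\textbf d,\textbf e\>=\dim \Hom_{kQ}(\textbf d,\textbf e)-\dim \Ext^1_{kQ}(\textbf d,\textbf e)$ combined with $\Ext^1_{kQ}(\textbf d,\textbf e)=0$ gives $\<\textbf d,\textbf e\> = \dim \Hom_{kQ}(\textbf d,\textbf e) \geq 0$.

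To conclude, examine the fibre of $p$ above $M_0=N \oplus N'$: the tangent space to $\Gr_{\textbf d}(M_0)$ at the point $N$ is $\Hom_{kQ}(N,N')$, of dimension $\<\textbf d,\textbf e\>+\dim \Ext^1_{kQ}(N,N')=\<\textbf d,\textbf e\>$, so the fibre attains the minimal possible dimension at $(M_0,N)$. Upper semicontinuity of fibre dimension for $p$ then guarantees that the generic fibre has dimension exactly $\<\textbf d,\textbf e\>=\dim Z - \dim\rep(Q,\textbf d+\textbf e)$, whence $p$ is dominant. Combined with properness, $p$ is surjective, giving the theorem. The main obstacle is the precise bookkeeping in the dimension count paired with the tangent-space estimate at the carefully chosen point $M_0 = N \oplus N'$: this is what allows us to replace "the general $M$ has such a sub-representation" by the far stronger assertion that \emph{every} $M$ does.
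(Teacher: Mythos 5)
Correct, and this is essentially Schofield's own argument from \cite{Schofield:generalrepresentations}, which is the reference the paper cites without reproducing a proof. The incidence variety together with properness of the first projection and the tangent-space identification $T_N\Gr_{\textbf d}(N\oplus N')\simeq\Hom_{kQ}(N,N')$ of dimension $\<\textbf d,\textbf e\>$ (once $\Ext^1_{kQ}(N,N')=0$) is exactly how the cited theorem is established.
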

	 	
	 	\begin{monlem}\label{lem:caracpreproj}
	 	 	Fix $Q$ a quiver of affine type $\Aaffine$, $E$ a quasi-simple module in an exceptional tube, $\lambda \in \P^1_0$ and $\textbf v$ a dimension vector. If $\Gr_{\textbf v}(M_\lambda) \neq \emptyset$. Then $\Ext^1_{kQ}(\textbf v, \delta-\textbf v)$ vanishes generally
	 	 	and $\chi(\Gr_{\textbf v}(M_\lambda))=\chi(\Gr_{\textbf v}(M_E))$.
	 	\end{monlem}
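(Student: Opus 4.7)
The plan is to exploit the fact that in type $\Aaffine$ the minimal imaginary root $\delta$ is sincere with $\delta_i=1$ for every $i \in Q_0$. First I would apply Lemma \ref{lem:caracun} to both $M_\lambda$ and $M_E$: since each has dimension vector $\delta$, every nonempty grassmannian $\Gr_{\textbf v}(M_\lambda)$ or $\Gr_{\textbf v}(M_E)$ is a single point, and in particular has Euler characteristic $1$. Thus the content of the lemma reduces to (i) the general vanishing of $\Ext^1_{kQ}(\textbf v, \delta - \textbf v)$ and (ii) the implication that $\Gr_{\textbf v}(M_E)$ is nonempty whenever $\Gr_{\textbf v}(M_\lambda)$ is.

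The key step is a structural analysis of a subrepresentation $N \subseteq M_\lambda$ of dimension $\textbf v$. The cases $\textbf v = 0$ and $\textbf v = \delta$ are trivial, so assume $N$ is a proper nonzero subrepresentation. Since $M_\lambda$ is a quasi-simple module in the homogeneous tube $\mathcal T_\lambda$, it contains no nonzero proper regular submodule. Combined with $\Hom_{kQ}(I,R)=0$ for $I \in \mathcal I(Q)$ and $R \in \mathcal R(Q)$, this forces every indecomposable summand of $N$ to be preprojective. Dually, $M_\lambda/N$ can have no preprojective summand (by $\Hom_{kQ}(R,P)=0$ applied to the canonical projection) and no regular summand (such a summand would provide a nonzero map $M_\lambda \to R$ with $R$ regular in $\mathcal T_\lambda$, but $M_\lambda$ being quasi-simple admits no proper regular quotient), hence $M_\lambda/N$ is preinjective.

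Next I would invoke the vanishing $\Ext^1_{kQ}(P,I)=0$ for $P \in \mathcal P(Q)$, $I \in \mathcal I(Q)$, recalled at the start of the affine section. Applied to the short exact sequence
$$0 \fl N \fl M_\lambda \fl M_\lambda/N \fl 0,$$
this yields $\Ext^1_{kQ}(N, M_\lambda/N)=0$, so that $\Ext^1_{kQ}(\textbf v, \delta - \textbf v)$ vanishes generally, with explicit witnesses $N$ and $M_\lambda/N$. Schofield's theorem (Theorem \ref{theorem:Schofield}) then guarantees that every representation of dimension $\delta$ admits a subrepresentation of dimension $\textbf v$. Applying this to $M_E$ gives $\Gr_{\textbf v}(M_E) \neq \emptyset$, and a final appeal to Lemma \ref{lem:caracun} forces $\chi(\Gr_{\textbf v}(M_E))=1=\chi(\Gr_{\textbf v}(M_\lambda))$.

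The main obstacle is the preprojective/preinjective dichotomy for $N$ and $M_\lambda/N$: everything else in the proof is essentially formal once that decomposition is established, but this step crucially requires both the quasi-simplicity of $M_\lambda$ in a homogeneous tube and the Hom/Ext vanishings between $\mathcal P(Q)$, $\mathcal R(Q)$ and $\mathcal I(Q)$. Once it is in place, the Schofield extension argument and the singleton-grassmannian property particular to type $\Aaffine$ combine transparently to give both assertions.
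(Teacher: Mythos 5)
Your proposal is correct and follows essentially the same route as the paper: show a proper submodule $N \subset M_\lambda$ is preprojective with preinjective (or, as the paper allows, preinjective/regular) quotient, deduce generic vanishing of $\Ext^1_{kQ}(\textbf v, \delta-\textbf v)$, invoke Schofield's theorem to get $\Gr_{\textbf v}(M_E)\neq\emptyset$, and finish with Lemma~\ref{lem:caracun}. Your extra step ruling out regular summands in $M_\lambda/N$ is a correct but unnecessary refinement, since the paper is content to use $\Ext^1_{kQ}(P,R)=0$ alongside $\Ext^1_{kQ}(P,I)=0$.
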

	 	\begin{proof}
	 		If $\textbf v = \delta$, then $\Gr_{\delta}(M_\lambda)=\ens{M_\lambda}$ and $\Gr_{\delta}(M_E)=\ens{M_E}$ so 
	 		$$\chi(\Gr_{\delta}(M_E))=\Gr_{\delta}(M_\lambda)=1.$$
	 		Assume now that $\textbf v \neq \delta$ is such that $\Gr_{\textbf v}(M_\lambda) \neq \emptyset$ and fix some proper submodule $V \subset M_\lambda$ of dimension vector $\textbf v$. Then $V$ is preprojective and $M_\lambda/V$ is a direct sum of preinjective or regular modules. It follows that $\Ext^1_{kQ}(V,M_\lambda/V)=0$ and thus $\Ext^1_{kQ}(\textbf v, \delta-\textbf v)=0$. According to theorem \ref{theorem:Schofield} every representation of dimension $\delta$ contains a sub-representation of dimension $\textbf v$. In particular, $\Gr_{\textbf v}(M_E) \neq \emptyset$. Now, lemma \ref{lem:caracun} implies that $\chi(\Gr_{\delta}(M_E))=\chi(\Gr_{\delta}(M_\lambda))=1$ and the lemma is proved.
	 	\end{proof}

	 	\begin{maprop}\label{prop:grassmannians}
	 		Fix $Q$ a quiver of affine type $\Aaffine$, $E$ a quasi-simple module in an exceptional tube, $\lambda \in \P^1_0$ and $V \subset M_E$ a proper submodule of dimension vector $\textbf v$. The following holds:
	 		\begin{enumerate}
				\item If $E \not \subset V$, then $\Gr_{\textbf v}(M_\lambda) \neq \emptyset$,
				\item If $E \subset V \subset \regrad M_E$, then $\Gr_{\textbf v}(M_\lambda)=\emptyset$,
				\item If $E \subset V \not \subset \regrad M_E$, then $\Gr_{\textbf v}(M_\lambda) \neq \emptyset$.
	 		\end{enumerate}
	 	\end{maprop}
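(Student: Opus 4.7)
The plan is to exploit two structural features specific to affine type $\Aaffine$. First, every coordinate of $\delta$ equals $1$, so by Lemma \ref{lem:caracun} each Grassmannian $\Gr_\textbf v(M)$ with $\ddim M = \delta$ is either empty or a single point, and existence reduces to the combinatorial condition that $S := \supp(\textbf v) \subset Q_0$ be stable under the nonzero structure maps of $M$; since $Q$ has underlying graph a cycle, this is equivalent to requiring every arrow $\alpha : i \to j$ of $Q$ with $i \in S$ and $j \notin S$ to be annihilated by $M$. Second, any proper subrepresentation $V \subset M_E$ decomposes as $V = V_P \oplus V_R$ with $V_P$ preprojective and $V_R$ a regular subrepresentation of $M_E$ lying in its tube, hence $V_R = E^{(k)}$ for some $0 \leq k \leq p - 1$.

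For case 1, the condition $E \not\subset V$ forces $V_R = 0$, so $V$ is itself preprojective, and $M_E/V$, being a quotient of the regular module $M_E$, carries no preprojective summand. Consequently $\Ext^1_{kQ}(V, M_E/V) = 0$, which gives the generic vanishing $\Ext^1_{kQ}(\textbf v, \delta - \textbf v) = 0$, and Schofield's theorem (Theorem \ref{theorem:Schofield}) produces a subrepresentation of every representation of dimension $\delta$ --- in particular of $M_\lambda$ --- of dimension $\textbf v$.

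For cases 2 and 3, I proceed combinatorially on the cycle $Q$. Since $M_\lambda$ has all structure maps nonzero for $\lambda \in \P^1_0$, one has $\Gr_\textbf v(M_\lambda) \neq \emptyset$ iff no arrow of $Q$ exits $S$, while the arrows annihilated by $M_E$ are exactly those dictated by the quasi-composition series $E, \tau^{-1}E, \ldots, \tau^{-(p-1)}E$ viewed as a string module on $Q$. In case 2, the hypothesis $E \subset V \subset \regrad M_E$ forces $S$ to contain $\supp(E)$ but avoid $\supp(\tau^{-(p-1)}E)$; the arrow of $Q$ joining the quasi-top arc to the rest of the cycle is then nonzero in $M_E$ and exits $S$, blocking the existence of a subrepresentation of $M_\lambda$ of dimension $\textbf v$. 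In case 3, $V \not\subset \regrad M_E$ forces $S$ to absorb $\supp(\tau^{-(p-1)}E)$, removing this obstruction, and an inspection of the remaining boundary arrows of $S$ shows that each either stays inside $S$ or enters $S$ from outside, so $\Gr_\textbf v(M_\lambda) \neq \emptyset$. The main obstacle is the combinatorial bookkeeping of which arrows of $Q$ are annihilated by $M_E$ and how the orientation of the cycle interacts with $S$, which I expect to handle by exploiting the explicit description of quasi-simples of exceptional tubes in $\Aaffine$ as string modules supported on arcs of $Q$ oriented in one of the two cyclic directions.
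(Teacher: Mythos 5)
Your case 1 is exactly the paper's argument. For cases 2 and 3, you propose a genuinely different route: instead of the paper's homological argument via $\Ext^1_{kQ}(\textbf v, \delta-\textbf v)$ and Schofield's theorem, you want to read off emptiness of $\Gr_{\textbf v}(M_\lambda)$ directly from the cyclic combinatorics of $Q$, using the observation that since all coordinates of $\delta$ are $1$, the Grassmannian is nonempty exactly when $S=\supp(\textbf v)$ is closed under the nonzero arrows of the module. That observation is a good starting point, and in principle a string-module bookkeeping proof could work, but there are two concrete problems with what you have written.

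First, a factual error. You assert that $M_\lambda$ has all structure maps nonzero whenever $\lambda\in\P^1_0$. This is false: in the $\affA{2}{1}$ example of the paper (example \ref{exmp:differencedeltaA21}), $M_\infty$ has the arrow $1\to 3$ acting by zero, yet $\infty\in\P^1_0(Q)$. A zero arrow does not by itself place a dimension-$\delta$ representation in an exceptional tube. So the criterion ``$\Gr_{\textbf v}(M_\lambda)\neq\emptyset$ iff no arrow exits $S$'' is simply wrong for some $\lambda\in\P^1_0$. You could repair this by choosing a specific generic $\lambda$ (with all arrows nonzero, which exists since only finitely many dimension-$\delta$ representations have a zero arrow while $\P^1_0$ is infinite) and then transferring the conclusion to all of $\P^1_0$ via Lemma \ref{lem:caracpreproj} --- which is precisely the uniformity the paper makes explicit --- but you have not noticed the issue, let alone repaired it.

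Second, and more seriously, cases 2 and 3 are not actually proved. You state what you expect to be true: that in case 2 some arrow of $Q$ exits $S$ and is nonzero in $M_\lambda$, and that in case 3 no arrow exits $S$, and you explicitly defer the ``combinatorial bookkeeping'' of which arrows $M_E$ annihilates and how that interacts with $V_P$, $V_R$ and the orientation of the cycle. That bookkeeping is the entire content of the argument. In particular, for case 3 the crucial difficulty is that $V=V_P\oplus V_R$ with $V_P\neq 0$ preprojective and $V_R\neq 0$ regular, and the support $S$ need not be an arc: you would have to argue about how the arcs $\supp(V_P)$ and $\supp(V_R)$ meet and about the orientation of the two boundary arrows. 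The paper handles this by showing $\Ext^1_{kQ}(V_R,V_P)\neq 0$, taking a non-split extension $0\to V_P\to X\to V_R\to 0$, and proving --- by mapping to $M_\lambda$ and using the fact that $\Hom$ and $\Ext$ vanish between distinct tubes --- that $X$ is preprojective, so Schofield applies to the pair $(\ddim X,\ddim M_E/V)$. That construction is delicate and has no analogue in your sketch. As written, the proposal identifies the right preliminary reductions but leaves the core of cases 2 and 3 unproved.
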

		\begin{proof}
			First we notice that if $V \subset M_E$, $V$ does not have any preinjective summand. We thus write $V=V_P \oplus V_R$ with $V_P$ preprojective and $V_R$ regular. Note also that the uniseriality of the regular components force $V_R$ to be indecomposable and to contain $E$ if it is not zero.
			
			If $E \not \subset V$, then $V_R=0$ and $V=V_P$ is preprojective. $M_E/V$ being a direct sum of regular and preinjective summands, it follows that $\Ext^1_{kQ}(V,M_E/V)=0$ and thus theorem \ref{theorem:Schofield} implies that $\Gr_{\textbf v}(M_\lambda) \neq \emptyset$. This proves the first point.
				
			If $E \subset V$, then $V$ is not preprojective. Now $M_E/V$ is a direct sum of regular and preinjective summands. It follows that 
			$$\Ext^1_{kQ}(V,M_E/V) \simeq \Hom_{kQ}(M_E/V, \tau V)=\Hom_{kQ}(M_E/V, \tau V_R).$$
			As $E \subset V \subset \regrad M_E$, we have 
			$$M_E/\regrad M_E \simeq (M_E/V) / (\regrad M_E/V)$$
			and thus $\tau E \simeq M_E/\regrad M_E$ is a quotient of $M_E/V$.
			In particular the space $\Hom_{kQ}(M_E/V,\tau E)$ is not reduced to zero. As $\tau E \subset \tau V_R$, we get
			$$\Hom_{kQ}(M_E/V,\tau V_R) \neq 0$$
			and thus 
			$$\Ext^1_{kQ}(V,M_E/V) \neq 0.$$
			As $\delta_i=1$ for every $i \in Q_0$, it follows that $\Hom_{kQ}(V,M_E/V)=0$ and thus 
			$$\<\textbf v, \delta- \textbf v\><0.$$
			In particular, for any two representations $X$ and $Y$ of respective dimensions $\textbf v$ and $\delta - \textbf v$, we have $\Ext^1_{kQ}(X,Y) \neq 0$ and thus $\Ext^1_{kQ}(\textbf v,\delta - \textbf v)$ does not vanish generally. It thus follows from lemma \ref{lem:caracpreproj} that $\Gr_{\textbf v}(M_\lambda)=\emptyset$. This proves the second point.
			
			Assume now that $E \subset V \not \subset \regrad M_E$. As $E \subset V$, $V_R \neq 0$. Now if $V_P =0$, $V=V_R$ is a proper regular submodule of $M_E$, it is thus contained in its quasi-radical $\regrad M_E$, this is a contradiction and thus $V=V_P \oplus V_R$ is a non-trivial decomposition. We now prove that $\Ext^1_{kQ}(V_R,V_P) \neq 0$. We denote by 
			$$\pi: M_E \fl M_E/\regrad M_E \simeq \tau E$$
			the canonical projection. 
			As $V \not \subset \regrad M_E$, $\pi(V) \neq 0$ and thus $\pi_{|V}: V \fl \tau E$ is a non-zero $V \fl \tau E$, thus
			$$\Hom_{kQ}(V, \tau E) \simeq  \Hom_{kQ}(V_P, \tau E) \oplus \Hom_{kQ}(V_R, \tau E) \neq 0.$$

			As $E \subset V_R \subset \regrad M_E$, the combinatorics of tubes proves that
			$$\Hom_{kQ}(V_R, \tau E) = 0$$
			and thus $\Hom_{kQ}(V_P,\tau E) \neq 0$. As $\tau E \subset \tau V_R$, it follows that  $\Hom_{kQ}(V_P, \tau V_R) \neq 0$ and thus $\Ext^1_{kQ}(V_R,V_P) \neq 0$.
			
			Fix now a non-split short exact sequence 
			\begin{equation}\label{sequenceX}
			 0 \fl V_P \xrightarrow{\iota} X \xrightarrow{p} V_R \fl 0,
			\end{equation}
			we prove that $X$ is preprojective. Applying the functor $\Hom_{kQ}(-,M_\lambda)$ to the sequence, we get a long exact sequence
			\begin{align*}
				0 	& \fl \Hom_{kQ}(V_R,M_\lambda) \fl \Hom_{kQ}(X,M_\lambda) \fl \Hom_{kQ}(V_P,M_\lambda) \\
					& \fl \Ext^1_{kQ}(V_R,M_\lambda) \fl \Ext^1_{kQ}(X,M_\lambda) \fl \Ext^1_{kQ}(V_P, M_\lambda) \fl 0
			\end{align*}
			As there are neither morphisms nor extensions between the tubes, we get an isomorphism
			$$\Hom_{kQ}(X,M_\lambda) \simeq \Hom_{kQ}(V_P,M_\lambda).$$
			More precisely, every morphism $V_P \fl M_\lambda$ factorises in $V_P \xrightarrow{\iota} X \fl M_\lambda$. Decompose $X=X_P \oplus X_R \oplus X_I$ into preprojective, regular and preinjective direct summands. As $\P^1_0$ is infinite, we can assume that $\lambda$ is chosen such that $V_R \not \in \mathcal T_\lambda$ and thus $\Hom_{kQ}(X_R,M_\lambda)=0$. Consider the inclusion $f: V_P \fl M_\lambda$. Then there is a factorization
			$$V_P \xrightarrow{[i_1,i_2,i_3]^t} X_P \oplus X_R \oplus X_I \xrightarrow{[g_1,0,0]} M_\lambda$$
			and thus $f=g_1i_1$ factorizes through $i_1$. In particular, $i_1$ is injective and thus $V_P \subset X_P$.  Also, we have $\ddim X = \ddim V \leq \delta$ so $\dim X(i) \leq 1$ for all $i$. It follows that $\Hom_{kQ}(V_P,X_R)=\Hom_{kQ}(V_P,X_I)=0$. Thus $i_2=i_3=0$ and then the sequence (\ref{sequenceX}) gives 
			$$V_R \simeq X/V_P \simeq X_P/V_P \oplus X_R \oplus X_I.$$
			Now $V_R$ is regular so $X_I=0$. It follows that $\d_{X_P}=\d_X=\d_V=\d_{V_P}$ and thus $X_P/V_P$ is a regular module. By uniseriality, $V_R$ is indecomposable and thus either $X_P/V_P$ is zero, or $X_R$ is zero. If $X_P/V_P$ is zero, the sequence (\ref{sequenceX}) splits, which is a contradiction. It follows that $X_R=0$ and thus that $X=X_P$ is preprojective. 
			
			As $M_E/V$ is a direct sum of regular and preinjective summands, we thus have $$\Ext^1_{kQ}(X,M_E/V)=0$$
			and thus $\Ext^1_{kQ}(\textbf v, \delta-\textbf v)$ vanishes generally. By theorem \ref{theorem:Schofield}, we thus have $\Gr_{\textbf v}(M_\lambda) \neq \emptyset$ and this proves the last point.
		\end{proof}
		
		\begin{defi}
		 	We say that a quiver $Q$ of affine type satisfies the \emph{difference property}\index{difference property} if for every quasi-simple module $E$ in an exceptional tube, the following identity holds
		 	\begin{equation}\label{eq:differencedelta}
		 	 	X_{M_E}=X_{M_\lambda}+X_{\regrad M_E/E}
		 	\end{equation}
		 	for any $\lambda \in \P^1_0(Q)$.
		\end{defi}

		\begin{theorem}\label{theorem:differencedelta}
			A quiver $Q$ of affine type $\Aaffine$ satisfies the difference property.
		\end{theorem}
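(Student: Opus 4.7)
The plan is to compute $X_{M_E}-X_{M_\lambda}$ directly from the Caldero-Chapoton formula, term by term, and recognise the result as $X_{\regrad M_E/E}$. Since both $M_E$ and $M_\lambda$ have dimension vector $\delta$, the two Caldero-Chapoton expansions run over the same monomials $\prod_i u_i^{-\<\textbf v,\alpha_i\>-\<\alpha_i,\delta-\textbf v\>}$, so everything reduces to comparing the Euler characteristics $\chi(\Gr_{\textbf v}(M_E))$ and $\chi(\Gr_{\textbf v}(M_\lambda))$. In type $\Aaffine$, $\delta_i=1$ for every $i$, so Lemma~\ref{lem:caracun} forces each nonempty Grassmannian to be a single point and parametrises submodules uniquely by their dimension vector; in particular both characteristics take values in $\{0,1\}$. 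Proposition~\ref{prop:grassmannians} then pinpoints where they disagree: precisely for $\textbf v=\ddim V$ with $E\subset V\subset\regrad M_E$ (case~(2)), where the contribution is $1-0=1$. To exclude a hidden contribution from some $\textbf v$ with $\Gr_{\textbf v}(M_\lambda)\neq\emptyset=\Gr_{\textbf v}(M_E)$, I would combine Lemma~\ref{lem:caracpreproj}, which says that a nonempty $\Gr_{\textbf v}(M_\lambda)$ forces $\Ext^1_{kQ}(\textbf v,\delta-\textbf v)$ to vanish generically, with Schofield's Theorem~\ref{theorem:Schofield} to embed a subrepresentation of dimension $\textbf v$ inside every representation of dimension $\delta$, in particular inside $M_E$.

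With this analysis in place, I would set $e:=\ddim E$, $r:=\ddim(\regrad M_E/E)$, $t:=\ddim(M_E/\regrad M_E)$, so that $\delta=e+r+t$, and use the bijection $V\mapsto W:=V/E$ from submodules $V$ with $E\subset V\subset\regrad M_E$ to submodules $W$ of $\regrad M_E/E$. The exponent of $u_i$ in the $V$-term splits as
\begin{equation*}
-\<e+\ddim W,\alpha_i\>-\<\alpha_i,\delta-e-\ddim W\> = \bigl(-\<\ddim W,\alpha_i\>-\<\alpha_i,r-\ddim W\>\bigr) - \<e,\alpha_i\> - \<\alpha_i,t\>.
\end{equation*}
Applying Lemma~\ref{lem:caracun} once more to $\regrad M_E/E$ re-identifies the sum over $W$ of the first parenthesis with the Caldero-Chapoton expansion of $X_{\regrad M_E/E}$, leaving
\begin{equation*}
X_{M_E}-X_{M_\lambda} = \left(\prod_{i\in Q_0} u_i^{-\<e,\alpha_i\>-\<\alpha_i,t\>}\right) X_{\regrad M_E/E}.
\end{equation*}

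The crux is then to verify that the monomial prefactor is $1$, i.e.\ that $\<e,\alpha_i\>+\<\alpha_i,t\>=0$ for every $i\in Q_0$. This is where the tube structure enters: in an exceptional tube of rank $p$ one has $\tau^p\simeq\mathrm{id}$, and the quasi-composition series of $M_E=E^{(p)}$ reads $E,\tau^{-1}E,\ldots,\tau^{-(p-1)}E$, so that $M_E/\regrad M_E\simeq\tau^{-(p-1)}E\simeq\tau E$ and therefore $t=c(e)$, where $c$ denotes the Coxeter transformation. The identity $\<\alpha,\beta\>=-\<\beta,c(\alpha)\>$ recalled in the preliminaries then gives $\<e,\alpha_i\>=-\<\alpha_i,c(e)\>=-\<\alpha_i,t\>$, the correction vanishes, and the difference property follows.

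The part I expect to require the most care is the initial case analysis: one must rule out contributions from every $\textbf v$ outside case~(2) of Proposition~\ref{prop:grassmannians}, which relies on the interplay between Lemma~\ref{lem:caracpreproj} and Schofield's theorem on the $M_\lambda$-side. Once that is secure, the remainder is a clean bilinear manipulation together with the tube periodicity $\tau^p\simeq\mathrm{id}$, and the type-$\Aaffine$ hypothesis is used precisely to apply Lemma~\ref{lem:caracun} (via $\delta_i=1$) to both $M_E$ and $\regrad M_E/E$.
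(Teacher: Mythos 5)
Your proposal is correct and follows essentially the same route as the paper's proof: compare Euler characteristics of Grassmannians via Proposition~\ref{prop:grassmannians} and Lemma~\ref{lem:caracun}, transport the surplus contributions through the bijection $V\mapsto V/E$ onto $\Gr_{\textbf v-\textbf e}(\regrad M_E/E)$, and match exponents using $\<\textbf e,\alpha_i\>+\<\alpha_i,c(\textbf e)\>=0$ (the paper writes this via $\textbf n=\ddim(\regrad M_E/E)=\delta-c(\textbf e)-\textbf e$, which is equivalent to your $t=c(e)$ observation). The only cosmetic difference is that you explicitly invoke Lemma~\ref{lem:caracpreproj} and Schofield's theorem to rule out dimension vectors where $\Gr_{\textbf v}(M_\lambda)\neq\emptyset$ but $\Gr_{\textbf v}(M_E)=\emptyset$; the paper absorbs this point into the statement of Lemma~\ref{lem:caracpreproj} and the equivalence it cites at the start, so the content is identical.
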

		\begin{proof}
			For any dimension vector $\textbf v$, proposition \ref{prop:grassmannians} and lemma \ref{lem:caracun} imply that 
			$$\Gr_{\textbf v}(M_\lambda) \neq \emptyset \Leftrightarrow \Gr_{\textbf v}^E(\regrad M_E) = \emptyset$$
			where $\Gr_{\textbf v}^E(\regrad M_E)$ denotes the modules $M \in \Gr_{\textbf v}(\regrad M_E)$ such that $E \subset M$. Moreover, if $\Gr_{\textbf v}(M_E) \neq \emptyset$ and $\Gr_{\textbf v}^E(\regrad M_E) = \emptyset$, we have $\chi(\Gr_{\textbf v}(M_\lambda))=\chi(\Gr_{\textbf v}(M_E))=1$ and if $\Gr_{\textbf v}^E(\regrad M_E) \neq \emptyset$, we have $\chi(\Gr_{\textbf v}^E(\regrad M_E))=\chi(\Gr_{\textbf v}(M_E))=1$.
			
			We write $\textbf e=\ddim E$, we have 
			\begin{align*}
			X_{M_E}	
				&=\sum_{\textbf v} \chi(\Gr_{\textbf v}(M_E)) \prod_i u_i^{-\<\textbf v, \alpha_i\>-\<\alpha_i, \delta-\textbf v\>}\\
				&=\sum_{\Gr_{\textbf v}(M_\lambda) \neq \emptyset} \chi(\Gr_{\textbf v}(M_E)) \prod_i u_i^{-\<\textbf v, \alpha_i\>-\<\alpha_i, \delta-\textbf v\>}\\
				&+\sum_{\Gr_{\textbf v}(M_\lambda) =\emptyset} \chi(\Gr_{\textbf v}(M_E)) \prod_i u_i^{-\<\textbf v, \alpha_i\>-\<\alpha_i, \delta-\textbf v\>}\\
				&=\sum_{\textbf v} \chi(\Gr_{\textbf v}(M_\lambda)) \prod_i u_i^{-\<\textbf v, \alpha_i\>-\<\alpha_i, \delta-\textbf v\>}\\
				&+\sum_{\Gr_{\textbf v}(M_\lambda) =\emptyset} \chi(\Gr_{\textbf v}(M_E)) \prod_i u_i^{-\<\textbf v, \alpha_i\>-\<\alpha_i, \delta-\textbf v\>}\\
				&=X_{M_\lambda}+\sum_{\Gr_{\textbf v}(M_\lambda) =\emptyset} \chi(\Gr_{\textbf v}(M_E)) \prod_i u_i^{-\<\textbf v, \alpha_i\>-\<\alpha_i, \delta-\textbf v\>}\\
			\end{align*}
			
			Now, we have 
			$$\sum_{\Gr_{\textbf v}(M_\lambda) =\emptyset} \chi(\Gr_{\textbf v}(M_E)) \prod_i u_i^{-\<\textbf v, \alpha_i\>-\<\alpha_i, \delta-\textbf v\>}=\sum_{\textbf v} \chi(\Gr_{\textbf v}^E(\regrad M_E))\prod_i u_i^{-\<\textbf v, \alpha_i\>-\<\alpha_i, \delta-\textbf v\>}$$
			As 
			$$\left\{\begin{array}{rcl}
				\Gr_{\textbf v}^E(\regrad M_E) & \fl & \Gr_{\textbf v - \textbf e}(\regrad M_E/E)\\
				U & \mapsto & U/E
			\end{array}\right.$$
			is an isomorphism, it follows that 
			$$\sum_{\Gr_{\textbf v}(M_\lambda) =\emptyset} \chi(\Gr_{\textbf v}(M_E)) \prod_i u_i^{-\<\textbf v, \alpha_i\>-\<\alpha_i, \delta-\textbf v\>}=\sum_{\textbf v} \chi(\Gr_{\textbf v-\textbf e}(\regrad M_E/E))\prod_i u_i^{-\<\textbf v, \alpha_i\>-\<\alpha_i, \delta-\textbf v\>}$$
			But if we write $\textbf n=\ddim \regrad (M_E/E)=\delta-c(e)-e$ where $c$ is the Coxeter transformation, we compute
			\begin{align*}
				-\<(\textbf v-\textbf e), \alpha_i\>-\<\alpha_i, \textbf n-(\textbf v-\textbf e)\>
					&= -\<\textbf v-\textbf e, \alpha_i\>-\<\alpha_i, \delta-\textbf e-c(\textbf e)-\textbf v+\textbf e\>\\
					&= -\<\textbf v,\alpha_i\>-\<\alpha_i, \delta-\textbf v\>+\<\textbf e, \alpha_i\>+\<\alpha_i, c(\textbf e)\>\\
					&= -\<\textbf v,\alpha_i\>-\<\alpha_i, \delta-\textbf v\>
			\end{align*}
			and thus 
			\begin{align*}
			X_{\regrad M_E/E}
			&=\sum_{\textbf v} \chi(\Gr_{\textbf v-\textbf e}(\regrad M_E/E))\prod_i u_i^{-\<(\textbf v-\textbf e), \alpha_i\>-\<\alpha_i, \textbf n-(\textbf v-\textbf e)\>}\\
			&=\sum_{\textbf v} \chi(\Gr_{\textbf v-\textbf e}(\regrad M_E/E))\prod_i u_i^{-\<\textbf v, \alpha_i\>-\<\alpha_i, \delta-\textbf v\>}\\
			&=\sum_{\Gr_{\textbf v}(M_\lambda) =\emptyset} \chi(\Gr_{\textbf v}(M_E)) \prod_i u_i^{-\<\textbf v, \alpha_i\>-\<\alpha_i, \delta-\textbf v\>}\\
			\end{align*}
			This implies that 
			$$X_{M_E}=X_{M_\lambda}+X_{\regrad M_E/E}$$
		\end{proof}
		
		\begin{corol}
			Let $Q$ be a quiver of affine type satisfying the difference property. If $E$ is a quasi-simple module contained in a tube of rank $p>2$. Then $X_{M_E} \neq X_{M_F}$ for any quasi-simple module $F \neq E$ in an exceptional tube. 
		\end{corol}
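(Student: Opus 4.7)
The plan is to apply the difference property to both $M_E$ and $M_F$ and subtract, which reduces the claim to the inequality
$$X_{\regrad M_E/E} \;\neq\; X_{\regrad M_F/F}.$$
Reading off the quasi-composition factors of $M_E$ from quasi-socle to quasi-top as $E,\tau^{-1}E,\ldots,\tau^{-(p-1)}E$, one identifies $\regrad M_E/E \simeq (\tau^{-1}E)^{(p-2)}$; since $p>2$ this is a nonzero indecomposable regular module, and since its quasi-length $p-2$ is strictly smaller than the rank $p$ of its ambient tube, it is rigid. Denote by $q$ the rank of the exceptional tube containing $F$. The same identification gives $\regrad M_F/F \simeq (\tau^{-1}F)^{(q-2)}$, which is zero if $q=2$ and otherwise again a nonzero rigid indecomposable regular module.

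I would then split into two cases. If $q=2$, then $X_{\regrad M_F/F}=1$, whereas by the denominators theorem (Theorem~\ref{theorem:denominators}) the Laurent polynomial $X_{\regrad M_E/E}$ has denominator vector $\ddim\,\regrad M_E/E\in \N^{Q_0}\setminus\{0\}$, so it cannot equal $1$. If $q>2$, I argue by contradiction: supposing $X_{\regrad M_E/E}=X_{\regrad M_F/F}$, the denominators theorem forces the two modules to share a dimension vector $\textbf d$, and rigidity means each has an open $G_{\textbf d}$-orbit in the irreducible variety $\rep(Q,\textbf d)$; those two open orbits must therefore coincide, yielding an isomorphism $\regrad M_E/E \simeq \regrad M_F/F$. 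Isomorphic indecomposable regular modules sit in the same tube, so $E$ and $F$ lie in a common exceptional tube of rank $p$, whence $(\tau^{-1}E)^{(p-2)}\simeq(\tau^{-1}F)^{(p-2)}$; as indecomposables in a tube are classified by quasi-length and quasi-socle, this forces $\tau^{-1}E\simeq\tau^{-1}F$ and hence $E\simeq F$, contradicting $F\neq E$.

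The main obstacle I anticipate is mostly bookkeeping: correctly identifying $\regrad M_E/E$ as $(\tau^{-1}E)^{(p-2)}$ and carefully tracking quasi-socles through the quotient, together with verifying that the resulting module is rigid (which uses precisely the hypothesis $p>2$, since a quasi-length equal to the tube rank would produce the non-rigid $M_E$ itself). Once these identifications are in place, the reduction is immediate from the difference property, and the distinction between the two Laurent polynomials is handled by the denominators theorem combined with the standard fact that rigid modules with the same dimension vector are isomorphic.
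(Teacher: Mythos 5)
Your proof is correct and follows essentially the same route as the paper: apply the difference property to $M_E$ and $M_F$, reduce to distinguishing $X_{\regrad M_E/E}$ from $X_{\regrad M_F/F}$, and conclude via the denominators theorem. You are in fact a bit more careful than the published proof, which glosses over the case where $F$ lies in a tube of rank $2$ (where $\regrad M_F/F=0$ is not an indecomposable module, though $X_0=1$ still differs from $X_{\regrad M_E/E}$), and you also spell out why two non-isomorphic rigid indecomposables must have distinct dimension vectors.
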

		\begin{proof}
			As $E$ is contained in an exceptional tube of rank $p>2$, it follows that $\regrad M_E$ is of quasi-length $l>1$ and thus $\regrad M_E/E$ is a non-zero indecomposable rigid module. Fix $F$ a quasi-simple not isomorphic to $E$. Then $\regrad M_F/F$ is an indecomposable rigid module which is not isomorphic to $\regrad M_E/E$. It follows that $\ddim \regrad M_F/F \neq \ddim \regrad M_E/E$ and thus the denominators theorem implies that $X_{\regrad M_F/F} \neq X_{\regrad M_E/E}$. Thus, 
			$$X_{M_E}=X_{M_\lambda}+X_{\regrad M_E/E} \neq X_{M_\lambda}+X_{\regrad M_F/F}=X_{M_F}.$$
		\end{proof}

		\begin{rmq}
			It is conjectured that every quiver of affine type satisfies the difference property.
		\end{rmq}
		
		\begin{monexmp}\label{exmp:differencedeltaA21}
			The most simple case of affine quivers after the Kronecker quiver are the quivers of affine type $\affA{2}{1}$. They are all isomorphic to 
			$$\xymatrix{
					&& 2 \ar[rd]\\
				Q: &1 \ar[rr] \ar[ru] &&3
			}$$
			The Auslander-Reiten quiver of $kQ$-mod contains only one exceptional tube of rank 2. The quasi-simple modules of this tube are given by 
			$$\xymatrix{
					&& 0 \ar[rd]\\
				E_0:& k \ar[rr]^1 \ar[ru]^0 &&k
			}$$
			and 
			$$\xymatrix{
				&& k \ar[rd]^0\\
				E_1\simeq S_2: &0 \ar[rr] \ar[ru] &&0
			}$$
			satisfying $\tau E_1 \simeq E_0$ and $\tau E_0 \simeq E_1$.
			The corresponding indecomposable modules of dimension $\delta$ are:
			$$\xymatrix{
					&& k \ar[rd]^1\\
				M_{E_0}:& k \ar[rr]^1 \ar[ru]^0 &&k
			}$$
			and 
			$$\xymatrix{
				&& k \ar[rd]^0\\
				M_{E_1}: &k \ar[rr]^1 \ar[ru]^1 &&k.
			}$$
			
			For any $0 \neq \lambda \in k$, we set
			$$\xymatrix{
				&& k \ar[rd]^\lambda\\
				M_{\lambda}: &k \ar[rr]^1 \ar[ru]^1 &&k
			}$$
			and we set 
			$$\xymatrix{
				&& k \ar[rd]^1\\
				M_{\infty}: &k \ar[rr]^0 \ar[ru]^1 &&k
			}$$
			We identify $\P^1$ and $k \sqcup \ens \infty$. Then the $M_\lambda$ for $\lambda \in \P^1 \setminus \ens 0$ are quasi-simple representations in homogeneous tubes and $\P^1_0(Q)=\P^1 \setminus \ens 0$.
			
			The following table sums up the situation where $\lambda \in \P^1_0(Q)$:
			$$\begin{array}{|c|c|c|c|c|c|c|c|c|c|}
			\hline
			\textbf e 
				& [000] & [001] & [010] & [100] & [011] & [101] & [110] & [111] \\
			\hline
			\Gr_{\textbf e}(M_\lambda)
				& 0 & S_3 & \emptyset & \emptyset & P_2 & \emptyset & \emptyset & M_\lambda \\
				\hline
			\chi(\Gr_{\textbf e}(M_\lambda))
				& 1 & 1 & 0 & 0 & 1 & 0 & 0 & 1 \\
			\hline
			\Gr_{\textbf e}(M_{E_0})
				& 0 & S_3 & \emptyset & \emptyset & P_2 & E_0 & \emptyset & M_{E_0} \\
			\hline 
			\chi(\Gr_{\textbf e}(M_{E_0}))
				& 1 & 1 & 0 & 0 & 1 & 1 & 0 & 1 \\
			\hline
			\Gr_{\textbf e}(M_{E_1})
				& 0 & S_3 & E_1 & \emptyset & S_2 \oplus S_3 & \emptyset & E_1 & M_{E_1} \\
			\hline 
			\chi(\Gr_{\textbf e}(M_{E_1}))
				& 1 & 1 & 1 & 0 & 1 & 0 & 0 & 1 \\
			\hline 
			\end{array}$$
			
			It is important to note that even if the grassmannians of dimension $[011]$ have the same Euler characteristic, they are not equal. More precisely, 
			$$\Gr_{[011]}(M_{E_1})=\ens{S_2 \oplus S_3}$$
			whereas $$\Gr_{[011]}(M_\lambda)=\Gr_{[011]}(M_{E_0})=\ens{P_2}.$$
			With the notations of proposition \ref{prop:grassmannians}, if $V=S_2 \oplus S_3$, we have $V_P=S_3$, $V_R=S_2$ and there is a non-trivial extension
			$$0 \fl S_3 \fl P_2 \fl S_2 \fl 0,$$
			illustrating the second point of proposition \ref{prop:grassmannians}.
			
			Now, if we compute the generalized variables corresponding to these modules, we find:
			$$X_{M_{E_0}}=X_{M_{E_1}}=\frac{u_2u_1^2+u_3u_1u_2+u_1+u_3+u_2u_3^2}{u_1u_2u_3}$$ and 
			$$X_{M_\lambda}=\frac{u_2u_1^2+u_1+u_3+u_2u_3^2}{u_1u_2u_3}=X_{M_{E_1}}-1.$$
			As $\regrad M_{E_i}=E_i$ for $i=1,2$, it follows that $\regrad M_{E_i}/E_i=0$, this illustrates theorem \ref{theorem:differencedelta}.
		\end{monexmp}
		
		\begin{rmq}
		 	Note that the tubes of rank 2 give nice examples of modules giving the same generalized variables but having different characteristics of grassmannians.
		\end{rmq}
		
		\begin{monexmp}\label{exmp:differencedeltaA31}
			Consider the quiver of affine type $\affA{3}{1}$:
			$$\xymatrix{
				&&2 \ar[r] & 3\ar[rd]\\
			Q:& 1 \ar[ru] \ar[rrr] &&&4
			}$$
			
			For any $\lambda \in k$, we set
			$$\xymatrix{
				&&k \ar[r]^1 & k\ar[rd]^{\lambda}\\
			M_\lambda:& k \ar[ru]^1 \ar[rrr]^1 &&&k
			}$$
			and 
			$$\xymatrix{
				&&k \ar[r]^1 & k\ar[rd]^{1}\\
			M_\infty:& k \ar[ru]^1 \ar[rrr]^0 &&&k
			}$$
			We identify $\P^1$ and $k \sqcup \ens \infty$. Then, for any $\lambda \in \P^1(k)$, $M_\lambda$ is an indecomposable representation of dimension $\delta$ and moreover, if $\lambda \neq 0$, then $M_\lambda$ belongs to an homogeneous tube and $\P^1_0(Q)=\P^1 \setminus \ens 0$.
			
			The representation
			$$\xymatrix{
				&&0 \ar[r] & k\ar[rd]^{0}\\
			E_0:& 0 \ar[ru] \ar[rrr] &&&0
			}$$
			is quasi-simple and, is given by 
			$$\xymatrix{
				&&k \ar[r]^0 & 0\ar[rd]\\
			E_1=\tau^{-1} E_0:& 0 \ar[ru] \ar[rrr] &&&0
			}$$
			$$\xymatrix{
				&&0 \ar[r] & 0\ar[rd]\\
			E_2=\tau^{-2} E_0:& k \ar[ru]^0 \ar[rrr]^1 &&&k
			}$$
			and $\tau^{-3}E_0=E_0$. Thus $E_0$ is contained in an exceptional tube of rank 3 and moreover $M_0=M_{E_0}$.
			
			The tube is thus 
			\begin{figure}[H]
 			\begin{picture}(300,170)(-50,0)
 				\setlength{\unitlength}{1mm}
 				
 				\put (-1,9){$\bullet$}
 				\put (-5,5){$E_{0}$}
 				\put (19,9){$\bullet$}
 				\put (7,5){$E_1$}
 				\put (39,9){$\bullet$}
 				\put (37,5){$E_{2}$}
 				\put (59,9){$\bullet$}
 				\put (55,5){$E_{0}$}
 				
 				\put (19,29){$\bullet$}
 				\put (22,29){$M_0$}
 				\put (20,30){\circle{5}}
 				
 				\put (20,10){\circle{5}}
 								
 				\multiput(0,10)(0,20){2}{\multiput(0,0)(20,0){3}{\vector(1,1){10}}}
 				\multiput(0,30)(0,20){2}{\multiput(0,0)(20,0){3}{\vector(1,-1){10}}}
 				\multiput(10,20)(0,20){2}{\multiput(0,0)(20,0){3}{\vector(1,-1){10}}}
 				\multiput(10,20)(0,20){2}{\multiput(0,0)(20,0){3}{\vector(1,1){10}}}
 		
 				\put (0,10){\line(0,1){40}}
 				\put (60,10){\line(0,1){40}}
 						
 			\end{picture}
 			\caption{The exceptional tube of $\tilde A_{3,1}$}
 			\end{figure}
			
			Note that $\regrad M_0=E_0^{(2)}$ and $\regrad M_0/E_0=E_1$. We compute the corresponding generalized variable and we get 
			$$X_{E_1}=\frac{u_1+u_3}{u_2}$$

			We now compute the grassmannians for some $\lambda \in \P^1_0$. We only write dimension vectors for which one of the two considered grassmannians is non-empty:
			$$\begin{array}{|c|c|c|c|c|c|c|c|}
			\hline
			\textbf e 
				& [0000] & [0001] & [0010] & [0110] & [0011] & [0111] &[1111]\\
			\hline
			\Gr_{\textbf e}(M_\lambda)
				& 0 & S_4 & \emptyset & \emptyset & P_3 & P_2 & M_\lambda\\
			\hline
			\chi(\Gr_{\textbf e}(M_\lambda))
				& 1 & 1 & 0 & 0 & 1 & 1 & 1\\
			\hline
			\Gr_{\textbf e}(M_0)
				& 0 & S_4 & E_0 & E_0^{(2)} & E_0 \oplus S_4 & E_0^{(2)} \oplus S_4 & M_0\\
			\hline 
			\chi(\Gr_{\textbf e}(M_{E_0}))
				& 1 & 1 & 1 & 1 & 1 & 1 & 1\\
			\hline 
			\end{array}$$
			
			A direct computation gives:
			$$X_{M_0}=\frac{u_1u_4u_3^2+u_3u_1^2u_4+u_3u_2u_4^2+u_4u_3+u_3u_2u_1^2+u_1u_4+u_2u_1}{u_1u_2u_3u_4}$$
			and 
			$$X_{M_\lambda}=\frac{u_3u_2u_1^2+u_2u_1+u_1u_4+u_4u_3+u_3u_2u_4^2}{u_1u_2u_3u_4}.$$
			
			Computing the difference, we get
			$$X_{M_0}-X_{M_\lambda}=\frac{u_1+u_3}{u_2}=X_{E_1},$$
			this illustrates theorem \ref{theorem:differencedelta}.
			
			It is interesting to note that for $\textbf e=[0011]$, $V \in \Gr_{\textbf e}(M_0)$ is the representation $E_0 \oplus S_4$. Using the notations of proposition \ref{prop:grassmannians}, $V_R=E_0$, $V_P=S_4$ and $P_3 \in \Gr_{\textbf e}(M_\lambda)$ is indeed the central term of an extension 
			$$0 \fl S_4 \fl P_3 \fl E_0 \fl 0,$$
			illustrating proposition \ref{prop:grassmannians}.
		\end{monexmp}

		\begin{corol}
			Fix $Q$ a quiver of affine type satisfying the difference property, $E$ a quasi-simple module in an exceptional tube. Then $X_{M_E} \in \Z\mathcal B'(Q)$.
		\end{corol}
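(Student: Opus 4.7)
The plan is to combine the difference property (Theorem \ref{theorem:differencedelta}) with what we already know about $X_\delta$ and about rigid modules. By hypothesis, $Q$ satisfies the difference property, so for any choice of $\lambda \in \P^1_0$ we may write
$$X_{M_E} = X_{M_\lambda} + X_{\regrad M_E/E}.$$
It therefore suffices to check that each summand on the right lies in $\Z\mathcal B'(Q)$.

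For the first summand, Lemma \ref{lem:imaginary} gives $X_{M_\lambda} = X_\delta$, and $X_\delta$ is by definition an element of $\mathcal B'(Q)$. For the second summand, let $p$ denote the rank of the exceptional tube containing $E$; since $M_E$ has quasi-length $p$, its quasi-radical $\regrad M_E$ has quasi-length $p-1$, and so $\regrad M_E/E$ is either zero (in which case $X_{\regrad M_E/E} = 1 \in \mathcal B'(Q)$) or it is an indecomposable regular module of quasi-length $p-2 < p$. In the latter case, having quasi-length strictly smaller than the rank of its tube, it is rigid, hence $\regrad M_E/E$ determines a cluster monomial by Theorem \ref{theorem:correspondanceCK2}. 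Lemma \ref{lem:clustermonomial} then identifies $X_{\regrad M_E/E} = X_{\ddim(\regrad M_E/E)}$, which is again an element of $\mathcal B'(Q)$.

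Combining these two facts, $X_{M_E}$ is a sum of two elements of $\mathcal B'(Q)$ (with coefficients in $\Z$), so $X_{M_E} \in \Z\mathcal B'(Q)$, as desired. There is no real obstacle once the difference property is in hand: the only things to verify are the rigidity of $\regrad M_E/E$ (immediate from its quasi-length being less than the rank of its tube) and the identification of $X_{M_\lambda}$ with a generic variable (immediate from Lemma \ref{lem:imaginary}).
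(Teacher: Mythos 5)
Your proof is correct and follows essentially the same approach as the paper: apply the difference property, note $X_{M_\lambda}=X_\delta\in\mathcal B'(Q)$, and split according to whether $\regrad M_E/E$ is zero (rank $p=2$) or an indecomposable rigid regular module of quasi-length $p-2<p$, in which case Lemma \ref{lem:clustermonomial} identifies $X_{\regrad M_E/E}$ as a generic variable.
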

		\begin{proof}
			If we denote by $p$ the rank of the exceptional tube containing $E$, $\regrad M_E/E$ is an indecomposable regular module of quasi-length $p-2$. If $p=2$, then $\regrad M_E/E$ is zero and thus equality (\ref{eq:differencedelta}) implies that 
			$$X_{M_E}=X_{M_\lambda}+1 \in \Z\mathcal B'(Q).$$
			If $p>2$, it follows that $\ddim E$ is a Schur root and that $X_{\regrad M_E/E}=X_{\ddim \regrad M_E/E} \in \mathcal B'(Q)$. Now, we know that $X_{M_\lambda}=X_{\delta} \in \mathcal B'(Q)$ so equality (\ref{eq:differencedelta}) implies that $X_{M_E} \in \Z\mathcal B'(Q)$.
		\end{proof}
		
		We now want to prove that $X_{M_E^{(n)}}$ is generated by $\mathcal B'(Q)$ over $\Z$ for any $n \geq 2$ and any quasi-simple $E$ in an exceptional tube.
		
		\begin{maprop}\label{prop:TubeinZB}
			Let $Q$ be a quiver of affine type satisfying the difference property and $\mathcal T$ be an exceptional tube in the AR quiver of $Q$. Then $X_M \in \Z \mathcal B'(Q)$ for any module $M$ in $\add \mathcal T$.
		\end{maprop}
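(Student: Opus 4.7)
My plan is a double induction: first I handle indecomposable modules $M \in \add \mathcal T$ by induction on quasi-length, then I reduce arbitrary $M \in \add \mathcal T$ to the indecomposable case by induction on the self-extension dimension $[M,M]^1$.

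For $M$ indecomposable, write $M = E^{(n)}$ with $E$ quasi-simple in $\mathcal T$ and $n \geq 1$, and let $p>1$ denote the rank of $\mathcal T$. If $n<p$, then $E^{(n)}$ is rigid in $\mathcal T$, hence rigid in $\mathcal C_Q$, so by Lemma \ref{lem:clustermonomial} one has $X_M = X_{\ddim M} \in \mathcal B'(Q)$. If $n=p$, then $M = M_E$ and the preceding corollary (obtained from the difference property) gives $X_{M_E} \in \Z\mathcal B'(Q)$ directly. For $n>p$, I use the almost split multiplication formula (Proposition \ref{prop:almostsplitmult}) applied to the AR sequence
$$0 \fl E^{(n-1)} \fl E^{(n)} \oplus (\tau^{-1}E)^{(n-2)} \fl (\tau^{-1}E)^{(n-1)} \fl 0,$$
which yields the recursion
$$X_{E^{(n)}}\, X_{(\tau^{-1}E)^{(n-2)}} = X_{E^{(n-1)}}\, X_{(\tau^{-1}E)^{(n-1)}} - 1.$$
Combining this recursion with the analogous ones at every quasi-simple of $\mathcal T$, and with the base-case expressions already in $\Z\mathcal B'(Q)$, one expresses $X_{E^{(n)}}$ as an integer polynomial in the $X_{F^{(j)}}$ with $j<n$. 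By induction, each such $X_{F^{(j)}}$ belongs to $\Z\mathcal B'(Q)$, so $X_{E^{(n)}} \in \Z\mathcal B'(Q)$ as well.

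For $M \in \add \mathcal T$ decomposable, I argue by induction on $[M,M]^1$. If $M$ is rigid, then $X_M$ is a cluster monomial and thus lies in $\mathcal B'(Q)$ by Lemma \ref{lem:clustermonomial}. Otherwise one can decompose $M = M_1 \oplus M_2$ with $\Ext^1_{\mathcal C_Q}(M_1,M_2)\neq 0$. Corollary \ref{corol:inductionstep} then expresses $X_M = X_{M_1} X_{M_2}$ as a linear combination of $X_Y$ with $[Y,Y]^1 < [M,M]^1$, and Lemma \ref{lem:Centremodule} ensures that every such $Y$ still lies in $\add \mathcal T$. Applying the induction hypothesis to each $X_Y$ concludes the argument, provided that the expansion has integer coefficients.

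The main obstacle is precisely this last point: Corollary \ref{corol:inductionstep} and the underlying multiplication theorem (Theorem \ref{theorem:multiplication}) are stated with $\Q$-coefficients, whereas the target statement requires $\Z$-coefficients. The recursion in the indecomposable step is unproblematic because the almost split formula contributes the integer constant $-1$; but for the decomposable reduction one must verify that when $M_1, M_2 \in \add \mathcal T$, the middle terms of triangles in $\Ext^1_{\mathcal C_Q}(M_1,M_2)$ and $\Ext^1_{\mathcal C_Q}(M_2,M_1)$ are parametrised in such a way that the resulting coefficients are integers. This is where the combinatorics of a single tube, together with the fact that all Ext-spaces involved are at most one-dimensional on suitably chosen pairs, have to be used to refine the general $\Q$-expansion to an actual $\Z$-expansion.
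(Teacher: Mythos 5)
Your indecomposable step has a genuine gap that goes beyond the $\Q$-versus-$\Z$ issue you flag at the end. Suppose you succeed in writing $X_{E^{(n)}}$ as an integer polynomial in the variables $X_{F^{(j)}}$ with $j<n$ (this itself is not immediate from your recursion, since isolating $X_{E^{(n)}}$ requires dividing by $X_{(\tau^{-1}E)^{(n-2)}}$, and the fact that the result is an integer polynomial is precisely the generalized Chebyshev polynomial theorem of \cite{Dupont:stabletubes}, which you would need to invoke). Even then, you cannot conclude $X_{E^{(n)}}\in\Z\mathcal B'(Q)$ by induction, because $\mathcal B'(Q)$ is not closed under multiplication: a monomial $\prod_i X_{F_i^{(j_i)}}$ equals $X_{\bigoplus_i F_i^{(j_i)}}$, and $\bigoplus_i F_i^{(j_i)}$ is a generic module only when it is rigid, which fails for most combinations of modules in a single tube (neighbouring quasi-simples already have non-vanishing $\Ext^1$). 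So ``polynomial in lower $X_{F^{(j)}}$'' is genuinely weaker than ``$\Z$-linear combination of generic variables.'' It is telling that your indecomposable argument never uses the difference property, even though it is in the hypotheses: the paper needs it exactly here, to convert $X_{M_E}^n$ (obtained as the leading term of the Chebyshev expansion of $X_{M_E^{(n)}}$) into $X_{M_\lambda}^n+\text{lower}$ and then into $X_{n\delta}+\text{lower}$, which does live in $\Z\mathcal B'(Q)$. The paper also splits the indecomposable case according to whether $\ddim M$ is a multiple of $\delta$ or a real (non-Schur) root, using the one-dimensional multiplication formula with a rigid partner in the latter case; your uniform recursion does not address this distinction.

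On the decomposable step, you correctly point out that Corollary \ref{corol:inductionstep} only yields $\Q$-coefficients. The paper does not try to upgrade that $\Q$-expansion; after using Proposition \ref{prop:dcpXM} to reduce to objects $Y=\bigoplus Y_i$ with $\Ext^1_{\mathcal C_Q}(Y_i,Y_j)=0$, it runs a separate induction on dimension vector, applying euclidean division on quasi-lengths and repeated uses of the one-dimensional multiplication formula (which contributes integer coefficients) and the difference property, so that the $\Z$-coefficient claim is established directly rather than inherited from Corollary \ref{corol:inductionstep}.
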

		\begin{proof}
			We prove it by induction on the dimension vector of $M$. If $M$ is quasi-simple, then $\ddim M$ is a real Schur root and $X_M \in \mathcal B'(Q)$. Assume now that $M$ is not quasi-simple and that for any $N$ in $\mathcal T$ such that $\ddim N \leq \ddim M$, we have $X_N \in \Z\mathcal B'(Q)$. We denote by $p$ the rank of $\mathcal T$ and $E_0, \ldots, E_{p-1}$ the quasi-simple modules of $\mathcal T$ ordered such that $\tau E_i=E_{i-1}$ for every $i \in \Z/p\Z$.
			
			Assume first that $M=M_E^{(n)}$ for some quasi-simple module $E$ in $\mathcal T$. From equality (\ref{eq:differencedelta}), we have
			$$X_{M_E}=X_{M_\lambda}+X_{\regrad M_E/E}$$
			Thus, by induction,
			$$X_{M_E}^n - X_{M_\lambda}^n \in \Z \mathcal B'(Q)$$
			but $X_{M_\lambda}^n=X_{n \delta} \in \mathcal B'(Q)$ so $X_{M_E}^n \in \Z\mathcal B'(Q)$. 
			
			Now, we know from \cite{Dupont:stabletubes} that 
			$$X_{M_E^{(n)}}=P_{np,p}(X_{E_0}, \ldots, X_{E_{p-1}})$$
			where $P_{np,p}$ denotes the $np$-th generalized Chebyshev polynomial of rank $p$. In particular
			$$X_{M_E^{(n)}}=X_{E_0 \oplus \cdots \oplus E_{p-1}}^n +\sum_{Y} s_{Y,n} X_Y$$
			where $s_{Y,n}$ are integers and $Y$ runs over modules of the form $\bigoplus_{i=0}^{p-1}E_i^{\oplus n_i}$ with $\sum n_i \ddim E_i \lneqq n\delta$. In particular, 
			$$X_{M_E} \in X_{E_0 \oplus \cdots \oplus E_{p-1}} + \sum_{Y} s_{Y,1} X_Y$$
			and thus 
			$$X_{M_E}^n \in X_{E_0 \oplus \cdots \oplus E_{p-1}}^n + \sum_{Y} t_{Y} X_Y$$
			where the $t_Y$ are integers and $Y$ run over modules of the form $\bigoplus_{i=0}^{p-1}E_i^{\oplus n_i}$ with $\sum n_i \ddim E_i \lneqq n\delta$.
			It follows that 
			$$X_{M_E^{(n)}}=X_{M_E}^n+\sum_{Y} k_{Y} X_Y$$
			where the $k_Y$ are integers and $Y$ run over modules of the form $\bigoplus_{i=0}^{p-1}E_i^{\oplus n_i}$ with $\sum n_i \ddim E_i \lneqq n\delta$. As $X_{M_E}^n \in \Z \mathcal B'(Q)$ by the above discussion and all the $X_Y$ are also in $\Z \mathcal B'(Q)$ by induction, we get $X_{M_E^{(n)}} \in \Z \mathcal B'(Q)$ for every $n \geq 1$.
			
			Now assume that $M$ is indecomposable but not of the form $M_E^{(n)}$. Then $\ddim M$ is a real root. If $\ddim M \leq \delta$, then $\ddim M$ is a real Schur root and $X_M \in \mathcal B'(Q)$. We can thus assume that $\delta \leq \ddim M$. More precisely, $\ddim M=\delta^{\oplus n} \oplus \beta$ where $\beta$ is a real Schur root. We write $N$ the unique indecomposable of dimension vector $\beta$, it belongs to $\mathcal T$ and we have an extension
			$$0 \fl N \fl M \fl M_E^{(n)} \fl 0$$
			for some quasi-simple module $E$ in $\mathcal T$. 
			Note that 
			$$\dim \Ext^1_{kQ}(N,M_E^{(n)})=\dim \Hom_{kQ}(M_E^{(n)},\tau N)=1,$$
			so it follows from the multiplication formula (see also \cite{Dupont:stabletubes}), that
			$$X_{M_E^{(n)}}X_N=X_M + X_B$$
			where $B \simeq \ker f \oplus \coker f[-1]$ for some morphism $f: M_E^{(n)} \fl \tau N$. In particular $\ddim B \lneqq \ddim N + n \delta= \ddim M$ and then, by induction $X_B \in \Z\mathcal B'(Q)$. Now, the above discussion proves that 
			$$X_{M_E^{(n)}}=X_{M_\lambda}^n + \sum_Y r_Y X_Y$$
			where the $r_Y$ are integers and $Y$ runs over modules in $\add \mathcal T$ such that $\ddim Y \lneqq n \delta$. In particular, 
			\begin{align*}
			 	X_{M_E^{(n)}}X_N
			 		&=X_{M_\lambda}^nX_N + \sum_Y r_Y X_{Y \oplus N}\\
			 		& \in X_{n \delta}X_{\beta} + \Z \mathcal B'(Q)\\
			 		& \in X_{n \delta+\beta} + \Z \mathcal B'(Q)\\
			 		& \in \Z \mathcal B'(Q)
			\end{align*}
			and thus $X_M = X_{M_E^{(n)}}X_N-X_B \in \Z \mathcal B'(Q)$.
			
			Now assume that $M$ is decomposable. Then $M=M_1 \oplus M_2$ for some regular objects $M_1,M_2$. But according to proposition \ref{prop:dcpXM}, we can write $X_M$ as a linear combination of $X_Y$ where $Y=\bigoplus_{i=1}^n Y_i$ (indecomposable decomposition) is in $\add \mathcal T$ such that $\Ext^1_{\mathcal C_Q}(Y_i,Y_j)=0$ for $i \neq j$. We prove by induction on the dimension vector that $X_M$ can be written as a $\Z$-linear combination of $X_V$ where $V$ is of the form $M_\lambda^kX_R$ where $R \in \add \mathcal T$ is rigid, $\lambda \in \P^1_0$ and $k \geq 0$. In particular, each $X_V$ will belong to $\mathcal B'(Q)$ and thus $X_M \in \Z\mathcal B'(Q)$.

			Fix some integer $1 \leq i \leq n$. The indecomposable summand $Y_i$ is of the form $E_i^{(n_i)}$ for some integer $n_i \geq 0$ and some quasi-simple module $E_i$. Let denote by $p$ the rank of the tube $\mathcal T$ and write $n_i=k_i p+r_i$ with $0 \leq r_i <p$ the euclidean division of $n_i$ by $p$. By theorem \ref{theorem:multiplication}, we can write
			$$X_{Y_i}=X_{E_i^{(r_i)}}X_{M_{F_i}^{(k_i)}}+X_B$$
			with $\ddim B \lneq \ddim Y_i$ and $F_i$ a quasi-simple in $\mathcal T$. Now, thanks to the difference property $X_{M_{F_i}^{(k_i)}}$ can be written
			$$X_{M_{F_i}^{(k_i)}}=X_{M_{\lambda}}^{k_i}+\sum_{\ddim Z_i \lneq k_i \delta} r_{Z_i} X_{Z_i}$$
			where $Z_i \in \add(\mathcal T)$.
		
			Then
			$$X_Y=\prod_{i=1}^n X_{E_i}^{(r_i)}X_{M_\lambda}^{k_i}+\sum_{\ddim U \lneq \ddim Y} s_U X_U$$
			with $U \in \add(\mathcal T)$. By induction on the dimension vector, each $X_U$ can be written as the expected $\Z$-linear combination.
			
			Now
			$$\prod_{i=1}^n X_{E_i}^{(r_i)}X_{M_\lambda}^{k_i}=X_{M_\lambda}^{\sum_{i=1}^n k_i}X_{\bigoplus_{j=1}^n E_j^{(r_j)}}.$$
			If $k_j=0$ for every $j=1, \ldots, n$, then each $Y_j$ is rigid and thus $Y$ is rigid and the result holds. Otherwise, $\ddim \bigoplus_{j=1}^n E_j^{(r_j)} \lneq \ddim Y$ and thus, by induction,  $X_{\bigoplus_{j=1}^n E_j^{(r_j)}}$ can be written as a $\Z$-linear combination of $X_{M_\lambda}^k X_R$ with $R$ rigid in $\add \mathcal T$ and thus $\prod_{i=1}^n X_{E_i}^{(r_i)}X_{M_\lambda}^{k_i}$ can be written as a $\Z$-linear combination of $X_{M_\lambda}^{k+\sum_{j=1}^n k_j} X_R$, which proves the assertion.
		\end{proof}
		
		\begin{corol}\label{corol:indcp}
			Let $Q$ be a quiver of affine type satisfying the difference property. Then for any indecomposable object $M$ in $\mathcal C_Q$, we have $X_M \in \Q \mathcal B'(Q)$.
		\end{corol}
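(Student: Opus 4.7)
The plan is to proceed by a case analysis on the indecomposable object $M \in \mathcal{C}_Q$. Recall that the indecomposable objects of $\mathcal{C}_Q$ are either indecomposable $kQ$-modules or shifts $P_i[1]$ of indecomposable projectives, and that for an affine $Q$ the indecomposable modules split into transjective (preprojective or preinjective) and regular (living in tubes).

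First, if $M$ is either a shift $P_i[1]$ or an indecomposable transjective module, then $M$ is rigid in $\mathcal{C}_Q$. By Lemma \ref{lem:clustermonomial} we then have $X_M = X_{\ddim M} \in \mathcal{B}'(Q)$, which is already contained in $\Z\mathcal{B}'(Q)$. So I would dispatch this case in one line.

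Second, if $M$ is indecomposable regular, then $M$ lies in some tube $\mathcal{T}_\lambda$. I would split into two subcases according to whether $\mathcal{T}_\lambda$ is homogeneous or exceptional. If $\mathcal{T}_\lambda$ is homogeneous, then $M = M_\lambda^{(n)}$ for some $n \geq 1$, and Lemma \ref{lem:XMlambdaninbase} gives $X_M \in \Z\mathcal{B}'(Q)$ directly (here is where the difference property is not even needed; only the Chebyshev identity and $X_{M_\lambda}^n = X_{n\delta}$ are used). If $\mathcal{T}_\lambda$ is exceptional, then $M \in \add \mathcal{T}_\lambda$ and Proposition \ref{prop:TubeinZB} applies, yielding $X_M \in \Z\mathcal{B}'(Q)$. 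This is the step that genuinely uses the difference property, but the work has already been done upstream.

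Since every possibility for an indecomposable $M$ has been covered and in each case we obtain $X_M \in \Z\mathcal{B}'(Q) \subset \Q\mathcal{B}'(Q)$, the corollary follows. There is no real obstacle: the statement is essentially a bookkeeping corollary that collates Lemma \ref{lem:clustermonomial}, Lemma \ref{lem:XMlambdaninbase} and Proposition \ref{prop:TubeinZB}, with the only subtlety being to make sure the three cases (transjective or shift of projective, regular in a homogeneous tube, regular in an exceptional tube) exhaust all indecomposables of $\mathcal{C}_Q$.
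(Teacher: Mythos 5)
Your proposal is correct and follows essentially the same route as the paper: the corollary is placed immediately after Proposition \ref{prop:TubeinZB} with no separate proof, because the intended argument is exactly the case split you give, namely rigid indecomposables (transjective modules and shifts $P_i[1]$) handled by Lemma \ref{lem:clustermonomial}, indecomposables in homogeneous tubes handled by Lemma \ref{lem:XMlambdaninbase}, and indecomposables in exceptional tubes handled by Proposition \ref{prop:TubeinZB}. You also correctly observe that your argument yields the stronger conclusion $X_M \in \Z\mathcal B'(Q)$, which implies the stated $X_M \in \Q\mathcal B'(Q)$.
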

		
		\begin{monlem}\label{lem:modgeninZB}
		 	Let $Q$ be an affine quiver satisfying the difference property and $Y=\bigoplus_{i} Y_i$ be an object in $\mathcal C_Q$ such that $\Ext^1_{\mathcal C}(Y_i,Y_j)=0$ for any $i \neq j$. Then $X_Y \in \Q \mathcal B'(Q)$.
		\end{monlem}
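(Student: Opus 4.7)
The plan is to leverage multiplicativity together with the explicit description of $\mathcal B'(Q)$ from proposition \ref{prop:explicitbase}. First I would use the multiplicativity of the Caldero-Chapoton map on direct sums to write $X_Y = \prod_i X_{Y_i}$. Then I would expand each $X_{Y_i}$ as a $\Z$-linear combination of elements of $\mathcal B'(Q)$ using the results of the preceding subsections, and finally recombine the resulting products of generic variables into single generic variables via lemma \ref{lem:multiplicativity}, the pairwise Ext-orthogonality of the summands being precisely what makes this last step work.

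For each indecomposable $Y_i$, I would distinguish cases. If $Y_i$ is rigid, lemma \ref{lem:clustermonomial} gives $X_{Y_i}=X_{\ddim Y_i}\in\mathcal B'(Q)$. Otherwise $Y_i$ is a non-rigid indecomposable, which in affine type forces $Y_i$ to be a regular module of dimension $n\delta$ lying in some tube: either $Y_i=M_\lambda^{(n)}$ in a homogeneous tube, or $Y_i=M_E^{(k)}$ in an exceptional tube of rank $p$. In the homogeneous case, lemma \ref{lem:XMlambdaninbase} writes $X_{Y_i}$ as a $\Z$-combination of $X_{m\delta}=X_{M_\lambda^{\oplus m}}\in\mathcal B'(Q)$. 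In the exceptional case, the induction of proposition \ref{prop:TubeinZB} expresses $X_{Y_i}$ as a $\Z$-combination of $X_V\in\mathcal B'(Q)$, where each $V$ decomposes as $M_\lambda^{\oplus m}\oplus R$ with $R$ rigid in $\add\mathcal T_i$, where $\mathcal T_i$ denotes the tube containing $Y_i$.

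Distributing the product then expresses $X_Y$ as a $\Q$-linear combination of terms of the form
$$X_{M_\lambda^{\oplus N}}\cdot\prod_{i\in I_{\mathrm{nrig}}}X_{R_i}\cdot\prod_{k\in I_{\mathrm{rig}}}X_{\ddim Y_k}.$$
I would then iterate lemma \ref{lem:multiplicativity} to collapse each such product into a single $X_{\textbf d}\in\mathcal B'(Q)$. The required general vanishing of $\Ext^1_{\mathcal C_Q}$ is automatic between pieces living in distinct tubes, between the transjective rigid part and the tube part, and between distinct quasi-simples $M_{\lambda_j}$ (where I may choose the $\lambda_j\in\P^1_0$ pairwise distinct thanks to lemma \ref{lem:XMlambda}, which identifies $X_{M_\lambda}^N=X_{M_{\lambda_1}\oplus\cdots\oplus M_{\lambda_N}}$ for any such choice).

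The main obstacle is the Ext-vanishing check inside a single exceptional tube $\mathcal T$: the rigid pieces $R_i$ manufactured by expanding the non-rigid $Y_i$, together with the rigid summands $Y_k$ that already lie in $\mathcal T$, must be pairwise Ext-orthogonal in $\mathcal C_Q$ for the collapse to go through. This should follow by tracking more carefully through the inductive expansion in proposition \ref{prop:TubeinZB}: the quasi-socle/quasi-length combinatorics of each $R_i$ is constrained by that of $Y_i$, so the hypothesis $\Ext^1_{\mathcal C_Q}(Y_i,Y_j)=0$ propagates to the needed general Ext-vanishing among the dimension vectors appearing in the expansion, yielding $X_Y\in\Q\mathcal B'(Q)$.
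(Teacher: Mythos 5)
Your plan follows the paper's opening move (multiplicativity and sorting summands by tube) but diverges structurally, and the divergence is where the gap lives. The paper first shows that if $Y$ is not already rigid then its transjective part $Y_P$ must vanish: a non-rigid regular summand has sincere dimension vector, which forces a non-zero $\Ext^1_{\mathcal C}$ with any shift of a projective, contradicting the hypothesis. You do not perform this reduction, and your framing in terms of ``rigid summands $Y_k$ that already lie in $\mathcal T$'' quietly assumes a picture the paper establishes rather than assumes. More importantly, the paper does not try to recombine the full expansions of the individual $X_{Y_i}$'s directly: it sets up an \emph{outer} induction on $\ddim Y$, and for each tube piece $Y_\lambda$ it proves a ``top-term'' claim of the shape
$$X_{Y_\lambda}=X_\delta^{n_\lambda}X_{R_\lambda}+\sum_{\ddim Z \lneqq \ddim Y_\lambda} r_Z X_Z,\qquad R_\lambda\text{ rigid in }\add\mathcal T_\lambda,\ Z\in\add\mathcal T_\lambda.$$
The products of the top terms over the tubes assemble into a single generic variable because $\bigoplus_\lambda R_\lambda$ is rigid (no $\Ext$'s between distinct tubes), and all cross terms have strictly smaller dimension vector and hence are handled by the outer induction combined with proposition \ref{prop:dcpXM}. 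This is exactly the device that makes your ``main obstacle'' disappear without ever having to prove it.

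Your proposal, as written, has a genuine gap at precisely the point you flag. There is no reason that the rigid pieces $R$ produced by expanding a non-rigid $Y_i$ in a tube $\mathcal T$ should satisfy $\Ext^1_{\mathcal C}(R,Y_k)=0$ for the other rigid summands $Y_k$ in the same tube, even though $\Ext^1_{\mathcal C}(Y_i,Y_k)=0$: the expansion (via the difference property and the cluster multiplication) produces quasi-simples and short rigid stacks that need not be $\Ext$-orthogonal to $Y_k$. For instance in a rank-$3$ tube, expanding $X_{E_0^{(4)}}$ via the difference property already throws up a term $X_{E_2}X_{E_0}$ with $\Ext^1_{kQ}(E_0,E_2)\neq 0$, so ``collapsing'' directly by lemma \ref{lem:multiplicativity} is blocked. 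Such terms are not errors — they are the lower-order terms in the paper's claim, absorbed by the outer induction — but your proof sketch supplies no mechanism to absorb them. ``This should follow by tracking more carefully through the inductive expansion'' is not an argument, and without the outer induction on $\ddim Y$ (or an equivalent device) I do not see how it would. There is also a smaller inaccuracy: non-rigid indecomposables in an exceptional tube of rank $p$ are those of quasi-length $\geq p$, whose dimension vectors are $n\delta+\beta$ for a real Schur root $\beta$, not only multiples of $\delta$; the cases $Y_i=M_E^{(k)}$ do not exhaust the possibilities, as the proof of proposition \ref{prop:TubeinZB} makes explicit.
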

		\begin{proof}
			If all the indecomposable direct summands of $Y$ are rigid, then $Y$ is also rigid and thus $X_Y \in \mathcal B'(Q)$. We thus assume that $Y$ is not rigid.
			
		 	Write
		 	$$Y = Y_P \oplus \bigoplus_{\lambda \in \P^1} Y_{\lambda}$$
		 	where $Y_P$ has all its direct summands in the transjective component and for any $\lambda$ in $\P^1$, $Y_{\lambda}$ has its direct summands in the tube $\mathcal T_{\lambda}$.
		 	
		 	If $Y_{\lambda} = 0$ for every $\lambda \in \P^1$, then $Y=Y_P$ is rigid, thus we can assume that $Y_{\lambda} \neq 0$ for some $\lambda \in \P^1$ and that $Y_\lambda$ has an indecomposable direct summand $M$ such that $\Ext^1_{kQ}(M,M) \neq 0$. 
		 	
		 	Assume that $Y_P \neq 0$ and fix an indecomposable direct summand $N$ of $Y_P$. Then $N=P_i[s]$ for some $i \in Q_0$ and $s \in \Z$. As $M$ is an indecomposable regular non-rigid object, if follows that $\ddim M$ is sincere and so is $\ddim \tau^{1-s}M \simeq M[1-s]$.
		 	\begin{align*}
		 		0 & \neq \dim \tau^{1-s}M(i) \\
		 		&= \dim \Hom_{kQ}(P_i,\tau^{1-s}M) \\
		 		&\leq \dim \Hom_{\mathcal C}(P_i,M[1-s])\\
		 		&=\dim \Hom_{\mathcal C}(P_i[s],M[1])\\
		 		&=\dim \Ext^1_{\mathcal C}(M,P_i[s]).
		 	\end{align*}
		 	so that $\Ext^1_{\mathcal C}(M,Y_P) \neq 0$ which is a contradiction. Thus, $Y_P=0$ and 
			$$Y=\bigoplus_{\lambda \in \P^1} Y_{\lambda}$$
			
			We now prove by induction on the dimension vector of $Y$ that $X_Y \in \Q\mathcal B'(Q)$. If $Y$ is a quasi-simple module, then either $Y=M_\mu$ for some $\mu \in \P^1_0$ or $Y$ is quasi-simple in an exceptional tube and is thus rigid. In both cases $X_Y \in \mathcal B'(Q)$.
			
			We now return to the general case $Y=\bigoplus_{\lambda \in \P^1} Y_{\lambda}$.	If $Y_{\lambda}=0$ for all $\lambda \in \P^1\setminus \P^1_0$, then it follows from corollary \ref{lem:Chebyshev} that $X_{Y_\lambda}$ is a polynomial in $X_\delta$. Thus, $X_Y$ is also a polynomial in $X_\delta$ and thus as $X_{n \delta}=X_\delta ^n$ for every $n \geq 1$, $X_Y$ is in $\Z \mathcal B'(Q)$.
		 
		 	From now on, we assume that $Y_\lambda \neq 0$ for some $\lambda \in \P^1\setminus \P^1_0$. 
			
		 	We claim that 
		 	$$X_{Y_\lambda}=X_\delta^{n_\lambda} X_{R_\lambda} + \sum_{\ddim Z \lneqq \ddim Y_\lambda} r_Z X_Z$$
		 	for some integer $n_\lambda \geq 0$, some rigid module $R_\lambda \in \add T_\lambda$ and some rational numbers $r_Z$ indexed by modules $Z \in \add \mathcal T_\lambda$. We prove it by induction on the dimension vector of $Y_\lambda$.
		 	
		 	If $Y_\lambda$ is quasi-simple, then $R_\lambda = Y_\lambda$ gives the result. Assume now that $Y_\lambda$ is any indecomposable module in $\mathcal T_\lambda$. Then it follows from the proof of \ref{prop:TubeinZB} that 
		 	$$X_Y=X_{M_E^{(n)}}X_N-X_B$$
		 	for some quasi-simple $E$ in $\mathcal T_\lambda$, some integer $n \geq 1$ and some regular module $B$ such that $\ddim B \lneqq \ddim Y$. In particular
		 	$$X_Y=X_{M_\lambda}^nX_N+ \sum r_z X_Z$$
		 	where $Z$ runs over objects in $\mathcal T_\lambda$ such that $\ddim Z \lneqq \ddim Y$. This proves the claim for $Y_\lambda$ indecomposable.
		 	
		 	Assume now that $Y_\lambda$ is decomposable. Then each indecomposable direct summand of $Y_\lambda$ satisfies the claim. Using inductively proposition \ref{prop:dcpXM}, we prove the claim for any $Y_\lambda \in \mathcal T_\lambda$.
		 	
		 	As for any $\lambda \in \P^1_0$, $X_{Y_\lambda}$ is a polynomial in $X_{\delta}$, it suffices to prove that 
		 	$$X_\delta^n \prod_{\lambda \in \P^1 \setminus \P^1_0} X_{Y_\lambda} \in \Z \mathcal B'(Q)$$
		 	for any $\lambda \in \P^1_0$ and for $n \leq n_0$ where $n_0 \delta =\ddim \bigoplus_{\mu \in \P^1_0} Y_\mu$. We have
		 	\begin{align*} 
		 	 	X_\delta^n \prod_{\lambda \in \P^1 \setminus \P^1_0} X_{Y_\lambda}
		 	 		&= X_\delta^n \prod_{\lambda \in \P^1 \setminus \P^1_0} \left(X_\delta^{n_\lambda} X_{R_\lambda} + \sum_{\ddim Z \lneqq \ddim Y_\lambda} r_Z X_Z\right)\\
		 	 		&=X_{\delta}^{n+\sum_\lambda n_\lambda} X_{\bigoplus_\lambda R_\lambda} + \sum_{\ddim Z \lneqq n\delta + \sum_\lambda Y_\lambda} r_Z X_Z
		 	\end{align*}
			where the $\lambda$ run over $\P^1 \setminus \P^1_0$. Note that for any $n \leq n_0$, we have
			$$\ddim Z \lneqq n\delta + \sum_\lambda Y_\lambda  \leq \ddim Z \lneqq n_0\delta + \sum_\lambda Y_\lambda = \ddim Y.$$ 
			
			By induction on proposition \ref{prop:dcpXM}, we can moreover assume that all the $X_Z$ satisfy the hypothesis of the lemma. Thus, by induction, it follows that all the $X_Z$ occurring in the last sum are in $\Q \mathcal B'(Q)$.
			
			As there are no extensions between the tubes, $\bigoplus_\lambda R_\lambda$ is rigid and thus it follows from proposition \ref{prop:explicitbase} (and also from lemma \ref{lem:multiplicativity}) that 
			$$X_{\delta}^{n+\sum_\lambda n_\lambda} X_{\bigoplus_\lambda R_\lambda} \in \Q \mathcal B'(Q).$$
		\end{proof}
		
		We are now able to prove that generic variables generate the cluster algebra in affine type $\Aaffine$.
		\begin{maprop}\label{prop:Qspan}
			Let $Q$ be a quiver of affine type satisfying the difference property. Then every element in $\mathcal A(Q)$ is in the $\Q$-vector space generated by $\mathcal B'(Q)$. Moreover, every element of $\mathcal B'(Q)$ belongs to $\mathcal A(Q)$.
		\end{maprop}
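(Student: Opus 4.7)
The statement splits into two inclusions: first, that every generic variable lies in $\mathcal A(Q)$, and second, that every element of $\mathcal A(Q)$ is a $\Q$-linear combination of generic variables. I will treat them separately, relying on the explicit description of $\mathcal B'(Q)$ already established and on the multiplication machinery of subsection \ref{subsection:multiplications}.

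For the inclusion $\mathcal B'(Q) \subset \mathcal A(Q)$, I would invoke proposition \ref{prop:explicitbase}: any generic variable is either a cluster monomial, and hence by definition in $\mathcal A(Q)$, or of the form $X_{M_\lambda^{\oplus n} \oplus E}$ with $E \in \add \mathcal E$ rigid and $n \geq 1$. In the second case, multiplicativity of the Caldero-Chapoton map yields $X_{M_\lambda^{\oplus n} \oplus E} = X_{M_\lambda}^n \, X_E$, and since $X_E$ is a cluster monomial the whole issue reduces to proving $X_{M_\lambda} \in \mathcal A(Q)$. For this, I would exploit the identity established in the proof of lemma \ref{lem:XMlambda}, namely $X_P X_{M_\lambda} = X_L + X_B$ for an extending projective $P$, where $L$ is a rigid preprojective of dimension $\delta+\textbf p$ and $B$ is a rigid object, so that $X_P$, $X_L$ and $X_B$ are all cluster monomials. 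To upgrade this relation (which a priori only expresses $X_{M_\lambda}$ in the ambient Laurent ring) into a genuine expression inside $\mathcal A(Q)$, I would use the Caldero-Keller multiplication formula of theorem \ref{theorem:onedimmult} together with an induction on $[M,M]^1$ based on lemma \ref{lem:dimautoext}, writing any $X_M$ for $M \in \mathcal C_Q$ as a $\Z$-linear combination of cluster monomials.

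For the reverse inclusion $\mathcal A(Q) \subset \Q\mathcal B'(Q)$, I would use that $\mathcal A(Q)$ is generated as a $\Z$-algebra by cluster variables, so every element is a $\Z$-linear combination of products $X_{M_1} \cdots X_{M_k}$ where each $M_i$ is rigid indecomposable. By multiplicativity of the Caldero-Chapoton map, each such product equals $X_M$ with $M = \bigoplus_i M_i$, an object of $\mathcal C_Q$ which need not be rigid because direct summands may fail to be Ext-orthogonal when they come from different clusters. Applying proposition \ref{prop:dcpXM} rewrites each such $X_M$ as a $\Q$-linear combination of generalized variables $X_Y$ whose indecomposable summands are pairwise Ext-orthogonal. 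Lemma \ref{lem:modgeninZB} then guarantees that each such $X_Y$ lies in $\Q\mathcal B'(Q)$, which closes the argument.

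The main obstacle is the first inclusion, specifically proving $X_{M_\lambda} \in \mathcal A(Q)$: a direct division of $X_L + X_B$ by $X_P$ only places $X_{M_\lambda}$ in the Laurent ring $\Z[\textbf u^{\pm 1}]$, not obviously in the subalgebra $\mathcal A(Q)$. The inductive argument on self-extensions, using lemma \ref{lem:dimautoext} to strictly decrease $[M,M]^1$ at each step via theorem \ref{theorem:onedimmult}, is the crucial ingredient to bypass this difficulty; the reverse inclusion is comparatively mechanical once propositions \ref{prop:dcpXM} and the spanning lemma \ref{lem:modgeninZB} are in hand.
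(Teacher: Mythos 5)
Your treatment of the second inclusion, $\mathcal A(Q) \subset \Q\mathcal B'(Q)$, is exactly the paper's argument: express a monomial in cluster variables as $X_M$ for a (not necessarily rigid) object $M$, apply proposition \ref{prop:dcpXM} to rewrite $X_M$ in terms of $X_Y$ with Ext-orthogonal summands, and invoke lemma \ref{lem:modgeninZB}. No issues there.

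The first inclusion has a genuine gap, precisely at the point you flag as "the main obstacle." Your proposed fix — an induction on $[M,M]^1$ via lemma \ref{lem:dimautoext} and theorem \ref{theorem:onedimmult}, "writing any $X_M$ as a $\Z$-linear combination of cluster monomials" — cannot work, for two reasons. First, it is false that every $X_M$ is a $\Z$-linear combination of cluster monomials in affine type: that failure is the entire motivation for introducing generic variables (cf.\ the discussion of \cite{shermanz} and \cite{CZ} in the introduction), and proposition \ref{prop:explicitbase} shows $\mathcal B'(Q)$ strictly contains the cluster monomials, with the extra elements having denominator vectors of the form $n\delta + \ddim E$ which no rigid object has (so by the denominators theorem they are linearly independent from all cluster monomials). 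Second, the inductive machinery of corollary \ref{corol:inductionstep} reduces $[M\oplus N,M\oplus N]^1$ only for \emph{decomposable} objects; $M_\lambda$ is indecomposable with nonzero self-extension, so the induction cannot touch it — proposition \ref{prop:dcpXM} simply returns $X_{M_\lambda}$ unchanged. The relation $X_P X_{M_\lambda} = X_L + X_B$ only places $X_P X_{M_\lambda}$ in $\mathcal A(Q)$, and dividing by the cluster variable $X_P$ does not keep you inside the subalgebra.

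Notice also that the proposition's hypothesis — that $Q$ satisfies the difference property — never appears in your argument for the first inclusion, which is a warning sign. The paper's actual route to $X_{M_\lambda} \in \mathcal A(Q)$ is short and uses this hypothesis head-on: by the difference property, $X_{M_\lambda} = X_{M_E} - X_{\regrad M_E/E}$ for any quasi-simple $E$ in an exceptional tube. The module $\regrad M_E/E$ is indecomposable rigid, so $X_{\regrad M_E/E}$ is a cluster variable, and $X_{M_E}$ is a polynomial with integer coefficients in the cluster variables $X_{E_i}$ (the quasi-simples of the tube) by the generalized Chebyshev polynomial identity of \cite{Dupont:stabletubes}; both terms therefore lie in $\mathcal A(Q)$, hence so does their difference. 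Once $X_\delta = X_{M_\lambda} \in \mathcal A(Q)$, the rest of $\mathcal B'(Q)$ follows via the canonical decomposition (proposition \ref{prop:dcpcanonique}) exactly as you sketch.
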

		\begin{proof}
			We first prove that $\mathcal B'(Q) \subset \mathcal A(Q)$. If $Q$ is the Kronecker quiver, this is proved in \cite{CZ}. Assume now that $Q$ is not the Kronecker quiver, it thus contains an exceptional tube.
			
			Fix $\textbf d$ a dimension vector. If $\textbf d \in \Z_{\leq 0}^{Q_0}$, $X_{\textbf d}$ is a product of $u_i$ and is thus in $\mathcal A(Q)$. As $X_{\textbf d}=X_{[\textbf d]_+}X_{[\textbf d]_-}$, it suffices to prove the result for $\textbf d \in \N^{Q_0}$.
			
			If $\textbf d$ is a real Schur root, then there exists an indecomposable rigid module $M \in \rep(Q,\textbf d)$ and thus theorem \ref{theorem:correspondanceCK2} implies $X_M$ is a cluster variable. 
			
			If $\textbf d=\delta$, then $X_{\textbf d}=X_{M_\lambda}$ for some $\lambda \in \P^1_0$. Fix $E$ a quasi-simple module in an exceptional tube $\mathcal T$, we know from \cite{Dupont:stabletubes} that $X_{M_E}$ is a polynomial in the $X_{E_i}$ where the $E_i$ are the quasi-simples of $\mathcal T$. In particular each $X_{E_i}$ being a cluster variable, it follows that $X_{M_E} \in \mathcal A(Q)$. As $\regrad M_E/E$ is indecomposable rigid, $X_{\regrad M_E/E}$ is also a cluster variable and thus 
			$$X_{\delta}=X_{M_\lambda}=X_{M_E}-X_{\regrad M_E/E} \in \mathcal A(Q).$$
			
			Now, we know that $X_{\textbf d} \in \mathcal A(Q)$ for any Schur root $\textbf d$. Fix now any element $\textbf d \in \N^{Q_0}$, and write $\textbf d=\textbf e_1 \oplus \cdots \oplus \textbf e_n$ its canonical decomposition, then every $\textbf e_i$ is a Schur root and thus $X_{\textbf e_i} \in \mathcal A(Q)$ for every $i=1, \ldots, n$. Now proposition \ref{prop:dcpcanonique} implies that 
			$$X_{\textbf d}=\prod_{i=1}^nX_{\textbf e_i} \in \mathcal A(Q),$$
			this proves the claim.
			
			Fix now an object $M$ in $\mathcal C_Q$. According to proposition \ref{prop:dcpXM}, $X_M$ can be written as a $\Q$-linear combination of $X_Y$ where $Y=\bigoplus_i Y_i$ is such that $\Ext^1_{\mathcal C}(Y_i,Y_j)=0$ for any $i \neq j$. It follows from lemma \ref{lem:modgeninZB} that $X_Y \in \Q\mathcal B'(Q)$ and thus $X_M \in \Q \mathcal B'(Q)$.
			
			Fix now a monomial $x \in \mathcal A(Q)$. Then $x=\prod_i x_i$ is a product of cluster variables $x_i$. According to theorem \ref{theorem:correspondanceCK2}, each $x_i$ can be written as a $X_{M_i}$ and thus $x=X_{\bigoplus_i M_i} \in \Q \mathcal B'(Q)$.
		\end{proof}
	\end{subsubsection}
\end{subsection}

\section{Reflection functors and Caldero-Chapoton map}\label{section:reflection}
	\begin{subsection}{Reflections and generic variables}\label{subsection:reflexions}
	We are now interested in proving that the elements in $\mathcal B'(Q)$ are linearly independent over $\Q$. For this purpose, we will follow the ideas of \cite{CK1} introducing a certain grading on the cluster algebra. This grading will depend on the orientation of the quiver $Q$ and thus we will need to understand the behaviour of generic variables under changes of orientations. In \cite{Zhu:equivalence}, the author investigated interactions between a certain extension of the BGP-reflection functors and the cluster combinatorics. This subsection is devoted to the study of the interaction between these extended reflection functors and the Caldero-Chapoton map. This is a generalization of the the works of Zhu.

	\begin{subsubsection}{Reflection functors and Caldero-Chapoton map}
		A \emph{sink}\index{sink} (resp. \emph{source}\index{source}) in $Q$ is a vertex in $Q_0$ such that there is no arrow starting (resp. ending) at $i$. Let $Q$ be a quiver and $i$ be a sink or a source in $Q$. We define the reflected quiver $\sigma_i(Q)$ by reversing all the arrows ending at $i$. An \emph{admissible sequence of sinks (resp. sources)} is a sequence $(i_1, \ldots, i_n)$ such that $i_1$ is a sink (resp. source) in $Q$ and $i_k$ is a sink (resp source) in $\sigma_{i_{k-1}}\cdots \sigma_{i_1}(Q)$ for any $k=2, \ldots, n$. A quiver $Q'$ is called \emph{reflection-equivalent}\index{reflection-equivalent} to $Q$ if there exists an admissible sequence of sinks or sources $(i_1, \ldots, i_n)$ such that $Q'=\sigma_{i_{n}}\cdots \sigma_{i_1}(Q)$. Note that this is an equivalence relation whose equivalence classes are called \emph{reflection classes}\index{reflection class}. In the following, we will only work with sinks but a straightforward adaptation gives the same results for sources.
		
		It is important to notice that mutations can be viewed as generalizations of reflections. Namely, if $i$ is a sink or a source in a quiver $Q$, then $\mu_i(Q)=\sigma_i(Q)$ where $\mu_i$ denotes the mutation in the direction $i$.
		
		If $Q$ is a quiver, we still denote by $\mathcal A(Q)$ the coefficient free cluster algebra with initial seed $(Q,\textbf u)$. If $Q'$ is a quiver mutation-equivalent to à $Q$, there exists some seed $(Q',\textbf v)$ in $\mathcal A(Q)$ and $\mathcal A(Q')$ will denote the cluster algebra with initial seed $(Q',\textbf v)$. There is a natural isomorphism of cluster algebras
		$$\Phi': \mathcal A(Q') \fl \mathcal A(Q)$$
		sending $v_i$ to its expansion in $\Z[u_i^{\pm 1}, i \in Q_0]$ for any $i \in Q_0$. Similarly, every $u_i$ can be written as a Laurent polynomial in $\textbf v$ and we write 
		$$\Phi: \mathcal A(Q) \fl \mathcal A(Q')$$
		the corresponding algebra isomorphism. Note in particular that $\Phi'$ are $\Phi$ are inverse isomorphisms. These isomorphisms will be referred to as the \emph{canonical cluster algebras isomorphisms}\index{canonical cluster algebras isomorphism}.

		From now on, we assume that $Q$ is acyclic and that $i$ is a sink in $Q_0$. We denote by $Q'=\sigma_i(Q)$ the reflected quiver. Let $\Sigma_i^+:\rep(Q) \fl \rep(Q')$ be the standard BGP-reflection functor and $R_i^+:\mathcal C_Q \fl \mathcal C_{Q'}$ be the extended BGP-reflection functor defined in \cite{Zhu:equivalence}. It is given on the objects of $\mathcal C_Q$ by:
		$$R_i^+:\left\{\begin{array}{rcll}
			X & \mapsto & \Sigma_i^+(X) & \textrm{ if }X \not \simeq S_i \textrm{ is a module}\\
			S_i & \mapsto & P_i[1] \\
			P_j[1] & \mapsto & P_j[1] & \textrm{ if }j \neq i\\
			P_i[1] & \mapsto & S_i
		\end{array}\right.$$
		The following holds:
		\begin{maprop}[\cite{Zhu:equivalence}]\label{prop:Riequivalence}
			Let $Q$ be an acyclic quiver and $i$ be a sink in $\mathcal C_Q$. Then:
			\begin{enumerate}
				\item $\Sigma_i^+$ induces a triangle equivalence $D^b(kQ) \fl D^b(kQ')$ commuting with the shift $[1]$ and the AR-translation $\tau$.
				\item $R_i^+$ induces a triangle equivalence $\mathcal C_Q \fl \mathcal C_{Q'}$.
			\end{enumerate}
		\end{maprop}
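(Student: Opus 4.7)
The plan is to establish part (1) first by interpreting the extended $\Sigma_i^+$ as a tilting-induced derived equivalence, and then to descend it along the orbit projection to obtain part (2).

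For part (1), my approach would be via APR-tilting at the sink $i$. The module $T = \tau^{-1}S_i \oplus \bigoplus_{j \neq i} P_j$ is a tilting module (an APR-tilt) whose opposite endomorphism algebra is isomorphic to $kQ'$. By Happel's theorem, any tilting module yields a triangle equivalence $\mathrm{RHom}_{kQ}(T,-): D^b(kQ) \fl D^b(kQ')$. I would then verify that when restricted to indecomposable modules $X \not\simeq S_i$, this derived functor coincides with the classical BGP-reflection $\Sigma_i^+(X)$, and that on $S_i$ it produces an object concentrated in degree $1$ (corresponding to what Zhu calls $P_i[1]$ in $\mathcal C_{Q'}$). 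Since triangle equivalences automatically commute with $[1]$, it only remains to check commutation with $\tau$: as $kQ$ is hereditary, $\tau$ is characterized up to isomorphism by the Serre functor $\nu[-1]$, and any triangle equivalence between derived categories of hereditary algebras respects the Serre functor. This yields $\Sigma_i^+ \circ \tau \simeq \tau \circ \Sigma_i^+$.

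For part (2), once $\Sigma_i^+: D^b(kQ) \fl D^b(kQ')$ is established as a triangle equivalence commuting with both $[1]$ and $\tau$, it commutes with the auto-equivalence $G = \tau^{-1}[1]$ and with $G'$ on the target. Hence it descends to a triangle functor between the orbit categories
$$\mathcal C_Q = D^b(kQ)/G \fl D^b(kQ')/G' = \mathcal C_{Q'},$$
which is automatically a triangle equivalence since $\Sigma_i^+$ is. It then remains to identify the descended functor with the explicit four-case formula defining $R_i^+$. For modules $X \not\simeq S_i$ and for $P_j[1]$ with $j \neq i$, the formula is the direct image of the derived reflection. For the two boundary cases one uses the orbit identifications in $\mathcal C_Q$: the derived reflection sends $S_i$ to an object isomorphic in $D^b(kQ')$ to $P_i'[1]$, while $P_i[1] \in \mathcal C_Q$ is identified via $G$ with $\tau^{-1}S_i$, whose reflection lies in the $G'$-orbit of $S_i' \in \mathcal C_{Q'}$.

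The main obstacle will be the careful bookkeeping of the two boundary objects $S_i$ and $P_i[1]$, where the derived version of $\Sigma_i^+$ diverges from the module-level reflection (which simply kills $S_i$). One must check that the specific formulas $R_i^+(S_i) = P_i[1]$ and $R_i^+(P_i[1]) = S_i$ are forced by the tilting description after applying the orbit projection, using that $\mathrm{ind}\,\mathcal C_Q = \mathrm{ind}\,kQ\textrm{-mod} \sqcup \{P_j[1] : j \in Q_0\}$ and the standard identifications $\tau P_j[1] \simeq I_j$ in the cluster category. Once this verification is done, the triangulated structure carries over automatically from the derived equivalence of part (1).
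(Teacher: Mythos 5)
The paper does not prove Proposition \ref{prop:Riequivalence}: it cites it from Zhu's paper \emph{Equivalence between cluster categories}. So I can only compare your outline against the standard argument that Zhu's proof follows, and your plan is indeed the same one: realize the (derived) reflection as $\mathrm{RHom}_{kQ}(T,-)$ for the APR-tilting module $T=\tau^{-1}P_i\oplus\bigoplus_{j\neq i}P_j$, invoke Happel's theorem, use uniqueness of the Serre functor to get commutation with $\tau$, descend to orbit categories via Keller's theorem, and then identify the descended functor with the four-case formula.

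There is, however, a concrete error in the boundary bookkeeping for $R_i^+(P_i[1])=S_i$. You assert that ``$P_i[1]\in\mathcal C_Q$ is identified via $G$ with $\tau^{-1}S_i$.'' This is false. Since $i$ is a sink, $S_i=P_i$ and $\tau^{-1}S_i=\tau^{-1}P_i$ is a genuine $kQ$-module; both $\tau^{-1}P_i$ and $P_i[1]$ already lie in the standard fundamental domain $\mathrm{ind}\mbox{-}kQ\mbox{-mod}\sqcup\{P_j[1]\}$ for the $G$-action, and they are distinct objects there, so they cannot be $G$-identified. (Computing directly: $G^{-1}(P_i[1])=\tau(P_i[1])[-1]=\nu(P_i)[-1]=I_i[-1]$, which is not a module in degree $0$.) Moreover, the tilting functor sends $\tau^{-1}P_i=T_i$ to the projective $P_i'$, not to the $G'$-orbit of $S_i'$ — these are different objects of $\mathcal C_{Q'}$. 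The correct and shorter verification is to note that $P_i[1]=S_i[1]$ and hence
$$\mathrm{RHom}_{kQ}(T,P_i[1])=\mathrm{RHom}_{kQ}(T,S_i)[1]\simeq I_i'[-1][1]=I_i'=S_i',$$
where $I_i'=S_i'$ because $i$ is a source in $Q'$. Your treatment of the other boundary case, $R_i^+(S_i)=P_i[1]$, is correct provided you read ``isomorphic to $P_i'[1]$'' as an isomorphism in $\mathcal C_{Q'}$ rather than in $D^b(kQ')$: indeed $\mathrm{RHom}(T,S_i)\simeq I_i'[-1]$ and $G'(I_i'[-1])=\tau'^{-1}(I_i')=P_i'[1]$. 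With that one identification repaired, the outline is sound and coincides with the argument the paper cites.
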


		Set $T_i=\tau^{-1}P_i$, $T_j=P_j$ for $j \neq i$ and $T=\bigoplus_{k=1}^n T_k$ the APR-tilting module associated to the sink $i$ and denote by the functor $F=\Hom_{\mathcal C_Q}(T,-)$. It is known that $F$ induces a triangle equivalence $\mathcal C_Q/\add T[1] \fl \modg k\sigma_i Q$. Moreover, $F$ and $R_i^+$ coincide on the objects of $\mathcal C_Q/\add T$.
		
		We denote by $X^Q_?$ (resp. by $X^{\sigma_i Q}_?$) the Caldero-Chapoton map associated to $Q$ (resp. to $\sigma_i Q$). We denote by $X^T_?$ the Palu's cluster character on $\mathcal C_Q$ associated to the cluster-tilting object $T$ introduced in \cite{Palu}. It is defined on indecomposable objects of $\mathcal C_Q$ by
		$$X_M^T=\left\{\begin{array}{lll}
			\sum_{\textbf e} \chi(\Gr_{\textbf e}(FM)) \prod_j v_j^{\<S_j, \textbf e\>_a-\<S_j,FM\>} & \textrm{ if }M \in \mathcal C_Q/\add T[1]\\
			v_j & \textrm{ if } M \simeq T_j[1] \textrm{ for any } j \in Q_0\\
		\end{array}\right.$$
		where $\<-,-\>_a$ is the symmetrized Euler form defined by
		$$\<M,N\>_a=\<M,N\>-\<N,M\>$$ for any two $k\sigma_i Q$-modules $M$ and $N$. 
		In our case, as $\sigma_i Q$ is acyclic, the Euler form is thus well defined on the Grothendieck group of $k\sigma_i Q$-mod, thus 
		$$\<S_j, \textbf e\>_a=\<S_j,\textbf e\>-\<\textbf e,S_j\>$$
		for any dimension vector $\textbf e$ and any $j \in \sigma_i Q_0$. It satisfies
		$$X^T_{M\oplus N}=X^T_MX^T_N$$
		for any two objects $M,N$ in $\mathcal C_Q$.
		The following lemma gives the link between the cluster character and the Caldero-Chapoton map in the particular case where $T$ is an APR-tilting module.
		
		\begin{monlem}\label{lem:Finitial}
			Let $Q$ be an acyclic quiver, $i$ a sink in $Q$ and $T$ be the APR-tilting $kQ$-module associated to $i$. Then for any object $M$ in $\mathcal C_Q$, we have
			$$X^T_M=X^{\sigma_i Q}_{R_i^+M}$$
		\end{monlem}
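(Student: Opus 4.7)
The plan is a case analysis on the indecomposable $M\in\mathcal{C}_Q$, using (i) the multiplicativity of both sides on direct sums, (ii) the classification of indecomposables of $\mathcal{C}_Q$ recalled in the first subsection, and (iii) the identification of $F=\Hom_{\mathcal{C}_Q}(T,-)$ with $R_i^+$ on $\mathcal{C}_Q/\add T[1]$ just recalled. Since $X^T$ and $X^{\sigma_iQ}$ are both multiplicative on direct sums and $R_i^+$ is additive, it suffices to treat the case where $M$ is indecomposable.

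First suppose $M\simeq T_k[1]$ is an indecomposable summand of $T[1]$; then $X^T_M=v_k$ by definition of $X^T$. For $k\neq i$ we have $T_k=P_k$, so $R_i^+T_k[1]=P_k[1]$ in $\mathcal{C}_{\sigma_iQ}$ and $X^{\sigma_iQ}_{R_i^+T_k[1]}=v_k$. For $k=i$, using that $P_i=S_i$ for the sink $i$ and that $G=\tau^{-1}[1]$ acts as the identity on $\mathcal{C}_Q$, one has
\[
T_i[1]=\tau^{-1}P_i[1]=GP_i\simeq P_i=S_i \quad\text{in}\ \mathcal{C}_Q,
\]
hence $R_i^+T_i[1]=R_i^+S_i=P_i[1]$ in $\mathcal{C}_{\sigma_iQ}$ and $X^{\sigma_iQ}_{R_i^+T_i[1]}=v_i$ again.

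Next suppose that $M$ is indecomposable but does not lie in $\add T[1]$. Then $M$ is either a module distinct from $S_i$ or $M=P_i[1]$; in either case, inspection of the definition of $R_i^+$ yields that $R_i^+M$ is a $k\sigma_iQ$-module (respectively $\Sigma_i^+M$ or $S_i$). The agreement of $F$ and $R_i^+$ on $\mathcal{C}_Q/\add T[1]$ then gives $FM\simeq R_i^+M$ as $k\sigma_iQ$-modules, whence $\chi(\Gr_{\mathbf{e}}(FM))=\chi(\Gr_{\mathbf{e}}(R_i^+M))$ for every dimension vector $\mathbf{e}$. It then remains to match the monomial factors term by term; for each $j\in Q_0$ one verifies the identity
\[
\<S_j,\mathbf{e}\>_a-\<S_j,FM\>=-\<\mathbf{e},\alpha_j\>-\<\alpha_j,\ddim R_i^+M-\mathbf{e}\>,
\]
by expanding the left- and right-hand sides in the Euler form of $k\sigma_iQ$ and identifying $S_j$ with its dimension vector $\alpha_j$; both reduce to $\<\alpha_j,\mathbf{e}\>-\<\mathbf{e},\alpha_j\>-\<\alpha_j,\ddim R_i^+M\>$.

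The only non-formal ingredient is the cited fact that $F$ and $R_i^+$ agree on $\mathcal{C}_Q/\add T[1]$; its subtle aspect is the boundary case $M=P_i[1]$, where $R_i^+M$ lands in the module category of $k\sigma_iQ$ even though $M$ itself is the shift of a projective in $\mathcal{C}_Q$. Once that module-theoretic identification is granted, the remainder is the bookkeeping comparison of exponents just sketched.
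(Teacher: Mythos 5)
Your proof is correct and follows essentially the same route as the paper's: reduce to indecomposables by multiplicativity, handle the summands of $T[1]$ directly (you spell out the orbit-category identification $T_i[1]\simeq S_i$ and the split into $k\neq i$ versus $k=i$, which the paper leaves more implicit), and for $M\notin\add T[1]$ compare Palu's exponent $\<S_j,\mathbf{e}\>_a-\<S_j,FM\>$ with the Caldero--Chapoton exponent after identifying $FM$ with $R_i^+M$. The exponent identity you verify is exactly the chain of rewritings in the paper's displayed computation; your explicit mention of the boundary case $M=P_i[1]$ is a useful clarification but not a genuine departure.
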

		\begin{proof}
			As $X^T_{M \oplus N}=X^T_MX^T_N$ and $X^{\sigma_i Q}_{M \oplus N}=X^{\sigma_i Q}_{M}X^{\sigma_i Q}_{N}$, it suffices to prove the lemma for $M$ indecomposable. If $M=T_i[1]$, then $R_i^+M=P_i[1]$ and thus $X^{\sigma_i Q}_{R_i^+M}=X^{\sigma_i Q}_{P_i[1]}=v_i$ but by definition $X^T_M=X^T_{T_i[1]}=v_i$. Let's assume that $M$ is indecomposable and non-isomorphic to $T_i[1]$ for any $i \in Q_0$. As $R_i^+(M)=FM$, we compute
			\begin{align*}
				X^T_M
					&= \sum_{\textbf e} \chi(\Gr_{\textbf e}(FM)) \prod_i v_i^{\<S_i,\textbf e\>_a-\<S_i,FM\>}\\
					&= \sum_{\textbf e} \chi(\Gr_{\textbf e}(FM)) \prod_i v_i^{\<S_i,\textbf e\>-\<\textbf e,S_i\>-\<S_i,FM\>}\\
					&= \sum_{\textbf e} \chi(\Gr_{\textbf e}(FM)) \prod_i v_i^{-\<S_i,FM-\textbf e\>-\<\textbf e,S_i\>}\\
					&= X^{\sigma_i Q}_{FM}\\
					&= X^{\sigma_i Q}_{R_i^+M}\\
			\end{align*}
		\end{proof}
	\end{subsubsection}
	
	\begin{subsubsection}{Reflections and cluster variables}
		We keep the above notations. We denote by $\Phi_i: \mathcal A(Q) \fl \mathcal A(\sigma_iQ)$ the canonical isomorphism of cluster algebras and by $\Phi_i': \mathcal A(\sigma_iQ) \fl \mathcal A(Q)$ the inverse isomorphism. It satisfies
		$$\Phi_i(X^Q_{T_j[1]})=X^{\sigma_iQ}_{R_i^+T_j[1]}=X^{\sigma_iQ}_{P_j[1]}=v_j$$
		for every $j \in Q_0$. 
		As $T[1]$ is a cluster-tilting object in $\mathcal C_Q$, it follows from theorem \ref{theorem:correspondanceCK2} that $\ens{X^Q_{T_j[1]} \ , \ j \in Q_0}$ is a cluster in $\mathcal A(Q)$ and $\ens{\Phi_i(X^Q_{T_j[1]}) \ ,\ j \in Q_0}=\ens{X^{\sigma_iQ}_{P_j[1]} \ , \ j \in Q_0}$ is a cluster in $\mathcal A(\sigma_i Q)$. We have the commutative diagram:
		$$
		\xymatrix{
			\ens{T_j[1] \ , \ j \in Q_0} \ar[r]^{R_i^+} \ar[d]_{X^Q_?} & \ens{P_j[1] \ , \ j \in Q_0} \ar[d]^{X^{\sigma_i Q}_?} \\
			\ens{X_{T_j[1]} \ , \ j \in Q_0 } \ar[r]_{\Phi_i} & \ens{v_j \ , \ j \in Q_0}.
		}
		$$
		The following lemma is a consequence of the works of Zhu and of Palu. We give an independent proof for completeness.
		\begin{monlem}\label{lem:exceptionnel}
			Let $Q$ be an acyclic quiver, $i$ be a sink in $\mathcal C_Q$. Then 
			$$\Phi_i(X^Q_M)=X^{\sigma_i Q}_{R_i^+M}$$
			for every rigid object $M$ in $\mathcal C_Q$.
		\end{monlem}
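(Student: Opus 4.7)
The plan is to use Palu's cluster character $X^T$ as a bridge between $X^Q_?$ and $X^{\sigma_i Q}_{R_i^+?}$. By the preceding Lemma \ref{lem:Finitial}, we already have $X^T_M = X^{\sigma_i Q}_{R_i^+ M}$ for every object $M$ of $\mathcal C_Q$. Thus it suffices to prove $\Phi_i(X^Q_M) = X^T_M$ in $\mathcal A(\sigma_i Q)$ whenever $M$ is rigid in $\mathcal C_Q$.

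Both maps are multiplicative on direct sums: the Caldero-Chapoton map satisfies $X^Q_{M\oplus N} = X^Q_M X^Q_N$ by definition, $\Phi_i$ is a ring homomorphism, and by construction $X^T_{M\oplus N}=X^T_M X^T_N$. Hence I would reduce to the case where $M$ is indecomposable rigid. By Theorem \ref{theorem:correspondanceCK2}, indecomposable rigid objects in $\mathcal C_Q$ are in bijection with cluster variables of $\mathcal A(Q)$, and since $R_i^+$ is a triangle equivalence (Proposition \ref{prop:Riequivalence}), it sends indecomposable rigid objects of $\mathcal C_Q$ to indecomposable rigid objects of $\mathcal C_{\sigma_iQ}$. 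So both $\Phi_i(X^Q_M)$ and $X^T_M = X^{\sigma_iQ}_{R_i^+M}$ are cluster variables of $\mathcal A(\sigma_iQ)$.

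Next I would verify that the two maps $M\mapsto \Phi_i(X^Q_M)$ and $M\mapsto X^T_M$ agree on the summands of the APR-tilt $T[1]$: indeed, the commutative diagram set up just before the statement shows $\Phi_i(X^Q_{T_j[1]})=v_j$, and by definition $X^T_{T_j[1]}=v_j$. Both maps also satisfy the one-dimensional exchange identity of Theorem \ref{theorem:onedimmult}: Palu's theorem guarantees it for $X^T$, while for $\Phi_i\circ X^Q$ it follows by applying the algebra homomorphism $\Phi_i$ to the identity already valid for $X^Q$. Since every indecomposable rigid object of $\mathcal C_Q$ is reachable from a summand of $T[1]$ by a finite chain of mutations of cluster-tilting objects, and each such mutation is realized by an exchange triangle of dimension one, an induction on the mutation distance from $T[1]$ shows that the two maps coincide on every rigid object of $\mathcal C_Q$.

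The main obstacle is the induction step: when two cluster variables are exchanged via a one-dimensional extension, the new value is read off from the multiplication formula once the middle terms of the exchange triangles are known, and one must check that these middle terms are themselves rigid objects of strictly smaller mutation distance from $T[1]$ so that the inductive hypothesis applies to them on both sides simultaneously. An alternative route avoiding this book-keeping is to argue by unicity: both $\Phi_i(X^Q_M)$ and $X^T_M$ are cluster monomials of $\mathcal A(\sigma_iQ)$, and it would suffice to show they share the same denominator vector in the variables $\mathbf v$ — this can be extracted from Theorem \ref{theorem:denominators} applied to $\mathcal A(\sigma_iQ)$ together with a comparison of the denominator vectors across the APR-tilt, using that $R_i^+$ preserves dimension vectors modulo the modification at the vertex $i$.
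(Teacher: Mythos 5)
Your primary route is precisely the paper's argument: use Lemma \ref{lem:Finitial} to re-express the right-hand side as $X^T_M$, reduce to indecomposable rigid objects by multiplicativity, note that both maps agree on the summands of $T[1]$, and propagate the identity across the tilting graph via the one-dimensional exchange formula. The ``obstacle'' you isolate about the middle terms dissolves once the induction is formulated over cluster-tilting objects in the tilting graph rather than over a mutation distance of individual indecomposables. The inductive hypothesis is that $\Phi_i(X^Q_M)=X^{\sigma_iQ}_{R_i^+M}$ holds for all indecomposable summands $M$ of a fixed cluster-tilting object $R=U\oplus\overline U$. In passing to a neighbor $R'=U^*\oplus\overline U$, the exchange triangles $U\to B\to U^*\to U[1]$ and $U^*\to B'\to U\to U^*[1]$ have $B,B'\in\add\overline U\subset\add R$, so every summand of $B$ and $B'$ is already in the scope of the hypothesis — there is no separate ``mutation distance'' to track for them. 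Applying Theorem \ref{theorem:onedimmult} in both $\mathcal C_Q$ and $\mathcal C_{\sigma_iQ}$ (the latter is legitimate because $R_i^+$ is a triangle equivalence, hence carries exchange triangles to exchange triangles) and using the hypothesis for $U$, $B$, $B'$ pins down $\Phi_i(X^Q_{U^*})=X^{\sigma_iQ}_{R_i^+U^*}$. Connectedness of the tilting graph (\cite{BMRRT,HU}) then covers all indecomposable rigid objects, since each can be completed to a cluster-tilting object, and multiplicativity covers all rigid objects. The alternative unicity-by-denominators route you sketch is not needed and would require additional care: one would have to verify that $\Phi_i(X^Q_M)$ is a cluster monomial of $\mathcal A(\sigma_iQ)$ and compare denominator vectors with respect to the cluster $\mathbf v$ after the base change, which does not fall out of Theorem \ref{theorem:denominators} alone.
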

		\begin{proof}
			Assume that $R$ is a cluster-tilting object such that for every direct summand $M$ of $R$, we have $\Phi_i(X^Q_M)=X^{\sigma_iQ}_{R_i^+M}$. Fix $R'$ a cluster-tilting object in $\mathcal C_Q$ next to $R$ in the tilting graph (see \cite{BMRRT} for terminology and results about cluster-tilting theory and exchange pairs). Then ,there exists an exchange pair $(U,U^*)$ such that $R=U \oplus \overline U$ and $R'=U^* \oplus \overline U$. We denote by 
			$$U \fl B \fl U^* \fl U[1] \textrm{ and } U^* \fl B' \fl U \fl U^*[1]$$
			the corresponding triangles. Thus, $B$ and $B'$ are in $\add \overline U$. According to theorem \ref{theorem:onedimmult}, we have 
			$$X^Q_UX^Q_{U^*}=X^Q_B+X^Q_{B'}$$
			and thus
			$$\Phi_i(X^Q_U)\Phi_i(X^Q_{U^*})=\Phi_i(X^Q_B)+\Phi_i(X^Q_{B'}).$$
			The induction gives
			$$\Phi_i(X^Q_{U^*})=\frac{X^Q_{R_i^+B}+X^Q_{R_i^+B'}}{X^Q_{R_i^+U}}.$$
			
			On the other hand $R_i^+$ is a triangle equivalence so $(R_i^+U,R_i^+U^*)$ is an exchange pair and the corresponding triangles are 
			$$R_i^+U \fl R_i^+B \fl R_i^+U^* \fl R_i^+U[1] \textrm{ and } R_i^+U^* \fl R_i^+B' \fl R_i^+U \fl R_i^+U^*[1]$$
			The one-dimensional multiplication formula in $\mathcal C_{\sigma_i Q}$ gives:
			$$X^{\sigma_i Q}_{R_i^+U^*}=\frac{X^{\sigma_i Q}_{R_i^+B}+X^{\sigma_iQ}_{R_i^+B'}}{X^{\sigma_iQ}_{R_i^+U}}$$
			and thus
			$$X^{\sigma_i Q}_{R_i^+U^*}=\Phi_i(X^Q_{U^*}).$$
			According to lemma \ref{lem:Finitial}, the proposition holds for the cluster-tilting object $T[1]$ and as the tilting graph is connected (see \cite{BMRRT} and \cite{HU}), it follows that for every direct summand $M$ of a tilting object, we have $\Phi_i(X^Q_M)=X^{\sigma_i Q}_{R_i^+M}$. As every indecomposable rigid object $M$ can be completed into a cluster-tilting object, it follows that 
			$$\Phi_i(X^Q_M)=X^{\sigma_i Q}_{R_i^+M}$$
			for every rigid object $M$ in $\mathcal C_Q$. This proves the lemma.
		\end{proof}
	\end{subsubsection}
	
	\begin{subsubsection}{Reflections for affine quivers}
		We now return to the case when $Q$ is an affine quiver.
		\begin{monlem}\label{lem:Ftubes}
			Let $Q$ be an affine quiver and $i \in Q_0$ be a sink. The following hold:
			\begin{enumerate}
				\item A indecomposable $kQ$-module $M$ is a regular module if and only if $\Sigma_i^+(M)$ is a regular $k\sigma_i Q$-module.
				\item Let $M$ be an indecomposable regular $kQ$-module. Then $\Sigma_i^+$ induces an equivalence of categories from the tube containing $M$ in $kQ$-mod to the tube containing $\Sigma_i^+M$ in $k\sigma_i Q$-mod.
			\end{enumerate}
		\end{monlem}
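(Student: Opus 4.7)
My plan is to derive both parts from Proposition~\ref{prop:Riequivalence}, which ensures that $\Sigma_i^+$ extends to a triangle equivalence $D^b(kQ) \fl D^b(k\sigma_i Q)$ commuting with $\tau$. Since $i$ is a sink in $Q$, the simple $S_i = P_i$ is projective and hence preprojective (in particular not regular); dually $S'_i = I'_i$ is injective in $k\sigma_i Q$ and hence preinjective. Standard BGP theory then provides an equivalence $\rep(Q)/\add S_i \simeq \rep(\sigma_i Q)/\add S'_i$, so $\Sigma_i^+ M$ is an indecomposable $k\sigma_i Q$-module for every indecomposable $M \in \rep(Q)$ with $M \not\simeq S_i$.

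For assertion (1), I would let $M$ be an indecomposable regular $kQ$-module. Then $M \not\simeq S_i$, hence $\Sigma_i^+ M$ is an indecomposable $k\sigma_i Q$-module. Since $M$ lies in a tube of rank $p$, the orbit $\ens{\tau^n M \ : \ n \in \Z}$ is finite of cardinality $p$ and consists of regular modules, none isomorphic to $S_i$. The $\tau$-equivariance of $\Sigma_i^+$ then yields the $\tau$-orbit $\ens{\tau^n \Sigma_i^+ M \ : \ n \in \Z}$ of indecomposable $k\sigma_i Q$-modules, which is again finite. But in $\rep(\sigma_i Q)$ every $\tau$-orbit in the preprojective component (resp. preinjective component) is infinite, since the modules $\tau^{-n}P'_j$ for $n \geq 0$ (resp. $\tau^n I'_j$ for $n \geq 0$) are pairwise non-isomorphic. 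A finite $\tau$-orbit must therefore be regular, so $\Sigma_i^+ M$ is regular. The converse follows by the symmetric argument applied to the inverse reflection $\Sigma_i^-$ at the source $i$ of $\sigma_i Q$.

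For assertion (2), I would let $\mathcal T$ denote the tube of $\rep(Q)$ containing $M$. Every indecomposable of $\mathcal T$ is regular and in particular distinct from $S_i$, so $\Sigma_i^+$ is well-defined and injective on isomorphism classes of indecomposables of $\mathcal T$. Because $\Sigma_i^+$ preserves almost split sequences whose terms avoid $\add S_i$, and because the terms of almost split sequences in $\mathcal T$ all avoid $S_i$, the functor $\Sigma_i^+$ maps the connected AR-component $\mathcal T$ into a single connected AR-component of $\rep(\sigma_i Q)$, which by (1) must be a tube; denote it $\mathcal T'$. The restriction $\Sigma_i^+|_{\mathcal T} : \mathcal T \fl \mathcal T'$ is fully faithful, being the restriction of the equivalence $\rep(Q)/\add S_i \simeq \rep(\sigma_i Q)/\add S'_i$ to full subcategories on which neither exceptional simple appears. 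Applying the symmetric argument to $\Sigma_i^-|_{\mathcal T'} : \mathcal T' \fl \mathcal T$ produces a quasi-inverse, establishing the equivalence.

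The main technical point requiring care is confirming that the image of $\mathcal T$ is contained in a \emph{single} tube rather than spread over several components. This rests on the fact that $\mathcal T$ is connected and entirely avoids $S_i$, so that $\Sigma_i^+$ acts on $\mathcal T$ exactly like an AR-quiver automorphism and the image inherits connectedness from $\mathcal T$; combined with assertion (1), this forces the image to land in a single regular AR-component.
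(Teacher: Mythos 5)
Your proposal is correct. For part (1) you take a genuinely different route from the paper: you argue via $\tau$-orbit finiteness, observing that a regular indecomposable in a tube of rank $p$ has finite $\tau$-orbit of cardinality $p$ while preprojective and preinjective $\tau$-orbits are infinite, and then use the $\tau$-equivariance of $\Sigma_i^+$ from Proposition~\ref{prop:Riequivalence} to transfer this dichotomy across the reflection. The paper instead observes that $S_i$ is projective in $kQ$-mod (hence $\Hom_{kQ}(R,S_i)=0$ for every regular $R$) and that $\Sigma_i^+ S_i$ is injective in $k\sigma_iQ$-mod, so the quotient functors $kQ\textrm{-mod}\to kQ\textrm{-mod}/S_i$ and $k\sigma_iQ\textrm{-mod}\to k\sigma_iQ\textrm{-mod}/S_i$ act as the identity on the regular components, and the BGP equivalence of quotient categories then carries each tube onto a tube. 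Your argument is more explicit and self-contained, at the small cost of invoking the infinite/finite $\tau$-orbit dichotomy as an extra input; the paper's argument bundles parts (1) and (2) into a single step. For part (2) both proofs agree: the equivalence is the restriction of the BGP equivalence of quotient categories to the full subcategories given by the two tubes, which avoid the exceptional simples on both sides. The one point you flagged as delicate — that the image lands in a single tube — is handled in both arguments the same way, via connectedness and preservation of the AR-structure.
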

		\begin{proof}
			We recall that $\Sigma_i^+$ induces an equivalence $kQ\textrm{-mod}/S_i \fl k\sigma_i Q\textrm{-mod}/S_i$. As the simple $kQ$-module $S_i$ is projective and the simple $k\sigma_i Q$-module $S_i$ is an injective $kQ$-module. Denote by $\pi: kQ\textrm{-mod} \fl kQ\textrm{-mod}/S_i$ the canonical functor. Then for any regular component $\mathcal T$, the restriction of the functor $\pi$ to $\mathcal T$ is isomorphic to the restriction of the identity to $\mathcal T$. If $M$ is a regular module, then it belongs to a tube $\mathcal T$ and $\Sigma_i^+M \simeq \Sigma_i^+(\pi(M))$ belongs to $\Sigma_i^+(\pi(\mathcal T))$. But $\Sigma_i^+(\pi(\mathcal T))\simeq \pi(\mathcal T) \simeq \mathcal T$ and then $\Sigma_i^+(M)$ belongs to a tube in $k\sigma_iQ\textrm{-mod}/S_i$, it is thus a regular module and moreover $\Sigma_i^+$ induces an equivalence between the tubes containing $M$ and $\Sigma_i^+(M)$.
		\end{proof}

		\begin{corol}\label{corol:Flongueur}
			Let $Q$ be an affine quiver, $i$ be a sink in $Q_0$ and $M$ be an indecomposable regular $kQ$-module with quasi-composition series 
			$$0=M_0 \subset M_1 \subset \cdots \subset M_r=M,$$
			then $R_i^+M$ is a regular $k\sigma_iQ$-module with regular composition series
			$$0=R_i^+M_0 \subset R_i^+M_1 \subset \cdots \subset R_i^+M_r=R_i^+M.$$
			In particular $R_i^+$ send quasi-socles to quasi-socles, quasi-radicals to quasi-radicals and preserves quasi-lengths.
		\end{corol}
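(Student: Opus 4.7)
The plan is to reduce the statement to a direct consequence of Lemma \ref{lem:Ftubes}. First, since $M$ is an indecomposable regular module, $M \not\simeq S_i$ (because $i$ is a sink, so $S_i = P_i$ is projective, hence preprojective, not regular). Each $M_j$ with $j \geq 1$ is a nonzero regular submodule of $M$ and is likewise not isomorphic to $S_i$ for the same reason. By the definition of the extended BGP functor recalled before Proposition \ref{prop:Riequivalence}, this gives $R_i^+ M_j = \Sigma_i^+ M_j$ for every $j$, so I may work with the classical BGP functor throughout.

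Next, let $\mathcal T$ denote the tube of $kQ$-mod containing $M$. By the second part of Lemma \ref{lem:Ftubes}, $\Sigma_i^+$ restricts to an equivalence of categories between $\mathcal T$ and the tube $\mathcal T'$ of $k\sigma_i Q$-mod containing $\Sigma_i^+ M$. Each $M_j$ lies in $\mathcal T$ since submodules and quotients of indecomposable regulars stay in the same tube (by uniseriality of tubes, each $M_j$ is itself indecomposable regular in $\mathcal T$). An equivalence of abelian categories is exact and preserves monomorphisms and cokernels, so applying $\Sigma_i^+$ to the quasi-composition series produces a strictly increasing chain
$$0 = \Sigma_i^+ M_0 \subset \Sigma_i^+ M_1 \subset \cdots \subset \Sigma_i^+ M_r = \Sigma_i^+ M$$
of submodules in $\mathcal T'$, with successive quotients $\Sigma_i^+(M_j/M_{j-1})$.

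Finally, I must verify that these successive quotients are quasi-simple in $\mathcal T'$. A regular module $N$ in $\mathcal T$ is quasi-simple iff it admits no proper nonzero submodule inside $\mathcal T$; since $\Sigma_i^+ \colon \mathcal T \to \mathcal T'$ is an equivalence and both tubes are closed under submodules in their respective module categories, this property is transported to $\Sigma_i^+ N$. Hence every $\Sigma_i^+(M_j/M_{j-1})$ is quasi-simple in $\mathcal T'$, so the displayed chain is the quasi-composition series of $R_i^+ M$. The statements about quasi-socle ($M_1 \mapsto R_i^+ M_1$), quasi-radical ($M_{r-1} \mapsto R_i^+ M_{r-1}$), and preservation of the quasi-length $r$ then follow immediately. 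There is no genuine obstacle: the entire argument is a formal consequence of Lemma \ref{lem:Ftubes} together with the fact that quasi-simplicity and the quasi-composition series are intrinsic to the uniserial structure of the tube.
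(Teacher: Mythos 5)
Your proof is correct and follows the same route as the paper: observe that since $i$ is a sink, $S_i$ is projective, hence $R_i^+$ agrees with $\Sigma_i^+$ on regular modules and their regular submodules, and then invoke the tube equivalence from Lemma~\ref{lem:Ftubes}. You simply spell out in more detail the intermediate steps (that each $M_j$ lies in $\mathcal T$, that the equivalence transports the chain and preserves quasi-simplicity of the quotients) which the paper leaves implicit in the phrase ``the result follows directly.''
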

		\begin{proof}
			As $i$ is a sink, $S_i$ is a projective module and then for any regular module $R_i^+(M) \simeq \Sigma_i^+(M)$. The result follows then directly from lemma \ref{lem:Ftubes}. 
		\end{proof}

		Now we extend lemma \ref{lem:exceptionnel} to any object $M$ in $\mathcal C_Q$. 
		\begin{maprop}\label{prop:reflexionaffine}
			Let $Q$ be a quiver of affine type with at least three vertices satisfying the difference property. Let $i$ be a sink in $Q_0$ such that $\sigma_i Q$ satisfies the difference property. Denote by $\Phi_i:\mathcal A(Q) \fl \mathcal A(\sigma_iQ)$ the canonical isomorphism and by $R_i^+:\mathcal C_Q \fl \mathcal C_{\sigma_iQ}$ the extended BGP functor. Then for any object $M$ in $\mathcal C_Q$, we have $\Phi_i(X^Q_M)=X^{\sigma_i Q}_{R_i^+M}$.
		\end{maprop}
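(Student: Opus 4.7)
The plan is to reduce the identity $\Phi_i(X^Q_M)=X^{\sigma_i Q}_{R_i^+M}$ to the rigid case treated in Lemma \ref{lem:exceptionnel}, using Krull-Schmidt decomposition in $\mathcal C_Q$ together with the difference property. Since $X^Q_?$ and $X^{\sigma_iQ}_?$ are multiplicative on direct sums and $R_i^+$ is additive, it suffices to check the identity on indecomposables. These are either shifts $P_j[1]$, transjective modules, or regular modules; the first two families are rigid and hence handled by Lemma \ref{lem:exceptionnel}. Among regular indecomposables, only those of dimension $n\delta$ with $n\geq 1$ can fail to be rigid, and these are exactly the $M_E^{(n)}$ in exceptional tubes and the $M_\lambda^{(n)}$ in homogeneous tubes.

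For $M_E^{(n)}$, I would invoke the generalized Chebyshev identity $X^Q_{M_E^{(n)}}=P_{np,p}(X^Q_{E_0},\ldots,X^Q_{E_{p-1}})$ from \cite{Dupont:stabletubes}, where $E_0,\ldots,E_{p-1}$ are the (rigid) quasi-simples of the exceptional tube of rank $p$ containing $E$. Applying $\Phi_i$ and Lemma \ref{lem:exceptionnel} to each $E_j$, one obtains $\Phi_i(X^Q_{M_E^{(n)}})=P_{np,p}(X^{\sigma_iQ}_{R_i^+E_0},\ldots,X^{\sigma_iQ}_{R_i^+E_{p-1}})$. By Lemma \ref{lem:Ftubes} and Corollary \ref{corol:Flongueur}, $R_i^+$ sends this exceptional tube onto an exceptional tube of the same rank $p$ in $\sigma_i Q$, with quasi-simples $R_i^+E_0,\ldots,R_i^+E_{p-1}$, and sends $M_E^{(n)}$ to the indecomposable there of quasi-socle $R_i^+E$ and quasi-length $np$. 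The same Chebyshev identity applied in $\sigma_i Q$ then identifies the right-hand side with $X^{\sigma_iQ}_{R_i^+M_E^{(n)}}$.

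For $M_\lambda$ with $\lambda\in\P^1_0(Q)$, since $Q$ has at least three vertices and is affine (hence not Kronecker) some exceptional tube exists; I pick a quasi-simple $E$ in it. The difference property in $Q$ yields $X^Q_{M_\lambda}=X^Q_{M_E}-X^Q_{\regrad M_E/E}$, where $X^Q_{M_E}$ is handled by the previous paragraph, and $\regrad M_E/E$ is rigid (quasi-length $p-2<p$), so $X^Q_{\regrad M_E/E}$ is handled by Lemma \ref{lem:exceptionnel}. By Corollary \ref{corol:Flongueur}, $R_i^+M_E=M_{R_i^+E}$ and $R_i^+(\regrad M_E/E)=\regrad M_{R_i^+E}/R_i^+E$, so the difference property applied in $\sigma_iQ$ gives $\Phi_i(X^Q_{M_\lambda})=X^{\sigma_iQ}_{M_\mu}$ for any $\mu\in\P^1_0(\sigma_iQ)$. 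On the other hand, by Lemma \ref{lem:Ftubes}, $R_i^+M_\lambda$ is a quasi-simple of dimension $\delta$ in a rank-one tube of $\sigma_iQ$, hence equals $M_\mu$ for some $\mu\in\P^1_0(\sigma_iQ)$; Lemma \ref{lem:XMlambda} says $X^{\sigma_iQ}_{M_\mu}$ does not depend on $\mu$, so the two sides agree. The remaining case $M_\lambda^{(n)}$ with $n\geq 2$ follows from Lemma \ref{lem:Chebyshev} ($X^Q_{M_\lambda^{(n)}}=C_n(X^Q_{M_\lambda})$), the case $n=1$ just established, and the analogous Chebyshev identity in $\sigma_i Q$ applied to $R_i^+M_\lambda$.

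The main obstacle is structural rather than computational: one must know that $R_i^+$ faithfully transports all relevant tube combinatorics (ranks, quasi-socles, quasi-lengths, and hence the Chebyshev and difference-property identities) between $Q$ and $\sigma_i Q$. Fortunately this is precisely the content of Lemma \ref{lem:Ftubes} and Corollary \ref{corol:Flongueur}, which are already available, so the argument reduces to organizing the reduction case by case as above.
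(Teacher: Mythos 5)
Your overall strategy — reduce to indecomposables, handle rigid objects by Lemma \ref{lem:exceptionnel}, use generalized Chebyshev polynomials together with Lemma \ref{lem:Ftubes} and Corollary \ref{corol:Flongueur} for exceptional tubes, and the difference property (in both $Q$ and $\sigma_iQ$) for the homogeneous case — is precisely the paper's approach. However, your reduction has a gap: it is not true that the only non-rigid regular indecomposables are those of dimension vector $n\delta$. In an exceptional tube of rank $p$, every indecomposable of quasi-length $l\geq p$ has self-extensions, including those with $p\nmid l$, whose dimension vectors are of the form $k\delta+\alpha$ with $\alpha$ a real Schur root, not multiples of $\delta$. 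For instance, in type $\affA{3}{1}$, the module $E_0^{(4)}$ of quasi-length $4$ in the rank-$3$ tube satisfies $\<\ddim E_0^{(4)},\ddim E_0^{(4)}\>=1$ while $\dim\End(E_0^{(4)})=2$, so it is not rigid, yet $\ddim E_0^{(4)}=\delta+\ddim E_0\notin\Z\delta$. Such modules are covered neither by Lemma \ref{lem:exceptionnel} (they are not rigid) nor by your $M_E^{(n)}$ paragraph (they are not of quasi-length $np$).

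The repair is exactly what the paper does: apply the Chebyshev identity $X^Q_M=P_{l,p}(X^Q_{E_0},\ldots,X^Q_{E_{p-1}})$ to \emph{every} indecomposable $M$ in an exceptional tube, where $l$ is its quasi-length, not only when $l=np$. Your argument then transfers verbatim, since the quasi-simples $E_j$ are rigid (hence governed by Lemma \ref{lem:exceptionnel}), $R_i^+$ preserves quasi-lengths and quasi-socles by Corollary \ref{corol:Flongueur}, and the corresponding Chebyshev identity holds in $\sigma_iQ$. The homogeneous-tube portion of your proof is correct as written; with this adjustment your proof coincides with the paper's.
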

		\begin{proof}
			It suffices to prove it for $M$ indecomposable. If $M$ is rigid the result follows from lemma \ref{lem:exceptionnel}. If $M$ is not rigid then it is regular.
			
			Assume first that $M$ is in an exceptional tube $\mathcal T$ of rank $p>1$. Denote by $E_0, \ldots, E_{p-1}$ the quasi-simple modules of $\mathcal T$ ordered such that $\tau E_i=E_{i-1}$ for all $i \in \Z/p\Z$. Denote by $l$ the quasi-length of $M$ and assume that the $E_i$ are indexed in such a way that $E_0=\regsoc M$. Then we know from \cite{Dupont:stabletubes} that
			$$X^Q_M=P_{l,p}(X^Q_{E_0}, \ldots, X^Q_{E_{p-1}})$$
			where $P_{l,p}$ is the $l$-th generalized Chebyshev polynomial of rank $p$.
			
			According to lemma \ref{lem:Ftubes}, $R_i^+M$ is regular and belongs to a tube $\mathcal T'$ of rank $p$. Moreover corollary \ref{corol:Flongueur} implies that the quasi-simples of $\mathcal T'$ are the $R_i^+E_i$ for $i \in \Z/p\Z$ and $R_i^+E_0=\regsoc R_i^+M$. Moreover, $\reglen(R_i^+M)=\reglen(M)=l$ and thus
			$$X^{\sigma_i Q}_{R_i^+M}=P_{l,p}(X^{\sigma_i Q}_{R_i^+E_0}, \ldots, X^{\sigma_i Q}_{R_i^+E_{p-1}}).$$
			
			As every quasi-simple in an exceptional tube is rigid, it follows that 
			$$X^{\sigma_i Q}_{R_i^+E_i}=\Phi_i(X^Q_{E_i}) \textrm{ for any }i \in \Z/p\Z$$
			and thus
			\begin{align*}
				X^{\sigma_i Q}_{R_i^+M}
					&=P_{l,p}(X^{\sigma_i Q}_{R_i^+E_0}, \ldots, X^{\sigma_i Q}_{R_i^+E_{p-1}}) \\
					&=P_{l,p}(\Phi_i(X^Q_{E_0}), \ldots, \Phi_i(X^Q_{E_{p-1}})) \\
					&=\Phi_i(P_{l,p}(X^Q_{E_0}, \ldots, X^Q_{E_{p-1}}) \\
					&=\Phi_i(X^Q_M)
			\end{align*}
			
			It only remains to prove the result for $M$ indecomposable in an homogeneous tube. $R_i^+$ is a triangle equivalence $\mathcal C_Q \fl \mathcal C_{\sigma_i Q}$ so according to lemma \ref{lem:Ftubes}, $R_i^+$ send the homogeneous tubes of $\mathcal C_Q$ to the homogeneous tubes of $\mathcal C_{\sigma_i Q}$. For any $\lambda \in \P^1(k)$, we denote by $\mathcal T_\lambda(Q)$ (resp. $\mathcal T_\lambda(\sigma_i Q)$) the tube in $\mathcal C_Q$ (resp. $\mathcal C_{\sigma_i Q}$) corresponding to the parameter $\lambda$. Up to re-indexation of the tubes, we can assume that $R_i^+\mathcal T_\lambda(Q)=T_\lambda(\sigma_i Q)$ for any $\lambda \in \P^1(k)$ and lemma \ref{lem:Ftubes} implies that $\P^1_0(\sigma_i Q)=\P^1_0(Q)$.
			The set $\ens{M_\lambda}_{\lambda \in \P^1_0}$ is a set of representatives of the quasi-simple modules in the homogeneous tubes in $\mathcal C_Q$ and $\ens{R_i^+M_\lambda}_{\lambda \in \P^1_0}$ is a set of representatives of the quasi-simple modules in the homogeneous tubes in $\mathcal C_{\sigma_i Q}$. Moreover, it follows from lemma \ref{lem:XMlambda} and corollary \ref{lem:Chebyshev} that 
			$$X^Q_{M_\lambda^{(n)}}=X^Q_{M_\mu^{(n)}}=C_n(X^Q_{M_\lambda}) \textrm{ and }X^{\sigma_i Q}_{R_i^+M_\lambda^{(n)}}=X^{\sigma_i Q}_{R_i^+M_\mu^{(n)}}=C_n(X^{\sigma_i Q}_{R_i^+M_\lambda})$$
			for any $\lambda,\mu \in \P^1_0(Q)$. It thus suffices to prove that $$\Phi_i(X^Q_{M_\lambda})=X^{\sigma_i Q}_{R_i^+M_\lambda}$$
			for some $\lambda \in \P^1_0(Q)$.
			
			The difference property (\ref{eq:differencedelta}) of $Q$ implies that for any quasi-simple $E$ in an exceptional tube, we have 
			$$X^Q_{M_\lambda}=X^Q_{M_E}-X^Q_{\regrad M_E/E}$$
			and the difference property (\ref{eq:differencedelta}) of $\sigma_iQ$
			$$X^{\sigma_i Q}_{R_i^+M_\lambda}=X^Q_{R_i^+M_E}-X^Q_{\regrad R_i^+M_E/\regsoc R_i^+M_E}$$
			In particular, 
			\begin{align*}
			\Phi_i(X^Q_{M_\lambda})
				&=\Phi_i(X^Q_{M_E})-\Phi_i(X^Q_{\regrad M_E/E})\\
				&= X^{\sigma_i Q}_{R_i^+(M_E)}-X^{\sigma_i Q}_{R_i^+(\regrad(M_E/E)}
			\end{align*}
			It follows from lemma \ref{lem:Ftubes} that $R_i^+(M_E)=M_{R_i^+(E)}$ and $\regrad R_i^+M_E/\regsoc R_i^+M_E=\regrad M_{R_i^+(E)}/R_i^+(E)$ and thus
			\begin{align*}
				\Phi_i(X^Q_{M_\lambda})
					&=X^{\sigma_i Q}_{M_{R_i^+(E)}}-X^Q_{\regrad M_{R_i^+E}/R_i^+E} \\
					&= X^{\sigma_i Q}_{R_i^+M_\lambda}.\\
			\end{align*}
		\end{proof}
		
		A direct induction on proposition \ref{prop:reflexionaffine} leads to the following proposition:
		\begin{maprop}\label{prop:reflexionCC}
			Let $Q$ be a quiver of affine type with at least three vertices satisfying the difference property. Let $Q'$ be a quiver reflection-equivalent to $Q$ and $(i_1, \ldots, i_n)$ be an admissible sequence of sinks such that $Q'=\sigma_{i_n} \circ \cdots \circ \sigma_{i_1}(Q)$. Assume moreover that $\sigma_{i_k} \cdots \sigma_{i_1}(Q)$ satisfies the difference property for every $k=1, \ldots, n$. Denote by $\Phi$ the canonical isomorphism of cluster algebras $\mathcal A(Q) \fl \mathcal A(Q')$ and by 
			$$R^+=R_{i_n}^+\cdots R_{i_1}^+:\mathcal C_{Q} \fl \mathcal C_{Q'}$$
			Then 
				$$\Phi(X^Q_{M}) =X^{Q'}_{R^+M}$$
			for every object $M$ in $\mathcal C_Q$.
		\end{maprop}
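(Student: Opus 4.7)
The plan is to proceed by induction on the length $n$ of the admissible sequence of sinks. The base case $n=1$ is exactly Proposition \ref{prop:reflexionaffine}, which I may assume. For the inductive step, set $Q''=\sigma_{i_{n-1}}\cdots\sigma_{i_1}(Q)$ and $R'^+=R_{i_{n-1}}^+\cdots R_{i_1}^+:\mathcal C_Q\fl \mathcal C_{Q''}$, and denote by $\Phi_1:\mathcal A(Q)\fl \mathcal A(Q'')$ the canonical isomorphism of cluster algebras. By the induction hypothesis, which applies because the hypothesis of the proposition guarantees that each of the intermediate quivers $\sigma_{i_k}\cdots\sigma_{i_1}(Q)$ satisfies the difference property, we get $\Phi_1(X^Q_M)=X^{Q''}_{R'^+M}$ for every object $M$ of $\mathcal C_Q$.

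Now $i_n$ is a sink in $Q''$ by the very definition of an admissible sequence of sinks, and $Q''$ as well as $Q'=\sigma_{i_n}(Q'')$ both satisfy the difference property by hypothesis. Proposition \ref{prop:reflexionaffine} can thus be applied to the sink $i_n$ in $Q''$: denoting by $\Phi_2:\mathcal A(Q'')\fl \mathcal A(Q')$ the canonical cluster algebras isomorphism, one has $\Phi_2(X^{Q''}_N)=X^{Q'}_{R_{i_n}^+N}$ for every object $N$ in $\mathcal C_{Q''}$. Applying this to $N=R'^+M$ yields
$$\Phi_2(X^{Q''}_{R'^+M})=X^{Q'}_{R_{i_n}^+R'^+M}=X^{Q'}_{R^+M}.$$

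The last step is the compatibility of canonical cluster algebras isomorphisms with composition. Since $\Phi_1$ and $\Phi_2$ are each defined by expanding the initial cluster variables of the target in those of the source, and since both $Q$ and $Q'$ sit inside the same mutation class through the intermediate seed corresponding to $Q''$, the composition $\Phi_2\circ\Phi_1:\mathcal A(Q)\fl\mathcal A(Q')$ coincides with the canonical isomorphism $\Phi$ attached to the full sequence $(i_1,\dots,i_n)$. Combining the two identities gives
$$\Phi(X^Q_M)=\Phi_2(\Phi_1(X^Q_M))=\Phi_2(X^{Q''}_{R'^+M})=X^{Q'}_{R^+M},$$
which is the desired equality.

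The induction is entirely routine once Proposition \ref{prop:reflexionaffine} is at hand; the only genuinely delicate point is the verification that the hypotheses transport correctly through the sequence, namely that the admissibility of $(i_1,\dots,i_n)$ ensures $i_n$ is a sink in $Q''$ and that the assumption on the difference property along the sequence supplies exactly what is needed to invoke Proposition \ref{prop:reflexionaffine} at every step. No new computation with the Caldero--Chapoton map, the grassmannians, or the BGP reflection functors is required beyond what has already been carried out.
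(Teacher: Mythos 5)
Your proof is correct and coincides with the paper's intended argument: the paper itself introduces the proposition with the single phrase ``a direct induction on proposition \ref{prop:reflexionaffine} leads to the following proposition,'' without supplying details. You have written out precisely that induction, correctly tracking which of the hypothesized difference properties (that of $Q$, of $Q''=\sigma_{i_{n-1}}\cdots\sigma_{i_1}(Q)$, and of $Q'$) is needed at each stage, and noting the compatibility of the canonical cluster-algebra isomorphisms under composition.
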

	\end{subsubsection}	

	\begin{subsubsection}{Reflections and generic variables}
		We are now able to prove that the generic variables are preserved under reflections.
	
		\begin{theorem}\label{theorem:invariancebase}
			Let $Q$ be an affine quiver of affine type with at least three vertices such that every quiver reflection-equivalent to $Q$ satisfies the difference property. Let $Q'$ be a quiver reflection-equivalent to $Q$. Write $\Phi: \mathcal A(Q) \fl \mathcal A(Q')$ the canonical isomorphism of cluster algebras. Then
			$$\Phi(\mathcal B'(Q))=\mathcal B'(Q').$$
		\end{theorem}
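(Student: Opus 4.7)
The plan is to assemble the theorem from \ref{prop:reflexionCC} and the explicit description of the set of generic variables given by \ref{prop:explicitbase}. Fix an admissible sequence of sinks $(i_1,\ldots,i_n)$ with $Q'=\sigma_{i_n}\cdots\sigma_{i_1}(Q)$ and set $R^+=R^+_{i_n}\cdots R^+_{i_1}:\mathcal C_Q\fl \mathcal C_{Q'}$. By hypothesis every intermediate quiver $\sigma_{i_k}\cdots\sigma_{i_1}(Q)$ satisfies the difference property, so \ref{prop:reflexionCC} applies and yields $\Phi(X^Q_M)=X^{Q'}_{R^+M}$ for every object $M$ of $\mathcal C_Q$. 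Thus it is enough to show that $R^+$ transports representatives of the elements of $\mathcal B'(Q)$ (in the sense of \ref{prop:explicitbase}) onto representatives of the elements of $\mathcal B'(Q')$.

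By \ref{prop:explicitbase}, every element of $\mathcal B'(Q)$ is of the form $X^Q_M$ with $M$ either a rigid object of $\mathcal C_Q$, or of the form $M_\lambda^{\oplus n}\oplus E$ with $\lambda\in \P^1_0(Q)$, $n\geq 1$ and $E$ a rigid regular module. Since $R^+$ is a triangle equivalence (\ref{prop:Riequivalence}), it preserves rigidity, so in the first case $R^+M$ is rigid in $\mathcal C_{Q'}$ and $X^{Q'}_{R^+M}$ is a cluster monomial of $\mathcal A(Q')$, hence lies in $\mathcal B'(Q')$. In the second case, \ref{lem:Ftubes} and \ref{corol:Flongueur} guarantee that $R^+$ sends the tube $\mathcal T_\lambda(Q)$ to a tube of the same rank in $\mathcal C_{Q'}$, preserving quasi-simples and quasi-lengths; in particular $R^+(M_\lambda)$ is quasi-simple in a homogeneous tube of $\mathcal C_{Q'}$ (i.e.\ equal to $M_\mu$ for some $\mu\in\P^1_0(Q')$) and $R^+E$ is still a rigid regular module of $\mathcal C_{Q'}$. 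Hence $R^+M=M_\mu^{\oplus n}\oplus R^+E$, and \ref{prop:explicitbase} applied to $Q'$ shows that $X^{Q'}_{R^+M}\in\mathcal B'(Q')$. Combined with Step~1 this yields $\Phi(\mathcal B'(Q))\subset\mathcal B'(Q')$.

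For the reverse inclusion I would invoke the symmetry of reflection-equivalence: the inverse admissible sequence at sources realizes $Q$ as reflection-equivalent to $Q'$, and the adaptation of \ref{prop:reflexionCC} to sources (already pointed out in \ref{subsection:reflexions}) combined with the equality of hypotheses — every quiver reflection-equivalent to $Q$ (equivalently to $Q'$) satisfies the difference property — allows to run exactly the same argument with the roles of $Q$ and $Q'$ exchanged and $\Phi$ replaced by $\Phi^{-1}$. This gives $\Phi^{-1}(\mathcal B'(Q'))\subset\mathcal B'(Q)$, and therefore the announced equality $\Phi(\mathcal B'(Q))=\mathcal B'(Q')$.

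The main obstacle is really concentrated in \ref{prop:reflexionCC}: once that compatibility is available, the theorem reduces to a bookkeeping exercise on the two types of elements listed in \ref{prop:explicitbase}. The only subtle point is to notice that on non-rigid regular summands one may rewrite any representative $M_\lambda^{\oplus n}$ as $M_\mu^{\oplus n}$ with $\mu\in\P^1_0$, thanks to \ref{lem:XMlambda}, so that the possible mismatch between the parameters indexing homogeneous tubes in $Q$ and $Q'$ does not affect the value of the Caldero-Chapoton map and the inclusion $\Phi(\mathcal B'(Q))\subset\mathcal B'(Q')$ is seen at the level of generic values rather than of individual representatives.
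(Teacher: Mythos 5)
Your proof is correct and follows essentially the same outline as the paper's: invoke Proposition \ref{prop:reflexionCC} for compatibility of $\Phi$ with $R^+$, show that $R^+$ transports the elements of $\mathcal B'(Q)$ to elements of $\mathcal B'(Q')$, and use the symmetry of reflection-equivalence for the reverse inclusion. The one organizational difference is that you route through the explicit description of $\mathcal B'(Q)$ from Proposition \ref{prop:explicitbase} (cluster monomials plus $X_{M_\lambda^{\oplus n}\oplus E}$ with $E$ rigid regular), whereas the paper argues directly at the level of dimension vectors: it picks $M\in U_{\textbf d}$, treats separately the cases where $\textbf d\notin\N^{Q_0}$, where a rigid $M$ exists, and where the canonical decomposition of $[\textbf d]_+$ involves homogeneous quasi-simples, and in the last case it must explicitly rule out that $R^+$ sends some rigid summand to a shifted projective $P_k[1]$. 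Your version sidesteps that check cleanly because Lemma \ref{lem:Ftubes} guarantees regularity is preserved, and regular objects are never shifted projectives; this makes the bookkeeping a bit lighter while relying on the same underlying lemmas.
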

		\begin{proof}
			Let $(i_1, \ldots, i_n)$ be an admissible sequence of sinks such that $Q'=\sigma_{i_n} \circ \cdots \circ \sigma_{i_1}(Q)$. 
			We denote by $R^+$ the composition
			$$R^+=R_{i_n}^+\cdots R_{i_1}^+:\mathcal C_{Q} \fl \mathcal C_{Q'},$$
			it is an equivalence of triangulated categories.
			According to proposition \ref{prop:reflexionCC}, we have $\Phi(X^Q_{M}) =X^{Q'}_{R^+M}$ for every object $M$ in $\mathcal C_Q$.
			
			Fix $\textbf d \in \Z^{Q_0}$ and assume that $\textbf d \not \in \N^{Q_0}$. Then
			$$X_{\textbf d}=X_{[\textbf d]_+} \prod_{d_i <0} X^Q_{P_i[1]^{\oplus (-d_i)}}$$
			Because $[\textbf d]_+$ is not sincere, there exists some $M_+ \in \rep(Q,[\textbf d]_+)$ which is rigid. It follows that
			$M=M_+ \oplus \bigoplus_{d_i<0}P_i[1]^{-d_i}$ is also rigid and thus so is $R^+M$. Then,  $\Phi(X^Q_M)=X^{Q'}_{R^+M}$ is in $\mathcal B'(Q')$.
			
			If $\textbf d \in \N^{Q_0}$, assume that there is some rigid module $M \in U_{\textbf d}$, then $R^+M$ is rigid and thus $\Phi(X^Q_M)=X^{Q'}_{R^+M}$ is in $\mathcal B'(Q')$. 
			Otherwise, according to the canonical decomposition of $[\textbf d]_+$, $M \in U_{[\textbf d]_+}\cap \mathfrak M_{[\textbf d]_+}$ decomposes into
			$$M=\bigoplus_{i \in I} M_{\lambda_i} \oplus \bigoplus_{j \in J} N_j$$
			where the $N_j$ are indecomposable rigid modules with trivial endomorphism ring for all $i,j \in J$ and $\Ext^1_{\mathcal C_Q}(U,V)=0$ for any pairwise distinct indecomposable summand of $M$ and $I$ is a non-empty set.
			It follows that 
			$$R^+M=\bigoplus_{i \in I} R^+M_{\lambda_i} \oplus \bigoplus_{j \in J} R^+N_j$$
			where the $R^+N_j$ are indecomposable rigid modules with local endomorphism ring for all $i,j \in J$ and $\Ext^1_{\mathcal C_Q}(U,V)=0$ for any pairwise distinct indecomposable summand of $R^+M$. Assume that $R^+N_j \simeq P_k[1]$ for some projective $kQ'$-module $P_k$. Then
		 	$$\Ext^1_{\mathcal C_{Q'}}(R^+N_j,M_{\lambda_i})=\Ext^1_{\mathcal C_{Q'}}(P_k[1],M_{\lambda_i})=\dim M_{\lambda_i}(k)>0$$
		 	which is a contradiction. It follows that $R^+M$ is a $kQ'$-module and according to the study in subsection \ref{subsection:dcpcanonique}, we have 
			$$X^{Q'}_{R^+M}=X^{Q'}_{\ddim R^+M}$$
			and thus $\Phi(\mathcal B'(Q)) \subset \mathcal B'(Q')$.
			
			Conversely, $Q$ can be obtained from $Q'$ after a sequence of reflections and thus the same proof shows the inverse inclusion.
		\end{proof}
		
		If $i \in Q_0$, we denote by $s_i$ the standard reflection of the Weyl group associated to $i$. We denote by $\sigma_i$ the piecewise linear transformation defined on $\Z\Phi(Q)$ as follows. If $\alpha \in \Phi_{\geq -1}(Q)$, following \cite{MRZ} we set
		$$\sigma_i(\textbf d)=\left\{\begin{array}{rcl}
			\textbf d& \textrm{ if } \textbf d=-\alpha_j \textrm{ for }j \neq i,\\
			s_i(\textbf d) & \textrm{ otherwise.}
		\end{array}\right.$$
		If $\textbf d \in \N^{Q_0}$, we write $\textbf d=\textbf e_1 \oplus \cdots \oplus \textbf e_n$ the canonical decomposition of $\textbf d$ and we set
		$$\sigma_i(\textbf d)=\sum_{i=1}^n \sigma_i(\textbf e_i).$$
		If $\textbf d \in \Z^{Q_0}$, we set 
		$$\sigma_i(\textbf d)=\sigma_i([\textbf d]_+)+[\textbf d]_-+2d_i\alpha_i.$$
		This is an involution of $\Z^{Q_0}$.
		
		\begin{monlem}\label{lem:dimensionreflexion}
			Let $Q$ be an acyclic quiver, $i$ and sink in $Q$. Fix $M$ an object in $\mathcal C_Q$ such that
			$$M \simeq M_1 \oplus \cdots \oplus M_n$$
			where the $M_k$ are indecomposable objects such that $\Ext^1_{\mathcal C_Q}(M_k,M_j)=0$ for $k \neq j$. Then 
			$$\ddim R_i^+(M)=\sigma_i(\ddim M).$$
		\end{monlem}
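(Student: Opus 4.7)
The plan is to reduce to the case of a single indecomposable summand, verify the identity there by a direct case analysis on the definition of $R_i^+$, and then show that the Ext-vanishing hypothesis forces the summands' contributions to combine exactly as required by the piecewise-linear definition of $\sigma_i$ on $\Z^{Q_0}$.

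First, since $R_i^+$ is additive, $\ddim R_i^+(M)=\sum_k \ddim R_i^+(M_k)$, and each indecomposable summand $M_k$ falls into one of four cases which I would check in turn. If $M_k$ is an indecomposable module distinct from $S_i$, then $R_i^+M_k=\Sigma_i^+M_k$ and $\ddim \Sigma_i^+M_k=s_i(\ddim M_k)$ by the classical formula for BGP reflection functors at a sink; since $\ddim M_k$ is a positive root different from $\alpha_i$, this coincides with $\sigma_i(\ddim M_k)$. If $M_k=S_i$, then $R_i^+M_k=P_i[1]$ has dimension vector $-\alpha_i=s_i(\alpha_i)=\sigma_i(\alpha_i)$. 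If $M_k=P_j[1]$ with $j\neq i$, then $R_i^+M_k=P_j[1]$ and $\ddim R_i^+M_k=-\alpha_j=\sigma_i(-\alpha_j)$ by the explicit defining case in $\Phi_{\geq -1}(Q)$. If $M_k=P_i[1]$, then $R_i^+M_k=S_i$ gives $\ddim R_i^+M_k=\alpha_i=s_i(-\alpha_i)=\sigma_i(-\alpha_i)$.

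The substantial step is to pass from the indecomposable identity $\sum_k\sigma_i(\ddim M_k)$ to $\sigma_i(\ddim M)$ using the piecewise-linear definition on $\Z^{Q_0}$. Writing $M=N\oplus P_M[1]$ with $N$ the module part and $P_M=\bigoplus_j P_j^{m_j}$, the hypothesis $\Ext^1_{\mathcal C_Q}(M_k,M_l)=0$ for $k\neq l$ combined with the identification $\Ext^1_{\mathcal C_Q}(P_j[1],M_l)=\Hom_{\mathcal C_Q}(P_j,M_l)=M_l(j)$ (when $M_l$ is a module) forces $N(j)=0$ whenever $m_j>0$. Consequently the positive and negative parts of $\ddim M$ decouple cleanly: $[\ddim M]_+=\ddim N$ and $[\ddim M]_-=-\ddim P_M$. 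The module summands then contribute $s_i(\ddim N)=\sigma_i([\ddim M]_+)$, using that $\sigma_i$ agrees with the linear map $s_i$ on positive roots so additivity over the indecomposable summands matches the canonical-decomposition definition of $\sigma_i$ on $\N^{Q_0}$. The shifted-projective summands contribute $-\ddim P_M$ from the indices $j\neq i$, together with a sign-flip on the $i$-th coordinate produced by any $P_i[1]$ summand; this extra piece is precisely the correction term in the definition of $\sigma_i$ on $\Z^{Q_0}$.

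The main obstacle lies not in any deep computation but in this careful bookkeeping between indecomposable contributions and the piecewise-linear formula for $\sigma_i$. The Ext-vanishing hypothesis is precisely what guarantees that the supports of the module part and of the shifted-projective part of $M$ are disjoint, which in turn guarantees that each indecomposable summand of $M$ contributes on the appropriate linear piece of $\sigma_i$; without this separation, the piecewise-linear corrections would fail to add up correctly.
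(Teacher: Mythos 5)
Your proof is correct and follows essentially the same route as the paper: establish that the Ext-vanishing hypothesis forces disjoint supports between the module part and the shifted-projective part of $M$ (so that $[\ddim M]_+=\ddim H^0(M)$ and $[\ddim M]_-=\ddim P_M[1]$), and then unwind the piecewise-linear definition of $\sigma_i$ to match $\ddim R_i^+(M)$. The only cosmetic difference is that the paper cites Zhu's Theorem 3.4 for the indecomposable base case, whereas you rederive it directly from the four defining cases of $R_i^+$; this makes your argument slightly more self-contained without changing the structure.
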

		\begin{proof}
			If $n=1$, then $M=M_1$ is indecomposable and $\ddim M \in \Phi_{\geq 1}$ and the result is proved in \cite[theorem 3.4]{Zhu:equivalence}. Assume now $n >1$. As $\Ext^1_{\mathcal C_Q}(M_k,M_j)=0$ for $k \neq j$, if some $M_j$ is isomorphic to some $P_k[1]$, then $(\ddim M_l)_k \leq 0$ for any $l=1,\ldots, n$. It follows that 
			$$[\ddim M]_+=\ddim H^0(M).$$
			We write 
			$$M=H^0(M)\oplus \bigoplus_{k \in Q_0} P_k[1]^{\oplus p_k}.$$
			In particular, for any $k \in Q_0$, we have $-p_k=(\ddim M)_k$. We have 
			$$R_i^+(M)=\Sigma_i^+(H^0(M)) \oplus S_i^{\oplus p_i} \oplus \bigoplus_{k \neq i} P_k[1]^{\oplus p_k}$$
			where $\Sigma_i^+$ denotes the standard BGP-reflection functor. It follows that 
			\begin{align*}
				\ddim R_i^+M
					&=s_i(\ddim H^0(M))+p_i \alpha_i -p_k\alpha_k \\
					&=s_i([\ddim M]_+)+[\ddim M]_-+2p_i \alpha_i\\
					&=\sigma_i([\ddim M]_+)+[\ddim M]_-+2p_i \alpha_i\\
					&=\sigma_i(\ddim M).
			\end{align*}
		\end{proof}	
		
		\begin{corol}\label{corol:reflectionbase}
		 	Let $Q$ be an affine quiver of affine type with at least three vertices satisfying the difference property. Let $i$ be a sink in $Q$ such that $\sigma_iQ$ satisfies the difference property, and $\Phi_i: \mathcal A(Q) \fl \mathcal A(\sigma_i Q)$ be the canonical isomorphism of cluster algebras. Then for any $\textbf d \in \Z^{Q_0}$, we have
			$$\Phi_i(X^Q_{\textbf d})=X^{\sigma_i Q}_{\sigma_i (\textbf d)}.$$
		\end{corol}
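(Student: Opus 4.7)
The plan is to realize the generic variable $X^Q_{\textbf d}$ as $X^Q_M$ for a carefully chosen object $M$ to which both Proposition \ref{prop:reflexionaffine} and Lemma \ref{lem:dimensionreflexion} apply, and then transport the identification through the reflection functor $R_i^+$.

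First I would construct $M$ as follows. Given $\textbf d \in \Z^{Q_0}$, consider the canonical decomposition $[\textbf d]_+ = \textbf e_1 \oplus \cdots \oplus \textbf e_n$ and pick, using Proposition \ref{prop:Kacdcp} and Corollary \ref{corol:Ud}, a representation $M_+ \in U_{[\textbf d]_+}\cap \mathfrak M_{[\textbf d]_+}$ which splits as $M_+ = M_1 \oplus \cdots \oplus M_n$ with each $M_k$ indecomposable Schur of dimension $\textbf e_k$ and $\Ext^1_{kQ}(M_k,M_l) = 0$ for $k\neq l$. Then set
$$M = M_+ \oplus \bigoplus_{d_j<0} P_j[1]^{\oplus(-d_j)}.$$
By the definition of $X_{\textbf d}$ one has $X^Q_M = X^Q_{\textbf d}$ and clearly $\ddim M = \textbf d$. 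I would then verify the vanishing $\Ext^1_{\mathcal C_Q}(N,N')=0$ between any two distinct indecomposable summands $N,N'$ of $M$: the case $N,N'$ both modules follows from the canonical decomposition together with the 2-Calabi-Yau property, the case of two distinct $P_j[1]$ is immediate, and the mixed case uses that $([\textbf d]_+)_j = 0$ whenever $d_j<0$, so $M_k(j)=0 = \Ext^1_{\mathcal C_Q}(P_j[1],M_k)$, and 2-CY duality handles the other direction.

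Once this is in place, Proposition \ref{prop:reflexionaffine} applies and yields $\Phi_i(X^Q_{\textbf d}) = \Phi_i(X^Q_M) = X^{\sigma_i Q}_{R_i^+ M}$, while Lemma \ref{lem:dimensionreflexion} gives $\ddim R_i^+ M = \sigma_i(\ddim M) = \sigma_i(\textbf d)$. It only remains to recognize $X^{\sigma_i Q}_{R_i^+ M}$ as the generic variable $X^{\sigma_i Q}_{\sigma_i(\textbf d)}$. Since $R_i^+$ is a triangle equivalence (Proposition \ref{prop:Riequivalence}), $R_i^+ M$ decomposes as a direct sum of indecomposables with pairwise vanishing $\Ext^1_{\mathcal C_{\sigma_i Q}}$. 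By Lemma \ref{lem:Ftubes}, the regular summands $M_k$ land in tubes (and homogeneous tubes go to homogeneous tubes), the rigid modular summands go to rigid objects, and the shifted projective summands are either preserved or turned into the quasi-simple $S_i$ (in the case $R_i^+(P_i[1]) = S_i$). Splitting $R_i^+ M = N_+ \oplus \bigoplus_{k:d_k'<0} P_k^{\sigma_i Q}[1]^{\oplus(-d_k')}$ with $d'_k = (\sigma_i(\textbf d))_k$, I would read off from this decomposition that $N_+$ lies in $\mathfrak M_{[\sigma_i(\textbf d)]_+}\cap U_{[\sigma_i(\textbf d)]_+}$ (its indecomposable summands having dimension vectors giving a canonical decomposition of $[\sigma_i(\textbf d)]_+$), whence by definition $X^{\sigma_i Q}_{R_i^+ M} = X^{\sigma_i Q}_{\sigma_i(\textbf d)}$.

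The main obstacle in this plan is the bookkeeping in the final paragraph: one has to check carefully that the components $(-d'_k)$ of $[\sigma_i(\textbf d)]_-$ really coincide with the multiplicities of each $P_k^{\sigma_i Q}[1]$ appearing in $R_i^+ M$, including the delicate swap $S_i \leftrightarrow P_i[1]$ that forces the correction term $2d_i\alpha_i$ in the definition of $\sigma_i$ on $\Z^{Q_0}$. Once this matching is established (which is precisely the content of Lemma \ref{lem:dimensionreflexion} applied to $R_i^+M$ in $\mathcal C_{\sigma_i Q}$, or of its proof read backwards), and once one observes that $N_+$ lies in the common refinement of $U_{[\sigma_i(\textbf d)]_+}$ and $\mathfrak M_{[\sigma_i(\textbf d)]_+}$ by Proposition \ref{prop:Kacdcp}, the three equalities $\Phi_i(X^Q_{\textbf d}) = X^{\sigma_i Q}_{R_i^+M} = X^{\sigma_i Q}_{\sigma_i(\textbf d)}$ chain together and conclude the proof.
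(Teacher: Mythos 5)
Your proposal is correct in broad outline, but it takes a genuinely different route from the paper's own proof. The paper's argument is short and modular: pick $M$ with $X^Q_M = X^Q_{\textbf d}$ satisfying the hypothesis of Lemma~\ref{lem:dimensionreflexion}, so that $\ddim R_i^+M = \sigma_i(\textbf d)$; apply Proposition~\ref{prop:reflexionaffine} to get $\Phi_i(X^Q_{\textbf d}) = X^{\sigma_iQ}_{R_i^+M}$; then simply cite Theorem~\ref{theorem:invariancebase} to conclude $\Phi_i(X^Q_{\textbf d}) \in \mathcal B'(\sigma_iQ)$, and identify which generic variable it is by comparing denominator vectors via Theorem~\ref{theorem:denominators}. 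You, by contrast, never invoke Theorem~\ref{theorem:invariancebase}; instead you unroll essentially its proof inline, analyzing the decomposition of $R_i^+M$ and tracking how the homogeneous-tube summands, rigid summands, and shifted projectives move under the reflection. Both routes work: the paper's is shorter because it reuses a previously proved theorem, while yours is more self-contained (and, as a side benefit, makes transparent that only the difference property for $Q$ and $\sigma_iQ$ is used, whereas Theorem~\ref{theorem:invariancebase} is stated with the formally stronger hypothesis that \emph{every} reflection-equivalent quiver satisfies it).

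One small but real inaccuracy in your final step: you claim that $N_+ := H^0(R_i^+M)$ lies in $U_{[\sigma_i(\textbf d)]_+} \cap \mathfrak M_{[\sigma_i(\textbf d)]_+}$ and that $X^{\sigma_iQ}_{R_i^+M} = X^{\sigma_iQ}_{\sigma_i(\textbf d)}$ holds ``by definition.'' Membership in $U_{[\sigma_i(\textbf d)]_+}$ is not guaranteed: that set records genericity of \emph{each} individual grassmannian Euler characteristic $\chi(\Gr_{\textbf e}(-))$, and a specific representation with the right canonical decomposition need not be in it (compare $M_{E_0}$ in Example~\ref{exmp:differencedeltaA21}, whose CC value is \emph{not} the generic one even though $\delta$ is its own canonical decomposition; more to the point, even modules that \emph{do} achieve the generic CC value need not lie in every $U_{\textbf d,\textbf e}$). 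What you actually need, and what is true, is the weaker statement that the CC value of $N_+$ equals $X^{\sigma_iQ}_{[\sigma_i(\textbf d)]_+}$, and this should be established via Proposition~\ref{prop:dcpcanonique} together with Lemma~\ref{lem:imaginary} and Lemma~\ref{lem:realSchur} (or Lemma~\ref{lem:clustermonomial}) applied in $\mathcal C_{\sigma_iQ}$: each $\Sigma_i^+(M_{\lambda_j})$ sits in a homogeneous tube of $\sigma_iQ$ by Lemma~\ref{lem:Ftubes}, each rigid indecomposable summand keeps the generic value, and multiplicativity (Lemma~\ref{lem:multiplicativity}, via the vanishing of $\Ext^1_{\mathcal C_{\sigma_iQ}}$ between distinct summands) assembles these into $X^{\sigma_iQ}_{\sigma_i(\textbf d)}$. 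With that replacement in the last paragraph, your argument goes through.
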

		\begin{proof}
			According to the denominators theorem for any object $M$ in $\mathcal C_Q$, we have 
			$$\ddim R_i^+(M)=\delta(X^{\sigma_i Q}_{R_i^+M}).$$
			Fix $\textbf d \in \Z^{Q_0}$ and $M$ such that $X_M = X_{\textbf d}$. Then we can choose $M$ such that it satisfies the hypothesis of lemma \ref{lem:dimensionreflexion}, in this case, we have $\ddim R_i^+(M)=\sigma_i(\ddim M)$. Now, we know from proposition \ref{prop:reflexionaffine} that $\Phi_i(X^Q_M)=X^{\sigma_i}_{R_i^+M}$ and thus it follows from theorem \ref{theorem:invariancebase} 
			that $$\Phi_i(X^Q_{\textbf d})=X^{\sigma_i Q}_{\sigma_{i}(\textbf d)}.$$
		\end{proof}
	\end{subsubsection}
\end{subsection}
 
\begin{subsection}{Linear independence for generic values}\label{subsection:independence}
	\begin{subsubsection}{Gradability and linear independence}
		In \cite{CK1}, a condition was introduced on the quiver $Q$ in order to obtain a nice framework for problems of linear independence of generalized variables. In the sequel, we will refer to this condition as \emph{gradability}\index{gradability}. For results concerning gradability, one can also refer to \cite{CK1,Dupont:stabletubes}.
		
		We recall that for any acyclic quiver $Q$, the \emph{matrix $B$ associated to $Q$} is the anti-symmetric matrix whose entries are given by
		$$b_{ij}=|\ens{i \fl j \in Q_1}|-|\ens{j \fl i \in Q_1}|$$
		for all $i,j \in Q_0$.
		
		\begin{defi}
			Let $Q$ be an acyclic quiver with associated matrix $B$. $Q$ will be called \emph{graded}\index{graded} if there exists a linear form $\epsilon$ on $\Z^{Q_0}$ such that $\epsilon(B \alpha_i)<0$ for any $i \in Q_0$ where $\alpha_i$ still denotes the $i$-th vector of the canonical basis of $\Z^{Q_0}$.
		\end{defi}
		
		An orientation of $Q$ is called \emph{alternating}\index{alternating} if every vertex in $Q_0$ is either a sink or a source. 
		
		\begin{monexmp}\label{exmp:alternating}
			Every quiver equipped with an alternating orientation is graded. Indeed, in this case the matrix $B$ of $Q$ satisfies for all $j \in Q_0$, $b_{ij}>0$ if $i$ is a source and $b_{ij}<0$ if $i$ is a sink. We can choose any form $\epsilon$ in the dual basis of $\Z^{Q_0}$ such that $\epsilon_i<0$ if $i$ is a source and $\epsilon_i>0$ if $i$ is a sink.
		\end{monexmp}

		If $Q$ is a graded quiver, then it is proved in \cite{CK1} that we can endow $\mathcal A(Q)$ with a grading. Namely, the results are the following:
		
		For any Laurent polynomial $L$ in the variables $\textbf u$, the \emph{support}\index{support} $\supp(L)$ of $L$ is defined as the set of points $\lambda=(\lambda_i,i \in Q_0)$ of $\Z^{Q_0}$ such that the $\lambda$-component, that is, the coefficient of $\prod_{i \in Q_0} u_i^{\lambda_i}$ in $L$ is non-zero. For any $\lambda$ in $\Z^{Q_0}$, let $C_\lambda$ be the convex cone with vertex $\lambda$ and edge vectors generated by the $B\alpha_i$ for any $i \in Q_0$.

		\begin{maprop}[\cite{CK1}]\label{prop:supportconeCK1}
			Let $Q$ be an acyclic quiver with no multiple arrows. Fix an indecomposable object $M$ of $\mathcal C_Q$ and write $M=H^0(M) \oplus P_M[1]$. Then, $\supp(X_M)$ is in $C_{\lambda_M}$ with $\lambda_M:=(-\<\alpha_i,\ddim H^0(M)\>+\<\ddim P_M, \alpha_i\>)_{i \in Q_0}$.  Moreover, the $\lambda_M$-component of $X_M$ is 1.
		\end{maprop}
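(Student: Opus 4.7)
The plan is to compute directly the exponent vectors of the monomials appearing in $X_M$ and show they all lie in the cone $C_{\lambda_M}$, with a single contribution equal to $1$ at the vertex.

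I would first dispatch the trivial case $M=P_i[1]$: since $X_M=u_i$ by definition, $\supp(X_M)=\ens{\alpha_i}$, and using the projectivity identity $\<\ddim P_i,\alpha_j\>=\delta_{ij}$ one checks that $\lambda_M=\alpha_i$, so both assertions hold. Assume now that $M$ is a module (so $P_M=0$). Each subrepresentation of dimension $\textbf v\leq \ddim M$ contributes a monomial to $X_M$ with exponent vector
\[
\mu(\textbf v)_i=-\<\textbf v,\alpha_i\>-\<\alpha_i,\ddim M-\textbf v\>,
\]
so $\mu(\textbf v)_i-\mu(0)_i=\<\alpha_i,\textbf v\>-\<\textbf v,\alpha_i\>$. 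Writing $\textbf v=\sum_j v_j\alpha_j$ and invoking the identification of the skew-symmetrization of the Euler form on simple roots with the matrix $B$ --- an identification valid precisely because $Q$ has no multiple arrows --- one rewrites this as a $\N$-linear combination of the edge vectors $B\alpha_j$. Since $\mu(0)=\lambda_M$ when $P_M=0$, this gives $\supp(X_M)\subset C_{\lambda_M}$.

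For the $\lambda_M$-component in this case, the key observation is that $\lambda_M$ is the extremal vertex of $C_{\lambda_M}$. Acyclicity of $Q$ provides a linear form $\epsilon:\Z^{Q_0}\fl\Z$ satisfying $\epsilon(B\alpha_j)>0$ for every $j\in Q_0$ (for instance the height function associated to any topological ordering of the vertices). Consequently, the only $\textbf v\geq 0$ with $B\textbf v=0$ is $\textbf v=0$, so the $\lambda_M$-coefficient of $X_M$ is exactly $\chi(\Gr_0(M))=1$.

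Finally, for a general indecomposable $M=H^0(M)\oplus P_M[1]$ with $P_M=\bigoplus_i P_i^{\oplus m_i}$, multiplicativity of the Caldero-Chapoton map gives $X_M=X_{H^0(M)}\prod_i u_i^{m_i}$. Multiplication by $\prod_i u_i^{m_i}$ translates the support by the vector $(m_i)_i$, and the projectivity identity yields $\<\ddim P_M,\alpha_j\>=m_j$, so $\lambda_{H^0(M)}+(m_i)_i=\lambda_M$. The $\lambda_M$-coefficient of $X_M$ therefore equals the $\lambda_{H^0(M)}$-coefficient of $X_{H^0(M)}$, which is $1$ by the module case. I expect the main obstacle to be the vertex-uniqueness argument above: guaranteeing that no $\textbf v>0$ contributes at the position $\lambda_M$ requires the $\N$-cone generated by the $B\alpha_j$ to be strictly convex, and the separating linear form $\epsilon$ exists precisely because $Q$ is acyclic, while the no-multiple-arrows hypothesis is what makes the identification $\mu(\textbf v)-\mu(0)\in\N\langle B\alpha_j\rangle$ exact rather than merely up to edge-multiplicity corrections.
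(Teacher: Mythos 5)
The paper does not give its own proof here (it cites \cite{CK1}), so I will assess the proposal on its merits.

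The cone containment part of your argument is sound in outline: the exponent of the term indexed by $\textbf v$ satisfies $\mu(\textbf v)_i - \mu(0)_i = \<\alpha_i,\textbf v\> - \<\textbf v,\alpha_i\>$, and by linearity this is $\sum_j v_j(\<\alpha_i,\alpha_j\>-\<\alpha_j,\alpha_i\>)$, which places $\mu(\textbf v)-\lambda_M$ in a cone spanned by the columns of $\pm B$; the sign you obtain is actually $-B\textbf v$ with the paper's convention $b_{ij}=|\{i\to j\}|-|\{j\to i\}|$, but since the paper's statement carries the same sign this is most plausibly a convention mismatch with \cite{CK1} and I will not press it.

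The genuine gap is in the proof that the $\lambda_M$-component equals $1$. You assert that acyclicity of $Q$ furnishes a linear form $\epsilon$ with $\epsilon(B\alpha_j)>0$ for every $j$, ``for instance the height function.'' This is false, and it is exactly the paper's notion of being \emph{graded}, which the paper treats as a genuinely extra hypothesis: see the definition, the remark that not every quiver is gradable, and Lemma~\ref{lem:Agradable} where gradability of $\affA{r}{s}$-quivers is established by a lengthy case analysis. A concrete counterexample to your claim is $A_3$ with orientation $1\to 2\to 3$: here $B\alpha_1=(0,-1,0)$ and $B\alpha_3=(0,1,0)=-B\alpha_1$, so no linear form takes a constant sign on all the $B\alpha_j$ (one checks directly that the height function gives $\epsilon(B\alpha_1)=-1$ while $\epsilon(B\alpha_3)=1$). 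Correspondingly, $\ker B\cap\N^{Q_0}$ is nontrivial for this quiver: $(1,0,1)\in\ker B$. So the cone $\lambda_M+\cone(\pm B\alpha_j)$ is not pointed, and your argument that $\textbf v=0$ is the only dimension vector landing at $\lambda_M$ does not go through. What is actually needed --- and what must be the content of the \cite{CK1} proof --- is that for an \emph{indecomposable} $M$ no nonzero subrepresentation $N\subset M$ satisfies $B\,\ddim N=0$. This is where indecomposability of $M$ enters essentially: for the decomposable module $S_1\oplus S_3$ over the linear $A_3$ quiver, $\Gr_{(1,0,1)}$ is a point, so the conclusion would fail. Your proposal never invokes indecomposability, which is a reliable sign that the argument is incomplete. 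A further, minor, observation: since $M$ is indecomposable, exactly one of $H^0(M)$ and $P_M$ vanishes, so your final ``mixed'' case $M=H^0(M)\oplus P_M[1]$ with both parts nonzero cannot occur and that paragraph is vacuous.
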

		
		\begin{maprop}[\cite{CK1}]\label{prop:graduationCK1}
			Let $Q$ be a graded acyclic quiver with no multiple arrows. For every $n \in \Z$, set 
			$$F_n=\left( \bigoplus_{\epsilon(\nu) \leq n} \Z\prod_{i \in Q_0}u_i^{\nu_i}\right) \cap \mathcal A(Q),$$
			then the set $(F_n)_{n \in \Z}$ defines a $\Z$-filtration of $\mathcal A(Q)$.		
		\end{maprop}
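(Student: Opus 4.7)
The plan is to verify the defining properties of a $\Z$-filtration: each $F_n$ is a $\Z$-submodule, $F_n \subseteq F_{n+1}$, $F_n \cdot F_m \subseteq F_{n+m}$, and $\bigcup_{n \in \Z} F_n = \mathcal A(Q)$. The first two are immediate from the definition of $F_n$ as an intersection of $\mathcal A(Q)$ with the $\Z$-span of monomials $\prod u_i^{\nu_i}$ subject to the linear inequality $\epsilon(\nu) \leq n$.

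For the multiplicativity $F_n \cdot F_m \subseteq F_{n+m}$, the computation is formal. If $x = \sum_{\epsilon(\nu) \leq n} a_\nu \prod_i u_i^{\nu_i}$ and $y = \sum_{\epsilon(\mu) \leq m} b_\mu \prod_i u_i^{\mu_i}$, then $xy = \sum_{\nu,\mu} a_\nu b_\mu \prod_i u_i^{\nu_i + \mu_i}$, and by linearity of $\epsilon$ every exponent $\nu+\mu$ occurring satisfies $\epsilon(\nu+\mu) = \epsilon(\nu)+\epsilon(\mu) \leq n+m$. Since $xy \in \mathcal A(Q)$ as well, we conclude $xy \in F_{n+m}$.

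The substantive step, and the one I expect to be the main obstacle, is exhaustion. Since $\mathcal A(Q)$ is generated as a $\Z$-algebra by the cluster variables, and since the $F_n$ are compatible with multiplication by the previous point, it suffices to prove that every cluster variable lies in some $F_n$. By Theorem \ref{theorem:correspondanceCK2}, a cluster variable has the form $X_M$ for an indecomposable rigid object $M$ in $\mathcal C_Q$. Applying Proposition \ref{prop:supportconeCK1}, the support $\supp(X_M)$ is contained in the cone $C_{\lambda_M} = \lambda_M + \sum_{i \in Q_0} \R_{\geq 0}\, B\alpha_i$. For any $\nu \in C_{\lambda_M}$, writing $\nu = \lambda_M + \sum_i c_i B\alpha_i$ with $c_i \geq 0$, linearity of $\epsilon$ yields
\[
\epsilon(\nu) = \epsilon(\lambda_M) + \sum_i c_i \, \epsilon(B\alpha_i) \leq \epsilon(\lambda_M),
\]
using the gradability hypothesis $\epsilon(B\alpha_i) < 0$. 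Consequently $X_M \in F_n$ for any integer $n \geq \epsilon(\lambda_M)$.

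Combining the two previous points, every monomial in cluster variables lies in some $F_n$, and thus so does every $\Z$-linear combination of such monomials, proving $\bigcup_{n \in \Z} F_n = \mathcal A(Q)$. The crucial ingredient is the containment of $\supp(X_M)$ in the cone $C_{\lambda_M}$ provided by Proposition \ref{prop:supportconeCK1}; the hypothesis that $Q$ has no multiple arrows is inherited from that result, and the gradability hypothesis is used exactly to convert the cone containment into a bound on $\epsilon$.
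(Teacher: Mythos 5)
Your proof is correct. The paper attributes this proposition to \cite{CK1} and does not reproduce a proof of its own, but your argument is the natural and intended one: the submodule, increasing, and multiplicative axioms are formal consequences of the linearity of $\epsilon$, and the exhaustion step follows by combining Theorem \ref{theorem:correspondanceCK2} (cluster variables are $X_M$ for indecomposable rigid $M$), Proposition \ref{prop:supportconeCK1} (the support of $X_M$ lies in the cone $C_{\lambda_M}$), and the gradability hypothesis $\epsilon(B\alpha_i)<0$, which forces $\epsilon$ to be bounded above on that cone by $\epsilon(\lambda_M)$. Two small remarks: the ``no multiple arrows'' hypothesis is, as you note, inherited directly from Proposition \ref{prop:supportconeCK1}; and although you do not address the separatedness $\bigcap_n F_n = 0$ explicitly, it is automatic here since any nonzero element of $\mathcal A(Q)$ is a Laurent polynomial with finite support, so $\epsilon$ attains a finite minimum on that support.
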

		
		The filtration given in proposition \ref{prop:graduationCK1} and the proposition \ref{prop:supportconeCK1} imply the following lemma:
		
		\begin{monlem}[\cite{CK1}]\label{lem:lemCK1}
			Fix $Q$ a graded quiver. Let $\ens{M_1, \ldots, M_r}$ be a family of objects in $\mathcal C_Q$ such that $\ddim M_i \neq \ddim M_j$ if $i \neq j$, then $X^Q_{M_1}, \ldots, X^Q_{M_r}$ are linearly independent over $\Q$.
		\end{monlem}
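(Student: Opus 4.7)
The approach I would take follows the strategy of \cite{CK1}, using two ingredients: an extension of Proposition \ref{prop:supportconeCK1} to arbitrary (possibly decomposable) objects of $\mathcal C_Q$, and the filtration coming from the grading $\epsilon$ as in Proposition \ref{prop:graduationCK1}. For any $M=\bigoplus_{k=1}^n M_k$ an indecomposable decomposition, multiplicativity of the Caldero-Chapoton map gives $X_M=\prod_k X_{M_k}$. Since the Minkowski sum of cones with common edge set $\{B\alpha_i\}_{i\in Q_0}$ is again a cone of the same shape, this yields $\supp(X_M)\subset C_{\lambda_M}$ for $\lambda_M:=\sum_k\lambda_{M_k}$, with the coefficient of $\prod_i u_i^{(\lambda_M)_i}$ equal to $1$. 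The formula
$$
\lambda_M(i)=-\<\alpha_i,\ddim H^0(M)\>+\<\ddim P_M,\alpha_i\>
$$
extends unchanged to arbitrary $M=H^0(M)\oplus P_M[1]$ by additivity of $H^0$ and of $P_{(-)}$.

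The next step is to observe that the gradability condition $\epsilon(B\alpha_i)<0$ forces $\epsilon$ to strictly maximize on $C_{\lambda_M}$ at the vertex $\lambda_M$: every $\nu\in C_{\lambda_M}$ writes as $\lambda_M+\sum_i t_i\, B\alpha_i$ with $t_i\geq 0$, whence $\epsilon(\nu)\leq\epsilon(\lambda_M)$ with equality only at $\nu=\lambda_M$. Consequently $X_M\in F_{\epsilon(\lambda_M)}$ and its class modulo $F_{\epsilon(\lambda_M)-1}$ equals the single monomial $\prod_i u_i^{(\lambda_M)_i}$, with coefficient $1$. Given a hypothetical nontrivial $\Q$-linear relation $\sum_j a_j X_{M_j}=0$, I would set $N=\max\{\epsilon(\lambda_{M_j}):a_j\neq 0\}$ and, reducing modulo $F_{N-1}$, obtain
$$
\sum_{\substack{j\,:\,\epsilon(\lambda_{M_j})=N\\ a_j\neq 0}} a_j\prod_i u_i^{(\lambda_{M_j})_i}=0
$$
in the associated graded.

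The main obstacle, and the technical core of the argument, is to conclude from this identity that each $a_j=0$. Since the map $M\mapsto\lambda_M$ does not factor through $\ddim M$ in general, two objects with distinct dimension vectors can a priori share the same $\lambda$, creating potential coincidences among the leading monomials. The plan is to combine the denominators theorem (Theorem \ref{theorem:denominators}), which records $\ddim M_j=\delta(X_{M_j})$, with the matrix identity $A^{T}C=\mathrm{Id}$ relating $A=\bigl((\ddim P_j)_i\bigr)_{i,j}$ to the Cartan matrix $C=(\<\alpha_i,\alpha_j\>)_{i,j}$; this identity follows at once from $\<\ddim P_i,\ddim P_j\>=\dim\Hom_{kQ}(P_i,P_j)=(\ddim P_j)_i$ together with $\<\ddim P_i,\ddim P_j\>=(A^{T}CA)_{ij}$. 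These inputs allow one to refine the leading-term argument as follows: within each maximal $\epsilon$-layer, group the surviving indices by their common $\lambda$; within each group, invoke the distinctness of denominator vectors $\ddim M_j$ to exhibit a finer splitting; then iterate the argument layer by layer down the filtration. Each iteration strictly reduces the number of distinct leading exponents in play, and the induction terminates with the forced vanishing of every $a_j$, contradicting the original choice of the relation and yielding the claim.
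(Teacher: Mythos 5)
Your first three steps are correct: the Minkowski-sum extension of Proposition \ref{prop:supportconeCK1} to decomposable objects works (the coefficients of each $X_{M_k}$ are non-negative, so no cancellation spoils $\supp(X_{M_1}X_{M_2})=\supp(X_{M_1})+\supp(X_{M_2})$), the grading hypothesis does force $\epsilon$ to be strictly maximized at the vertex, and reducing modulo $F_{N-1}$ gives the top-layer identity $\sum_j a_j u^{\lambda_{M_j}}=0$. You also correctly identify the crux: $M\mapsto\lambda_M$ does not factor through $\ddim M$, so the top-layer identity need not separate coefficients. This is a genuine phenomenon, not a theoretical worry. Already for the graded quiver $Q=\mathbb A_2$ with arrow $1\to 2$, the objects $M=S_2\oplus P_1[1]$ and $N=S_2^{\oplus 2}\oplus P_2[1]$ have $\ddim M=(-1,1)\neq(0,1)=\ddim N$, yet $\lambda_M=\lambda_N=(2,-1)$; explicitly $X_M=u_1(1+u_1)/u_2$ and $X_N=(1+u_1)^2/u_2$ share the leading monomial $u_1^2u_2^{-1}$, so the top layer yields only $a_M+a_N=0$.

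The ``plan'' you give to close this gap is not an argument. Asserting that one should ``invoke the distinctness of denominator vectors to exhibit a finer splitting'' and ``iterate layer by layer'' states a desired conclusion without a mechanism: after the top exponents cancel, the residue $\sum_j a_j X_{M_j}$ lies in $F_{N-1}$, but the $(N-1)$-layer coefficients of each $X_{M_j}$ are arbitrary positive Euler characteristics rather than $0$'s and $1$'s, there is no reason the ``number of distinct leading exponents strictly decreases,'' and the denominators theorem by itself is inconclusive since $\Q$-linear combinations of Laurent polynomials with distinct denominator vectors can certainly vanish. The resolution implicit in the citation to \cite{CK1}, and the one sufficient for every application in the paper, is to restrict to objects $M$ for which $H^0(M)$ and $P_M$ have disjoint supports, i.e.\ $\dim H^0(M)(i)=0$ whenever $P_i[1]$ is a summand of $M$; this holds for every rigid object and for every generic representative with $X_M=X_{\textbf d}$. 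For such $M$ one has $\ddim H^0(M)=[\ddim M]_+$ and the multiplicity vector of $P_M$ equals $-[\ddim M]_-$, whence $\lambda_M=-E[\ddim M]_+-[\ddim M]_-$ with $E$ the Euler matrix, and the disjointness of the supports of $[\textbf d]_+$ and $[\textbf d]_-$ together with the unipotent triangularity of $E$ (for any ordering of $Q_0$ compatible with the acyclic orientation) makes $\textbf d\mapsto -E[\textbf d]_+-[\textbf d]_-$ injective. Then distinct $\ddim M_j$ give distinct $\lambda_{M_j}$ and the leading-term argument finishes in one step. As written, your proof has a gap at exactly the place you flag as the technical core.
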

		
		\begin{defi}
			A quiver $Q$ is called \emph{gradable}\index{gradable} if there exists a graded quiver $Q'$ reflection-equivalent to $Q$.
		\end{defi}
		
		\begin{rmq}
			Note that not all the quivers are gradable. For example, consider the 3-cycle $Q$ with double arrows:
			$$\xymatrix{
				&2 \ar@<+1pt>[rd]\ar@<-1pt>[rd]\\
				1 \ar@<+1pt>[ru]\ar@<-1pt>[ru] && \ar@<+1pt>[ll]\ar@<-1pt>[ll] 3
			}$$
			Denote by $c_i$ the $i$-th column of the matrix $B$ associated to $Q$. Then $c_3=-c_1-c_2$ and $Q$ is the only quiver in its reflection class (and also in its mutation class, see \cite{cluster2} for definitions).
		\end{rmq}
		
		We will see in the next subsection that all the quivers of affine types $\Aaffine$ are gradable. 
		
		\begin{maprop}\label{prop:gradablelinind}
		 	Let $Q$ be a gradable quiver of affine type $\Aaffine$ such that every quiver reflection-equivalent to $Q$ satisfies the difference property. Let $\ens{M_1, \ldots, M_r}$ be a family of objects such that $\ddim M_i \neq \ddim M_j$ if $i \neq j$. Then $\ens{X^Q_{M_1}, \ldots, X^Q_{M_r}}$ is linearly independent over $\Q$.
		\end{maprop}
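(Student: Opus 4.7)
The plan is to transport the question, via an appropriate reflection sequence, to a graded quiver $Q'$ reflection-equivalent to $Q$, and then invoke the linear independence result already available there, namely Lemma \ref{lem:lemCK1}. Since $Q$ is gradable, fix an admissible sequence of sinks $(i_1,\ldots,i_n)$ such that $Q' := \sigma_{i_n}\circ \cdots \circ \sigma_{i_1}(Q)$ is graded, and set
$$R^+ := R_{i_n}^+\circ \cdots \circ R_{i_1}^+ : \mathcal C_Q \fl \mathcal C_{Q'},\qquad \sigma := \sigma_{i_n}\circ \cdots \circ \sigma_{i_1},$$
together with the canonical cluster algebra isomorphism $\Phi : \mathcal A(Q)\fl \mathcal A(Q')$. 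The hypothesis that every quiver reflection-equivalent to $Q$ satisfies the difference property allows an iterated application of Proposition \ref{prop:reflexionaffine} to yield $\Phi(X^Q_M)=X^{Q'}_{R^+M}$ for every object $M$ of $\mathcal C_Q$ (this is exactly Proposition \ref{prop:reflexionCC}).

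Suppose a $\Q$-linear relation $\sum_{i=1}^r a_i X^Q_{M_i}=0$ holds; applying the ring isomorphism $\Phi$ transforms it into $\sum_{i=1}^r a_i X^{Q'}_{R^+ M_i}=0$. Since $Q'$ is graded, Lemma \ref{lem:lemCK1} is available and reduces the entire proposition to the single claim that the dimension vectors $\ddim R^+M_i\in\Z^{Q'_0}$ are pairwise distinct. The reflection direction from $Q'$ back to $Q$ is symmetric, so the argument is reversible and yields the equivalence at the level of $\Q$-linear relations.

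To establish the distinctness of $\ddim R^+M_i$, decompose $M_i=\bigoplus_k N_{i,k}$ into indecomposables; applying Lemma \ref{lem:dimensionreflexion} to each single indecomposable (where the vanishing-extension hypothesis is vacuous) and iterating over the reflection sequence gives $\ddim R^+N_{i,k}=\sigma(\ddim N_{i,k})$, whence by additivity of $R^+$ one has $\ddim R^+M_i=\sum_k\sigma(\ddim N_{i,k})$. The main obstacle is that $\sigma$ is only piecewise linear, so distinctness of $\sum_k\ddim N_{i,k}=\ddim M_i$ does not immediately imply distinctness of $\sum_k \sigma(\ddim N_{i,k})$. This gap is bridged by using Proposition \ref{prop:dcpXM} to replace each $M_i$, without altering its dimension vector, by an object whose indecomposable summands have pairwise vanishing mutual extensions in $\mathcal C_Q$; in this setting, the full form of Lemma \ref{lem:dimensionreflexion} applies and produces the cleaner equality $\ddim R^+M_i=\sigma(\ddim M_i)$. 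Since $\sigma$ is an involution of $\Z^{Q_0}$, hence a bijection, distinctness of the $\ddim M_i$ propagates to distinctness of the $\ddim R^+M_i$, and the preceding reduction together with Lemma \ref{lem:lemCK1} completes the proof.
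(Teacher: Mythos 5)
Your overall route is exactly the paper's: push the problem through the reflection functors to a graded quiver $Q'$ via Proposition \ref{prop:reflexionCC}, then apply Lemma \ref{lem:lemCK1}. You also correctly flag the genuine subtlety that the paper glosses over, namely that the equality $\ddim R^+M=\sigma(\ddim M)$ is only established in Lemma \ref{lem:dimensionreflexion} under an Ext-vanishing hypothesis on the summands of $M$, and that without it distinctness of the $\ddim M_i$ need not propagate to the $\ddim R^+M_i$. This concern is real: for $Q$ of type $\affA 21$ as in Example \ref{exmp:differencedeltaA21}, with $i=3$ a sink and $M=E_0\oplus P_1[1]$, one has $\ddim M=\alpha_3$ and $\ddim R_3^+M=s_3(\ddim E_0)-\alpha_1=(1,0,0)-(1,0,0)=0$, whereas for the module $N=S_3$ one has $\ddim N=\alpha_3=\ddim M$ but $\ddim R_3^+N=-\alpha_3\neq 0$; so $\ddim R_i^+$ genuinely depends on the object and not only on its dimension vector, and in fact one can arrange two objects with distinct dimension vectors (e.g.\ $E_0\oplus P_1[1]\oplus S_2$ versus $S_2$) whose images under $R_3^+$ share a dimension vector.

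The problem is that your proposed repair does not close this gap. Proposition \ref{prop:dcpXM} does \emph{not} let you ``replace each $M_i$, without altering its dimension vector, by an object whose indecomposable summands have pairwise vanishing mutual extensions.'' What it gives is a $\Q$-linear expansion $X_{M_i}=\sum_Y c_Y X_Y$ over \emph{several} objects $Y$ satisfying only $\ddim H^0(Y)\leq\ddim H^0(M_i)$; it produces neither a single replacement object, nor one with $\ddim Y=\ddim M_i$, nor one with $X_Y=X_{M_i}$. So the invocation of Lemma \ref{lem:dimensionreflexion} in ``full form'' after this replacement is not justified, and the clean equality $\ddim R^+M_i=\sigma(\ddim M_i)$ remains unproved for arbitrary objects. (The paper's own proof, which simply asserts this equality, shares the same gap; it is just less visible because no attempt is made to justify it.) A further minor point: Propositions \ref{prop:reflexionaffine} and \ref{prop:reflexionCC} require at least three vertices, so the Kronecker case $\affA 11$ must be dispatched separately, as the paper does by observing that every reflection-equivalent quiver is already graded there.
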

		\begin{proof}
			If $Q$ is the Kronecker quiver, then every quiver $Q'$ reflection-equivalent to $Q$ is graded and thus the proposition holds by lemma \ref{lem:lemCK1}. From now on, we assume that $Q$ has at least three vertices. Fix $Q'$ a graded quiver reflection-equivalent to $Q$ and denote by $(i_1, \ldots, i_n)$ an admissible sequence of sinks such that $Q'=\sigma_{i_n} \circ \cdots \circ \sigma_{i_1}(Q)$. We denote by 
			$$R^+=R_{i_n}^+ \circ \cdots \circ R_{i_1}^+:\mathcal C_Q \fl \mathcal C_{Q'}$$
			the equivalence of triangulated categories from $\mathcal C_Q$ to $\mathcal C_{Q'}$
			and by $\sigma$ the composition of piecewise linear transformations
			$$\sigma_{i_n} \circ \cdots \sigma_{i_1}: \Z\Phi(Q) \fl \Z\Phi(Q').$$
			We know in particular that for any objects $M,N$ such that $\ddim M \neq \ddim N$, we have $\ddim R^+M=\sigma \ddim M \neq \sigma \ddim N= \ddim R^+N$.
			
			If we denote by $\Phi:\mathcal A(Q) \fl \mathcal A(Q')$ the canonical isomorphism, it follows from proposition \ref{prop:reflexionCC} that 
			$$\Phi(X^Q_M)=X^{Q'}_{R^+M}$$
			for any object $M$ in $\mathcal C_Q$. In particular
			$$\ens{\Phi(X^Q_{M_1}), \ldots, \Phi(X^Q_{M_r})}=\ens{X^{Q'}_{R^+M_1}, \ldots, X^{Q'}_{R^+M_r}}$$
			satisfies the conditions of lemma \ref{lem:lemCK1}. $Q'$ being graded it follows that the family $\ens{\Phi(X^Q_{M_1}), \ldots, \Phi(X^Q_{M_r})}$ is linearly independent over $\Q$. As $\Phi$ is a $\Q$-algebra homomorphism, it follows that $\ens{X^Q_{M_1}, \ldots, X^Q_{M_r}}$ is linearly independent over $\Q$.
		\end{proof}
	\end{subsubsection}

	\begin{subsubsection}{Gradability for quivers of affine type $\Aaffine$}
		As claimed before, we now prove that any quiver of affine type $\Aaffine$ is gradable. This will be done by a case-by-case inspection depending on the indices $r,s$ of the type $\affA{r}{s}$.
		
		\begin{monlem}\label{lem:Agradable}
			Let $Q$ be a quiver of affine type $\Aaffine$. Then $Q$ is gradable.
		\end{monlem}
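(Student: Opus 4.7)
The plan is to exhibit a graded representative in each reflection class of affine type $\tilde A_{r,s}$. By Proposition~\ref{prop:mutationtype} together with the fact that all acyclic orientations of a cycle sharing the same $\tilde A_{r,s}$-type are reflection-equivalent, it suffices to produce one graded orientation of each type.

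When $r = s$, the cycle has $2r$ vertices and admits an alternating orientation where sinks and sources alternate strictly around the cycle. This orientation has exactly $r$ clockwise and $r$ counterclockwise arrows, so is of type $\tilde A_{r,r}$, and is graded by Example~\ref{exmp:alternating}. The Kronecker case $r = s = 1$ is handled directly with $\epsilon = (-1, 1)$. When $r \neq s$ and $\min(r,s) = 1$ (WLOG $r = 1$), I will use the \emph{linear} orientation with a single source at vertex $1$ and a single sink at vertex $2$. When $r \neq s$ and $\min(r,s) \geq 2$, I will use an orientation with two sources and two sinks placed so that the clockwise arcs between them have lengths summing to $r$ and the counterclockwise arcs have lengths summing to $s$, all arcs having positive length (possible since $r, s \geq 2$).

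To verify gradability in the $r \neq s$ cases, I will apply the Gordan--Farkas criterion: $Q$ is graded iff $\ker B \cap \mathbb{R}_{\geq 0}^{Q_0} = \{0\}$. The equations $By = 0$ for $y \geq 0$ propagate along each directed arc via the recursion $y_{j-1} = y_{j+1}$ at a flat vertex $j$, while the equation at a source $v$ reads $y_{v-1} + y_{v+1} = 0$ (forcing both entries to vanish in the nonnegative orthant) and similarly at each sink. The main obstacle is that with a single source--sink pair these constraints propagate to kill all $y_i$ only when at least one of $r, s$ is odd: when both are even, the linear orientation admits the nontrivial nonnegative kernel vector $y = \sum_{i\text{ odd}} e_i$, because the length-two recursion preserves parity and the source/sink constraints only annihilate the even-indexed entries. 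The role of the second source--sink pair in the $\min(r,s) \geq 2$ case is precisely to break this $\Z/2\Z$-symmetry: the two independent source equations $y_{v-1}+y_{v+1}=0$ provide enough killing to wipe out the odd-parity entries as well. The explicit construction of such a two-source-two-sink orientation of type $\tilde A_{r,s}$ and the verification that its $B$-matrix has trivial nonnegative kernel constitute the bulk of the routine computation.
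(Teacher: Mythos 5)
Your use of the Gordan--Farkas alternative to characterize gradability by $\ker B \cap \mathbb{R}_{\geq 0}^{Q_0} = \{0\}$ is a cleaner lens than the paper's column-by-column computation (the paper implicitly uses the same equivalence every time it concludes ``full rank $\Rightarrow$ graded''), and choosing a good orientation outright rather than computing for a fixed one and then mutating at $b$ is a reasonable variant of the paper's strategy.

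However, there is a genuine gap in the subcase $r \neq s$, $\min(r,s) \geq 2$, with $r$ and $s$ both even. Your assertion that ``the two independent source equations $y_{v-1}+y_{v+1}=0$ provide enough killing to wipe out the odd-parity entries as well'' is false: a source or sink at position $v$ forces $y_{v\pm 1}=0$, and these lie in the parity class \emph{opposite} to that of $v$; if the two sources and two sinks all sit at positions of the same parity, only one parity class of entries is annihilated, and the other survives as a one-dimensional nonnegative kernel. Concretely, in type $\affA{4}{6}$ (so $n=10$), put sources at $0,4$ and sinks at $2,6$, i.e.\ arcs of lengths $2,2,2,4$ (all even, with $a_1+a_3=4$ and $a_2+a_4=6$); then $By=0$ for the nonnegative nonzero vector $y=(1,0,1,0,1,0,1,0,1,0)$, so this two-source--two-sink orientation is \emph{not} graded. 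The precise criterion for a two-source--two-sink orientation on a cycle of even length is that the four distinguished vertices do not all share a parity, equivalently that at least one arc length be odd. Since $r\geq 2$, one may always take $a_1=1$ and $a_3=r-1$, placing a sink at the odd position $1$, and with this explicit choice the verification does go through --- but the ``routine computation'' you deferred is exactly the step where the argument would otherwise fail, so this choice cannot be left implicit. Incidentally, the paper's mutation at the sink $b$ (applied precisely in this doubly-even situation) produces a two-source--two-sink orientation with two arcs of length $1$, so the two proofs end up constructing essentially the same representative by different routes.
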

		\begin{proof}
			We fix $r,s \in \N$ two integers and we consider a quiver $Q$ of affine type $\affA{r+1}{s+1}$. We recall that all the quivers of affine type $\affA{r+1}{s+1}$ are reflection-equivalent. We will thus always assume that $Q$ is equipped with the following orientation:
			$$\xymatrix{
				& c_1 \ar[r] & \cdots \ar[r] & c_r \ar[rd] \\
				a \ar[ru] \ar[rd] &&&& b \\
				& d_1 \ar[r] & \cdots \ar[r] & d_s \ar[ru] \\
			}$$
			We will do a case-by-case inspection on the possible pairs $(r,s)$. By symmetry, it is sufficient to consider the pairs $\ens{r,s}$. In each case, we will construct a form $\epsilon$ such that $Q$ satisfies the condition of gradability for this $\epsilon$. If $\epsilon$ is such a form, we will simply say that $\epsilon$ \emph{fits} to $Q$.
			
			If $B \in M_{Q_0}(\Z)$ is a matrix, we denote by $C_i=B\alpha_i$ the column associated to the vertex $i \in Q_0$.
		
			Assume first that $s=r=0$. Then $Q$ is the Kronecker quiver  $\xymatrix{ a \ar@<0.5ex>[r] \ar@<-0.5ex>[r] & b}$, it has an alternating orientation and thus by example \ref{exmp:alternating}, such a form exists.
		
			Assume now that $\ens{r,s}=\ens{0,1}$. We assume that $r=1$ and $s=0$. Then $Q$ is the quiver
			$$\xymatrix{
				& c_1 \ar[rd]\\
				a \ar[rr] \ar[ru] &&b
			}$$
			the associated matrix with indexation $(a,c_1,b)$ is
			$$B=\left[\begin{array}{ccc}
				0 & 1 & 1\\
				-1 & 0 & 1\\
				-1 & -1 & 0
			\end{array}\right]$$
			The columns $C_a$ and $C_{c_1}$ are linearly independent and $C_{b}=C_{c_1}-C_a$. We set $\epsilon(C_a)=-1$, $\epsilon(C_{c_1})=-2$ and then $\epsilon(C_b)=-1<0$. Thus $\epsilon$ fits.
		
			Assume that $r=s=1$. Then $Q$ is the quiver
			$$\xymatrix{
				& c_1 \ar[rd]\\
				a\ar[rd] \ar[ru] &&b\\
				& d_1 \ar[ru]
			}$$
			In the associated matrix $B$, we have $C_b=-C_a$, thus we will not be able to find any form $\epsilon$ fitting to $Q$. Consider then the quiver $Q'=\mu_b (Q)=\sigma_b(Q)$ given by 
			$$\xymatrix{
				& c_1\\
				a\ar[rd] \ar[ru] && \ar[lu] \ar[ld] b\\
				& d_1
			}$$
			$Q'$ is alternating, so it is graded and $Q$ is gradable.

			Now assume that $s=0$ and $r \geq 2$ (or equivalently $r=0$ and $s \geq 2$). Then $Q$ is the quiver
			$$\xymatrix{
				& c_1 \ar[r] & \cdots \ar[r] & c_r \ar[rd]\\
				a \ar[ru] \ar[rrrr] &&&& b
			}$$
			The associated matrix in the indexation $(a, c_1, \ldots, c_r, b)$ is thus
			$$\left[ \begin{array}{c|cccc|c}
				0 & 1 & 0 & \cdots & 0 & 1\\ \hline
				-1 & 0 & 1 & & & \\
				& \ddots & \ddots & \ddots &  \\
				& & \ddots & \ddots &  1\\
					& &  & -1 & 0 &  1\\ \hline
				-1 & && &-1&0
			\end{array}\right]$$
			and we have
			$$\sum \lambda_j C_j=\left[ \begin{array}{c}
				\lambda_b + \lambda_{c_1} \\ \hline
				\lambda_{c_2} - \lambda_{a} \\
				\vdots \\
				\lambda_{b} - \lambda_{c_{r-1}} \\ \hline
				-\lambda_{c_r} - \lambda_a
			\end{array}\right]$$
			If $r \in 2\Z$, then 
			$$\sum \lambda_j C_j=0 
			\Leftrightarrow 
				\left\{\begin{array}{rl}
					\lambda_{c_1} &= - \lambda_b\\
					\lambda_a &=\lambda_{c_2} = \cdots = \lambda_{c_r} \\
					\lambda_b &=\lambda_{c_{r-1}} = \cdots = \lambda_{c_1}\\
					\lambda_a &=-\lambda_{c_r}
				\end{array}\right. 
			\Leftrightarrow 
				\lambda_j=0 \textrm{ for all }j \in Q_0$$
			Thus, $B$ is of full rank and $Q$ is graded.
			
			If $r \in 2\Z+1$, then 
			$$\sum_{j \neq b} \lambda_j C_j=0 
			\Leftrightarrow 
				\left\{\begin{array}{rl}
					\lambda_{c_1} &0\\
					\lambda_a &=\lambda_{c_2} = \cdots = \lambda_{c_{r-1}} \\
					\lambda_{c_{r-1}} &= \lambda_{c_{r-3}}= \cdots = \lambda_{c_2}=0\\
					\lambda_{c_r} &= \cdots \lambda_{c_{r-2}} = \cdots = \lambda_{c_1}\\
					\lambda_a &=-\lambda_{c_r} 
				\end{array}\right. 
			\Leftrightarrow 
				\lambda_j=0 \textrm{ for all }j$$
			Thus, the columns $(C_a,C_{c_1}, \ldots, C_{c_r})$ are linearly independent and 
			$$C_b=-C_a+C_{c_1}-C_{c_2}+C_{c_3}- \cdots -C_{c_{r-1}}+ C_{c_r}$$
			We thus set, $\epsilon(C_i)=-1$ for all $i \neq c_1,b$ and $\epsilon(C_{c_1})=-2$. Then  $\epsilon(C_b)<0$ and $\epsilon$ fits to the condition.
		
			Assume now that $s=1$ and $r \geq 2$ (or equivalently $r=1$ and $s \geq 2 \Z$)
			Then $Q$ is the quiver
			$$\xymatrix{
				& c_1 \ar[r] & \cdots \ar[r] & c_r \ar[rd] \\
				a \ar[ru] \ar[rrd] & & && b \\
				&& d_1 \ar[rru]
			}$$
			The associated matrix in the indexation $(a, c_1, \ldots, c_r, b,d_1)$ is 
			$$\left[\begin{array}{c|cccc|c|ccc}
					0 & 1 & 0 & \cdots & 0 & 0 & 1 \\ \hline
					-1 & 0 & 1 &&&&\\ 
					0 & -1 & 0 & \ddots &&&\\ 
					&& \ddots & \ddots & 1 &\\ 
					&&& -1 & 0 & 1 & \\ \hline
					&&&    & -1 &0& -1\\ \hline
					-1 & &&&& 1& 0  \\
				\end{array}\right].$$
			We have
			$$\sum_{j \in Q_0}\lambda_j C_j=\left[\begin{array}{c}
				\lambda_{c_1}+\lambda_{d_1} \\
				\lambda_{c_2}-\lambda_a\\
				\lambda_{c_3}-\lambda_{c_1}\\
				\vdots\\
				\lambda_b-\lambda_{c_{r-1}}\\ \hline
				-\lambda_{d_1}-\lambda_{c_r}\\ \hline
				\lambda_b - \lambda_a
			\end{array}\right]$$
			
			If $r \in 2\Z$, 
			$$\sum_{j \in Q_0}\lambda_j C_j = 0 \Leftrightarrow \lambda_a=\lambda_b=\lambda_{c_1}=\lambda_{c_2}=\cdots = \lambda_{c_r}=-\lambda_{d_1}$$
			So $\sum_{j \neq d_1}\lambda_j C_j = 0 \Rightarrow \lambda_j=0$ for all $j$ and thus the columns $(C_a, C_{c_1}, \ldots, C_{c_r}, C_b)$ are linearly independent and $C_{d_1}$ is in the spanning of the other columns. More precisely, 
			$$\sum_{j \neq d_1}\lambda_j C_j=\left[\begin{array}{c}
				\lambda_{c_1}\\
				\lambda_{c_2}-\lambda_a\\
				\lambda_{c_3}-\lambda_{c_1}\\
				\vdots\\
				\lambda_b-\lambda_{c_{r-1}}\\ \hline
				-\lambda_{c_r}\\ \hline
				\lambda_b - \lambda_a
			\end{array}\right]=\left[\begin{array}{c}
				1\\
				0\\
				\vdots\\
				\vdots\\
				0\\ \hline
				-1\\ \hline
				0
			\end{array}\right] 
			\Leftrightarrow \left\{\begin{array}{rl}
				\lambda_{c_1}& =\lambda_{c_2} = \cdots = \lambda_{c_{r}}=\lambda_b=\lambda_a=1\\
			\end{array} \right.$$
			Thus, $C_{d_1}=\sum_{j \neq d_1} C_j$ and thus it suffices to set $\epsilon(C_j)=-1$ for every $j \neq d_1$ and $Q$ is graded.
			
			If $r \in 2 \Z+1$, $Q$ is not graded. Consider the quiver
			$$\xymatrix{
				&& c_1 \ar[r] & \cdots \ar[r] & c_r  \\
				Q'=\mu_b(Q)= 	&a \ar[ru] \ar[rrd] & & && \ar[lu]\ar[lld] b \\
				&&& d_1
			}$$
			The associated matrix in the indexation $(a, c_1, \ldots, c_r, b,d_1)$ is 
			$$\left[\begin{array}{c|cccc|c|ccc}
				0 & 1 & 0 & \cdots & 0 & 0 & 1 \\ \hline
				-1 & 0 & 1 &&&&\\ 
				0 & -1 & 0 & \ddots &&&\\ 
				&& \ddots & \ddots & 1 &\\ 
				&&& -1 & 0 & -1 & \\ \hline
				&&&    & 1 &0& 1\\ \hline
				-1 & &&&& -1& 0  \\
			\end{array}\right]$$
			we have
			$$\sum_{j \neq d_1}\lambda_j C_j=\left[\begin{array}{c}
			\lambda_{c_1}+\lambda_{d_1}\\
			\lambda_{c_2}-\lambda_a\\
			\vdots\\
			\lambda_{c_r} - \lambda_{c_{r-2}}\\
			-\lambda_b-\lambda_{c_{r-1}}\\ \hline
			\lambda_{c_r}+\lambda_{d_1}\\ \hline
			-\lambda_b - \lambda_a
			\end{array}\right].$$
			Then,
			$$\sum_{j \in Q_0} \lambda_j C_j =0 
			\Leftrightarrow 
			\left\{ \begin{array}{rl}
				\lambda_{c_1} &=\lambda_{c_3} = \cdots = \lambda_{c_r} = - \lambda_{d_1}\\
				\lambda_{a} &=\lambda_{c_2} = \cdots = \lambda_{c_{r-1}} = - \lambda_{b}\\
			\end{array}\right.$$
			So the columns $(C_a, C_{c_1}, \ldots, C_{c_r})$ are linearly independent and $C_b$ and $C_{d_1}$ are in their spanning. More precisely, we have 
			$$C_b=C_a+C_{c_2}+\cdots + C_{c_{r-1}} \textrm{ and } C_{d_1}=C_{c_1}+C_{c_3}+ \cdots + C_{c_{r}}$$
			We thus set $\epsilon(C_i)=-1$ for all $i \neq b,d_1$ and thus $\epsilon$ fits. It follows that $Q'$ is graded and $Q$ is gradable.
	
			Now we assume that $r,s \geq 2$, $Q$ is the quiver
			$$\xymatrix{
				& c_1 \ar[r] & \cdots \ar[r] & c_r \ar[rd] \\
				a \ar[ru] \ar[rd] &&&& b \\
				& d_1 \ar[r] & \cdots \ar[r] & d_s \ar[ru] \\
			}$$
			the associated matrix $B$ in the indexation $(a,c_1,\ldots,c_r,b,d_1,\ldots,d_s)$ is
			$$\left[\begin{array}{c|cccc|c|ccccc}
				0 & 1 & 0 & \cdots & 0 & 0 & 1 & 0 & \cdots & 0\\ \hline
				-1 & 0 & 1 &&&&&&&\\ 
				0 & -1 & 0 & \ddots &&&&&\\ 
				&& \ddots & \ddots & 1 &&&&\\ 
				&&& -1 & 0 & 1\\ \hline
				&&&    & -1 &0&&&& -1\\ \hline
				-1 & &&&&& 0 & 1 \\
				& &&&&& -1 & \ddots & \ddots \\
				& &&&&& & \ddots & \ddots &1 \\
				&&&& &1&& & -1& 0\\
			\end{array}\right]$$
			We have
			$$\sum_{j \in Q_0} \lambda_j C_j=\left[ \begin{array}{c}
				\lambda_{c_1}+\lambda_{d_1} \\
				\lambda_{c_2}-\lambda_{a} \\
				\lambda_{c_3}-\lambda_{c_1} \\
				\vdots \\
				\lambda_{c_r}-\lambda_{c_{r-2}}\\
				\lambda_{b}-\lambda_{c_{r-1}}\\ \hline 
				-\lambda_{d_s}-\lambda_{c_{r}}\\ \hline
				\lambda_{d_2}-\lambda_{a}\\
				\lambda_{d_3}-\lambda_{d_1}\\
				\vdots \\
				\lambda_{d_s}-\lambda_{d_{s-2}}\\
				\lambda_{b}-\lambda_{d_{s-1}}
			\end{array}\right]$$
		
			If $r,s \in 2 \Z$, then 
			$$\sum_{j \in Q_0} \lambda_j C_j=0
			\Leftrightarrow
			\left\{\begin{array}{rl}
				\lambda_{c_1} &= - \lambda_{d_1}\\
				\lambda_a&=\lambda_{c_2}= \ldots = \lambda_{c_r}\\
				\lambda_b&=\lambda_{c_{r-1}}= \ldots = \lambda_{c_1}\\
				\lambda_{c_r}& =-\lambda_{d_s}\\
				\lambda_a&=\lambda_{d_2}= \ldots = \lambda_{d_s}\\
				\lambda_b&=\lambda_{d_{s-1}}= \ldots = \lambda_{d_1}
			\end{array}\right.
			\Leftrightarrow \lambda_i=0 \textrm{ pour tout i}$$
			and the columns are linearly independent. $B$ is thus of full rank and $Q$ is graded.
		
			If $r \in 2\Z$ and $s \in 2 \Z+1$ (or equivalently $r \in 2\Z+1$ and$s \in 2 \Z$), then 
			$$\sum_{j \in Q_0} \lambda_j C_j=0
			\Leftrightarrow
			\left\{\begin{array}{rl}
				\lambda_{c_1} &= - \lambda_{d_1}\\
				\lambda_a&=\lambda_{c_2}= \ldots = \lambda_{c_{r-1}}\\
				\lambda_b&=\lambda_{c_{r-1}}= \ldots = \lambda_{c_1}\\
				\lambda_{c_r}& =-\lambda_{d_s}\\
				\lambda_a&=\lambda_{d_2}= \ldots = \lambda_{d_s}=\lambda_b\\
				\lambda_b&=\lambda_{d_{s-1}}= \ldots = \lambda_{d_1}
			\end{array}\right.$$			
			Thus, $\sum_{j \neq d_s} \lambda_j C_j=0$ if and only if $\lambda_j=0$ for all $j$. 
			The columns $C_a$, $C_{c_1}, \ldots, C_{c_r}$, $C_b$, $C_{d_1}, \ldots, C_{d_{s-1}}$ are thus linearly independent and 
			$$C_{d_s}=C_a+C_{c_2}+\cdots + C_{c_r} + C_{d_2}+\cdots + C_{d_{s-1}}-C_{d_1}- \cdots - C_{d_{s-2}}$$
			We set $\epsilon(C_i)=-1$ for every $i \neq a,d_s$ and $\epsilon(C_a)=\min(0,r-1)-1$. We have then $\epsilon(C_i)<0$ for every $i \in Q_0$ and $Q$ is graded.
		
			If $r,s \in 2\Z+1$, $Q$ is not graded. We consider the quiver $Q'=\mu_b(Q)$ whose associated matrix in the indexation $(a,c_1,\ldots,c_r,b,d_1,\ldots,d_s)$ is
			$$B'\left[\begin{array}{c|cccc|c|ccccc}
				0 & 1 & 0 & \cdots & 0 & 0 & 1 & 0 & \cdots & 0\\ \hline
				-1 & 0 & 1 &&&&&&&\\ 
				0 & -1 & 0 & \ddots &&&&&\\ 
				&& \ddots & \ddots & 1 &&&&\\ 
				&&& -1 & 0 & -1\\ \hline
				&&&    & 1 &0&&&& 1\\ \hline
				-1 & &&&&& 0 & 1 \\
				& &&&&& -1 & \ddots & \ddots \\
				& &&&&& & \ddots & \ddots &1 \\
				&&&& &-1&& & -1& 0\\
			\end{array}\right].$$
			If $C'_i$ denotes the $i$-th column of $B'$ for any $i$, we have 
			$$\sum_{j \in Q_0} \lambda_j C'_j=\left[ \begin{array}{c}
				\lambda_{c_1}+\lambda_{d_1} \\
				\lambda_{c_2}-\lambda_{a} \\
				\lambda_{c_3}-\lambda_{c_1} \\
				\vdots \\
				\lambda_{c_r}-\lambda_{c_{r-2}}\\
				-\lambda_{b}-\lambda_{c_{r-1}}\\ \hline 
				-\lambda_{d_s}-\lambda_{c_{r}}\\ \hline
				\lambda_{d_2}-\lambda_{a}\\
				\lambda_{d_3}-\lambda_{d_1}\\
				\vdots \\
				\lambda_{d_s}-\lambda_{d_{s-2}}\\
				-\lambda_{b}-\lambda_{d_{s-1}}
			\end{array}\right]$$
			and
			$$\sum \lambda_j C_j'=0 
			\Leftrightarrow 
			\left\{\begin{array}{rl}
				\lambda_{c_1} &= -\lambda_{d_1}\\
				\lambda_{a} &= \lambda_{c_2} = \cdots = \lambda_{c_{r-1}} = - \lambda_b\\
				\lambda_{c_1}& =\cdots = \lambda_{c_r}\\
				\lambda_{c_r} &=-\lambda_{d_s}\\
				\lambda_{a} &= \lambda_{d_2} = \cdots = \lambda_{d_{s-1}} = - \lambda_b\\
				\lambda_{d_1}& =\cdots = \lambda_{d_r}\\
			\end{array}\right.$$
			Thus, $\sum_{j \neq b,d_s} \lambda_j C_j'=0 \Leftrightarrow \lambda_j=0$ for all $j$. The columns $C'_a$, $C'_{c_1}, \ldots, C'_{c_r}$, $C'_{d_1}, \ldots, C'_{d_{s-1}}$ are thus linearly independent. Moreover, we have
			$$C_{b}'=C_a'+C_{c_2}'+ \cdots + C_{c_{r-1}}' + C_{d_2}' + \cdots + C_{d_{s-1}}'$$
			and
			$$C_{d_s}'=C_{c_1}'+C_{c_3}' + \cdots + C_{c_r}' - C_{d_1}' - \ldots - C_{d_{s-2}}'$$
			We thus set $\epsilon(C'_i)=-1$ for all $i \neq b,d_s,c_1$ and $\epsilon(C_{c_1}')=\min(0, \frac{r+1}{2}-\frac{s-1}{2})-1$. Then, $\epsilon(C_b')<0$ and $\epsilon(C_{d_s}')<0$. $Q'$ is thus graded and so $Q$ is gradable.
			
			In every case, we proved that a quiver $Q$ of affine type $\affA rs$ is gradable.
		\end{proof}
	\end{subsubsection}
	
	\begin{theorem}\label{theorem:semicanonicalbasis}
		Let $Q$ be an affine quiver such that every quiver reflection-equivalent to $Q$ satisfies the difference property. Then the set of generic variables is a $\Z$-basis in $\mathcal A(Q)$.
	\end{theorem}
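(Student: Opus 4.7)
The plan is to assemble the theorem from two independent components: the spanning inclusions already provided by Proposition \ref{prop:Qspan}, and a linear independence statement derived from the gradability technology of Section \ref{subsection:independence}. Proposition \ref{prop:Qspan} gives both $\mathcal B'(Q) \subset \mathcal A(Q)$ and $\mathcal A(Q) \subset \Q\mathcal B'(Q)$, so the remaining work divides cleanly into proving $\Q$-linear independence of $\mathcal B'(Q)$ and then upgrading the $\Q$-spanning to $\Z$-spanning.

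For the linear independence, I would exploit the reflection invariance proved in Section \ref{section:reflection}. By Lemma \ref{lem:Agradable} one can find a graded quiver $Q'$ in the reflection class of $Q$, and by hypothesis every such $Q'$ satisfies the difference property, so Theorem \ref{theorem:invariancebase} yields a bijection $\Phi(\mathcal B'(Q)) = \mathcal B'(Q')$ via the canonical isomorphism $\Phi: \mathcal A(Q) \fl \mathcal A(Q')$. It therefore suffices to establish $\Q$-linear independence of $\mathcal B'(Q')$. Here, Corollary \ref{corol:Ud} and Theorem \ref{theorem:denominators} force the denominator vector of $X^{Q'}_{\textbf d}$ to be exactly $\textbf d$, so distinct elements of $\mathcal B'(Q')$ have distinct dimension vectors. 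Lemma \ref{lem:lemCK1} applied to the graded quiver $Q'$ then delivers the required independence, which transports back to $\mathcal A(Q)$ via $\Phi$.

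For the $\Z$-spanning, combining Proposition \ref{prop:Qspan} with the independence just obtained gives every $x \in \mathcal A(Q)$ a unique expansion $x = \sum q_{\textbf d} X_{\textbf d}$ with $q_{\textbf d} \in \Q$. To see that $q_{\textbf d} \in \Z$, I would re-examine the chain of reductions through Lemmas \ref{lem:XMlambdaninbase}, Proposition \ref{prop:TubeinZB} and Lemma \ref{lem:modgeninZB}. The normalized Chebyshev polynomials $C_n$, the difference property identity \eqref{eq:differencedelta}, the Caldero-Keller one-dimensional multiplication formula of Theorem \ref{theorem:onedimmult}, the multiplicativity of Lemma \ref{lem:multiplicativity} and the explicit description of $\mathcal B'(Q)$ in Proposition \ref{prop:explicitbase} all output integer coefficients; consequently every intermediate expansion can in fact be taken in $\Z\mathcal B'(Q)$. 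Uniqueness of the expansion then forces $q_{\textbf d} \in \Z$.

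The principal obstacle will be the integrality bookkeeping in this last step: one must check that nothing in the inductive reductions through Corollary \ref{corol:inductionstep} or Proposition \ref{prop:dcpXM} introduces a genuine denominator once the intermediate $X_Y$'s are re-expanded in $\mathcal B'(Q)$. A clean strategy is to induct on $[M,M]^1$ (as in Lemma \ref{lem:dimautoext}) and on the dimension vector, using at each step that the corresponding exchange relation and Chebyshev identity have $\Z$-coefficients, thereby propagating the integrality of cluster monomials throughout $\mathcal B'(Q)$.
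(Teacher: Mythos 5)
Your argument for $\Q$-linear independence is correct and matches the paper: reduce to a graded quiver in the reflection class via Theorem \ref{theorem:invariancebase}, observe that the denominator theorem forces distinct $\textbf d$ to give distinct $X_{\textbf d}$, and apply Lemma \ref{lem:lemCK1}. One small omission: Lemma \ref{lem:Agradable} covers only type $\Aaffine$; for types $\Daffine$ and $\Eaffine$ you must add the observation (which the paper makes) that the underlying diagram is a tree, hence admits an alternating --- and thus graded, by Example \ref{exmp:alternating} --- orientation, and any two orientations of a tree are reflection-equivalent.

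The $\Z$-spanning argument has a genuine gap, and it is precisely the ``principal obstacle'' you flag at the end. Your plan is to propagate integrality through the reduction chain in Proposition \ref{prop:Qspan} by noting that the Chebyshev identities, the difference property, the one-dimensional multiplication formula and Lemma \ref{lem:multiplicativity} all have $\Z$-coefficients. But the reduction also runs through the general cluster multiplication formula of Theorem \ref{theorem:multiplication} (via Corollary \ref{corol:inductionstep} and Proposition \ref{prop:dcpXM}, which are invoked inside Lemma \ref{lem:modgeninZB}), and those results are stated with $\Q$-coefficients for a reason: the Xiao--Xu/Hubery-type multiplication formulas produce rational, not integral, coefficients in general. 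Inducting on $[M,M]^1$ does not repair this, because the very inductive step that lowers $[M,M]^1$ is Corollary \ref{corol:inductionstep}, which only gives a $\Q$-linear combination. So ``every intermediate expansion can in fact be taken in $\Z\mathcal B'(Q)$'' is not established by what you list; the $\Q$-span output of Proposition \ref{prop:Qspan} cannot be upgraded to a $\Z$-span by bookkeeping alone.

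The paper takes a different route that sidesteps the multiplication formulas entirely. Having obtained $X_M\in\Q\mathcal B'(Q)$, it uses the grading machinery of Propositions \ref{prop:supportconeCK1} and \ref{prop:graduationCK1}: for each $\textbf d$, the support of $X_{\textbf d}$ lies in the cone $C_{\lambda_{\textbf d}}$ and its $\lambda_{\textbf d}$-component equals $1$, so the change of basis from the family $\ens{X_{\textbf d}}$ to the monomial basis of the Laurent ring is unitriangular with respect to the filtration $(F_n)$. Since every $X_M$ has integer Laurent coefficients (Euler characteristics are integers), one peels off leading terms one at a time: the coefficient of the highest filtration-degree $X_{\textbf d}$ appearing in the $\Q$-expansion of $X_M$ is read off as a Laurent coefficient, hence is an integer; subtracting and descending along the filtration shows all coefficients are integers. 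This triangularity argument is what you should use in place of trying to make the multiplication formulas integral.
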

	\begin{proof}
		Let $Q$ be an affine quiver. If $Q$ is of affine type $\Aaffine$, then $Q$ is gradable by lemma \ref{lem:Agradable}. Otherwise, the underlying diagram $\Delta$ of $Q$ is a tree and thus admits an alternating orientation. As any orientations of a tree are reflection-equivalent, if follows that $Q$ is gradable. By theorem \ref{theorem:invariancebase}, we can thus assume that $Q$ is a graded quiver. It follows from lemma \ref{lem:lemCK1} that $\mathcal B'(Q)$ is linearly independent over $\Q$ and thus over $\Z$.
		
		Now, it follows from proposition \ref{prop:Qspan} that every element $X_M$ is a $\Q$-linear combination of elements of $\mathcal B'(Q)$. For every $\textbf d \in \Z^{Q_0}$, set $$\lambda_{\textbf d}=(-\<\alpha_i, [\textbf d]_+\>+\<[\textbf d]_-, \alpha_i\>)_{i\in Q_0},$$
		If $M$ is such that $X_M=X_{\textbf d}$, then it follows from the definition of $X_{\textbf d}$ that 
		$\ddim H^0(M)=[\textbf d]_+$ and $\ddim P_M[1]=[\textbf d]_-$. Then, it follows from proposition \ref{prop:supportconeCK1} that $\supp(X_{\textbf d}) \in C_{\lambda_{\textbf d}}$ and the $\lambda_{\textbf d}$ component of $X_{\textbf d}$ is 1.
		
		As $Q$ is graded, using the filtration given by proposition \ref{prop:graduationCK1}, we obtain
		$$X_M=X_{\ddim M} + \sum_{\epsilon(\textbf d) <\epsilon(\ddim M)} n_{\textbf d} X_{\textbf d}$$
		where the $n_{\textbf d}$ are rational numbers.
		By induction on the filtration, using the fact that $\chi(\Gr_{\textbf e}(M)) \in \Z$ for every dimension vector $\textbf e$, we see that $n_{\textbf d} \in \Z$ for every $\textbf d \in \Z^{Q_0}$ and thus $\mathcal A(Q)$ is generated by $\mathcal B'(Q)$ as a $\Z$-module.
	\end{proof}
		
	\begin{theorem}\label{theorem:semicanonicalbasisA}
		Let $Q$ be a quiver of affine type $\Aaffine$. Then $\mathcal B'(Q)$ is a $\Z$-basis of the $\Z$-module $\mathcal A(Q)$.
	\end{theorem}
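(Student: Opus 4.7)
The plan is to derive this theorem as a direct consequence of the more general Theorem~\ref{theorem:semicanonicalbasis} combined with the fact that the difference property is inherited under reflection equivalence within the type $\Aaffine$.

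First, I would verify that the hypothesis of Theorem~\ref{theorem:semicanonicalbasis} is met, namely that \emph{every} quiver reflection-equivalent to $Q$ satisfies the difference property. Since reflections are particular mutations, any quiver reflection-equivalent to $Q$ is in particular mutation-equivalent and acyclic. By Proposition~\ref{prop:mutationtype}, acyclic quivers mutation-equivalent to a quiver of affine type $\Aaffine$ are themselves of affine type $\Aaffine$ (of the same tubular type). Thus every quiver reflection-equivalent to $Q$ is of affine type $\affA{r}{s}$ for some $r,s$.

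Next, I would invoke Theorem~\ref{theorem:differencedelta}, which asserts that every quiver of affine type $\Aaffine$ satisfies the difference property. Combined with the previous step, this shows that the blanket hypothesis of Theorem~\ref{theorem:semicanonicalbasis} holds for our $Q$.

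Finally, I would simply quote Theorem~\ref{theorem:semicanonicalbasis} to conclude that $\mathcal{B}'(Q)$ is a $\Z$-basis of $\mathcal{A}(Q)$. Since all the substantive work (the difference property in type $\Aaffine$, the gradability of type $\Aaffine$ quivers via Lemma~\ref{lem:Agradable}, the reflection-invariance of generic variables, and the spanning and independence arguments) has already been carried out, there is no genuine obstacle here: the proof is a two-line citation. The only thing worth noting is that one should not confuse reflection-equivalence with mutation-equivalence in general, but within the acyclic locus Proposition~\ref{prop:mutationtype} is strong enough to make this distinction harmless.
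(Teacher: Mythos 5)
Your proof is correct and takes essentially the same route as the paper: cite Theorem~\ref{theorem:differencedelta} for the difference property in type $\Aaffine$ and then apply Theorem~\ref{theorem:semicanonicalbasis}. The only difference is that you are more explicit than the paper about checking the blanket hypothesis that \emph{every} quiver reflection-equivalent to $Q$ satisfies the difference property (via Proposition~\ref{prop:mutationtype}, though noting directly that reflections preserve the underlying diagram, hence the type $\Aaffine$, would be even more elementary); the paper leaves this step tacit.
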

	\begin{proof}
		According to theorem \ref{theorem:differencedelta}, a quiver of affine type $\Aaffine$ satisfies the difference property. Thus $\mathcal B'(Q)$ is a $\Z$-basis in $\mathcal A(Q)$ by theorem \ref{theorem:semicanonicalbasis}.
	\end{proof}
	
	\begin{rmq}
		In the works of \cite{CZ} on the Kronecker quiver (see also \cite{Cerulli:thesis} for affine quivers of rank three), the elements arising in their semicanonical basis with denominator vectors $n \delta$ turn out to be the $X_{M_\lambda^{(n)}}$ for $n \geq 2$ and $\lambda \in \P^1_0$. Nevertheless, it follows from lemma \ref{lem:imaginary} that $X_{n\delta} \neq X_{M_\lambda^{(n)}}$ and thus that generalized variables associated to the regular modules $M_\lambda^{(n)}$ for $n \geq 2$ do not appear as elements of $\mathcal B'(Q)$. Even if it seems more natural from the point of view of the AR quiver of $\mathcal C_Q$ to introduce the $X_{M_\lambda^{(n)}}$, the natural choice from the point of view of geometry is $X_{M_\lambda}^n$. We will see in subsection \ref{subsection:basesKronecker} that for the Kronecker quiver, Caldero-Zelevinsky's semicanonical basis differ from the set $\mathcal B'(Q)$ of generic variables only by a locally unipotent (see subsection \ref{subsection:basesKronecker} for definitions) transformation. 
		
		The reader can find a bit confusing to use also the terminology \emph{semicanonical} in our case whereas the considered set does not coincide with the semicanonical basis found in the previous literature. Nevertheless, because of the very strong analogy between our basis and Lusztig's dual semicanonical basis, we preferred to keep the terminology \emph{semicanonical} even if it can seem a bit confusing at first.
	\end{rmq}

	\begin{monexmp}
	Consider the quiver $Q$ of affine type $\Aaffine_{4,4}$:
		$$\xymatrix{
			& 	& 2 & \ar[l] 3 \ar[r] & 4 \\
			Q:& 1 \ar[ru] \ar[rd] &&&& 5 \ar[lu] \ar[ld] \\
			& 	& 8 & \ar[l] 7 \ar[r] & 6
		}$$
		
		For any $i \in \Z/8\Z$, denote by $E_{i,i+1}$ the quasi-simple module whose composition factors are $S_i$ and $S_{i+1}$. Then $\tau E_{i,i+1}=E_{i+2,i+3}\textrm{ for all  }i \in \Z/8\Z$. As usual, we denote by $E_{ij}^{(n)}$ the unique indecomposable regular module with quasi-length $n$ and quasi-socle $E_{ij}$. 	
	
		The AR-quiver of $kQ$-mod contains exactly two exceptional tubes which are of rank $p=4$. We consider the tube depicted in figure \ref{figure:tubeA44}.
		\begin{figure}[H]
			\begin{picture}(300,220)(-40,0)
				\setlength{\unitlength}{1mm}
				\put (-21,9){$\bullet$}
				\put (-25,5){$E_{12}$}
				\put (-1,9){$\bullet$}
				\put (-5,5){$E_{78}$}
				\put (0,10){\circle{5}}
				\put (19,9){$\bullet$}
				\put (15,5){$E_{56}$}
				\put (39,9){$\bullet$}
				\put (35,5){$E_{34}$}
				\put (59,9){$\bullet$}
				\put (55,5){$E_{12}$}
				
				\put (79,9){$\bullet$}
				\put (75,5){$E_{78}$}
				\put (80,10){\circle{5}}
				
				\put (99,9){$\bullet$}
				\put (95,5){$E_{56}$}
				
				\put (29,39){$\bullet$}
				\put (32,39){$E^{(4)}_{78}$}
				\put (49,39){$\bullet$}
				\put (52,39){$E^{(4)}_{56}$}
				
				\put (40,30){\circle{5}}
				\put (39,29){$\bullet$}
				\put (42,29){$E_{56}^{(3)}$}

				\put (49,39){$\bullet$}
				\put (52,19){$E_{34}^{(2)}$}
				\put (29,19){$\bullet$}
				\put (32,19){$E_{56}^{(2)}$}

				\multiput(-20,10)(0,20){3}{\multiput(0,0)(20,0){6}{\vector(1,1){10}}}
				\multiput(-20,30)(0,20){3}{\multiput(0,0)(20,0){6}{\vector(1,-1){10}}}
				\multiput(-10,20)(0,20){3}{\multiput(0,0)(20,0){6}{\vector(1,-1){10}}}
				\multiput(-10,20)(0,20){3}{\multiput(0,0)(20,0){6}{\vector(1,1){10}}}
		
				\put (0,10){\line(0,1){60}}
				\put (80,10){\line(0,1){60}}
						
			\end{picture}
			\caption{An exceptional tube of $\Aaffine_{4,4}$}\label{figure:tubeA44}
		\end{figure}	
		
		We consider the module $M=E_{78} \oplus E_{56}^{(3)}$. The multiplication formula gives
		$$X_M=X_{E^{(4)}_{56}}+X_{E_{56}^{(2)}}$$
		but according to the difference property proved in theorem \ref{theorem:differencedelta}, we have
		$$X_{E^{(4)}_{56}}=X_{M_\lambda}+X_{E_{34}^{(2)}}$$
		for every $\lambda \in \P^1_0(Q)$.
		As $E_{34}^{(2)}$ and $E_{56}^{(2)}$ are indecomposable rigid, they belong to $\mathcal B'(Q)$. Moreover $X_{M_\lambda}=X_\delta$ so 
		\begin{align*}
			X_M
			 	&=X_{M_\lambda}+X_{E_{34}^{(2)}}+X_{E_{34}^{(2)}}\\
			 	&=X_{\delta}+X_{\ddim E_{34}^{(2)}}+X_{\ddim E_{34}^{(2)}}\\
			 	& \in \Z \mathcal B'(Q). 
		\end{align*}
	\end{monexmp}
\end{subsection}

\section{Examples of semicanonical bases}\label{section:examples}
	\begin{subsection}{Semicanonical basis for Dynkin quivers}\label{subsection:Dynkin}
	In this short subsection, we prove that the generic variables are the elements of the Caldero-Keller basis found in \cite{CK1} for a quiver of Dynkin type.
	
	\begin{maprop}
		Let $Q$ be a Dynkin quiver. Then 
		$$\mathcal B'(Q)=\ens{\textrm{cluster monomials in } \mathcal A(Q)}$$
		is a $\Z$-basis for the $\Z$-module $\mathcal A(Q)$.
	\end{maprop}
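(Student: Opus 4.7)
The plan is to show the two set-theoretic inclusions $\mathcal{B}'(Q) \subseteq \{\text{cluster monomials}\}$ and $\{\text{cluster monomials}\} \subseteq \mathcal{B}'(Q)$, and then invoke Theorem \ref{theorem:baseCK1} to conclude the basis statement. The second inclusion is essentially free: given a cluster monomial $x$, Theorem \ref{theorem:correspondanceCK2} produces a rigid object $M \in \mathcal{C}_Q$ with $x = X_M$, and Lemma \ref{lem:clustermonomial} immediately gives $X_M = X_{\ddim M} \in \mathcal{B}'(Q)$.

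For the first inclusion, I would argue in direct analogy with Proposition \ref{prop:explicitbase}, but using the crucial fact that a Dynkin quiver has no imaginary root and that every positive root is a real Schur root with a unique indecomposable representative, which is automatically rigid (this is where Dynkin type does all the work). Fix $\textbf{d} \in \mathbb{Z}^{Q_0}$ and write the canonical decomposition of $[\textbf{d}]_+$ as $\textbf{e}_1 \oplus \cdots \oplus \textbf{e}_n$. Each $\textbf{e}_i$ is a Schur root, hence a positive real root, so there is a unique indecomposable module $M_i$ of dimension $\textbf{e}_i$, and $M_i$ is rigid. By Proposition \ref{prop:Kacdcp}, $\Ext^1_{kQ}(M_i,M_j)=0$ for $i\neq j$, so $M_+ := \bigoplus_i M_i$ is a rigid module lying in $\mathfrak{M}_{[\textbf{d}]_+}$. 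Since the orbit of a rigid module is dense in its representation variety, $\mathcal{O}_{M_+} \cap U_{[\textbf{d}]_+} \neq \emptyset$, whence $X_{[\textbf{d}]_+} = X_{M_+}$.

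Setting $M := M_+ \oplus \bigoplus_{d_i<0} P_i[1]^{\oplus(-d_i)}$, the object $M$ is rigid in $\mathcal{C}_Q$: the modular summands are mutually Ext-orthogonal by construction, and $\Ext^1_{\mathcal{C}_Q}(M_+, P_i[1]) \simeq D\Hom_{kQ}(P_i, M_+)$ vanishes on those indices $i$ with $d_i<0$, because $\dim M_+(i) = [\textbf{d}]_+(i) = 0$ there. Then
\[
X_{\textbf{d}} = X_{[\textbf{d}]_+} \prod_{d_i<0} u_i^{-d_i} = X_{M_+} \prod_{d_i<0} X_{P_i[1]}^{-d_i} = X_M,
\]
and Theorem \ref{theorem:correspondanceCK2} identifies $X_M$ as a cluster monomial.

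Combining both inclusions gives the set equality $\mathcal{B}'(Q) = \{\text{cluster monomials}\}$, and the basis statement is then Theorem \ref{theorem:baseCK1}. There is no real obstacle here; the only subtlety worth double-checking is the rigidity of the enlarged object $M$ in $\mathcal{C}_Q$ (not merely in $kQ$-mod), which is the reason the shifts of projectives had to be handled separately.
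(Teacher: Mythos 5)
Your proof is correct and follows essentially the same route as the paper's: first the easy inclusion of cluster monomials into $\mathcal B'(Q)$ via Theorem \ref{theorem:correspondanceCK2} and Lemma \ref{lem:clustermonomial}, then the reverse inclusion using the canonical decomposition, the fact that every positive root of a Dynkin quiver is a real Schur root with a unique (hence rigid) indecomposable representative, and the Ext-vanishing against the shifted projectives $P_i[1]$, with the basis claim finally delegated to Theorem \ref{theorem:baseCK1}. Your extra care in justifying $\Ext^1_{\mathcal C_Q}(M_+, P_i[1]) = 0$ via the $2$-Calabi--Yau duality and the vanishing of $M_+(i)$ is a welcome expansion of a step the paper states without comment, but it does not change the argument.
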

	\begin{proof}
		We denote by $\mathcal M(Q)=\ens{\textrm{cluster monomials in }\mathcal A(Q)}$. It follows from lemma \ref{lem:clustermonomial} that $\mathcal M(Q) \subset \mathcal B'(Q)$. Fix now an element $\textbf d \in \N^{Q_0}$ and write $\textbf d=\bigoplus_{i=1}^n \textbf e_i$ its canonical decomposition. Then, according to proposition \ref{prop:Kacdcp}, for every $i=1, \ldots,n$, there is a Schur representation $M_i \in \rep(Q,\textbf e_i)$ such that $\Ext^1_{kQ}(M_i, M_j) = 0$ for $i \neq j$. As $Q$ is Dynkin, every root is a real root and then each $\textbf e_i$ is a real Schur root and thus each $M_i$ is rigid. It follows that $M=\bigoplus_{i=1}^n M_i$ is rigid in $\rep(Q,\textbf d)$ and then $\mathcal O_M \cap U_{\textbf d} \neq \emptyset$. It follows that $X_{\textbf d}=X_M$ for some rigid module $M$ and thus $X_{\textbf d}$ is a cluster monomial.
		
		Assume now that $\textbf d$ is any element in $\Z^{Q_0}$. Then 
		$$X_{\textbf d}=X_{[\textbf d]_+}\prod_{d_i<0}u_i^{-d_i}.$$
		By the above discussion, there exists some rigid module $M \in \rep(Q,[\textbf d]_+)$ such that $X_M=X_{[\textbf d]_+}$. Moreover, for every $i \in Q_0$ such that $d_i<0$, we have $\Ext^1_{\mathcal C}(M,P_i[1])=0$ so $N=M \oplus \bigoplus_{d_i<0}P_i[1]^{\oplus (-d_i)}$ is rigid in $\mathcal C_Q$ and $X_{\textbf d}=X_N$ is a cluster monomial.
		
		This is a $\Z$-basis according to \cite{CK1}.
	\end{proof}
\end{subsection}

\begin{subsection}{Bases for the Kronecker quiver}\label{subsection:basesKronecker}
	
	The first example of cluster algebra of non Dynkin type that has already been studied is the most simple case of affine quiver, namely the Kronecker quiver
	$$\xymatrix{ K : 1 & \ar@<-2pt>[l]\ar@<+2pt>[l] 2}.$$
	We write $\delta=(1,1)$ the minimal imaginary root of $K$ and we denote by $\mathcal A(K)$ the coefficient-free cluster algebra with initial seed $(K, \textbf u)$ with $\textbf u=(u_1,u_2)$.

	\begin{subsubsection}{Normalized Chebyshev polynomials}\label{subsection:CZChebyshev}
		In this subsection, we recall briefly some results of \cite{CZ} concerning the normalized Chebyschev polynomials.
		
		The \emph{normalized Chebyshev polynomials of the second kind}\index{Chebyshev polynomial!normalized of the second kind} are the polynomials defined inductively by :
		$$C_{-2}(x)=0, \, C_{-1}(x)=0, \, C_0(x)=1, \, C_{n+1}(x)=xC_n(x)-C_{n-1}(x)$$
		In particular, for every $n \geq 0$, $C_n$ is the unitary polynomial of degree $n$ characterized by the identity 
		$$C_n(t+t^{-1})=\sum_{k=0}^n t^{n-2k}$$
	
		The \emph{normalized Chebyschev polynomials of the first kind}\index{Chebyshev polynomial!normalized of the first kind} are the polynomials defined inductively by :
		$$P_n(x) = C_n(x) - C_{n-2}(x)$$
		for every $n \geq 0$. In particular, $P_n$ is also a unitary polynomial of degree $n$ for any $n \geq 0$ satisfying 
		$$P_n(t+t^{-1})=t^n+t^{-n}$$
		
		We can express the $C_n$ in terms of $P_n$ with the relation
		$$C_n(x)=\sum_{k=1}^{\left[\frac n2\right]+1}P_{n-2k}.$$
		In particular $C_n$ is a positive linear combination of the $P_k$.
	\end{subsubsection}

	\begin{subsubsection}{The canonical basis for the Kronecker quiver}
		We recall briefly the results and notations of \cite{shermanz}. 
		\begin{defi}
			An element $y \in \mathcal A(K)$ is called positive if for every cluster $\textbf x=(x,x')$, the coefficients in the expansion of $y$ as a Laurent polynomial in $x$ and $x'$ are positive.
		\end{defi}
		
		\begin{theorem}[\cite{shermanz}]\label{theorem:canonicalbasis}
			There exists an unique $\Z$-basis $\mathcal B(K)$ of $\mathcal A(K)$ such that the semi-ring of positive elements in $\mathcal A(K)$ consists precisely of positive integer linear combinations of elements of $\mathcal B(K)$.
		\end{theorem}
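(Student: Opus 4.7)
The proof breaks naturally into a uniqueness argument and an explicit construction of $\mathcal{B}(K)$, followed by verification of the positivity property. For uniqueness, the plan is to characterize elements of $\mathcal{B}(K)$ intrinsically. Call a positive element $y \in \mathcal{A}(K)$ \emph{indecomposable} if whenever $y = y_1 + y_2$ with $y_1, y_2$ positive, one of the $y_i$ is zero. If $\mathcal{B}$ is a basis with the stated property, each $b \in \mathcal{B}$ must be indecomposable: any nontrivial decomposition $b = y_1 + y_2$ would re-expand each $y_i$ as a non-negative $\mathbb{Z}$-combination of $\mathcal{B}$, contradicting linear independence. Conversely, any indecomposable positive element, being a non-negative $\mathbb{Z}$-combination of $\mathcal{B}$, must equal a single basis element. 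Hence $\mathcal{B}$ coincides with the set of indecomposable positive elements, yielding uniqueness.

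For existence, I would construct $\mathcal{B}(K)$ explicitly. Denote by $\{x_m\}_{m \in \mathbb{Z}}$ the doubly-infinite sequence of cluster variables of $\mathcal{A}(K)$ satisfying $x_{m-1}x_{m+1} = x_m^2 + 1$. Let $\mathcal{M}$ be the set of cluster monomials $x_m^a x_{m+1}^b$ for $m \in \mathbb{Z}$, $a,b \in \mathbb{N}$. Set
$$z_1 := x_m x_{m+3} - x_{m+1} x_{m+2},$$
which a short calculation using the exchange relation shows is independent of $m$, and for $n \geq 1$ set $z_n := P_n(z_1)$, where $P_n$ is the $n$-th normalized Chebyshev polynomial of the first kind from subsection \ref{subsection:CZChebyshev}. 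Define $\mathcal{B}(K) := \mathcal{M} \sqcup \{z_n : n \geq 1\}$.

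The next step is to check that $\mathcal{B}(K)$ is a $\mathbb{Z}$-basis. I would do this by a triangularity argument: expanding each element in the initial cluster $(u_1, u_2)$, the cluster monomials $x_m^a x_{m+1}^b$ have pairwise distinct denominator vectors given by the positive real roots (together with $-\alpha_1, -\alpha_2$), with leading coefficient $1$, and the $z_n$ have denominator vector $n\delta$ with again leading coefficient $1$; this yields linear independence. Spanning then follows from an inductive argument using the exchange relations $x_{m-1}x_{m+1} = x_m^2 + 1$ together with the Chebyshev recursion $P_{n+1}(z_1) = z_1 P_n(z_1) - P_{n-1}(z_1)$, which reduce any monomial in cluster variables to an integer combination of elements of $\mathcal{B}(K)$.

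The main obstacle will be the positivity property: showing that every positive element $y \in \mathcal{A}(K)$ expands as a \emph{non-negative} integer combination of $\mathcal{B}(K)$. The plan here is to exhibit, for each $b \in \mathcal{B}(K)$, a distinguished monomial $u_1^{\alpha(b)} u_2^{\beta(b)}$ in the Laurent expansion with respect to the initial cluster $(u_1, u_2)$, whose coefficient in $b$ is $1$, and to verify that the map $b \mapsto (\alpha(b), \beta(b))$ is a bijection $\mathcal{B}(K) \to \mathbb{Z}^2$. Granting this, if $y = \sum_{b} c_b \, b$, then the coefficient of $u_1^{\alpha(b)} u_2^{\beta(b)}$ in $y$ can be computed by triangularly peeling off contributions, and one sees inductively that $c_b$ equals a non-negative integer linear combination of Laurent coefficients of $y$ in the cluster $(u_1, u_2)$; positivity of $y$ then forces $c_b \geq 0$. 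Establishing this triangularity and the bijectivity of the leading-monomial map is the technical heart of the argument; it amounts to an explicit combinatorial analysis of the Laurent expansions of $x_m^a x_{m+1}^b$ and $z_n$, and is where the specific choice of $P_n$ (rather than $C_n$) is crucial.
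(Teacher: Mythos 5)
This theorem is cited verbatim from Sherman--Zelevinsky \cite{shermanz}; the paper offers no proof of its own, so there is nothing internal to compare against. Your uniqueness argument via indecomposable positive elements is correct and standard, your explicit construction of $\mathcal B(K)$ (cluster monomials plus $P_n(z)$) matches theirs, and the verification that $x_m x_{m+3}-x_{m+1}x_{m+2}$ is $m$-independent is right.

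The gap is in the positivity statement, and it appears in both directions. You never address the ``easy'' direction, that every element of $\mathcal B(K)$ is a positive element: this requires knowing that each cluster monomial \emph{and} each $P_n(z)$ has non-negative Laurent coefficients in \emph{every} cluster, which is itself a substantial theorem (proved in \cite{shermanz} by an explicit combinatorial formula for the Laurent expansions). More seriously, the ``hard'' direction as you sketch it does not go through. Inverting the triangular system $[u^{\lambda(b)}]y = c_b + \sum_{\lambda(b')>\lambda(b)} c_{b'}[u^{\lambda(b)}]b'$ expresses $c_b$ as an integer linear combination of Laurent coefficients of $y$ in the initial cluster, but the signs alternate with the depth of back-substitution: there is no reason the resulting coefficients are non-negative. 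Indeed, if one could recover every $c_b$ as a non-negative combination of Laurent coefficients of $y$ in the single cluster $(u_1,u_2)$, then positivity in that one cluster alone would force $c_b\geq 0$ for all $b$, i.e.\ would already imply positivity in all clusters --- but the definition of positive element deliberately quantifies over all clusters precisely because one cluster is not enough. The Sherman--Zelevinsky argument circumvents this by using \emph{different} clusters for different $b$: for each basis element $b$ they choose a cluster in which the pointed Laurent monomial of $b$ is not in the support of any other basis element, so that its coefficient in $y$ equals $c_b$ directly, with no peeling and no sign ambiguity. This cluster-dependent isolation, together with the explicit Laurent-positivity of the basis elements, is the missing core of the argument.
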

		
		The set $\mathcal B(K)$ is called the \emph{canonical basis}\index{canonical basis} of the cluster algebra $\mathcal A(K)$. The explicit value of the canonical basis is explicitly computed in \cite{shermanz} and is given by 
		$$\mathcal B(K)=\ens{\textrm{cluster monomials}} \sqcup \ens{P_n(z) \ : \ n \geq 1}$$
		where 
		$$z=\frac{1+u_1^2+u_2^2}{u_1u_2}.$$
		It is proved in \cite{shermanz} that for any $n \geq 1$, we have
			$$\delta(P_n(z))=n \delta$$
		Thus the canonical basis is the disjoint union of the set of cluster variables and a family of variables whose denominator vector are the imaginary roots of $Q$.	
	\end{subsubsection}

	\begin{subsubsection}{The semi-canonical basis for the Kronecker quiver}
		We denote by $\mathcal B'(K)$ the semi-canonical basis of the cluster algebra associated to $K$. This is given by
		$$\mathcal B'(K)=\ens{X_\alpha \ : \ \alpha \in \Z^{Q_0}}$$
		
		The explicit description is given by proposition \ref{prop:explicitbase} but we give an independent and explicit construction of all the $X_{\textbf d}$ based on the known results about the canonical decomposition of elements in the root lattice of the Kronecker quiver.
		\begin{maprop}
			$$\mathcal B'(K)=\ens{\textrm{cluster monomials}} \sqcup \ens{z^n \ : \ n \geq 1}$$
		\end{maprop}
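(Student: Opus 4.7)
The plan is to invoke the general description of $\mathcal B'(Q)$ specialized to the Kronecker quiver and then carry out an explicit Caldero-Chapoton computation to identify the imaginary part with powers of $z$.

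First I would observe that the AR-quiver of $kK$-mod has only homogeneous tubes: the Kronecker quiver has tubular type the empty set, so $\P^1_0(K)=\P^1$ and there are no exceptional tubes. Consequently, the set $\mathcal E$ of indecomposable regular modules figuring in Proposition \ref{prop:explicitbase} admits no non-zero rigid object in $\add \mathcal E$. Applying that proposition directly gives
$$\mathcal B'(K)=\ens{\textrm{cluster monomials}} \sqcup \ens{X_{M_\lambda^{\oplus n}} \ : \ n \geq 1},$$
where $M_\lambda$ is the quasi-simple in any homogeneous tube.

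Next, by Lemma \ref{lem:imaginary}, for every $n\geq 1$ one has $X_{n\delta}=X_{M_\lambda}^n$ independently of the choice of $\lambda\in \P^1_0$, and by definition $X_{M_\lambda^{\oplus n}}=X_{n\delta}$. Thus the task reduces to proving the single identity $X_{M_\lambda}=z$, which amounts to an explicit evaluation of the Caldero-Chapoton map on a quasi-simple module of dimension $\delta=(1,1)$. Taking a representative $M_\lambda$ with both structure maps nonzero, the submodule grassmannians are immediate: $\Gr_{(0,0)}(M_\lambda)$ and $\Gr_{\delta}(M_\lambda)$ are reduced to a point, while $\Gr_{(1,0)}(M_\lambda)$ is also a point (corresponding to the unique subspace of $M_\lambda(1)$) and $\Gr_{(0,1)}(M_\lambda)=\emptyset$ since both maps out of $M_\lambda(2)$ are nonzero. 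Plugging into formula (\ref{CCmap}) with the Euler form of $K$ and simplifying gives
$$X_{M_\lambda}=\frac{u_2}{u_1}+\frac{1}{u_1u_2}+\frac{u_1}{u_2}=\frac{1+u_1^2+u_2^2}{u_1u_2}=z,$$
so $X_{M_\lambda^{\oplus n}}=z^n$ for every $n\geq 1$ and the claimed equality of sets follows.

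There is no real obstacle here: the main work is bookkeeping. The one point worth checking carefully is that the union on the right-hand side is disjoint, i.e.\ that no $z^n$ coincides with a cluster monomial. This follows from the denominators theorem \ref{theorem:denominators}: if $z^n$ were a cluster monomial $X_M$, the object $M$ would be rigid in $\mathcal C_K$ with dimension vector $n\delta$, but $n\delta$ is not the dimension vector of any rigid object in $\mathcal C_K$ (every module of dimension $n\delta$ has non-trivial self-extensions, as already used in the proof of Proposition \ref{prop:explicitbase}). Alternatively, one may simply invoke the disjointness already established in that proposition.
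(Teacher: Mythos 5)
Your proof is correct, but it takes a genuinely different route from the one in the paper. The paper remarks that the claimed description follows from Proposition \ref{prop:explicitbase}, but then deliberately sets that aside (``we give an independent and explicit construction\ldots'') and argues from scratch using Derksen--Weyman's description of the canonical decomposition of a dimension vector $(d_1,d_2)$ for the Kronecker quiver in terms of the ratio $d_1/d_2$: cases $d_1/d_2>1$ and $<1$ produce sums of preprojective (resp.\ preinjective) Schur roots and hence cluster monomials, $d_1/d_2=1$ yields $n\delta$ and hence $z^n$, and a separate case analysis handles dimension vectors with negative entries. You instead take the shortcut: invoke Proposition \ref{prop:explicitbase} directly, note that the Kronecker quiver has trivial tubular type so the only rigid object of $\add\mathcal E$ is $0$, and thereby collapse the imaginary part of the basis to $\ens{X_{M_\lambda^{\oplus n}}=X_{n\delta}=z^n}$, leaving only the explicit computation $X_{M_\lambda}=z$. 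Both arguments are valid and non-circular; yours is more economical and leverages the general affine machinery, while the paper's is self-contained and makes the rank-two canonical decomposition concrete. The Caldero--Chapoton evaluation of $X_{M_\lambda}$ and the disjointness argument via the denominators theorem (no rigid object of $\mathcal C_K$ has dimension vector $n\delta$) that you include appear in essentially identical form in the paper's proof.
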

		\begin{proof}
			It follows from lemma \ref{lem:clustermonomial} that cluster monomials are elements of the canonical basis. Also, a direct computation proves that
			$$X_\delta=X_{M_\lambda}=z$$
			for any homogeneous simple $M_\lambda$.
			
			We also know that $X_{n\delta}=X_\delta^n=z^n$. Now we claim the above union is disjoint. In deed, consider an element $z_n$ and a cluster monomial $x$. Then there is some $M$ without self-extension such that $X_M=x$. Thus it follows that $\ddim M \not \in \Z_{\geq 0} \delta$ (there is no representation without self extension in $\rep(Q, n \delta)$ for every $n$). Thus $\delta(x) \neq \delta(z^n)$ for every $n$ and the above union is disjoint.
			
			Now it only remains to prove that every element $X_{\textbf d}$ is either a cluster monomial or a $z^n$ for some $n > 0$. First assume that ${\textbf d} \in \N^{Q_0}$. Then we know that $X_{\textbf d}=X_M$ for some $M$ in the open set of Kac's canonical decomposition $\mathfrak M_{\textbf d}$. We write ${\textbf d}=(d_1, d_2)$. Then it follows from \cite[subsection 3]{DW} that the canonical decomposition of ${\textbf d}$ depends on the quotient $a=d_1/d_2$. More precisely, if $a>1$, then the canonical decomposition of ${\textbf d}$ is of the form $\lambda p_m+\mu p_{m+1}$ where $p_{2k+1}=\ddim P_{2k+1}=\ddim \tau^{-k} P_1$ and $p_{2(k+1)}=\ddim P_{2(k+1)}=\ddim \tau^{-k} P_2$. Thus $$X_{{\textbf d}}=X_{P_m^{\lambda}\oplus P_{m+1}^{\mu}}$$
			but $\Ext^1_{kQ}(P_m,P_{m+1})=\Ext^1_{kQ}(P_{m+1}, P_m)=0$ the $X_{{\textbf d}}$ is a cluster monomial. Similarly, if $a <1$, $X_{{\textbf d}}$ will be a cluster monomial. Now if $a=1$, ${\textbf d} =n\delta$ for some $n$ and then we know that 
			$$X_{{\textbf d}}=X_{n\delta}=X_{\delta}^n=X_{M_\lambda}^n=z^n$$
			
			Now suppose that ${\textbf d} \in \Z^{Q_0}$ is such that $d_i<0$ for some $i$. If $d_1<0$ and $d_2<0$, then 
			$$X_{{\textbf d}}=X_{P_1[1]^{\oplus(-d_1)} \oplus P_2[1]^{\oplus(-d_2)}}$$
			is a cluster monomial because $P_1[1]^{\oplus(-d_1)} \oplus P_2[1]^{\oplus(-d_2)}$ has no self-extension. 
			
			If $d_1<0$ and $d_2 \geq 0$, then 
			$$X_{{\textbf d}}=X_{P_1[1]^{\oplus(-d_1)} \oplus I_2^{\oplus d_2}}$$
			is a cluster monomial because $P_1[1]^{\oplus(-d_1)} \oplus I_2^{\oplus d_2}$ has no self-extension. 
			
			Finally, if $d_1 \geq 0$ and $d_2<0$, then 
			$$X_{{\textbf d}}=X_{P_1^{\oplus d_1} \oplus P_2[1]^{\oplus(-d_2)}}$$
			is a cluster monomial because $P_1^{\oplus d_1} \oplus P_2[1]^{\oplus(-d_2)}$ has no self-extension. 
			
			Now it proves that
			$$\mathcal B'(K)=\ens{\textrm{cluster monomials}} \sqcup \ens{z^n \ : \ n \geq 1}$$
		\end{proof}
		
		Notice that $P_n(z) \neq z^n$ for every $n >1$. It follows that the semi-canonical basis and the canonical basis do not coincide. Thus, the semi-canonical do not have the positivity property of theorem \ref{theorem:canonicalbasis}. 
		
		Nevertheless, for any $n \geq 1$, the denominator vector $\delta(z^n)$ of $z^n$ is $n \delta(z)=n\delta$. Thus, as for the canonical basis, the semi-canonical basis is the disjoint union of the set of cluster variables and a family of variables whose denominator vector are the positive imaginary roots of $Q$.
	\end{subsubsection}

	\begin{subsubsection}{Caldero-Zelevinsky basis for the Kronecker quiver}
		In \cite{CZ}, the authors have computed another $\Z$-basis for the Kronecker quiver, this basis is given by
		$$\mathcal B''(K)=\ens{\textrm{cluster monomials}} \sqcup \ens{X_{M_\lambda^{(n)}} \ : \ n \geq 1}$$
		
		In corollary \ref{lem:Chebyshev}, we proved that $X_{M_\lambda^{(n)}}=C_n(z)$ where the $C_n$ are the normalized Chebyschev polynomial of the second kind defined in subsection \ref{subsection:CZChebyshev}.
	
		As it was noticed before, the denominator theorem of \cite{CK2} implies that $$\delta(C_n(z))=\delta(X_{M_\lambda^{(n)}})=n\delta.$$
		Thus the basis $\mathcal B''(K)$ is also the disjoint union of the set of cluster monomials and of a sets of variables whose denominator vectors correspond to the positive imaginary roots of $Q$.
	\end{subsubsection}

	\begin{subsubsection}{Base change between $\mathcal B(K)$ and $\mathcal B'(K)$}
	 	\begin{defi}
	 		Let $\mathfrak a=\ens{a_n, n \geq 0}$ and $\mathfrak b=\ens{b_n, n \geq 0}$ be two bases of the $\Z$-module $\mathcal A(K)$. We say that there is a \emph{locally unipotent base change}\index{locally unipotent base change} from $A$ to $B$ if for every $n \in \Z$, the $\Z$-modules spanned by $\ens{a_k, 0 \leq k \leq n}$ and $\ens{b_k, 0 \leq k \leq n}$ coincide and if the base change matrix $P$ from $(a_k, 0 \leq k \leq n)$ to $(b_k, 0 \leq k \leq n)$ is unipotent in $M_n(\Z)$. If moreover $P$ has positive entries, then the base change is called \emph{positive}\index{positive base change}.
	 	\end{defi}
	 	
		\begin{maprop}\label{prop:BtoB'}
			There is a positive locally unipotent base change from $\mathcal B(K)$ to $\mathcal B'(K)$.
		\end{maprop}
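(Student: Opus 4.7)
The plan is to exploit the fact that $\mathcal B(K)$ and $\mathcal B'(K)$ share their cluster monomial parts, so that the base change reduces to expressing each imaginary element $z^n$ of $\mathcal B'(K)$ in terms of the imaginary elements $P_k(z)$ of $\mathcal B(K)$ together with the unit cluster monomial $1$. The key input is an explicit expansion of $z^n$ in the Chebyshev basis.

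First I would carry out this expansion. Setting formally $z = t + t^{-1}$ so that $P_m(z) = t^m + t^{-m}$, the binomial theorem yields $z^n = \sum_{k=0}^n \binom{n}{k} t^{n-2k}$. Pairing the $k$-th and $(n-k)$-th terms (which have equal binomial coefficients and contribute $\binom{n}{k}\,t^{\pm(n-2k)}$) produces
$$z^n = P_n(z) + \sum_{k=1}^{\lfloor (n-1)/2\rfloor} \binom{n}{k}\, P_{n-2k}(z) + \begin{cases} \binom{n}{n/2}\cdot 1 & \text{if } n \text{ is even,}\\ 0 & \text{if } n \text{ is odd.}\end{cases}$$
The coefficient of $P_n(z)$ is $1$, all remaining coefficients $\binom{n}{k}$ are strictly positive integers, and when $n$ is even the constant enters as a non-negative integer multiple of the cluster monomial $1 \in \mathcal B(K)$.

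To conclude, I would enumerate both bases compatibly: fix a listing $c_0 = 1, c_1, c_2, \ldots$ of the cluster monomials and insert $P_n(z)$ (respectively $z^n$) at identical positions in both lists, in increasing order of $n$. With this common indexing $\mathcal B(K) = \{a_i\}_{i \geq 0}$ and $\mathcal B'(K) = \{b_i\}_{i \geq 0}$, the formula above writes each $z^n$ as a $\Z$-combination of $1$ and $P_1(z), \ldots, P_n(z)$, while conversely the triangularity of the change $\{z^k\}_{k \leq n} \to \{P_k(z)\}_{1 \leq k \leq n}$ in the $\Z$-module $\Z[z]_{\leq n}$ -- guaranteed since each $P_k(z)$ is a unitary integer polynomial of degree $k$ -- yields the reverse inclusion. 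Therefore, for every $N$ the initial segments $\{a_i\}_{i\leq N}$ and $\{b_i\}_{i\leq N}$ span the same $\Z$-submodule, and the transition matrix is lower unitriangular with non-negative integer entries. This is precisely the claim of positive local unipotence. No real obstacle intervenes beyond the bookkeeping of the compatible indexing; the whole content of the proposition lies in the Chebyshev identity above.
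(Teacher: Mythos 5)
Your proof is correct, and the integrality/triangularity half of the argument (each $P_k$ is a unitary integer polynomial of degree $k$, so $\{1, P_1(z),\dots,P_n(z)\}$ and $\{1,z,\dots,z^n\}$ span the same $\Z$-submodule of $\Z[z]$) matches the paper's reasoning exactly. Where you diverge is in proving \emph{positivity} of the transition coefficients. The paper invokes Sherman--Zelevinsky's Theorem \ref{theorem:canonicalbasis}: since $z=X_{M_\lambda}$ is a positive element and the positive elements form a semiring in $\mathcal A(K)$, each power $z^n$ is positive and hence expands with non-negative integer coefficients in the canonical basis. You instead compute the expansion explicitly via $z=t+t^{-1}$, $P_m(z)=t^m+t^{-m}$, and the binomial theorem, obtaining
$$z^n \;=\; P_n(z) + \sum_{k=1}^{\lfloor (n-1)/2\rfloor} \binom{n}{k}\, P_{n-2k}(z) + \bigl[\,n \text{ even}\,\bigr]\binom{n}{n/2},$$
from which positivity is immediate (and the binomial coefficients indeed reproduce the numerical table displayed after the proposition in the paper). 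Your route is more elementary and self-contained, since it avoids relying on the positivity theorem for the canonical basis; it also yields closed-form coefficients, which the paper's abstract argument does not. The paper's approach, on the other hand, is independent of the precise Chebyshev arithmetic and would transfer more readily to settings where no such generating-function identity is available. One small presentational remark: since you use the normalization $P_m(t+t^{-1})=t^m+t^{-m}$, literally $P_0=2$, so it is right that you record the constant as an explicit multiple of the cluster monomial $1$ rather than as a multiple of $P_0$; this matches the paper's convention that only $P_n$ with $n\geq 1$ appear in $\mathcal B(K)$.
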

		\begin{proof}
			As 
			$$\mathcal B(K)=\ens{\textrm{cluster monomials}} \sqcup \ens{P_n(z), n \in \N}$$
			and
			$$\mathcal B'(K)=\ens{\textrm{cluster monomials}} \sqcup \ens{z^n, n \in \N},$$
			it suffices to prove that there is a positive locally unipotent base change from $\ens{P_n(z), n \in \N}$ to $\ens{z^n, n \in \N}$. It is equivalent to prove that every $z^n$ can be written as a positive $\Z$-linear combination of the $P_k(z)$ for $0 \leq k \leq n$.
			
			As $\mathcal B(K)$ is a $\Z$-basis of $\mathcal A(K)$ and $z^n \in \mathcal A(K)$ for every $n$, it follows that each $z^n$ can be written as a $\Z$-linear combination of $P_k(z)$ for $k \in \N$. Each $P_k(z)$ being a unitary polynomial of degree $k$, it follows that $z^n$ can be written as a $\Z$-linear combination of the $P_k(z)$ for $0 \leq k \leq n$. Thus, there is a locally unipotent base change from $\mathcal B(K)$ to $\mathcal B'(K)$.
			
			According to \cite{shermanz}, $z=X_{M_\lambda}$ is a positive element in $\mathcal A(K)$. Thus, as positive elements form a semiring in $\mathcal A(K)$, each $X_{M_\lambda}^n$ is a positive element in $\mathcal A(K)$ and can thus be written as a positive $\Z$-linear combination of elements of $\mathcal B(K)$. The base change is then positive and the proposition is proved.
		\end{proof}
		
		\begin{monexmp}
			If we look at the base change matrix from the family $(z^n, 0 \leq n \leq 10) \subset \mathcal B'(K)$ to the family $(P_n(z), 0 \leq n \leq 10)$ of the canonical basis for $0 \leq n \leq 10$, we obtain
			$$\left[\begin{array}{ccccccccccc}
				1 & 0 & 2 & 0 & 6 & 0 & 20 & 0 & 70 & 0 & 252 \\
				0 & 1 & 0 & 3 & 0 & 10 & 0 & 35 & 0 & 126 & 0 \\
				0 & 0 & 1 & 0 & 4 & 0 & 15 & 0 & 56 & 0 & 210 \\
				0 & 0 & 0 & 1 & 0 & 5 & 0 & 21 & 0 & 84 & 0 \\
				0 & 0 & 0 & 0 & 1 & 0 & 6 & 0 & 28 & 0 & 120 \\
				0 & 0 & 0 & 0 & 0 & 1 & 0 & 7 & 0 & 36 & 0 \\
				0 & 0 & 0 & 0 & 0 & 0 & 1 & 0 & 8 & 0 & 45 \\
				0 & 0 & 0 & 0 & 0 & 0 & 0 & 1 & 0 & 9 & 0 \\
				0 & 0 & 0 & 0 & 0 & 0 & 0 & 0 & 1 & 0 & 10 \\
				0 & 0 & 0 & 0 & 0 & 0 & 0 & 0 & 0 & 1 & 0 \\
				0 & 0 & 0 & 0 & 0 & 0 & 0 & 0 & 0 & 0 & 1
			\end{array}\right] $$
			which is positive and unipotent. The inverse matrix is
			$$\left[\begin{array}{rrrrrrrrrrr}
				1 & 0 & -2 & 0 & 2 & 0 & -2 & 0 & 2 & 0 & -2 \\
				0 & 1 & 0 & -3 & 0 & 5 & 0 & -7 & 0 & 9 & 0 \\
				0 & 0 & 1 & 0 & -4 & 0 & 9 & 0 & -16 & 0 & 25 \\
				0 & 0 & 0 & 1 & 0 & -5 & 0 & 14 & 0 & -30 & 0 \\
				0 & 0 & 0 & 0 & 1 & 0 & -6 & 0 & 20 & 0 & -50 \\
				0 & 0 & 0 & 0 & 0 & 1 & 0 & -7 & 0 & 27 & 0 \\
				0 & 0 & 0 & 0 & 0 & 0 & 1 & 0 & -8 & 0 & 35 \\
				0 & 0 & 0 & 0 & 0 & 0 & 0 & 1 & 0 & -9 & 0 \\
				0 & 0 & 0 & 0 & 0 & 0 & 0 & 0 & 1 & 0 & -10 \\
				0 & 0 & 0 & 0 & 0 & 0 & 0 & 0 & 0 & 1 & 0 \\
				0 & 0 & 0 & 0 & 0 & 0 & 0 & 0 & 0 & 0 & 1
			\end{array}\right]$$ 
		\end{monexmp}
	\end{subsubsection}
	
	\begin{subsubsection}{Base change between $\mathcal B'(K)$ and $\mathcal B''(K)$}
		
			For any $n \geq 0$, we write $P_n=P_n(z)$ and
			$$z^n=\sum_{i \leq n} \lambda_{i,n} P_i$$
			the expansion of $z^n$ in the $P_n$. It follows from proposition \ref{prop:BtoB'} that that each $\lambda_{i,n}$ is positive.

		\begin{monlem}\label{lemcoeff}
			For any $n \geq 1$, we have :
			\begin{enumerate}
				\item $\lambda_{i,n}=0$ if $i \not \equiv n [2]$,
				\item $\lambda_{i,n} < \lambda_{i-2,n}$ for any $i \geq 2$ such that $i \equiv n[2]$.
			\end{enumerate} 
		\end{monlem}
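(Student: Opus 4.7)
The plan is to pass to the substitution $z = t + t^{-1}$, where both sides of the expansion $z^n = \sum_{i \leq n} \lambda_{i,n} P_i$ become very explicit Laurent polynomials in $t$, and then read off the coefficients $\lambda_{i,n}$ as binomial coefficients. This reduces both statements of the lemma to elementary facts about the central row of Pascal's triangle.

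First I would record the key identity from the definition of $P_n$: by induction, one checks directly from $P_n(x) = C_n(x) - C_{n-2}(x)$ and $C_n(t+t^{-1}) = \sum_{k=0}^{n}t^{n-2k}$ (or more simply from the recurrence $P_{n+1} = x P_n - P_{n-1}$) that $P_i(t+t^{-1}) = t^i + t^{-i}$ for every $i \geq 1$, while $P_0 = 1$. On the other hand, the binomial theorem gives
\[
z^n = (t+t^{-1})^n = \sum_{k=0}^{n} \binom{n}{k} t^{n-2k}.
\]
All exponents $n - 2k$ appearing on the right are congruent to $n$ modulo $2$.

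Next I would identify the coefficients $\lambda_{i,n}$. Grouping the monomials $t^{\pm i}$ together and using the symmetry $\binom{n}{k} = \binom{n}{n-k}$, the expression $z^n$ rewrites as
\[
z^n = \binom{n}{n/2}\,[n\text{ even}] + \sum_{\substack{1\leq i \leq n\\ i\equiv n\,[2]}}\binom{n}{(n-i)/2}\bigl(t^i + t^{-i}\bigr),
\]
where the isolated term is present only when $n$ is even and contributes to the coefficient of $P_0 = 1$. Comparing with $z^n = \sum_i \lambda_{i,n} P_i$ and using $P_i(t+t^{-1}) = t^i + t^{-i}$ for $i \geq 1$, we conclude that
\[
\lambda_{i,n} = \binom{n}{(n-i)/2} \quad \text{for } i \equiv n\,[2],\ 0 \leq i \leq n,
\]
and $\lambda_{i,n} = 0$ otherwise. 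This immediately yields assertion (1).

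Finally, for assertion (2), writing $k = (n-i)/2$, the claim $\lambda_{i,n} < \lambda_{i-2,n}$ becomes $\binom{n}{k} < \binom{n}{k+1}$. Since $i \geq 2$ with $i \equiv n\,[2]$, one has $k+1 \leq \lfloor n/2 \rfloor$: indeed, if $n$ is even then $i \geq 2$ gives $k+1 \leq n/2$, and if $n$ is odd then $i \geq 3$ gives $k+1 \leq (n-1)/2$. In both cases, $k+1$ lies in the strictly increasing part of the sequence $\bigl(\binom{n}{j}\bigr)_{j=0}^{\lfloor n/2 \rfloor}$, so the strict inequality holds. The only step that requires any genuine case distinction is this last parity check; everything else is bookkeeping around the substitution $z = t + t^{-1}$.
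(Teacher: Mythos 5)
Your proof is correct, and it takes a genuinely different route from the paper. The paper argues by induction on $n$, multiplying the expansion of $z^n$ by $P_1$ and using the three-term recurrence $P_1 P_i = P_{i-1} + P_{i+1}$ (with the special cases $P_1 P_1 = 2 + P_2$ and $P_1 P_0 = P_1$) to derive a Pascal-type recurrence for the $\lambda_{i,n}$, from which both assertions follow inductively; the closed form of the coefficients is never made explicit. You instead substitute $z = t + t^{-1}$, invoke the binomial theorem and the identification $P_i(t+t^{-1}) = t^i + t^{-i}$ for $i \geq 1$ (with $P_0 = 1$), and obtain the explicit formula $\lambda_{i,n} = \binom{n}{(n-i)/2}$ when $i \equiv n \pmod{2}$ and $0 \leq i \leq n$, from which both assertions become immediate facts about binomial coefficients. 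Your version is shorter, avoids the induction entirely, and yields the closed-form coefficients as a byproduct; its only cost is that it leans on the generating-function realization $P_i(t + t^{-1}) = t^i + t^{-i}$ rather than staying purely inside the algebra $\Z[z]$ via the recurrence. The parity bookkeeping in your final step is correct: for $i \geq 2$ with $i \equiv n \pmod 2$, $k = (n-i)/2$ satisfies $k + 1 \leq \lfloor n/2 \rfloor$, placing $k+1$ in the strictly increasing range of $\binom{n}{\cdot}$, so the inequality $\binom{n}{k} < \binom{n}{k+1}$ is indeed strict.
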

		\begin{proof}
			We prove it by induction on $n$. If $n=1$, then $z^n=P_1$ and thus $\lambda_{1,1}=1$ and $\lambda_{i,1}=0$ for all $i \neq 1$, the above assertions are true. We now prove the induction step. 
			$$z^{n+1}=z.z^n=P_1.\left(\sum_{i \leq n} \lambda_{i,n} P_i \right)$$
			Now according to \cite[prop. 5.4 (1)]{shermanz}, we have
			$$P_1P_i=\left\{ \begin{array}{rl}
				P_{i-1}+P_{i+1} & \textrm{ if } n >1 \\
				2 + P_2 & \textrm{ if } i =1\\
				P_1 & \textrm{ if } i=0
			\end{array}\right.$$
			it follows that
			\begin{align*}
				z^{n+1}
					&= \lambda_{0,n}P_1P_0 + \lambda_{1,n} P_1P_1+\sum_{2 \leq i \leq n}\lambda_{i,n}P_1P_i \\
					&= \lambda_{0,n}P_1 + \lambda_{1,n} (2+P_2)+\sum_{2 \leq i \leq n}\lambda_{i,n}(P_{i-1}+P_{i+1}) \\
					&= 2\lambda_{1,n}P_0 + (\lambda_{0,n}+\lambda_{2,n})P_1 + \sum_{i \geq 2}(\lambda_{i-1,n}+\lambda_{i+1,n}) P_i
			\end{align*}
			A direct check proves that the induction step is verified.
		\end{proof}

		\begin{maprop}\label{prop:B''toB'}
			There is a positive locally unipotent base change from $\mathcal B''(K)$ to $\mathcal B'(K)$.
		\end{maprop}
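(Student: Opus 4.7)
The plan is to compare the ``imaginary'' part of the two bases, since the two bases agree on cluster monomials. Both $\ens{z^n \ : \ n \geq 0}$ and $\ens{C_n(z) \ : \ n \geq 0}$ are families of elements of $\mathcal A(K)$ indexed by non-negative integers, and by lemma \ref{lem:Chebyshev} (together with the fact that $X_\delta = X_{M_\lambda} = z$), the elements of $\mathcal B''(K)$ indexed by $n\delta$ are precisely $C_n(z)$. Thus it suffices to prove that the family $(z^n)_{n \geq 0}$ is obtained from $(C_n(z))_{n \geq 0}$ by a positive locally unipotent base change.

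First I would establish the triangular unipotent property. Since $C_n$ is a unitary polynomial of degree $n$ in a single variable, an easy induction shows that for every $n \geq 0$, the $\Z$-modules spanned by $\ens{z^k \ : \ 0 \leq k \leq n}$ and $\ens{C_k(z) \ : \ 0 \leq k \leq n}$ coincide, and the base change matrix expressing $(z^k)_{0 \leq k \leq n}$ in the family $(C_k(z))_{0 \leq k \leq n}$ is upper-triangular with $1$'s on the diagonal. This is the locally unipotent part.

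Next, to obtain positivity, I would set $z^n = \sum_{0 \leq k \leq n} \mu_{k,n} C_k(z)$ and prove by induction on $n$ that $\mu_{k,n} \in \Z_{\geq 0}$. The key identity is the recursion
\[
z\, C_k(z) = C_{k+1}(z) + C_{k-1}(z) \quad \textrm{for } k \geq 1, \qquad z\, C_0(z) = C_1(z),
\]
which is a direct consequence of the defining recursion $C_{k+1}(x) = x\,C_k(x) - C_{k-1}(x)$ together with $C_{-1}=0$. Multiplying the inductive expression for $z^n$ by $z$ and regrouping yields
\[
\mu_{0,n+1} = \mu_{1,n}, \quad \mu_{1,n+1} = \mu_{0,n} + \mu_{2,n}, \quad \mu_{k,n+1} = \mu_{k-1,n} + \mu_{k+1,n} \textrm{ for } k \geq 2,
\]
with the convention $\mu_{k,n} = 0$ for $k > n$. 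Starting from $\mu_{0,0}=1$, all $\mu_{k,n}$ are then visibly non-negative integers, establishing positivity.

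Finally, I would assemble the two pieces: the restriction of the base change to the ``cluster monomial'' parts of $\mathcal B'(K)$ and $\mathcal B''(K)$ is the identity, while on the complements the change of basis is the triangular unipotent transformation with non-negative entries constructed above. The main step here is really only the recursion argument; there is no genuine obstacle, just bookkeeping to make sure that the filtration by the degree in $z$ (equivalently, by multiples of $\delta$ in the denominator vector, thanks to the denominators theorem) is compatible with the ordering needed in the definition of a locally unipotent base change.
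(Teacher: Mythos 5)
Your proof is correct, and it takes a genuinely different and arguably cleaner route than the paper's. The paper does not work directly with the $C_n$: it first expands $z^n$ in the first-kind Chebyshev polynomials $P_i$, proves in lemma \ref{lemcoeff} that the resulting coefficients $\lambda_{i,n}$ satisfy a parity constraint \emph{and} a strict decrease $\lambda_{i,n} < \lambda_{i-2,n}$, and only then passes to the $C_n$-expansion via the relation $P_n = C_n - C_{n-2}$, obtaining the coefficient of $C_i$ as the difference $\lambda_{i,n}-\lambda_{i+2,n}$, whose positivity requires precisely the strict-decrease half of the lemma. You bypass the $P_n$ family entirely: the recursion $zC_k(z)=C_{k+1}(z)+C_{k-1}(z)$ (with $zC_0 = C_1$) gives a Pascal-triangle recursion for the $\mu_{k,n}$ in which non-negativity is manifest, so no strict inequality is needed. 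Your route buys a shorter and more self-contained argument; the paper's detour through the $P_n$ is chosen because that machinery was already set up for the base change from the canonical basis $\mathcal B(K)$ to $\mathcal B'(K)$ in proposition \ref{prop:BtoB'}, so the paper gets lemma \ref{lemcoeff} to do double duty. Your treatment of the unipotent-triangularity part (from $C_n$ being monic of degree $n$) and your handling of the cluster-monomial part (identity map) match the paper's implicit reductions.
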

		\begin{proof}
			As 
			$$\mathcal B'(K)=\ens{\textrm{cluster monomials}}\sqcup\ens{z^n, n \geq 0}$$
			and
			$$\mathcal B''(K)=\ens{\textrm{cluster monomials}}\sqcup\ens{C_n(z), n \geq 0},$$
			it suffices to prove that for any $n \geq 0$, the coefficients of the expansion of $z^n$ as a linear combination of the $C_n(z)$ is positive.
			
			We denote by $C_n=C_n(z)$. Then we recall that $P_n=C_n-C_{n-2}$. We write
			$$z^n=\sum_{i \leq n} \lambda_{i,n} P_n$$
			the expansion of $z^n$ as a linear combination of the $P_n$. Then
			\begin{align*}
				z^n
					&=\sum_{i} \lambda_{i,n} (C_n-C_{n-2}) \\
					&=\sum_{i} (\lambda_{i,n}-\lambda_{i+2,n}) C_n\\
			\end{align*}
			but according to lemma \ref{lemcoeff}, the difference $(\lambda_{i,n}-\lambda_{i+2,n})$ is positive and the $z^n$ can be written as a positive linear combination of the $C_n$.
		\end{proof}
		
		\begin{monexmp}
			If we look at the base change matrix from $(C_n(z), 0 \leq n \leq 10) \subset \mathcal B''(K)$ to $(z^n, 0 \leq n \leq 10) \subset \mathcal B'(K)$ is 
			$$\left[\begin{array}{ccccccccccc}
				1 & 0 & 1 & 0 & 2 & 0 & 5 & 0 & 14 & 0 & 42 \\
				0 & 1 & 0 & 2 & 0 & 5 & 0 & 14 & 0 & 42 & 0 \\
				0 & 0 & 1 & 0 & 3 & 0 & 9 & 0 & 28 & 0 & 90 \\
				0 & 0 & 0 & 1 & 0 & 4 & 0 & 14 & 0 & 48 & 0 \\
				0 & 0 & 0 & 0 & 1 & 0 & 5 & 0 & 20 & 0 & 75 \\
				0 & 0 & 0 & 0 & 0 & 1 & 0 & 6 & 0 & 27 & 0 \\
				0 & 0 & 0 & 0 & 0 & 0 & 1 & 0 & 7 & 0 & 35 \\
				0 & 0 & 0 & 0 & 0 & 0 & 0 & 1 & 0 & 8 & 0 \\
				0 & 0 & 0 & 0 & 0 & 0 & 0 & 0 & 1 & 0 & 9 \\
				0 & 0 & 0 & 0 & 0 & 0 & 0 & 0 & 0 & 1 & 0 \\
				0 & 0 & 0 & 0 & 0 & 0 & 0 & 0 & 0 & 0 & 1
			\end{array}\right]$$
			which is positive and unipotent. The inverse matrix is
			$$\left[\begin{array}{rrrrrrrrrrr}
				1 & 0 & -1 & 0 & 1 & 0 & -1 & 0 & 1 & 0 & -1 \\
				0 & 1 & 0 & -2 & 0 & 3 & 0 & -4 & 0 & 5 & 0 \\
				0 & 0 & 1 & 0 & -3 & 0 & 6 & 0 & -10 & 0 & 15 \\
				0 & 0 & 0 & 1 & 0 & -4 & 0 & 10 & 0 & -20 & 0 \\
				0 & 0 & 0 & 0 & 1 & 0 & -5 & 0 & 15 & 0 & -35 \\
				0 & 0 & 0 & 0 & 0 & 1 & 0 & -6 & 0 & 21 & 0 \\
				0 & 0 & 0 & 0 & 0 & 0 & 1 & 0 & -7 & 0 & 28 \\
				0 & 0 & 0 & 0 & 0 & 0 & 0 & 1 & 0 & -8 & 0 \\
				0 & 0 & 0 & 0 & 0 & 0 & 0 & 0 & 1 & 0 & -9 \\
				0 & 0 & 0 & 0 & 0 & 0 & 0 & 0 & 0 & 1 & 0 \\
				0 & 0 & 0 & 0 & 0 & 0 & 0 & 0 & 0 & 0 & 1
			\end{array}\right]$$
		\end{monexmp}
	\end{subsubsection}	
\end{subsection}

\begin{subsection}{Semicanonical basis for cluster algebras of type $\affA 21$}\label{subsection:A21}
		We are now interested in the particular case of quivers of affine type $\affA 21$, which is the only example of simply laced affine cluster algebra of rank 3. Such a quiver is necessarily isomorphic to 
		$$\xymatrix{
			&&2 \ar[rd]\\
			Q:& 1 \ar[ru] \ar[rr] &&3
		}.$$
		
		For any $\lambda \in k$, we set $M_\lambda$ to be the representation given by
		$$\xymatrix{
			&&k \ar[rd]^\lambda\\
			M_\lambda:& k \ar[ru]^1 \ar[rr]^1 &&k
		}$$
		and we set 
		$$\xymatrix{
			&&k \ar[rd]^1\\
			M_\infty:& k \ar[ru]^1 \ar[rr]^0 &&k
		}.$$
		
		We identify $\P^1$ and $k \sqcup \ens \infty$. For any $0 \neq \lambda \in \P^1 \setminus \ens{\infty}$, $M_\lambda$ is a quasi-module in an homogeneous tube and $\P^1_0=\P^1 \setminus \ens 0$. The only exceptional tube is $\mathcal T_0$ of rank 2 whose quasi-simples are 
		$$\xymatrix{
			&&k \ar[rd]^0\\
			E_0:& 0 \ar[ru] \ar[rr] &&0
		}$$
		and
		$$\xymatrix{
			&&0 \ar[rd]\\
			E_1:& k \ar[ru]^0 \ar[rr]^1 &&k
		}.$$
		
		We denote by 
		$$x_0=X_{E_0}=\frac{u_1+u_3}{u_2}$$
		$$x_1=X_{E_1}=\frac{1+u_1u_2+u_2u_3}{u_1u_3}$$
		$$z=X_{M_\lambda}=\frac{u_1^2u_2+u_1+u_3+u_2u_3^2}{u_1u_2u_3u_4}$$
		
		Note that for $i \neq j$, we have 
		$$\Ext^1_{\mathcal C_Q}(E_i,E_j) \neq 0,$$
		thus, the only rigid modules in the additive closure of $\ens{E_i\ : \ i =0,1}$ are $E_0^{\oplus n}$ and $E_1^{\oplus n}$ for $n \geq 1$.
		
		According to proposition \ref{prop:explicitbase}, we have then 
		$$\mathcal B'(Q)=\ens{\textrm{cluster monomials}} \sqcup \ens{z^nx_i^r \ : \ n >0, r \geq 0, i=0,1}.$$
\end{subsection}

\begin{subsection}{Semicanonical basis for cluster algebras of type $\affA 31$}\label{subsection:A31}
		We are now interested in a slightly more complicated example that we already met in example \ref{exmp:differencedeltaA31}. Consider a quiver $Q$ of type $\affA 31$, it is necessarily isomorphic to the quiver 
		$$\xymatrix{
			&&2 \ar[r] & 3\ar[rd]\\
			Q:& 1 \ar[ru] \ar[rrr] &&&4
		}.$$
		
		We keep the notations of example \ref{exmp:differencedeltaA31}. We compute
		$$x_0=X_{E_0}=\frac{u_2+u_4}{u_3}$$
		$$x_1=X_{E_1}=\frac{u_1+u_3}{u_2}$$
		$$x_2=X_{E_2}=\frac{1+u_1u_3+u_2u_4}{u_1u_4}$$
		$$y_0=X_{E_0^{(2)}}=\frac{u_2u_1+u_1u_4+u_4u_3}{u_2u_3}$$
		$$y_1=X_{E_1^{(2)}}=\frac{u_3u_1^2+u_1+u_3^2u_1+u_3+u_3u_2u_4}{u_1u_2u_4}$$
		$$y_2=X_{E_2^{(2)}}=\frac{u_2u_3u_1+u_2+u_2^2u_4+u_4+u_2u_4^2}{u_1u_3u_4}$$
		$$z=X_{M_\lambda}=\frac{u_3u_2u_1^2+u_2u_1+u_1u_4+u_4u_3+u_3u_2u_4^2}{u_1u_2u_3u_4}$$
		
		Note that for $i \neq j$, we have 
		$$\Ext^1_{\mathcal C_Q}(E_i,E_j) \neq 0,$$
		$$\Ext^1_{\mathcal C_Q}(E_i^{(2)},E_j^{(2)}) \neq 0,$$
		$$\Ext^1_{\mathcal C_Q}(E_i,E_j^{(2)}) \neq 0,$$
		Finally, for any $i=0,1,2$, we have $\Ext^1_{\mathcal C_Q}(E_i,E_i^{(2)})=0$. Thus, the only rigid modules in the additive closure of $\ens{E_i, E_i^{(2)} \ : \ i =0,1,2}$ are the $E_i^{\oplus r}\oplus (E_i^{(2)})^{\oplus s}$ for $r,s \geq 0$ and $i=0,1,2$.
		
		According to proposition \ref{prop:explicitbase}, we have then 
		$$\mathcal B'(Q)=\ens{\textrm{cluster monomials}} \sqcup \ens{z^nx_i^ry_i^s \ : \ n >0, r,s \geq 0, i= 0,1,2}.$$
\end{subsection}

\begin{subsection}{Semicanonical basis for cluster algebras of type $\affA 22$}\label{subsection:A22}
		The only other example of (simply laced) rank 4 affine cluster algebra is the affine type $\affA 22$. Up to one mutation, we can assume that the considered quiver is
		$$\xymatrix{
			&&2 \ar[rd]\\
			Q:& 1 \ar[ru] \ar[rd] &&4\\
			&&3 \ar[ru]\\
		}.$$
		
		The AR-quiver of $Q$ contains two exceptional tubes of rank 2 $\mathcal T_0$ and $T_\infty$. The quasi-simples of $\mathcal T_0$ are denoted by $E_0$ and $E_1$ and the quasi-simples of $\mathcal T_\infty$ are denoted by $F_0$ and $F_1$. 
		
		We compute
		$$x_0=X_{E_0}=\frac{u_1+u_4}{u_2}$$
		$$x_1=X_{E_1}=\frac{u_1+u_4+u_1u_2u_3+u_2u_3u_4}{u_1u_3u_4}$$
		$$y_0=X_{F_0}=\frac{u_1+u_4}{u_3}$$
		$$y_1=X_{F_2}=\frac{u_1+u_4+u_1u_2u_3+u_2u_3u_4}{u_1u_2u_4}$$
		$$z=X_{M_\lambda}=\frac{u_1^2+u_4^2+2u_1u_4+u_1^2u_2u_3+u_2u_3u_4^2}{u_1u_2u_3u_4}$$
		
		Note that, we have 
		$$\Ext^1_{\mathcal C_Q}(E_0,E_1) \simeq k^2 \simeq \Ext^1_{\mathcal C_Q}(F_0,F_1),$$
		$$\Ext^1_{\mathcal C_Q}(E_i,F_j)=0 \textrm{ for }i \neq j.$$
		Thus, the only rigid modules in the additive closure of $\ens{E_0,E_1,F_0,F_1}$ are the $E_i^{\oplus r}\oplus F_j^{\oplus s}$ for $r,s \geq 0$ and $i,j=0,1$.
		
		According to proposition \ref{prop:explicitbase}, we have then 
		$$\mathcal B'(Q)=\ens{\textrm{cluster monomials}} \sqcup \ens{z^nx_i^ry_j^s \ : \ n >0, r,s \geq 0, i,j= 0,1}.$$
\end{subsection}

\section{Conjectures and questions}\label{section:conjectures}
	\begin{subsection}{Semicanonical basis for cluster algebras of affine types $\tilde {\mathbb E},\tilde {\mathbb D}$}
	We conjecture that corollary \ref{theorem:semicanonicalbasisA} holds for any quiver of affine type.
	\begin{maconj}
		Let $Q$ be an affine quiver. Then the set $\mathcal B'(Q)$ of all generic variables is a $\Z$-basis in $\mathcal A(Q)$.
	\end{maconj}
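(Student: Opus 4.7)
The plan is to reduce the conjecture, via Theorem \ref{theorem:semicanonicalbasis}, to establishing the difference property for every affine quiver. Theorem \ref{theorem:differencedelta} handles type $\Aaffine$, and by Proposition \ref{prop:mutationtype} all quivers of a fixed affine type are reflection-equivalent, so only types $\Daffine_n$ ($n \geq 4$) and $\Eaffine_n$ ($n = 6,7,8$) remain. I would combine two ingredients: (a) a direct verification of the difference property for one convenient orientation per type, and (b) a stability result asserting that if $Q$ satisfies the difference property and $i$ is a sink of $Q$, then so does $\sigma_i Q$.

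For (b), although $M_E$ and $M_\lambda$ are not rigid, the formula $X_{M_E}=P_{p,p}(X_{E_0},\ldots,X_{E_{p-1}})$ from \cite{Dupont:stabletubes} (already exploited in the proof of Proposition \ref{prop:TubeinZB}) expresses $X^Q_{M_E}$ as a polynomial in the Caldero-Chapoton values of the rigid quasi-simples $E_j$ of the tube containing $E$. Since each $E_j$ is rigid, Lemma \ref{lem:exceptionnel} gives $\Phi_i(X^Q_{E_j})=X^{\sigma_i Q}_{R_i^+ E_j}$; combined with Corollary \ref{corol:Flongueur} this yields the unconditional identity $\Phi_i(X^Q_{M_E})=X^{\sigma_i Q}_{M_{R_i^+ E}}$. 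The analogous identity $\Phi_i(X^Q_{\regrad M_E/E})=X^{\sigma_i Q}_{\regrad M_{R_i^+ E}/R_i^+ E}$ is immediate from Lemma \ref{lem:exceptionnel} since the quotient is rigid. Subtracting these and invoking the difference property on $Q$, stability under reflection reduces to the single equality $\Phi_i(X^Q_{M_\lambda})=X^{\sigma_i Q}_{M_\mu}$ for any $\mu\in\P^1_0(\sigma_iQ)$; this should be extractable from Crawley-Boevey's one-point-extension presentation of a homogeneous quasi-simple (as in the proof of Lemma \ref{lem:XMlambda}), since all auxiliary modules involved there are transjective and hence rigid, so their Caldero-Chapoton values commute with $\Phi_i$ via Lemma \ref{lem:exceptionnel}.

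For (a), the main difficulty is that the proof in type $\Aaffine$ relies crucially on Lemma \ref{lem:caracun}, which exploits $\delta_i \leq 1$; in types $\Daffine,\Eaffine$ the coefficients of $\delta$ can reach six, so $\Gr_{\mathbf v}(M_\lambda)$ is no longer a point whenever non-empty and a direct Euler-characteristic analysis becomes intractable. I would instead attempt an indirect algebraic route: apply Theorem \ref{theorem:onedimmult} or Theorem \ref{theorem:multiplication} to a carefully chosen exchange pair whose middle terms, after stripping off rigid summands, coincide with $M_E$ and with an extension through a homogeneous tube, then extract the identity by matching. A complementary strategy, exploiting that every affine quiver of type $\Daffine$ or $\Eaffine$ is reflection-equivalent to an alternating orientation of a tree (hence gradable by Example \ref{exmp:alternating}), is to invoke Proposition \ref{prop:supportconeCK1} and the filtration of Proposition \ref{prop:graduationCK1} to compare the Laurent expansions of $X_{M_E}$ and $X_{M_\lambda}$ stratum by stratum, reducing the identity to an inductive combinatorial statement about characteristics of submodule grassmannians.

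The hardest step will be step (a), specifically the analogue of Proposition \ref{prop:grassmannians} without the constraint $\delta_i \leq 1$: one needs to identify, for each dimension vector $\mathbf v \leq \delta$, a cancellation mechanism pairing the contributions of submodules of $M_\lambda$ with those of $M_E$ that do not contain $E$, leaving only the contributions from submodules of $\regrad M_E$ containing $E$. In type $\Aaffine$ this pairing was trivial because every relevant grassmannian was either empty or a point; in the general affine case the grassmannians are positive-dimensional projective varieties and the pairing must be upgraded to an isomorphism (or at least an Euler-characteristic-preserving correspondence) between the appropriate strata, which will presumably require a more refined understanding of the geometry of regular representations in tubes than is presently available.
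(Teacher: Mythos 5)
The statement you address is a conjecture in the paper, not a theorem: the paper itself only reduces it (via Theorem~\ref{theorem:semicanonicalbasis}) to the open Conjecture~\ref{conj:differencedelta}, and your proposal correctly recovers this reduction. Type $\Aaffine$ is settled by Theorem~\ref{theorem:differencedelta}; types $\Daffine$ and $\Eaffine$ remain.

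Your step (b) is a valid and genuinely useful refinement. Fix an extending vertex $e$ of $Q$ and apply $\Phi_i$ to the multiplication relation $X^Q_P X^Q_{M_\lambda} = X^Q_L + X^Q_B$ coming from the one-point extension construction in the proof of Lemma~\ref{lem:XMlambda}; since $P$, $L$, $B$ are transjective and hence rigid, Lemma~\ref{lem:exceptionnel} turns this into $X^{\sigma_i Q}_{R_i^+P}\,\Phi_i(X^Q_{M_\lambda}) = X^{\sigma_i Q}_{R_i^+L} + X^{\sigma_i Q}_{R_i^+B}$. Because $R_i^+$ is a triangle equivalence, the one-dimensional multiplication formula in $\mathcal C_{\sigma_i Q}$ applied to $R_i^+P$ and $R_i^+M_\lambda$ produces exactly the same right-hand side, so $\Phi_i(X^Q_{M_\lambda}) = X^{\sigma_i Q}_{R_i^+M_\lambda}$ holds with no difference-property hypothesis at all. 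Combined with the generalized Chebyshev expansion, Lemma~\ref{lem:exceptionnel}, and Corollary~\ref{corol:Flongueur} (giving $\Phi_i(X^Q_{M_E}) = X^{\sigma_i Q}_{M_{R_i^+E}}$ and the analogous identity for the quasi-radical quotient), this does show that the difference property propagates across the reflection class. This is in fact sharper than the paper's Proposition~\ref{prop:reflexionaffine}, which assumes the difference property on both $Q$ and $\sigma_i Q$; your version would let one weaken the hypothesis of Theorem~\ref{theorem:semicanonicalbasis} from ``every quiver reflection-equivalent to $Q$ satisfies the difference property'' to ``some quiver reflection-equivalent to $Q$ satisfies it.''

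However, step (a) is a genuine gap, and you acknowledge it yourself: you do not prove the difference property for any orientation of $\Daffine$ or $\Eaffine$. This is exactly the paper's open Conjecture~\ref{conj:differencedelta} (equivalently Conjecture~\ref{conj:grassmannians}). Your diagnosis of why the type-$\Aaffine$ argument breaks down is correct: once $\delta_i>1$ the grassmannians $\Gr_{\textbf v}(M_\lambda)$ and $\Gr_{\textbf v}(M_E)$ are positive-dimensional projective varieties, Lemma~\ref{lem:caracun} gives nothing, and one would need a stratification-level cancellation, not the point-or-empty dichotomy of Proposition~\ref{prop:grassmannians}. The alternative strategies you sketch (matching middle terms in multiplication formulas; stratum-by-stratum Laurent-expansion comparison via Proposition~\ref{prop:graduationCK1}) are plausible directions but not executed. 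So the proposal reduces the conjecture to the same unproved core statement the paper leaves open. It is worth adding that the paper's Lemma~\ref{lem:differenceconstante} already shows that $X_{M_E} - X_{\regrad M_E/E}$ does not depend on the choice of quasi-simple $E$ in a given tube, so for step (a) it would suffice to verify the identity for a single well-chosen $E$ per exceptional tube, for instance the one arising from Crawley-Boevey's one-point extension.
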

	In order to prove this conjecture, it follows from theorem \ref{theorem:semicanonicalbasis} that one only has to prove that a quiver of affine type $\Daffine$ or $\Eaffine$ satisfies the difference property.

	It seems unlikely to carry out proposition \ref{prop:grassmannians} to the other affine types. The possible multiplicities in the grassmannians do not leave any hope to adapt directly the proof given in this article. Using the notations of proposition \ref{prop:grassmannians}, the first conjecture is:
	
	\begin{maconj}\label{conj:grassmannians}
		Let $Q$ be an affine quiver of affine type, $\lambda\in \P^1_0(Q)$ and $E$ be a quasi-simple module in an exceptional tube. Then, for any dimension vector $\textbf v$, the following equality holds:
		$$\chi(\Gr_{\textbf v}(M_E)) =\chi(\Gr_{\textbf v}(M_\lambda)) +\chi(\Gr_{\textbf v}^E(\regrad M_E))$$
	\end{maconj}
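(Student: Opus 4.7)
The plan is to establish the conjecture by realising $M_E$ and the $M_\lambda$ as fibres of a single geometric family of $\delta$-dimensional representations and tracking the Euler characteristic of quiver grassmannians under specialisation, in the spirit of what was done in type $\Aaffine$ via the dichotomy of Proposition \ref{prop:grassmannians} and Lemma \ref{lem:caracun}.

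First I would set up the family. Following Crawley-Boevey's construction recalled in the proof of Lemma \ref{lem:XMlambda}, fix an extending vertex $e \in Q_0$, set $P=P_e$, and let $L$ be the unique indecomposable of dimension $\delta + \ddim P$. The assignment $\lambda \mapsto \coker \lambda$ on $\P\Hom_{kQ}(P,L) \cong \P^1$ provides a universal family $\pi : \mathcal M \to \P^1$ of indecomposable regular modules of dimension $\delta$, whose fibre over $\lambda \in \P^1_0$ is $M_\lambda$ and whose fibre over the exceptional parameter $t_0$ indexed by $E$ is $M_E$. Form the relative quiver grassmannian $p : \mathcal{Gr}_{\textbf v}(\mathcal M) \to \P^1$ (a projective morphism, in general not flat).

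Next I would stratify $\Gr_{\textbf v}(M_E) = p^{-1}(t_0)$ into the open subset $\Gr^{\neg E}_{\textbf v}(M_E)$ of subrepresentations $V$ with $E \not\subset V$ and the closed subset $\Gr^E_{\textbf v}(M_E)$ of those with $E \subset V$. Since $M_\lambda$ is quasi-simple, $E$ is never a submodule of $M_\lambda$ for $\lambda \in \P^1_0$, so on nearby fibres the whole grassmannian lies in the open stratum. The conjecture then splits into two claims: (i) $\chi(\Gr^{\neg E}_{\textbf v}(M_E)) = \chi(\Gr_{\textbf v}(M_\lambda))$, and (ii) $\chi(\Gr^E_{\textbf v}(M_E)) = \chi(\Gr^E_{\textbf v}(\regrad M_E))$. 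Claim (i) should follow by restricting $\mathcal{Gr}_{\textbf v}(\mathcal M)$ to the open subscheme of pairs $(t,V)$ with the image of $V$ in $\mathcal M_t/\regrad\mathcal M_t$ being ``generic'', then showing the resulting morphism to $\P^1$ is flat in a neighbourhood of $t_0$, so its fibrewise Euler characteristic is constant. The geometric mechanism is that the obstruction to deforming $V$ along the family lies in $\Ext^1_{kQ}(V, \mathcal M_t/V)$, and the condition $E \not\subset V$ is exactly what is needed to make the relevant component of this obstruction vanish.

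Claim (ii) is where I expect the main obstacle. One has the inclusion $\Gr^E_{\textbf v}(\regrad M_E) \subseteq \Gr^E_{\textbf v}(M_E)$, and the complement consists of subrepresentations $V$ with $E \subset V$ but $V \not\subset \regrad M_E$. In type $\Aaffine$ Proposition \ref{prop:grassmannians} shows this complement is empty because $\delta$ is multiplicity-free; in general it is nonempty and of positive dimension, so the equality of Euler characteristics is not achievable by a set-theoretic argument. The plan is to produce a $\mathbb G_m$-action (or more generally a constructible fibration) on $\Gr^E_{\textbf v}(M_E)$ whose fixed locus is exactly $\Gr^E_{\textbf v}(\regrad M_E)$ and whose non-fixed orbits are affine lines, so that by the Bia{\l}ynicki-Birula decomposition of Euler characteristics the two grassmannians have the same $\chi$. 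A natural candidate arises from the canonical $\mathbb G_m$-action scaling the top factor $M_E/\regrad M_E$, lifted suitably through the short exact sequence $0 \to \regrad M_E \to M_E \to M_E/\regrad M_E \to 0$; constructing this lift in a functorial way compatible with the submodule condition is the genuinely hard step, and explains why the general affine case is not a routine extension of the $\Aaffine$ analysis carried out in the paper.
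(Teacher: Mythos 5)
Note first that the statement you are tackling is Conjecture~\ref{conj:grassmannians}, which the paper leaves open: it proves the identity only in type $\Aaffine$, through Proposition~\ref{prop:grassmannians} and Lemma~\ref{lem:caracun} (which forces every nonempty grassmannian involved to be a single point), and remarks explicitly that the argument does not transfer to the other affine types. There is therefore no proof in the paper to compare against; what follows is an assessment of the internal soundness of your plan.

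There is a genuine gap in your reduction: Claims (i) and (ii) are both individually false, already in type $\Aaffine$. Take $\textbf v=\delta$. The only subrepresentation of $M_E$ of dimension $\delta$ is $M_E$ itself, which contains $E$, so $\Gr^{\neg E}_{\delta}(M_E)=\emptyset$, whereas $\Gr_{\delta}(M_\lambda)=\ens{M_\lambda}$; Claim (i) would give $0=1$. Dually, $\Gr^{E}_{\delta}(M_E)=\ens{M_E}$ whereas $\Gr^{E}_{\delta}(\regrad M_E)=\emptyset$ since $\ddim\regrad M_E\lneqq\delta$; Claim (ii) would give $1=0$. The two errors cancel in the sum, but your plan is to establish each claim separately, which cannot work for false statements. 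The root cause is that the open stratum $\Gr^{\neg E}_{\textbf v}(M_E)$ is not the correct geometric model for $\Gr_{\textbf v}(M_\lambda)$: already in the $\Aaffine$ proof, case (3) of Proposition~\ref{prop:grassmannians} handles $V$ with $E\subset V\not\subset\regrad M_E$ by producing a preprojective submodule of $M_\lambda$ via a nontrivial extension of the regular part of $V$ by its preprojective part, not by deforming $V$ in a family. For the same reason your assertion that ``in type $\Aaffine$ Proposition~\ref{prop:grassmannians} shows this complement is empty'' is incorrect --- that complement is precisely case (3), and it contains $M_E$ itself when $\textbf v=\delta$. What the proposition actually furnishes is a dichotomy on the unique submodule (preprojective-only, or contained in $\regrad M_E$, or neither), matched against nonemptiness of $\Gr_{\textbf v}(M_\lambda)$, combined with the dimension shift $\textbf v\mapsto\textbf v-\ddim E$ appearing in the proof of Theorem~\ref{theorem:differencedelta}. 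Any attack on the general conjecture would have to reproduce that combinatorial exchange at the level of Euler characteristics rather than stratify a single grassmannian along the $E\subset V$ condition.
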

	
	An unsatisfying method, but at least fruitful in simple examples, consists in doing explicit computations of cluster variables. In order to have a finite number of computations, one can chose a given orientation and try to deal later with reflection functors in order to obtain the result for any orientations. Nevertheless, it seems rather hopeless to do the computations for affine types $\tilde {\mathbb{E}}$.
	
	As proposition \ref{prop:grassmannians} is only used in order to prove theorem \ref{theorem:differencedelta}, one can also try to prove directly theorem \ref{theorem:differencedelta}. The following conjecture would be a direct consequence of conjecture \ref{conj:grassmannians} but it might be simpler to prove it directly.
	
	\begin{maconj}\label{conj:differencedelta}
		Every quiver of affine type satisfies the difference property.
	\end{maconj}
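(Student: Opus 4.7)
The plan is to attack Conjecture \ref{conj:differencedelta} by establishing the stronger Conjecture \ref{conj:grassmannians} on Euler characteristics of quiver grassmannians, combined with a reflection-functor reduction that cuts the work down to one orientation per affine type.

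First I would exploit the compatibility of the difference property with extended BGP reflection functors. By Lemma \ref{lem:Ftubes} and Corollary \ref{corol:Flongueur}, an extended reflection functor $R_i^+$ sends $M_E$ to $M_{R_i^+E}$, $E$ to $R_i^+E$, and $\regrad M_E/E$ to $\regrad M_{R_i^+E}/R_i^+E$; it also sends quasi-simples in homogeneous tubes to quasi-simples in homogeneous tubes. Combined with Lemma \ref{lem:XMlambda} (the value $X_{M_\mu}$ is independent of $\mu\in\P^1_0$) and the fact that the canonical cluster algebra isomorphism $\Phi_i$ already matches $X^Q_{M}$ with $X^{\sigma_iQ}_{R_i^+M}$ on rigid objects (Lemma \ref{lem:exceptionnel}), the difference identity is transported faithfully along reflections. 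Since every quiver of type $\Daffine$ or $\Eaffine$ is reflection-equivalent to any chosen orientation, it suffices to verify the difference property for one convenient orientation per type.

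Next I would target Conjecture \ref{conj:grassmannians} directly. The computation at the end of the proof of Theorem \ref{theorem:differencedelta} shows that this identity of Euler characteristics formally implies the difference property (the Euler form computation with $\textbf n = \delta - c(\textbf e) - \textbf e$ is type-independent). To establish the identity, stratify
\[
\Gr_{\textbf v}(M_E) = S_0(\textbf v) \sqcup S_1(\textbf v),
\]
where $S_1(\textbf v) = \Gr_{\textbf v}^E(\regrad M_E)$ parametrises submodules $V\subset M_E$ containing $E$ and contained in $\regrad M_E$, and $S_0(\textbf v)$ is the complement. By uniseriality inside the tube, $S_0(\textbf v)$ consists precisely of submodules $V = V_P\oplus V_R$ with $V_R$ either $0$ or a quasi-sub of $M_E$ not contained in $\regrad M_E$. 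The bijection $U\mapsto U/E$ identifies $S_1(\textbf v)$ with $\Gr_{\textbf v-\textbf e}(\regrad M_E/E)$ as varieties, giving the term $\chi(\Gr_{\textbf v}^E(\regrad M_E))$ after the Euler-form bookkeeping. The substantive claim is then $\chi(S_0(\textbf v)) = \chi(\Gr_{\textbf v}(M_\lambda))$. The idea I would pursue is to build, for each $\textbf v$ with $S_0(\textbf v)\neq \emptyset$, a relative quiver grassmannian $\mathcal G_{\textbf v} \to \rep(Q,\delta)$ whose fibre over $M$ is $\Gr_{\textbf v}(M)$, and to show that the restriction of $\mathcal G_{\textbf v}$ to $S_0(\textbf v)$ extends to a flat family over the closure of a generic $G_\delta$-orbit. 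Constructibility of Euler characteristics and the denseness of $\bigsqcup_{\mu\in\P^1_0}\mathcal O_{M_\mu}$ in $\mathfrak M_\delta$ (Lemma \ref{lem:imaginary}) would then force $\chi(S_0(\textbf v)) = \chi(\Gr_{\textbf v}(M_\lambda))$.

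The main obstacle is this last Euler-characteristic identification in types $\Daffine$ and $\Eaffine$. Unlike the type-$\Aaffine$ case handled via Lemma \ref{lem:caracun}, here $\delta$ is no longer thin, so both $\Gr_{\textbf v}(M_E)$ and $\Gr_{\textbf v}(M_\lambda)$ may be positive-dimensional with nontrivial cohomology, and the flat-family argument must be controlled delicately: the degeneration $M_\lambda \leadsto M_E$ (inside $\rep(Q,\delta)$) can introduce extra components to the grassmannian, and one must verify that these extra components are exactly $S_1(\textbf v)$, with no further correction from $S_0(\textbf v)$. One plausible line of attack is to analyse the stratification of $\Gr_{\textbf v}$ using Schofield's general representation theory of subrepresentations together with Crawley-Boevey's one-point-extension construction extended to $\tilde{\mathbb D}$ and $\tilde{\mathbb E}$ via the extending vertices. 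A fallback is a direct case-by-case verification in the small-rank base cases $\tilde{\mathbb D}_4$, $\tilde{\mathbb D}_5$ and $\tilde{\mathbb E}_6$ (after which reflections propagate the property); one expects, however, that a uniform geometric argument in terms of semi-invariants and the Schofield stratification is the right conceptual route.
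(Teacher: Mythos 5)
Conjecture \ref{conj:differencedelta} is precisely that: a conjecture. The paper offers no proof, and in fact in the surrounding discussion the author explicitly states that it ``seems unlikely to carry out proposition \ref{prop:grassmannians} to the other affine types'' and that case-by-case computation ``seems rather hopeless to do ... for affine types $\tilde{\mathbb E}$.'' The only supporting result the paper gives is Lemma \ref{lem:differenceconstante}, which shows the difference $X_{M_E}-X_{\regrad M_E/E}$ is constant across quasi-simples $E$ within a fixed tube; it does not identify that constant with $X_{M_\lambda}$. So there is no ``paper's own proof'' to compare against, and your text is accordingly best read not as a proof but as a research sketch -- which you to your credit acknowledge in the last paragraph.

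That said, two concrete problems with the sketch are worth flagging. First, your reflection-functor reduction is circular. The statement $\Phi_i(X^Q_{M_\lambda})=X^{\sigma_iQ}_{R_i^+ M_\lambda}$ for $M_\lambda$ in a homogeneous tube is Proposition \ref{prop:reflexionaffine}, and its proof in the paper \emph{assumes} the difference property for both $Q$ and $\sigma_iQ$; Lemma \ref{lem:exceptionnel} only covers rigid objects, and $M_\lambda$ is not rigid. Using the ingredients you cite (Lemmas \ref{lem:Ftubes}, \ref{lem:XMlambda}, \ref{lem:exceptionnel} and Corollary \ref{corol:Flongueur}), together with the difference property of $Q$, one can transport the identity $X^Q_{M_E}-X^Q_{\regrad M_E/E}=X^Q_{M_\lambda}$ to conclude that the quantity $X^{\sigma_iQ}_{M_G}-X^{\sigma_iQ}_{\regrad M_G/G}$ is independent of the quasi-simple $G$; but one does \emph{not} obtain that it equals $X^{\sigma_iQ}_{M_\mu}$ for $\mu\in\P^1_0(\sigma_iQ)$, which is the content of the difference property for $\sigma_iQ$. (This tube-independence is essentially what Lemma \ref{lem:differenceconstante} establishes by a different route -- algebraic independence of generalized Chebyshev polynomials -- and the paper states the conjecture as open precisely because this falls short.) Second, the core analytic step, namely $\chi(S_0(\mathbf v))=\chi(\Gr_{\mathbf v}(M_\lambda))$ for $\Daffine$ and $\Eaffine$, is sketched only via an unconstructed ``flat family over the closure of a generic orbit,'' and you yourself note that the degeneration $M_\lambda \leadsto M_E$ may change components of the grassmannian in ways that are not controlled by the argument as written. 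This is exactly where the $\Aaffine$ argument (Lemma \ref{lem:caracun}, which crucially uses that $\delta$ is thin) breaks down, and no substitute is supplied. As a research plan the outline is reasonable, but as a proof it has two real gaps, and the statement remains a conjecture both in the paper and here.
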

	
	The following lemma can turn out to be very useful, it proves that the difference $X_{M_E}-X_{\regrad M_E/E}$ is invariant under translation. Namely:
	\begin{monlem}\label{lem:differenceconstante}
		Let $Q$ be a quiver of affine type, let $E$ be a quasi-simple module in an exceptional tube $\mathcal T$. Then 
		$$X_{M_E}-X_{\regrad M_E/E}=X_{M_F}-X_{\regrad M_F/F}$$
		for any quasi-simple module $F$ in $\mathcal T$.
	\end{monlem}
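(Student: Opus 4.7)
The plan is to apply the Caldero--Keller one-dimensional multiplication formula (Theorem~\ref{theorem:onedimmult}) to four carefully chosen pairs, producing two identities whose right-hand sides coincide after an index shift; the lemma will then follow by cancellation in a Laurent polynomial domain. Label the quasi-simples of $\mathcal T$ as $E_0,\ldots,E_{p-1}$ with $\tau E_i=E_{i-1}$ (indices modulo $p$), and set $M_i:=M_{E_i}=E_i^{(p)}$, $R_i:=\regrad M_i=E_i^{(p-1)}$, $Q_i:=R_i/E_i=E_{i+1}^{(p-2)}$, and $Y_i:=X_{M_i}-X_{Q_i}$; it suffices to prove $Y_i=Y_{i+1}$ for all $i\in\Z/p\Z$.

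The rank-two case will be dispatched separately: here $Q_i=0$ and $X_{Q_i}=1$, so the claim reduces to $X_{M_0}=X_{M_1}$. This follows from the almost split multiplication formula (Proposition~\ref{prop:almostsplitmult}) applied to the two AR sequences $0\to E_0\to M_{E_0}\to E_1\to 0$ and $0\to E_1\to M_{E_1}\to E_0\to 0$, which yield $X_{E_0}X_{E_1}=X_{M_{E_0}}+1$ and $X_{E_1}X_{E_0}=X_{M_{E_1}}+1$; commutativity then gives $X_{M_{E_0}}=X_{M_{E_1}}$.

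Assume henceforth $p\ge 3$. Using the Auslander--Reiten formula $\Ext^1_{kQ}(U,V)\simeq D\Hom_{kQ}(V,\tau U)$ and the decomposition $\Ext^1_{\mathcal C_Q}(U,V)=\Ext^1_{kQ}(U,V)\oplus D\Ext^1_{kQ}(V,U)$, a case-by-case check inside $\mathcal T$ will show that each of the pairs $(E_i,M_i)$, $(E_i,Q_i)$, $(E_{i-1},M_i)$, $(E_{i-1},Q_i)$ has one-dimensional $\Ext^1_{\mathcal C_Q}$. Theorem~\ref{theorem:onedimmult} then supplies two middle terms per product: one from the unique non-trivial short exact sequence inside the tube, and the other of the form $\ker f\oplus\coker f[-1]$ for the appropriate non-zero $f$ in $\Hom_{kQ}(M,\tau N)$ or $\Hom_{kQ}(N,\tau M)$. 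In each of the four cases exactly one of $\ker f$ and $\coker f$ vanishes; using the cluster-category identification $X_{N[-1]}=X_{\tau^{-1}N}$ to rewrite shifted cokernels as modules in $\mathcal T$, one arrives at
\begin{align*}
X_{E_i}X_{M_i}&=X_{R_i}+X_{E_i^{(p+1)}}, &
X_{E_i}X_{Q_i}&=X_{R_i}+X_{E_{i+2}^{(p-3)}},\\
X_{E_{i-1}}X_{M_i}&=X_{R_{i+1}}+X_{E_{i-1}^{(p+1)}}, &
X_{E_{i-1}}X_{Q_i}&=X_{R_{i+1}}+X_{E_{i+1}^{(p-3)}},
\end{align*}
with the convention $E^{(0)}=0$ and $X_{E^{(0)}}=1$ handling the boundary case $p=3$. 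Subtracting the two equations in each row yields
$$X_{E_i}Y_i=X_{E_i^{(p+1)}}-X_{E_{i+2}^{(p-3)}}\quad\text{and}\quad X_{E_{i-1}}Y_i=X_{E_{i-1}^{(p+1)}}-X_{E_{i+1}^{(p-3)}}.$$

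Substituting $i\mapsto i+1$ in the second identity produces $X_{E_i}Y_{i+1}=X_{E_i^{(p+1)}}-X_{E_{i+2}^{(p-3)}}$, which is exactly the right-hand side of the first identity. Hence $X_{E_i}(Y_{i+1}-Y_i)=0$ in the domain $\Z[\textbf u^{\pm 1}]$, and since $X_{E_i}\ne 0$ we conclude $Y_i=Y_{i+1}$, completing the argument. The chief technical obstacle will be the bookkeeping for the four $\Ext$ computations: for each pair one must determine which summand of $\Ext^1_{\mathcal C_Q}(U,V)$ carries the non-zero extension class (and hence whether one recovers a genuine short exact sequence or a $\ker/\coker$ triangle), and then unwind the identification $[-1]=\tau^{-1}$ in $\mathcal C_Q$ to land the dual-side middle term back inside $\mathcal T$.
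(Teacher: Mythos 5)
Your argument is correct, and it takes a genuinely different route from the paper's proof. The paper proves this lemma by introducing an auxiliary quiver $A$ of type $\affA{p}{p}$ (alternating orientation), invoking the algebraic independence of the family $\{X^A_{E_i(A)}\}$ established in \cite{Dupont:stabletubes} to build a surjective $\Z$-algebra homomorphism $\pi\colon \Z[X^A_{E_i(A)}]\to\Z[X^Q_{E_i(Q)}]$ compatible with the generalized Chebyshev polynomials, and then transporting Theorem~\ref{theorem:differencedelta} (the already-proved $\Aaffine$ difference property) across $\pi$. Your proof, by contrast, is a purely local computation inside the tube: four one-dimensional $\Ext^1_{\mathcal C_Q}$-spaces, eight middle terms read off from the tube combinatorics and from the identification $[-1]=\tau^{-1}$ in $\mathcal C_Q$, and a telescoping cancellation after the index shift $i\mapsto i+1$. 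I verified all four $\Ext$-computations and all eight middle terms; they are right, including the degenerate boundary cases $p=2$ (handled separately via the almost split formula) and $p=3$ (where the convention $X_{E^{(0)}}=1$ is exactly the constant term that the almost split formula produces). What your approach buys: it is self-contained, using only the Caldero--Chapoton and Caldero--Keller multiplication formulas, and does not import Theorem~\ref{theorem:differencedelta} nor the algebraic-independence result for the auxiliary $\affA{p}{p}$ quiver. What the paper's approach buys: it is shorter on the page once the $\Aaffine$ machinery is available, and it illustrates the general ``transport via specialization'' technique that the paper advocates in Section~\ref{section:conjectures} as a strategy for attacking the remaining affine types.
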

	\begin{proof}
		Fix $Q$ an affine quiver, $E$ a quasi-simple in an exceptional tube $\mathcal T$ of rank $p>1$. Denote by $E_i(Q), i \in \Z/p\Z$ the quasi-simple modules in $\mathcal T$ such that $\tau E_i(Q)=E_{i-1}(Q)$ for any $i \in \Z/p\Z$. We denote by $X^Q_?$ the Caldero-Chapoton map on the cluster category $\mathcal C_Q$.
			
		Consider the quiver $A$ of affine type $\affA p,p$ with an alternating orientation. Fix $\mathcal T(A)$ an exceptional tube of $kA$-mod. Then $\mathcal T(A)$ has rank $p$. Denote by $E_i(A), i \in \Z/p\Z$ the quasi-simple modules in $\mathcal T$ such that $\tau E_i(A)=E_{i-1}(A)$ for any $i \in \Z/p\Z$. We denote by $X^A_?$ the Caldero-Chapoton map on the cluster category $\mathcal C_A$. It is proved in \cite{Dupont:stabletubes} that the family $\ens{X^A_{E_i(A)}, i \in \Z/p\Z}$ is algebraically independent over $\Q$.
		
		We consider the surjective $\Z$-algebra homomorphism:
		$$\pi: \left\{\begin{array}{rcll}
			\Z[X^A_{E_i(A)}, i \in \Z/p\Z] & \fl & \Z[X^Q_{E_i(Q)}, i \in \Z/p\Z]\\
			X^A_{E_i(A)} & \mapsto & X^Q_{E_i(Q)} & \textrm{ for any } i \in \Z/p\Z
		\end{array}\right.$$

		If follows from \cite{Dupont:stabletubes} that
		$$\pi(X^A_{E_i(A)^{(k)}})=X^Q_{E_i(Q)^{(k)}}$$
		for any $i \in \Z/p\Z$ and any $k \geq 0$.
		
		In order to prove the lemma, it is equivalent to prove that 
		$$X^Q_{E_i(Q)^{(p)}}-X^Q_{E_{i-1}(Q)^{(p-1)}}$$
		does not depend on $i \in \Z/p\Z$.
		
		By theorem \ref{theorem:differencedelta}, 
		$$X^A_{E_i(A)^{(p)}}-X^A_{E_{i-1}(A)^{(p-1)}}=X^A_{M_\lambda}$$
		where $\lambda \in \P^1_0(Q)$. In particular, the difference does not depend on $i \in \Z/p\Z$. Thus
		$$X^Q_{E_i(Q)^{(p)}}-X^Q_{E_{i-1}(Q)^{(p-1)}}=\pi(X^A_{E_i(A)^{(p)}}-X^A_{E_{i-1}(A)^{(p-1)}})$$
		does not depend on $i \in \Z/p\Z$.
	\end{proof}

	This lemma finds its interest in the fact that for an affine quiver $Q$, Crawley-Boevey's construction by one point extensions allows to realize an indecomposable module of dimension $\delta$ in each tube. This way, in any exceptional tube, there is a ``privileged'' module $M_E$, explicitly realized, for which it seems to be reasonable to prove equality $(\ref{eq:differencedelta})$. Lemma \ref{lem:differenceconstante} can then prove that equality $(\ref{eq:differencedelta})$ holds for any indecomposable in the tube.
\end{subsection}
	
\begin{subsection}{Semicanonical basis for acyclic quivers}
	For quivers of wild type, it is not clear that our methods can generalize. Indeed, our work involve deeply the knowledge of the Auslander-Reiten combinatorics of the considered quiver and at this time this Auslander-Reiten combinatorics is not so clear for wild quivers. Nevertheless, the very strong analogy between our works and those of Geiss, Leclerc and Schro\"er (see \cite{GLS:rigid2}) about nilpotent varieties, there is a hope that the answer to the following question is positive:
	\begin{question}
		Let $Q$ be an acyclic quiver, is $\mathcal B'(Q) \cap \mathcal A(Q)$ a $\Z$-basis of the $\Z$-module $\mathcal A(Q)$?
	\end{question}
\end{subsection}

\subsection*{Acknowledgements}
	This work is part of my PhD thesis \cite{mathese}. I would like to thank my adviser Philippe Caldero for very helpful conversations and fruitful ideas about the subject. I would also like to thank Bernhard Keller and Robert Marsh for their careful reading through of this work and their various remarks and corrections. I am also grateful to Michel Brion for indicating the proof of lemma \ref{lem:Ude}. Finally, I would like to thank Idun Reiten and Claus Michael Ringel for interesting discussions and advices concerning the topic.

%

\begin{thebibliography}{BMR{\etalchar{+}}06}

\bibitem[ASS05]{ASS}
I.~Assem, D.~Simson, and A.~Skowro{\'n}ski.
\newblock {\em Elements of representation theory of Associative Algebras,
  Volume 1: Techniques of Representation Theory}, volume~65 of {\em London
  Mathematical Society Student Texts}.
\newblock Cambridge University Press, 2005.
\newblock MR2197389 (2006j:16020).

\bibitem[BFZ05]{cluster3}
A.~Berenstein, S.~Fomin, and A.~Zelevinsky.
\newblock Cluster algebras {III}: Upper bounds and double {B}ruhat cells.
\newblock {\em Duke Mathematical Journal}, 126(1):1--52, 2005.
\newblock MR2110627 (2005i:16065).

\bibitem[BMR{\etalchar{+}}06]{BMRRT}
A.~Buan, R.~Marsh, M.~Reineke, I.~Reiten, and G.~Todorov.
\newblock Tilting theory and cluster combinatorics.
\newblock {\em Adv. Math.}, 204(2):572--618, 2006.
\newblock MR2249625 (2007f:16033).

\bibitem[BMR07]{BMR1}
A.~Buan, R.~Marsh, and I.~Reiten.
\newblock Cluster-tilted algebras.
\newblock {\em Trans. Amer. Math. Soc.}, 359(1):323--332, 2007.
\newblock MR2247893 (2007f:16035).

\bibitem[BMR08]{BMR2}
A.~Buan, R.~Marsh, and I.~Reiten.
\newblock Cluster mutation via quiver representations.
\newblock {\em Commentarii Mathematici Helvetici}, 83(1):143--177, 2008.

\bibitem[BMR09]{BMR3}
A.~Buan, R.~Marsh, and I.~Reiten.
\newblock Denominators of cluster variables.
\newblock {\em J. Lond. Math. Soc. (2)}, 79(3):589--611, 2009.

\bibitem[CB92]{CB:lectures}
W.~Crawley-Boevey.
\newblock Lectures on representations of quivers.
\newblock 1992.

\bibitem[CBS02]{CBS}
W.~Crawley-Boevey and J.~Schröer.
\newblock Irreducible components of varieties of modules.
\newblock {\em J. Reine Angew. Math.}, 553:201--220, 2002.
\newblock MR1944812 (2004a:16020).

\bibitem[CC06]{CC}
P.~Caldero and F.~Chapoton.
\newblock Cluster algebras as {H}all algebras of quiver representations.
\newblock {\em Commentarii Mathematici Helvetici}, 81:596--616, 2006.
\newblock MR2250855 (2008b:16015).

\bibitem[CCS06a]{CCS2}
P.~Caldero, F.~Chapoton, and R.~Schiffler.
\newblock Quivers with relations and cluster tilted algebras.
\newblock {\em Algebras and Representation Theory}, 9:359--376, 2006.
\newblock MR2250652 (2007f:16036).

\bibitem[CCS06b]{CCS1}
P.~Caldero, F.~Chapoton, and R.~Schiffler.
\newblock Quivers with relations arising from clusters ({$A_n$} case).
\newblock {\em Transactions of the AMS}, 358:1347--1354, 2006.
\newblock MR2187656 (2007a:16025).

\bibitem[{Cer}08]{Cerulli:thesis}
G.~{Cerulli Irelli}.
\newblock {\em Structure theory for affine cluster algebras of rank 3}.
\newblock PhD thesis, University of Padova, 2008.

\bibitem[CK06]{CK2}
P.~Caldero and B.~Keller.
\newblock From triangulated categories to cluster algebras {II}.
\newblock {\em Annales Scientifiques de l'Ecole Normale Sup{\'e}rieure},
  39(4):83--100, 2006.
\newblock MR2316979 (2008m:16031).

\bibitem[CK08]{CK1}
P.~Caldero and B.~Keller.
\newblock From triangulated categories to cluster algebras.
\newblock {\em Inventiones Mathematicae}, 172:169--211, 2008.
\newblock MR2385670.

\bibitem[CR08]{CR}
P.~Caldero and M.~Reineke.
\newblock On the quiver grassmannian in the acyclic case.
\newblock {\em Journal of Pure and Applied Algebra}, 212(11):2369--2380, 2008.
\newblock MR2440252.

\bibitem[CZ06]{CZ}
P.~Caldero and A.~Zelevinsky.
\newblock Laurent expansions in cluster algebras via quiver representations.
\newblock {\em Moscow Mathematical Journal}, 6:411--429, 2006.
\newblock MR2274858 (2008j:16045).

\bibitem[DR76]{DR:memoirs}
V.~Dlab and C.M. Ringel.
\newblock Indecomposable representations of graphs and algebras.
\newblock {\em Memoirs of the AMS}, 173:1--57, 1976.
\newblock MR0447344.

\bibitem[Dup08]{mathese}
G.~Dupont.
\newblock {\em Alg{\`e}bres amass{\'e}es affines}.
\newblock PhD thesis, Universit{\'e} Claude Bernard Lyon 1,
  http://tel.archives-ouvertes.fr/tel-00338684/fr, november 2008.

\bibitem[Dup09]{Dupont:stabletubes}
G.~Dupont.
\newblock Cluster multiplication in regular components via generalized
  {C}hebyshev polynomials.
\newblock {\em arXiv:0801.3964v2 [math.RT]}, 2009.

\bibitem[DW02]{DW}
H.~Derksen and J.~Weyman.
\newblock On the canonical decomposition of quiver representations.
\newblock {\em Compositio Mathematica}, 133:245--265, 2002.
\newblock MR1930979 (2003h:16017).

\bibitem[FZ02]{cluster1}
S.~Fomin and A.~Zelevinsky.
\newblock Cluster algebras {I}: Foundations.
\newblock {\em J. Amer. Math. Soc.}, 15:497--529, 2002.
\newblock MR1887642 (2003f:16050).

\bibitem[FZ03]{cluster2}
S.~Fomin and A.~Zelevinsky.
\newblock Cluster algebras {II}: Finite type classification.
\newblock {\em Inventiones Mathematicae}, 154:63--121, 2003.
\newblock MR2004457 (2004m:17011).

\bibitem[FZ07]{cluster4}
S.~Fomin and A.~Zelevinsky.
\newblock Cluster algebras {IV}: Coefficients.
\newblock {\em Composition Mathematica}, 143(1):112--164, 2007.
\newblock MR2295199 (2008d:16049).

\bibitem[GLS05]{GLS}
C.~Geiss, B.~Leclerc, and J.~Schr{\"o}er.
\newblock Semicanonical bases and preprojective algebras.
\newblock {\em Ann. Sci. \'Ecole Norm. Sup. (4)}, 38(2):193--253, 2005.

\bibitem[GLS07]{GLS2}
C.~Geiss, B.~Leclerc, and J.~Schr{\"o}er.
\newblock Semicanonical bases and preprojective algebras {II}: A multiplication
  formula.
\newblock {\em Compos. Math.}, 143(5):1313--1334, 2007.
\newblock MR2360317 (2009b:17031).

\bibitem[GLS08]{GLS:rigid2}
C.~Geiss, B.~Leclerc, and J.~Schr{\"o}er.
\newblock Cluster algebra structures and semicanonical bases for unipotent
  groups.
\newblock {\em arXiv:math/0703039v3 [math.RT]}, 2008.

\bibitem[HU05]{HU}
D.~Happel and L.~Unger.
\newblock On the set of tilting objects in hereditary categories.
\newblock {\em Fields Institute Communications}, 45:141--159, 2005.
\newblock MR2146246 (2006h:18006).

\bibitem[Hub06]{Hubery:cluster}
A.~Hubery.
\newblock Acyclic cluster algebras via {R}ingel-{H}all algebras.
\newblock {\em preprint}, 2006.

\bibitem[Kac80]{Kac:infroot1}
V.G. Kac.
\newblock Infinite root systems, representations of graphs and invariant
  theory.
\newblock {\em Inventiones Mathematicae}, 56:57--92, 1980.
\newblock MR0557581 (82j:16050).

\bibitem[Kac82]{Kac:infroot2}
V.G. Kac.
\newblock Infinite root systems, representations of graphs and invariant theory
  {II}.
\newblock {\em Journal of algebra}, 78:163--180, 1982.
\newblock MR0677715 (85b:17003).

\bibitem[Kel05]{K}
B.~Keller.
\newblock On triangulated orbit categories.
\newblock {\em Documenta Mathematica}, 10:551--581, 2005.
\newblock MR2184464 (2007c:18006).

\bibitem[Kel08]{Keller:categorification}
B.~Keller.
\newblock Categorification of acyclic cluster algebras: an introduction.
\newblock {\em arXiv:0801.3103v1 [math.RT]}, 2008.

\bibitem[KR08]{KR}
B.~Keller and I.~Reiten.
\newblock Acyclic calabi-yau categories.
\newblock {\em Compositio Mathematicae}, 144(5):1332--1348., 2008.
\newblock MR2457529.

\bibitem[Lec03]{Leclerc:imaginary_vectors}
B.~Leclerc.
\newblock Imaginary vectors in the dual canonical basis of ${U_q(n)}$.
\newblock {\em Transform. Groups}, 8(1):95--104, 2003.
\newblock MR1959765 (2004d:17020).

\bibitem[MRZ03]{MRZ}
R.~Marsh, M.~Reineke, and A.~Zelevinsky.
\newblock Generalized associahedra via quiver representations.
\newblock {\em Trans. Amer. Math. Soc.}, 355(1):4171--4186, 2003.
\newblock MR1990581 (2004g:52014).

\bibitem[Pal08]{Palu}
Y.~Palu.
\newblock Cluster characters for 2-{C}alabi-{Y}au triangulated categories.
\newblock {\em Ann. Inst. Fourier (Grenoble)}, 58(6):2221--2248, 2008.

\bibitem[Rin84]{ringel:1099}
C.M. Ringel.
\newblock Tame algebras and integral quadratic forms.
\newblock {\em Lecture Notes in Mathematics}, 1099:1--376, 1984.
\newblock MR0774589 (87f:16027).

\bibitem[Sch92]{Schofield:generalrepresentations}
A.~Schofield.
\newblock General representations of quivers.
\newblock {\em Proc. London Math. Soc.}, 65(3):46--64, 1992.
\newblock MR1162487 (93d:16014).

\bibitem[SZ04]{shermanz}
P.~Sherman and A.~Zelevinsky.
\newblock Positivity and canonical bases in rank 2 cluster algebras of finite
  and affine types.
\newblock {\em Mosc. Math. J.}, 4:947--974, 2004.
\newblock MR2124174 (2006c:16052).

\bibitem[Ver76]{verdier}
J.L. Verdier.
\newblock Stratification de {W}hitney et th{\'e}or{\`e}me de {B}ertini-{S}ard.
\newblock {\em Inventiones Mathematicae}, 36:295--312, 1976.
\newblock MR0481096.

\bibitem[Xu10]{Xu}
F.~Xu.
\newblock On the cluster multiplication theorem for acyclic cluster algebras.
\newblock {\em Trans. Amer. Math. Soc.}, 362(2):753--776, 2010.

\bibitem[XX07]{XX}
J.~Xiao and F.~Xu.
\newblock Green's formula with ${{\mathbb {C}}^*}$-action and
  {C}aldero-{K}eller's formula for cluster algebras.
\newblock {\em arXiv:0707.1175v2 [math.QA]}, 2007.

\bibitem[Zhu06]{Zhu:equivalence}
B.~Zhu.
\newblock Equivalence between cluster categories.
\newblock {\em Journal of Algebra}, 304:832--850, 2006.
\newblock MR2264281 (2007h:18017).

\end{thebibliography}

\newcommand{\etalchar}[1]{$^{#1}$}

\end{document}